\documentclass{article}
\usepackage{mathrsfs}  
\usepackage[english]{babel}
\usepackage[utf8]{inputenc}
\usepackage{amsmath}
\usepackage[title]{appendix}
\usepackage{amsfonts}
\usepackage{graphicx}
\usepackage[colorinlistoftodos]{todonotes}
\usepackage{blindtext}
\usepackage{amssymb}
\usepackage{algorithm}
\usepackage{amsthm}
\usepackage{bbm}
\usepackage{appendix}
\usepackage{authblk}
\numberwithin{equation}{section}
\DeclareMathOperator\supp{supp}

\makeatletter
\newcommand*{\rom}[1]{\expandafter\@slowromancap\romannumeral #1@}

\newtheorem{theorem}{Theorem}[section]

\newtheorem{lemma}[theorem]{Lemma}
\newtheorem{proposition}{Proposition}[section]

\theoremstyle{remark}

\theoremstyle{definition}
\newtheorem{definition}{Definition}[section]
\title{The length of the longest increasing subsequence of Mallows permutation models with $L^1$ and $L^2$ distances}

\author{Chenyang Zhong}
\affil{Department of Statistics, Columbia University}

\date{\today}

\begin{document}
\maketitle
\begin{abstract}
Introduced by Mallows in statistical ranking theory, Mallows permutation model is a class of non-uniform probability measures on the symmetric group $S_n$ that depend on a distance metric $d(\sigma,\tau)$ on $S_n$ and a scale parameter $\beta$. Taking the distance metric to be the $L^1$ and $L^2$ distances--which are respectively known as Spearman's footrule and Spearman's rank correlation in the statistics literature--leads to Mallows permutation models with $L^1$ and $L^2$ distances.

In this paper, we study the length of the longest increasing subsequence of random permutations drawn from Mallows permutation models with $L^1$ and $L^2$ distances. For both models and various regimes of the scale parameter $\beta$, we determine the typical order of magnitude of the length of the longest increasing subsequence and establish a law of large numbers for this length. For Mallows permutation model with the $L^1$ distance, when $\beta\sim \theta n^{-1}$ for some fixed $\theta>0$, the typical length of the longest increasing subsequence is of order $\sqrt{n}$; when $n^{-1}\ll \beta \ll 1$, this typical length is of order $n\sqrt{\beta}$. For Mallows permutation model with the $L^2$ distance, when $\beta\sim \theta n^{-2}$ for some fixed $\theta >0$, the typical length of the longest increasing subsequence is of order $\sqrt{n}$; when $n^{-2}\ll \beta \ll 1$, this typical length is of order $n\beta^{1\slash 4}$.
\end{abstract}

\section{Introduction}\label{Sect.1}

The length of the longest increasing subsequence of random permutations has received much recent interest in various fields including probability, combinatorics, and mathematical physics. For any permutation $\sigma\in S_n$, the length of the longest increasing subsequence of $\sigma$, denoted by $LIS(\sigma)$, is defined as
\begin{eqnarray*}
    LIS(\sigma)&:=& \max\{k\in \{1,2,\cdots,n\}: \sigma(i_1)<\cdots<\sigma(i_k)\text{ for some }\\
    &&\quad\quad i_1,\cdots,i_k\in \{1,2,\cdots,n\}\text{ such that }i_1<\cdots<i_k\}.
\end{eqnarray*}
For uniformly random permutations, there have been extensive investigations on the distribution of the length of the longest increasing subsequence (see e.g. \cite{Ham,LS,VK,AD3,AD2,Cor}), which culminate with the work of Baik, Deift, and Johansson \cite{BDJ} who showed that the limiting distribution of this length is given by the Tracy-Widom distribution arising from random matrix theory. In this paper, we investigate the distribution of the length of the longest increasing subsequence of random permutations drawn from two classes of \emph{non-uniform} probability measures on permutations called Mallows permutation models with $L^1$ and $L^2$ distances, which we introduce as follows.


Mallows permutation model, introduced by Mallows \cite{Mal} in statistical ranking theory, is a class of non-uniform probability measures on the symmetric group $S_n$. The model depends on a distance metric $d(\sigma,\tau)$ on $S_n$, a scale parameter $\beta$, and a location parameter $\sigma_0\in S_n$. Under the model, the probability of picking a permutation $\sigma\in S_n$ is proportional to $\exp(-\beta d(\sigma,\sigma_0))$. For a reasonable choice of the distance metric $d(\sigma,\tau)$, when $\beta>0$, the model is biased towards $\sigma_0$. In this paper, we consider the case where $\beta>0$ and $\sigma_0=Id$--the identity permutation. The reader is referred to \cite[Section 1]{Zho1} for an overview of Mallows permutation model and related literature. 

The distance metric $d(\sigma,\tau)$ as mentioned above can be chosen from a host of metrics on permutations. Several widely used choices are listed as follows (see \cite[Chapter 6]{D} for further discussions on metrics on permutations):
\begin{itemize}
    \item $L^1$ distance, or Spearman's footrule: $d(\sigma,\tau)=\sum_{i=1}^n|\sigma(i)-\tau(i)|$;
    \item $L^2$ distance, or Spearman's rank correlation: $d(\sigma,\tau)=\sum_{i=1}^n (\sigma(i)-\tau(i))^2$;
    \item Kendall's $\tau$: $d(\sigma,\tau)=$ minimum number of pairwise adjacent transpositions taking $\sigma^{-1}$ to $\tau^{-1}$;
    \item Cayley distance: $d(\sigma,\tau)=$ minimum number of transpositions taking $\sigma$ to $\tau$; 
    \item Hamming distance: $d(\sigma,\tau)=\#\{i\in\{1,\cdots,n\}:\sigma(i)\neq \tau(i)\}$;
    \item Ulam's distance: $d(\sigma,\tau)=n-$ the length of the longest increasing subsequence in $\tau\sigma^{-1}$.
\end{itemize}
In this paper, we consider Mallows permutation models with $L^1$ and $L^2$ distances. These models carry a spatial structure, and are also known as ``spatial random permutations'' in the mathematical physics literature \cite{FM}. In the rest of this paper, we also refer to these two models as ``the $L^1$\slash $L^2$ model''. Throughout the paper, for any two permutations $\sigma,\tau\in S_n$, we denote the $L^1$ and $L^2$ distances between $\sigma$ and $\tau$ by
\begin{equation*}
    H(\sigma,\tau)=\sum_{i=1}^n |\sigma(i)-\tau(i)|,\quad \tilde{H}(\sigma,\tau)=\sum_{i=1}^n (\sigma(i)-\tau(i))^2,
\end{equation*}
respectively. The probability measures that correspond to the $L^1$ and $L^2$ models are respectively denoted by $\mathbb{P}_{n,\beta}$ and $\tilde{\mathbb{P}}_{n,\beta}$: For any $\sigma\in S_n$,
\begin{equation*}
    \mathbb{P}_{n,\beta}(\sigma)=Z_{n,\beta}^{-1}\exp(-\beta H(\sigma,Id)), \quad \tilde{\mathbb{P}}_{n,\beta}(\sigma)= \tilde{Z}_{n,\beta}^{-1}\exp(-\beta \tilde{H}(\sigma,Id)),
\end{equation*}
where $Z_{n,\beta}$ and $\tilde{Z}_{n,\beta}$ are the normalizing constants. 


The behavior of the $L^1$ and $L^2$ models depends crucially on the scale parameter $\beta$. If $\beta$ is too small, the models are indistinguishable from the uniform distribution on $S_n$. For the $L^1$ model, ``too small'' means that $\beta$ is much smaller than $n^{-1}$; for the $L^2$ model, ``too small'' means that $\beta$ is much smaller than $n^{-2}$. For $\sigma$ drawn from the $L^1$ model, if $\beta\sim \theta n^{-1}$ for some fixed $\theta>0$, the random probability measure $\nu_{n,\sigma}:=n^{-1}\sum_{i=1}^n \delta_{(i\slash n,\sigma(i)\slash n)}$ converges weakly in probability to a deterministic probability measure on $[0,1]^2$; if $n^{-1}\ll \beta\ll 1$, with high probability, most of the points $\{(i,\sigma(i))\}_{i=1}^n$ are concentrated in a band with order $\beta^{-1}$ width around the diagonal of the plane. Parallel results hold for the $L^2$ model: For $\sigma$ drawn from the $L^2$ model, if $\beta\sim \theta n^{-2}$ for some fixed $\theta>0$, $\nu_{n,\sigma}$ converges weakly in probability to a deterministic probability measure on $[0,1]^2$; if $n^{-2}\ll \beta\ll 1$, with high probability, most of the points $\{(i,\sigma(i))\}_{i=1}^n$ are concentrated in a band with order $\beta^{-1\slash 2}$ width around the diagonal of the plane. Mathematical backups for these properties can be found in \cite{FM,M1,Zhong2}. We also review relevant results in Sections \ref{Sect.1.1} and \ref{Sect.2.2} below.

In this paper, for both the $L^1$ and $L^2$ models and the above two regimes of $\beta$, we determine the typical order of magnitude of the length of the longest increasing subsequence and establish a law of large numbers for this length. For the $L^1$ model, when $\beta\sim \theta n^{-1}$ for some fixed $\theta>0$, the typical length of the longest increasing subsequence is of order $\sqrt{n}$; when $n^{-1}\ll \beta \ll 1$, this typical length is of order $n\sqrt{\beta}$. For the $L^2$ model, when $\beta\sim \theta n^{-2}$ for some fixed $\theta >0$, the typical length of the longest increasing subsequence is of order $\sqrt{n}$; when $n^{-2}\ll \beta \ll 1$, this typical length is of order $n\beta^{1\slash 4}$. The concrete results for the $L^1$ and $L^2$ models are presented in Sections \ref{Sect.1.2} and \ref{Sect.1.3}, respectively.

There have been previous works on the length of the longest increasing subsequence of Mallows permutation models with Kendall's $\tau$ and Cayley distance. The latter model is also known as ``Ewens sampling formula'' in the literature (see e.g. \cite{Crane}). These two Mallows models possess several special\slash exactly solvable structures: Both of them have explicit normalizing constants and can be exactly sampled in an efficient manner, and the latter model is invariant under conjugations (meaning that $\sigma$ has the same distribution as $\tau^{-1}\sigma\tau$ for $\sigma$ drawn from the model and any fixed $\tau\in S_n$). For Mallows permutation model with Kendall's $\tau$, Mueller and Starr \cite{MS} showed a law of large numbers for the regime $\beta\sim \theta n^{-1}$ (where $\theta\in\mathbb{R}$ is fixed), and Bhatnagar and Peled \cite{BP} established a law of large numbers for the regime $n^{-1}\ll \beta \ll 1$. Bhatnagar and Peled \cite{BP} also gave large deviation bounds and concentration inequalities for this Mallows model. Later Basu and Bhatnagar \cite{BB} established a central limit theorem for the regime where $\beta>0$ is fixed. For Mallows permutation model with Cayley distance, Kammoun \cite{Ka3,Ka2} showed that the limiting distribution of the length of the longest increasing subsequence is given by the Tracy-Widom distribution under certain parameter regimes. The proofs of these results rely heavily on the special\slash exactly solvable structures of Mallows permutation models with Kendall's $\tau$ and Cayley distance as mentioned above.

For Mallows permutation models with $L^1$ and $L^2$ distances as considered in this paper, however, there is a lack of exactly solvable structures. For both models, the normalizing constants do not have an explicit form and are hard to compute in general, and there is no known efficient algorithm for exactly sampling from them. Moreover, neither of the models is invariant under conjugations. Due to the lack of exactly solvable structures, there is no previous result in the literature on the distribution of the length of the longest increasing subsequence of these models. In this paper, we develop a novel set of tools to overcome such difficulties. In particular, we utilize hit and run algorithms--which are a unifying class of Markov chain Monte Carlo algorithms--that sample from the $L^1$ and $L^2$ models as a crucial tool in our analysis. A review of these hit and run algorithms is given in Section \ref{Sect.1.5} below.

In the following, we introduce some notations that will be used throughout this paper. We denote $[0]:=\emptyset$ and $[n]:=\{1,2,\cdots,n\}$ for any $n\in\mathbb{N}^{*}$. For any finite set $A$, we denote by $|A|$ the cardinality of $A$. For any $(x_0,y_0)\in\mathbb{R}^2$, $\alpha>0$, and $A\subseteq \mathbb{R}^2$, we denote $(x_0,y_0)+\alpha A:=\{(x_0+\alpha x,y_0+\alpha y):(x,y)\in A\}$. For any two sets $A$ and $B$, we denote by $A\Delta B$ their symmetric difference. 

Throughout the paper, we use $C,c$ to denote positive absolute constants. The values of these constants may change from line to line.  

In Definition \ref{Def1.1} below, we extend the definition of the length of the longest increasing subsequence to bijections. This extension will be useful in proving our main results.

\begin{definition}\label{Def1.1}
For any two sets $S,T\subseteq [n]$ such that $|S|=|T|$ and any bijection $\sigma: S\rightarrow T$, we define the length of the longest increasing subsequence of $\sigma$ by
\begin{eqnarray*}
    LIS(\sigma)&:=& \max\{k\in \{0\}\cup [n]: \sigma(i_1)<\cdots<\sigma(i_k)\text{ for some }i_1,\cdots,i_k\in S\\
    && \quad\quad \text{ such that }i_1<\cdots<i_k\}.
\end{eqnarray*}
\end{definition}

We also introduce the following two definitions.

\begin{definition}\label{Def1.2}
For any set $S\subseteq \mathbb{R}$, we let $\phi(S,n):=S\cap [n]$. For any permutation $\sigma\in S_n$ and any two sets $S,T\subseteq \mathbb{R}$, we define $\sigma|_{S\times T}$ to be the bijection from $\phi(S,n)\cap\sigma^{-1}(\phi(T,n))$ to $\sigma(\phi(S,n))\cap \phi(T,n)$, such that for any $i\in \phi(S,n)\cap \sigma^{-1}(\phi(T,n))$, $(\sigma|_{S\times T})(i)=\sigma(i)$.
\end{definition}

\begin{definition}\label{Def1.5}
For any $n\in\mathbb{N}^{*}$ and any $\sigma\in S_n$, we define 
\begin{equation*}
    S(\sigma):=\{(i,\sigma(i)):i\in [n]\}, \quad \nu_{n,\sigma}:=n^{-1}\sum_{i=1}^n\delta_{(i\slash n,\sigma(i)\slash n)}.
\end{equation*}
\end{definition}

As mentioned before, when $\beta\sim \theta n^{-1}$ for the $L^1$ model or $\beta\sim \theta n^{-2}$ for the $L^2$ model (where $\theta>0$ is fixed), for $\sigma$ drawn the $L^1$ or $L^2$ model, the random probability measure $\nu_{n,\sigma}$ converges weakly in probability to a deterministic probability measure on $[0,1]^2$. In Section \ref{Sect.1.1}, we review results on the density of this limiting probability measure. These results are used in the statement and proof of Theorems \ref{limit_l1_1} and \ref{limit_l2_1} in Sections \ref{Sect.1.2} and \ref{Sect.1.3}. The main results for the $L^1$ and $L^2$ models are presented in Sections \ref{Sect.1.2} and \ref{Sect.1.3}, respectively.

\subsection{Limiting density of $\nu_{n,\sigma}$ for the $L^1$ and $L^2$ models}\label{Sect.1.1}

The length of the longest increasing subsequence of the $L^1$ or $L^2$ model for certain parameter regime ($\beta\sim \theta n^{-1}$ for the $L^1$ model and $\beta\sim \theta n^{-2}$ for the $L^2$ model, where $\theta>0$ is fixed) is closely related to the limiting density of $\nu_{n,\sigma}$ (see Definition \ref{Def1.5}) with $\sigma$ drawn from the corresponding model. In this subsection, we review relevant results on this limiting density. We start with the following definition.

\begin{definition}\label{Defm}
We define $\mathcal{M}$ to be the set of all Borel probability measures on $[0,1]^2$ with uniform marginals.
\end{definition}

The following result for the $L^1$ model follows by adapting the proofs of \cite[Theorem 1.5]{M1} and \cite[Corollary 1.12]{M2}. The detailed proof is given in the appendix.

\begin{proposition}\label{Densi.l1}
Let $(\beta_n)_{n=1}^{\infty}$ be an arbitrary sequence of positive numbers such that $\lim_{n\rightarrow\infty} n\beta_n=\theta >0$. Let $\sigma$ be drawn from $\mathbb{P}_{n,\beta_n}$. Then the random probability measure $\nu_{n,\sigma}$ defined in Definition \ref{Def1.5} converges weakly in probability to a probability measure $\mu_{\theta}\in\mathcal{M}$ that only depends on $\theta$. Moreover, with respect to the Lebesgue measure on $[0,1]^2$, $\mu_{\theta}$ has a continuous density $\rho_{\theta}(\cdot,\cdot)$ given by
\begin{equation*}
    \rho_{\theta}(x,y)=e^{-\theta|x-y|+a_{\theta}(x)+a_{\theta}(y)}, \quad \forall (x,y)\in [0,1]^2,
\end{equation*}
where the function $a_{\theta}(\cdot)\in L^1([0,1])$ satisfies $a_{\theta}(x)=a_{\theta}(1-x), \forall x\in [0,1]$. Moreover, there exist positive constants $m_{\theta}$ and $M_{\theta}$ that only depend on $\theta$, such that $m_{\theta}\leq \rho_{\theta}(x,y)\leq M_{\theta}$ for every $(x,y)\in [0,1]^2$. 
\end{proposition}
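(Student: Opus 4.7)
The plan is to follow the large-deviations / Gibbs variational principle framework developed by Mukherjee in \cite{M1,M2}. The key scaling observation is
\begin{equation*}
\beta_n H(\sigma,Id) \;=\; n^2\beta_n\int_{[0,1]^2}|x-y|\,d\nu_{n,\sigma}(x,y),
\end{equation*}
so with $n\beta_n\to\theta$ the Hamiltonian is of order $n\theta\langle|x-y|,\nu_{n,\sigma}\rangle$, matching the speed of Mukherjee's LDP for $\nu_{n,\sigma}$ under the uniform distribution on $S_n$ (with rate function $I$ equal to the restriction of the relative entropy with respect to Lebesgue measure on $[0,1]^2$ to $\mathcal{M}$, and $+\infty$ otherwise). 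Since $(x,y)\mapsto|x-y|$ is bounded and continuous on $[0,1]^2$, Varadhan's lemma in the spirit of \cite[Corollary 1.12]{M2} yields that $\nu_{n,\sigma}$ concentrates weakly in probability on the set of minimizers of
\begin{equation*}
F_\theta(\mu) \;:=\; \theta\int_{[0,1]^2}|x-y|\,d\mu + I(\mu), \qquad \mu\in\mathcal{M}.
\end{equation*}

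Strict convexity of $I$ on $\mathcal{M}$ together with linearity of the cost, lower semicontinuity of $F_\theta$, and weak compactness of $\mathcal{M}$ give existence of a \emph{unique} minimizer $\mu_\theta$. Writing $d\mu_\theta=\rho_\theta\,dx\,dy$ and introducing Lagrange multipliers $a(x), b(y)$ for the two marginal constraints, the Euler--Lagrange equation reads
\begin{equation*}
\log\rho_\theta(x,y) \;=\; -\theta|x-y|+a(x)+b(y).
\end{equation*}
Two intrinsic symmetries of $\mathbb{P}_{n,\beta_n}$ then pin down the form claimed in the proposition. First, since $H(\sigma,Id)=H(\sigma^{-1},Id)$, the distribution is invariant under inversion, giving $\rho_\theta(x,y)=\rho_\theta(y,x)$; this forces $a-b$ to be a.e.\ constant, so we may take $a=b=:a_\theta$. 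Second, the reversal $\sigma'(i):=n+1-\sigma(n+1-i)$ preserves $H(\cdot,Id)$ (by re-indexing), hence preserves $\mathbb{P}_{n,\beta_n}$, giving $\rho_\theta(x,y)=\rho_\theta(1-x,1-y)$; combined with the already-derived form, this forces $a_\theta(x)=a_\theta(1-x)$ a.e.\ (after absorbing a constant that must be its own negative).

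Finally, I would verify $a_\theta\in L^1([0,1])$, continuity of $\rho_\theta$, and the uniform bounds $m_\theta\leq\rho_\theta\leq M_\theta$. Plugging the form of $\rho_\theta$ into the marginal equation $\int_0^1\rho_\theta(x,y)\,dy=1$ yields the fixed-point relation
\begin{equation*}
e^{-a_\theta(x)} \;=\; \int_0^1 e^{-\theta|x-y|+a_\theta(y)}\,dy.
\end{equation*}
Since $e^{-\theta}\leq e^{-\theta|x-y|}\leq 1$, writing $C_\theta:=\int_0^1 e^{a_\theta(y)}\,dy$ sandwiches $e^{-a_\theta(x)}$ between $e^{-\theta}C_\theta$ and $C_\theta$ for a.e.\ $x$, which bootstraps to a uniform bound on $a_\theta$ depending only on $\theta$ (and, in passing, shows $C_\theta<\infty$). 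The right-hand side of the fixed-point relation is then Lipschitz in $x$ (from the Lipschitz dependence of $|x-y|$), so $a_\theta$ admits a continuous version, and the explicit formula transfers boundedness and continuity to $\rho_\theta$.

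The main obstacle is the first step: genuinely importing Mukherjee's LDP and the tilted-measure argument to the present setting. The uniform LDP from \cite{M1} for $\nu_{n,\sigma}$ on $\mathcal{M}$ is what supports the variational characterization, and one has to check that the $L^1$ tilt $\exp(-n\theta\langle|x-y|,\nu_{n,\sigma}\rangle)$ is sufficiently regular for Varadhan's lemma and, crucially, that concentration holds at the \emph{unique} minimizer rather than just along subsequences. Once this is in hand, everything in the second and third paragraphs is a calculus-of-variations exercise.
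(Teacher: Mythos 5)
Your proposal takes essentially the same route as the paper's appendix proof: both import the weak convergence and the Gibbs/Euler--Lagrange form $\rho_\theta(x,y)=e^{-\theta|x-y|+a(x)+b(y)}$ from Mukherjee's LDP framework (\cite{M1,M2}), both deduce $a=b$ from invariance under $\sigma\mapsto\sigma^{-1}$ and $a_\theta(x)=a_\theta(1-x)$ from invariance under $\sigma\mapsto(i\mapsto n+1-\sigma(n+1-i))$, and both use the marginal fixed-point relation $e^{-a_\theta(x)}=\int_0^1 e^{-\theta|x-y|+a_\theta(y)}\,dy$ to extract boundedness and (Lipschitz) continuity. The only superficial differences are that you spell out the Varadhan/variational step which the paper cites by reference, while the paper is more careful about passing from the a.e.-defined $A_\theta,B_\theta$ to everywhere-defined continuous representatives before running the Lipschitz argument, a measure-theoretic detail that your sketch compresses but which does not change the substance.
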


The parallel result for the $L^2$ model is given below. It can be proved in a similar manner as Proposition \ref{Densi.l1}.

\begin{proposition}\label{Densi.l2}
Let $(\beta_n)_{n=1}^{\infty}$ be an arbitrary sequence of positive numbers such that $\lim_{n\rightarrow\infty} n^2 \beta_n=\theta >0$. Let $\sigma$ be drawn from $\tilde{\mathbb{P}}_{n,\beta_n}$. Then the random probability measure $\nu_{n,\sigma}$ defined in Definition \ref{Def1.5} converges weakly in probability to a probability measure $\tilde{\mu}_{\theta}\in\mathcal{M}$ that only depends on $\theta$. Moreover, with respect to the Lebesgue measure on $[0,1]^2$, $\tilde{\mu}_{\theta}$ has a continuous density $\tilde{\rho}_{\theta}(\cdot,\cdot)$ given by
\begin{equation*}
    \tilde{\rho}_{\theta}(x,y)=e^{-\theta(x-y)^2+\tilde{a}_{\theta}(x)+\tilde{a}_{\theta}(y)}, \quad \forall (x,y)\in [0,1]^2,
\end{equation*}
where the function $\tilde{a}_{\theta}(\cdot)\in L^1([0,1])$ satisfies $\tilde{a}_{\theta}(x)=\tilde{a}_{\theta}(1-x),\forall x\in [0,1]$. Moreover, there exist positive constants $\tilde{m}_{\theta}$ and $\tilde{M}_{\theta}$ that only depend on $\theta$, such that $\tilde{m}_{\theta}\leq \tilde{\rho}_{\theta}(x,y)\leq \tilde{M}_{\theta}$ for every $(x,y)\in [0,1]^2$.  
\end{proposition}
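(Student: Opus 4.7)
I would adapt the derivation of Proposition~\ref{Densi.l1}, replacing the $L^1$ cost $|x-y|$ by the $L^2$ cost $(x-y)^2$ throughout. The two ingredients I need are a large deviation principle for $\nu_{n,\sigma}$ under $\tilde{\mathbb{P}}_{n,\beta_n}$ at speed $n$, and the identification of the unique minimizer of the resulting rate function.

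First I would rewrite the Hamiltonian in a form suitable for an LDP. Since $n^2\beta_n\to\theta$,
\begin{equation*}
\beta_n\tilde{H}(\sigma,Id) \,=\, \beta_n n^3 \int_{[0,1]^2}(x-y)^2\,d\nu_{n,\sigma}(x,y) \,=\, (\theta+o(1))\,n\,\mathcal{E}(\nu_{n,\sigma}),
\end{equation*}
where $\mathcal{E}(\mu):=\int(x-y)^2\,d\mu$, so $\tilde{\mathbb{P}}_{n,\beta_n}$ is a Gibbs tilt of the uniform measure on $S_n$ by the bounded continuous functional $\theta\mathcal{E}$ at speed $n$. Next I would invoke the permuton LDP for uniform random permutations -- rate function $-h(\mu)$ on $\mathcal{M}$, with $h$ the Shannon entropy of the density when it exists -- and apply Varadhan's lemma to the continuous tilt $\theta\mathcal{E}$ to obtain an LDP for $\nu_{n,\sigma}$ under $\tilde{\mathbb{P}}_{n,\beta_n}$ with rate function $\tilde{I}_\theta(\mu)=\theta\mathcal{E}(\mu)-h(\mu)-c_\theta$, where $c_\theta$ is the infimum of $\theta\mathcal{E}-h$ over $\mathcal{M}$.

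Then I would use strict concavity of $h$ on the absolutely continuous elements of $\mathcal{M}$ and linearity of $\mathcal{E}$ to conclude that $\tilde{I}_\theta$ has a unique minimizer $\tilde{\mu}_\theta$, whence $\nu_{n,\sigma}\to\tilde{\mu}_\theta$ weakly in probability. To identify $\tilde{\mu}_\theta$, a Lagrange multiplier computation enforcing the two uniform-marginal constraints yields the Euler-Lagrange form
\begin{equation*}
\tilde{\rho}_\theta(x,y)\,=\,\exp\bigl(-\theta(x-y)^2+\tilde{a}_\theta(x)+\tilde{b}_\theta(y)\bigr).
\end{equation*}
The involution $\sigma\mapsto\sigma^{-1}$ preserves $\tilde{H}(\cdot,Id)$ and forces the density to be symmetric in $(x,y)$, hence $\tilde{b}_\theta=\tilde{a}_\theta$ after absorption of an additive constant; the involution $\sigma(i)\mapsto n+1-\sigma(n+1-i)$ also preserves $\tilde{\mathbb{P}}_{n,\beta_n}$, yielding $\tilde{\rho}_\theta(x,y)=\tilde{\rho}_\theta(1-x,1-y)$ and thus $\tilde{a}_\theta(x)=\tilde{a}_\theta(1-x)$. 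Finally, the marginal identity $\int_0^1 \exp(-\theta(x-y)^2+\tilde{a}_\theta(x)+\tilde{a}_\theta(y))\,dy=1$ combined with smoothness and strict positivity of the Gaussian kernel bootstraps continuity of $\tilde{a}_\theta$, so $\tilde{\rho}_\theta$ is continuous on the compact square $[0,1]^2$ and hence bounded between two positive constants $\tilde{m}_\theta$ and $\tilde{M}_\theta$.

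The main obstacle is the rigorous establishment of the permuton LDP together with the Varadhan tilt in the Mallows context, for which I would follow the scheme of \cite[Theorem 1.5]{M1} and \cite[Corollary 1.12]{M2}. Fortunately the $L^2$ kernel $e^{-\theta(x-y)^2}$ is smoother and more uniformly bounded below than the Laplace-type kernel $e^{-\theta|x-y|}$ treated in Proposition~\ref{Densi.l1}, so no new analytic difficulty arises: the tightness estimates, variational uniqueness, symmetry reductions, and bootstrap regularity of $\tilde{a}_\theta$ all transfer with only routine modifications from the $L^1$ case.
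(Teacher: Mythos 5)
Your proposal is correct and follows essentially the same approach as the paper. The paper omits the proof of this proposition with the remark that it ``can be proved in a similar manner as Proposition~\ref{Densi.l1},'' and the appendix proof of that proposition proceeds exactly along your lines: cite the permuton LDP and variational identification of the optimizer from \cite{M1,M2} to get a density of the form $e^{-\theta(x-y)^2+\tilde a_\theta(x)+\tilde b_\theta(y)}$, use the invariance of $\tilde H(\cdot,\mathrm{Id})$ under $\sigma\mapsto\sigma^{-1}$ to get $\tilde b_\theta=\tilde a_\theta$, use the reflection $\sigma(i)\mapsto n+1-\sigma(n+1-i)$ to get $\tilde a_\theta(x)=\tilde a_\theta(1-x)$, and bootstrap Lipschitz continuity of $\tilde a_\theta$ from the marginal identity (for the $L^2$ kernel the bound $|(x_1-y)^2-(x_2-y)^2|\le 2|x_1-x_2|$ on $[0,1]^2$ gives $|\tilde a_\theta(x_1)-\tilde a_\theta(x_2)|\le 2\theta|x_1-x_2|$, the analogue of the paper's $\theta|x_1-x_2|$ bound), which yields the two-sided bounds on the compact square.
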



\subsection{Main results for the $L^1$ model}\label{Sect.1.2}

In this subsection, we present the main results for the $L^1$ model. The following theorem implies a law of large numbers for the parameter regime $\beta\sim \theta n^{-1}$ with fixed $\theta>0$.

\begin{theorem}\label{limit_l1_1}
Let $(\beta_n)_{n=1}^{\infty}$ be an arbitrary sequence of positive numbers such that $\lim_{n\rightarrow\infty} n\beta_n=\theta>0$. Let $\sigma$ be drawn from $\mathbb{P}_{n,\beta_n}$. Then we have
\begin{equation}
    \frac{LIS(\sigma)}{\sqrt{n}}\xrightarrow[]{L^1} 2\int_0^1\sqrt{\rho_{\theta}(x,x)}dx,
\end{equation}
where $\rho_{\theta}(\cdot,\cdot)$ is defined in Proposition \ref{Densi.l1}.
\end{theorem}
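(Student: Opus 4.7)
The plan is to obtain the law of large numbers via a block-decomposition that recognizes the right-hand side as the Deuschel--Zeitouni type variational formula for the LIS of iid samples from density $\rho_{\theta}$, specialized to the fact that $\rho_{\theta}(x,y)=e^{-\theta|x-y|+a_{\theta}(x)+a_{\theta}(y)}$ is maximized, for each fixed $x$, at $y=x$, so the optimal ``staircase'' is the diagonal. Concretely, I partition $[0,1]$ into $K$ equal subintervals $I_1,\dots,I_K$, which induces a partition of $[0,1]^2$ into $K^2$ boxes $I_j\times I_k$. By Proposition \ref{Densi.l1}, the block count $m_{jk}(\sigma):=|\{i\in[n]:(i/n,\sigma(i)/n)\in I_j\times I_k\}|$ satisfies $m_{jk}/n\to \iint_{I_j\times I_k}\rho_{\theta}\approx \rho_{\theta}(x_j,x_k)/K^2$ in probability as $n\to\infty$ for each fixed $K$. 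The crucial auxiliary input I would establish is that, conditionally on the row/column profiles in each block, the restriction $\sigma|_{I_j\times I_k}$ is close, in total variation, to a uniform bijection on its domain/image. Heuristically this is because for $\beta\sim\theta/n$ and $(i,\sigma(i))\in I_j\times I_k$, rearrangements within one block change $\beta H(\sigma,Id)$ by only $O(m_{jk}/K)$, so the Mallows weight is almost constant on the block. I expect to make this rigorous using the hit and run algorithm reviewed in Section \ref{Sect.1.5}, which provides an explicit local resampling mechanism.

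Given local uniformity, the argument splits into matching bounds. For the upper bound, any increasing subsequence of $\sigma$ decomposes according to which box its points lie in, and the sequence of visited boxes $(j_l,k_l)_l$ must be a monotone staircase. The contribution from box $(j,k)$ is at most $LIS(\sigma|_{I_j\times I_k})$, which by local uniformity and the Vershik--Kerov/Logan--Shepp asymptotics is at most $2\sqrt{m_{jk}}(1+o_n(1))+O(1)$ in expectation. Summing and optimizing over staircases,
\[
E[LIS(\sigma)]\leq 2\sqrt{n}\sup_{(j_l,k_l)}\frac{1}{K}\sum_l\sqrt{\rho_{\theta}(x_{j_l},x_{k_l})}+o(\sqrt{n}).
\]
As $K\to\infty$ the discrete supremum converges to $\sup_{\phi}2\int_0^1\sqrt{\rho_{\theta}(x,\phi(x))\phi'(x)}\,dx$ over non-decreasing $\phi:[0,1]\to[0,1]$ with $\phi(0)=0,\phi(1)=1$; using the explicit form of $\rho_{\theta}$ together with a Cauchy--Schwarz / rearrangement argument one identifies the optimizer as $\phi(x)=x$ and the optimum as $2\int_0^1\sqrt{\rho_{\theta}(x,x)}\,dx$. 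For the lower bound I would construct an explicit increasing subsequence by concatenating internal LIS's of the diagonal blocks $I_j\times I_j$; each has length $\geq 2\sqrt{m_{jj}}(1-o_n(1))$ by local uniformity, and summing over $j$ and letting $K\to\infty$ yields $LIS(\sigma)\geq 2\sqrt{n}\int_0^1\sqrt{\rho_{\theta}(x,x)}\,dx+o(\sqrt{n})$ with high probability. Finally, to promote convergence in probability to $L^1$ convergence I would use the deterministic bound $LIS(\sigma)\leq n$ together with a concentration estimate (e.g.\ a McDiarmid-type inequality adapted to the Mallows measure via the hit and run representation) to obtain uniform integrability of $LIS(\sigma)/\sqrt{n}$.

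The main obstacle is the approximate local uniformity assertion: the $L^1$ Mallows measure is a non-product, globally coupled law, so one cannot directly factor it over blocks. The quantitative total-variation comparison between $\sigma|_{I_j\times I_k}$ and a uniform bijection, uniform over $n$ and over blocks, is where the bulk of the technical work will lie, and where I anticipate invoking the hit and run Markov chain together with the density bounds $m_{\theta}\leq\rho_{\theta}(x,y)\leq M_{\theta}$ from Proposition \ref{Densi.l1} to control the resampling step. A secondary (but easier) technical point is the verification that the diagonal attains the continuum supremum, which relies on the exponential-in-$|x-y|$ structure of $\rho_{\theta}$ and the uniform-marginal property $\mu_{\theta}\in\mathcal{M}$.
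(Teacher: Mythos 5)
Your block-decomposition and staircase-optimization scheme mirrors the overall structure of the paper: Lemma~\ref{L1} provides the refined-path decomposition, Proposition~\ref{P3.1} gives the per-block LIS estimate, and the refined-path / AM--GM argument in Section~\ref{Sect.3.2} identifies the diagonal staircase as the optimizer (your Cauchy--Schwarz argument over staircase maps $\phi$ is equivalent to the paper's calculation in (\ref{Eq10.16})). The promotion to $L^1$ convergence via the deterministic bound $LIS(\sigma)\le n$ together with an exponential tail bound on the bad event is also the same. The proposal is therefore aimed in the right direction.

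The key technical claim, however --- that conditionally on the block row/column profiles, $\sigma|_{I_j\times I_k}$ is close in total variation to a uniform bijection --- is not what the paper establishes, and it is actually false for a partition into $K^2$ blocks with $K$ fixed. Since $H(\sigma,Id)=\sum_i|\sigma(i)-i|$ decomposes additively over blocks, conditioning on the block profiles yields an exact Mallows $L^1$ law $\propto\exp(-\beta_n\sum_\ell|j_{\tau(\ell)}-i_\ell|)$ on the intra-block matching $\tau\in S_m$. The fluctuation of $\sum_\ell|j_{\tau(\ell)}-i_\ell|$ over uniformly random $\tau$ is of order $\sqrt m\cdot(n/K)$, and after multiplying by $\beta_n\sim\theta/n$ the log-weight fluctuates by order $\theta\sqrt m/K\sim\theta\sqrt n/K^2$, which diverges for fixed $K$. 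So the conditional law is \emph{not} close to uniform in total variation. (A Mallows $L^1$ law with a moderate effective parameter does still have LIS of order $\sqrt m$, but appealing to that directly would be circular, and in any case it is far weaker than TV closeness.) Your heuristic that a rearrangement changes the log-weight by $O(m_{jk}/K)$ is right for a single transposition, but TV closeness needs the whole log-weight to have $o(1)$ fluctuations, which it does not.

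The paper circumvents this in Proposition~\ref{P3.1} by conditioning on more than the block profiles. After one step of the hit and run algorithm, it conditions on the auxiliary thresholds $\{b_i\}$ and on the set $\mathcal{S}_{1,l}$ of box indices whose threshold $b_i$ exceeds the top of the box, and then shows in (\ref{Eq3.23}) that the relative order $\tau$ of the matching restricted to $\mathcal{S}_{1,l}$ is \emph{exactly} uniform on $S_R$ --- not merely close in TV. The entire Mallows bias is absorbed into the law of $\{b_i\}$, and given $\{b_i\}$ the uniform-sampling structure of the hit and run step makes $\tau$ exactly uniform. The complementary set $\mathcal{S}_{2,l}$, whose thresholds land inside the box, has controllably smaller cardinality, and its LIS contribution $L_{2,l}$ is of order $T^{-1/2}\sqrt n$, negligible after $T\to\infty$. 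This exact conditional uniformity, rather than approximate TV uniformity of the full block restriction, is what lets the Logan--Shepp / LMR tail bound (Proposition~\ref{P1}) be applied. If you pursue the hit and run representation as you propose, the decomposition to aim for is into $\mathcal{S}_{1,l}$ and $\mathcal{S}_{2,l}$ conditional on $\{b_i\}$, not a TV comparison of the full block restriction with uniform.
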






The following theorem implies a law of large numbers for the parameter regime $n^{-1}\ll \beta\ll 1$. 

\begin{theorem}\label{limit_l1_2}
Let $(\beta_n)_{n=1}^{\infty}$ be an arbitrary sequence of positive numbers such that $\lim_{n\rightarrow\infty}\beta_n=0$ and $\lim_{n\rightarrow\infty} n\beta_n=\infty$. Let $\sigma$ be drawn from $\mathbb{P}_{n,\beta_n}$. Then we have
\begin{equation}
    \frac{LIS(\sigma)}{n\sqrt{\beta_n}}\xrightarrow[]{L^1} \sqrt{2}.
\end{equation}
\end{theorem}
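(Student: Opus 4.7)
The plan is to reduce Theorem \ref{limit_l1_2} to a $\theta\to\infty$ asymptotic of Theorem \ref{limit_l1_1}, via a block decomposition at the natural local scale. Choose a sequence $\theta_n\to\infty$ growing slowly enough that $\theta_n\ll n\beta_n$, set the block size $L_n:=\lceil \theta_n/\beta_n\rceil$ and $M_n:=\lfloor n/L_n\rfloor$, and partition $[n]$ into consecutive blocks $I_j:=[(j-1)L_n+1,jL_n]$ for $j=1,\ldots,M_n$. The effective scale parameter inside each block is $L_n\beta_n=\theta_n(1+o(1))$, placing each block in the regime of Theorem \ref{limit_l1_1} with $\theta=\theta_n$ (possibly in a slightly strengthened form allowing $\theta$ to grow slowly with $n$).

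Next, I would invoke the band-concentration property reviewed in Section \ref{Sect.2.2} (see \cite{FM,M1,Zhong2}): with high probability $|\sigma(i)-i|=O(\beta_n^{-1})$ uniformly in $i$. Since $\beta_n^{-1}=o(L_n)$, all but those indices in buffer regions of size $O(\beta_n^{-1})$ adjacent to block boundaries satisfy $\sigma(i)\in I_{j(i)}$; I call such indices \emph{diagonal} and the remainder \emph{exits}, with total exit count $O(n/\theta_n)$. Diagonal points $(i,\sigma(i))$ in block $I_j$ strictly precede those in $I_{j+1}$ in both coordinates, so increasing subsequences within each block concatenate to a global increasing subsequence, giving
\begin{equation*}
LIS(\sigma)\geq\sum_{j=1}^{M_n}LIS(\sigma|_{I_j\times I_j}).
\end{equation*}
Conversely, an arbitrary increasing subsequence splits into a diagonal part, bounded by $\sum_j LIS(\sigma|_{I_j\times I_j})$, and an exit part. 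Applying Theorem \ref{limit_l1_1} inside each buffer region (of size $O(\beta_n^{-1})$ with effective parameter of order $1$) gives a per-buffer LIS of $O(\beta_n^{-1/2})$; summing over $M_n$ buffers yields a total exit contribution of $O(n\sqrt{\beta_n}/\theta_n)=o(n\sqrt{\beta_n})$.

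To evaluate $LIS(\sigma|_{I_j\times I_j})$, I would condition on the positions and values of the exit indices. By the Mallows factorization, the conditional law of $\sigma|_{I_j\times I_j}$ is a Mallows $L^1$-type measure on bijections between two subsets of $I_j$ of size $L_n(1+o(1))$, with parameter $\beta_n$ and boundary conditions induced by the fixed exits. The hit-and-run resampling algorithm of Section \ref{Sect.1.5} provides the clean coupling tool: it realizes $\mathbb{P}_{n,\beta_n}$ as the stationary distribution of a Markov chain whose local updates are exactly restrictions of this form, enabling me to show that this conditional law is close, in a sense sufficient to transfer Theorem \ref{limit_l1_1}, to the unconditional model $\mathbb{P}_{L_n,\beta_n}$. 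This yields
\begin{equation*}
LIS(\sigma|_{I_j\times I_j})=2\sqrt{L_n}\int_0^1\sqrt{\rho_{\theta_n}(x,x)}\,dx\cdot(1+o(1))
\end{equation*}
in probability for each $j$, and summing gives
\begin{equation*}
\frac{LIS(\sigma)}{n\sqrt{\beta_n}}=\frac{2}{\sqrt{\theta_n}}\int_0^1\sqrt{\rho_{\theta_n}(x,x)}\,dx+o(1).
\end{equation*}

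It remains to establish $\theta^{-1/2}\int_0^1\sqrt{\rho_\theta(x,x)}\,dx\to 1/\sqrt{2}$ as $\theta\to\infty$. Writing $\rho_\theta(x,y)=e^{-\theta|x-y|+a_\theta(x)+a_\theta(y)}$ from Proposition \ref{Densi.l1}, the uniform marginal constraint $\int_0^1\rho_\theta(x,y)\,dy=1$ becomes $e^{a_\theta(x)}\int_0^1 e^{-\theta|x-y|+a_\theta(y)}\,dy=1$. For $x$ bounded away from $\{0,1\}$, a Laplace analysis of the inner integral gives $e^{a_\theta(x)}\sim\sqrt{\theta/2}$, hence $\rho_\theta(x,x)\sim\theta/2$; the boundary contribution from $x$ near $0$ or $1$ is controlled using the uniform bounds $m_\theta,M_\theta$ of Proposition \ref{Densi.l1} together with a cutoff argument. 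Combining gives convergence in probability to $\sqrt{2}$, and $L^1$ convergence then follows from uniform integrability via the standard Lipschitz concentration of $LIS$ under single-coordinate perturbations. The main obstacle I expect is the technical transfer of Theorem \ref{limit_l1_1} to the within-block conditional measure while allowing $\theta_n\to\infty$ slowly; since block restrictions of Mallows permutations are not themselves Mallows permutations, the hit-and-run framework appears essential, and its application requires a careful quantitative coupling between the conditional and unconditional within-block measures.
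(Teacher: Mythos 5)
You take a genuinely different route from the paper, and the central step contains a gap.

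Your outline---block decomposition at scale $\sim L\beta_n^{-1}$, bound the off-diagonal contributions, estimate the within-block LIS, sum---matches the skeleton of Section \ref{Sect.4} (Propositions \ref{P4.1}--\ref{P4.3}). The differences are: (i) you use a growing block scale $L_n\beta_n=\theta_n\to\infty$ and aim to reuse Theorem \ref{limit_l1_1} inside each block, whereas the paper keeps $L$ \emph{fixed} throughout and takes $L\to\infty$ only after summing; (ii) you compute the $\theta\to\infty$ asymptotic $\theta^{-1/2}\int_0^1\sqrt{\rho_\theta(x,x)}\,dx\to 1/\sqrt{2}$, whereas the paper never touches $\rho_\theta$ here---it works directly with the bulk limiting measure $d\mu=\tfrac12 e^{-|x-y|}dx\,dy$ of Definition \ref{Def2.1} and the concentration estimate of Proposition \ref{P2.3}.

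The gap is your within-block step. You propose to show that the conditional law of $\sigma|_{I_j\times I_j}$, given the exits, is close to the \emph{unconditional} Mallows measure $\mathbb{P}_{L_n,\beta_n}$, citing a ``Mallows factorization.'' There is no such factorization for the $L^1$ model: the introduction stresses that, unlike Kendall's $\tau$ and Cayley, these models lack exactly solvable structure. Conditionally on the exits, the block restriction is a tilted bijection between two \emph{random} subsets of $I_j$ whose endpoints fluctuate, and it is not close to $\mathbb{P}_{L_n,\beta_n}$ in a sense strong enough to transfer the LIS asymptotics without substantial work. The paper sidesteps this comparison entirely. In Step 4 of Section \ref{Sect.3.1.2} (reused in the proof of Proposition \ref{P4.3}), after one hit-and-run step from stationarity it isolates the ``diagonal'' sub-permutation supported on $\mathcal{S}_{1,l}$ and proves, via the bijection $\psi_{\eta_1,\eta_2}$ between the sets $M_{r,\eta}$, that this sub-permutation is \emph{exactly} uniform on $S_R$ conditionally on $\mathcal{B}_l'$; Proposition \ref{P1} (uniform-permutation LIS concentration) then applies directly. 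That exchangeability argument, combined with the density estimate from Proposition \ref{P2.3}, replaces both the conditional-versus-unconditional comparison and the growing-$\theta$ strengthening of Theorem \ref{limit_l1_1} that your plan would require.

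Two smaller issues. First, a version of Theorem \ref{limit_l1_1} valid with $\theta_n\to\infty$ would need explicit error rates in $\theta$, which is essentially tantamount to reproving it: the rates in Proposition \ref{P4.3} ($C_L r_s^{-1/10}$, $Le^{2L}T^{-1}$, etc.) are exactly this bookkeeping done against $d\mu$, not against $\rho_\theta$. Second, the pointwise displacement bound $|\sigma(i)-i|=O(\beta_n^{-1})$ uniformly over $i\in[n]$ is not what Proposition \ref{P2.2} gives; a union bound over $i$ forces the uniform bound up to $O(\max\{\beta_n^{-1},\log n\})$, so your per-buffer scale picks up a $\log$ factor that $\theta_n$ must dominate. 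The paper avoids any uniform displacement bound by controlling the off-diagonal LIS directly through the moment bound on $\Lambda_{s,q}$ in Proposition \ref{P4.1}.
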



\subsection{Main results for the $L^2$ model}\label{Sect.1.3}

In this subsection, we present the main results for the $L^2$ model. The following theorem implies a law of large numbers for the parameter regime $\beta\sim \theta n^{-2}$ with fixed $\theta>0$. 

\begin{theorem}\label{limit_l2_1}
Let $(\beta_n)_{n=1}^{\infty}$ be an arbitrary sequence of positive numbers such that $\lim_{n\rightarrow\infty} n^2\beta_n=\theta>0$. Let $\sigma$ be drawn from $\tilde{\mathbb{P}}_{n,\beta_n}$. Then we have
\begin{equation}
    \frac{LIS(\sigma)}{\sqrt{n}}\xrightarrow[]{L^1} 2\int_0^1\sqrt{\tilde{\rho}_{\theta}(x,x)}dx,
\end{equation}
where $\tilde{\rho}_{\theta}(\cdot,\cdot)$ is defined in Proposition \ref{Densi.l2}.
\end{theorem}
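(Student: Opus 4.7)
The plan is to follow the blueprint of the proof of Theorem \ref{limit_l1_1}, replacing Proposition \ref{Densi.l1} with Proposition \ref{Densi.l2} throughout. Only qualitative features of the limiting density are used: continuity, the positive two-sided bounds $\tilde m_\theta\le\tilde\rho_\theta\le\tilde M_\theta$, and uniform marginals. These are enough to recover the Deuschel--Zeitouni-type variational problem whose optimum on densities with uniform marginals is attained along the main diagonal and equals $2\int_0^1\sqrt{\tilde\rho_\theta(x,x)}\,dx$. Convergence in probability will then be lifted to convergence in $L^1$ by combining the trivial bound $LIS(\sigma)/\sqrt{n}\le\sqrt{n}$ with a uniform high-probability estimate $LIS(\sigma)\le C_\theta\sqrt{n}$, which itself drops out of the upper bound argument applied with $\tilde\rho_\theta\le\tilde M_\theta$.

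For the upper bound, I would partition $[0,1]^2$ into a $K\times K$ grid of boxes $B_{ij}$ of side $1/K$. Proposition \ref{Densi.l2} shows that, with high probability, each $B_{ij}$ contains $(1+o_K(1))\,nK^{-2}\tilde\rho_\theta(i/K,j/K)$ points of $S(\sigma)$. Every increasing subsequence of $\sigma$ traverses the boxes along a monotone staircase and, inside each visited box, forms a local increasing subsequence whose length is controlled (with high probability) by $(2+\epsilon)\sqrt{\#\text{points in the box}}$; summing these local bounds along an arbitrary staircase and optimizing by Cauchy--Schwarz, using that $\tilde\rho_\theta$ has uniform marginals, collapses the supremum onto staircases hugging the diagonal and produces $LIS(\sigma)\le (2+o(1))\sqrt{n}\int_0^1\sqrt{\tilde\rho_\theta(x,x)}\,dx$ after letting $n\to\infty$ and then $K\to\infty$.

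For the lower bound, I would, for fixed $K$, concatenate increasing subsequences selected inside each diagonal box $B_{jj}=[(j-1)/K,j/K]^2$ into a single global increasing subsequence. The main obstacle, and the crux of the argument, is the local step: showing that inside $B_{jj}$ the LIS attains the Vershik--Kerov value $(2+o(1))\sqrt{\#(S(\sigma)\cap B_{jj})}$. The conditional law of $\sigma$ inside $B_{jj}$, given which rows and columns are occupied there, is not uniform, but because $\beta_n\sim\theta n^{-2}$ and the box has diameter $n/K$, the weight $\exp(-\beta_n\tilde H)$ varies by a factor $1+O(\theta/K)$ across rearrangements internal to the box, so the conditional law sits within $O(\theta/K)$ total variation of the uniform law on such bijections. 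Combining this with the classical Vershik--Kerov asymptotic for uniform random permutations yields the local LIS, and the hit and run sampler reviewed in Section \ref{Sect.1.5} provides a natural mechanism to make this comparison rigorous. Concatenation then gives $LIS(\sigma)\ge\sum_{j=1}^{K}(2-o(1))\sqrt{nK^{-2}\tilde\rho_\theta(j/K,j/K)}$, which after $n\to\infty$ and then $K\to\infty$ matches the upper bound and delivers the claimed $L^1$ limit.
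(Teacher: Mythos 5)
Your overall blueprint---discretize $[0,1]^2$ into boxes, control local LIS inside each box, and sum/maximize along monotone staircases---matches the paper's strategy for Theorem \ref{limit_l1_1}, and you have correctly identified the crux as the local LIS asymptotics inside a box. The total-variation step you propose for that crux, however, is wrong. Inside a box of side $n/K$ containing $m\approx nK^{-2}$ points, fix the occupied rows $\{i_k\}$ and columns $\{j_k\}$; for two internal bijections $\tau,\tau'$ the log-weight ratio is $2\beta_n\sum_k i_k\big(\tau(i_k)-\tau'(i_k)\big)$, and after centering at the box corner (write $i_k=a+\delta_k$, $\tau(i_k)=a+\epsilon_k$) this collapses to $2\beta_n\sum_k\delta_k(\epsilon_k-\epsilon_k')$ with $\delta_k,\epsilon_k\in[0,n/K]$. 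Its range over pairs $(\tau,\tau')$ is of order $m\,(n/K)^2\sim n^3K^{-4}$, so multiplying by $\beta_n\sim\theta n^{-2}$ gives a log-weight spread of order $\theta nK^{-4}$, which diverges as $n\to\infty$ for every fixed $K$. Equivalently, the conditional law inside a box is itself an $L^2$ Mallows law at effective scale $\Theta(K^{-2})$ in its own coordinates, not close to uniform in total variation. Invoking Vershik--Kerov here is therefore either circular (it would require knowing LIS asymptotics for Mallows at small positive parameter, which is essentially the theorem being proved) or would require exactly the continuity-in-$\theta$ argument the TV claim was supposed to bypass.

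What makes the local step work, and what the paper actually does, is an exact identity rather than an approximation. After one sweep of the hit-and-run chain (for the $L^2$ model, the resampling algorithm of Lemma \ref{L2.2}), one conditions on the threshold variables $\{b_i\}$, on the assignments outside the target column range, and on the random sets of occupied rows and columns $R,\{I_i\},\{J_i\}$. Conditionally on this $\sigma$-algebra $\mathcal B_l'$, the relabelled bijection $\tau$ on the residual free points is \emph{exactly} uniform on $S_R$---this is the bijective counting giving $\mathbb P(\tau=\eta\mid\mathcal B_l')=1/R!$ in Section \ref{Sect.3.1.2}---and Proposition \ref{P1} then applies with no slack. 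The hit-and-run structure thus \emph{replaces} the false approximate-uniformity claim rather than certifying it, so presenting it as merely ``a mechanism to make the comparison rigorous'' misses the point. Two secondary gaps: a plain $K\times K$ grid does not cleanly dominate every increasing subsequence because increasing runs may straddle box boundaries, which is why the paper uses the refined paths of Lemma \ref{L1} with the extra $K_0$-subdivision; and the maximization over the $\binom{2K}{K}$ staircases in the upper bound needs the strong (stretched-exponential in $n$) tail bounds that fall out of the conditionally uniform $\tau$, not merely a ``with high probability'' per-box bound.
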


The proof of Theorem \ref{limit_l2_1} is similar to that of Theorem \ref{limit_l1_1} and is therefore omitted.

The following theorem implies a law of large numbers for the parameter regime $n^{-2}\ll \beta\ll 1$. 

\begin{theorem}\label{limit_l2_2}
Let $(\beta_n)_{n=1}^{\infty}$ be an arbitrary sequence of positive numbers such that $\lim_{n\rightarrow\infty}\beta_n=0$ and $\lim_{n\rightarrow\infty} n^2\beta_n=\infty$. Let $\sigma$ be drawn from $\tilde{\mathbb{P}}_{n,\beta_n}$. Then we have
\begin{equation}
    \frac{LIS(\sigma)}{n \beta_n^{1\slash 4}}\xrightarrow[]{L^1}  2\pi^{-1\slash 4}.
\end{equation}
\end{theorem}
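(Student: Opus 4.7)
The plan is to reduce the regime $n^{-2}\ll\beta_n\ll 1$ to the fixed-$\theta$ regime of Theorem \ref{limit_l2_1} by a mesoscopic block decomposition, and then to take the scale parameter inside $\tilde{\rho}_\theta$ to infinity. For a large fixed $\theta>0$, set $k=k_n(\theta):=\lfloor\sqrt{\theta/\beta_n}\rfloor$, so that $k^2\beta_n\to\theta$ while $n/k\to\infty$ (using $n^2\beta_n\to\infty$). After rescaling $[k]$ to $[0,1]$, a block of side $k$ looks like an $L^2$-Mallows permutation on $[k]$ with effective parameter $k^2\beta_n\to\theta$, which is exactly the setting of Theorem \ref{limit_l2_1}.

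For the lower bound, partition $[n]$ into $\lfloor n/k\rfloor$ consecutive diagonal blocks $B_j=[(j-1)k+1,jk]$. Concatenating increasing subsequences gives
\begin{equation*}
LIS(\sigma)\geq\sum_{j=1}^{\lfloor n/k\rfloor}LIS(\sigma|_{B_j\times B_j}).
\end{equation*}
The heart of the argument is to show that, after conditioning on the complement, the restriction $\sigma|_{B_j\times B_j}$ is comparable to $\tilde{\mathbb{P}}_{k,\beta_n}$. The hit-and-run description of the $L^2$ model from Section \ref{Sect.1.5} furnishes exactly such a resampling representation: given the values of $\sigma$ outside the window $B_j\cup\sigma^{-1}(B_j)$, the conditional law on the window is again an $L^2$-Mallows distribution. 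Combined with the band concentration of \cite{FM,Zhong2}--only $O(\beta_n^{-1/2})=o(k)$ points of $B_j$ have $\sigma(i)\notin B_j$ with high probability--one identifies $\sigma|_{B_j\times B_j}$ with an $L^2$-Mallows permutation on $[k]$ with parameter $\beta_n$, up to $o(k)$ boundary corrections. Applying Theorem \ref{limit_l2_1} per block and summing yields, in probability,
\begin{equation*}
\liminf_{n\to\infty}\frac{LIS(\sigma)}{n\beta_n^{1/4}}\geq \frac{2}{\theta^{1/4}}\int_0^1\sqrt{\tilde{\rho}_\theta(x,x)}\,dx.
\end{equation*}
Sending $\theta\to\infty$ and invoking the Gaussian asymptotics $\tilde{\rho}_\theta(x,x)\sim\sqrt{\theta/\pi}$, uniformly on compact subsets of $(0,1)$ (which follows from a saddle-point analysis of the fixed-point equation satisfied by $\tilde{a}_\theta$ in Proposition \ref{Densi.l2}), produces the lower bound $2\pi^{-1/4}$.

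The upper bound is where the real difficulty lies. Here one must ensure that every increasing subsequence decomposes into pieces each localized in a single block, up to a subsequence of negligible length. I would first show that with high probability at most $o(n\beta_n^{1/4})$ indices $i$ satisfy $|\sigma(i)-i|>R\beta_n^{-1/2}$ for $R$ large; this follows from Proposition \ref{Densi.l2} applied at the scale $\beta_n^{-1/2}$, together with an $e^{-cR^2}$ tail derived from the explicit form of $\tilde{\rho}_\theta$. On this event, any increasing subsequence splits into pieces each contained in an enlarged block $[(j-1)k+1,jk]\times[(j-1)k-R\beta_n^{-1/2},jk+R\beta_n^{-1/2}]$, each governed by the fixed-$\theta$ limit after rescaling. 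Summing the per-window bounds from Theorem \ref{limit_l2_1} and taking $\theta\to\infty$ gives the matching upper bound. Upgrading to $L^1$ convergence is then automatic from uniform integrability, which follows from $LIS(\sigma)\leq n$ together with a Gaussian tail $\tilde{\mathbb{P}}_{n,\beta_n}(LIS(\sigma)>\lambda n\beta_n^{1/4})\leq e^{-c\lambda^2}$ obtained from the same block decomposition.

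The main obstacle is the decoupling of adjacent blocks: there is no exact spatial Markov property for $\tilde{\mathbb{P}}_{n,\beta_n}$, so one must use the hit-and-run sampler to obtain effective conditional independence of well-separated blocks and to transfer the block-wise application of Theorem \ref{limit_l2_1} into a statement about the full $LIS(\sigma)$. A secondary but essential ingredient is the asymptotic $\tilde{\rho}_\theta(x,x)\sim\sqrt{\theta/\pi}$ as $\theta\to\infty$, which is what produces the universal constant $2\pi^{-1/4}$ in the limit.
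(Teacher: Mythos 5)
There is a genuine gap in your central reduction step. You claim that "given the values of $\sigma$ outside the window $B_j\cup\sigma^{-1}(B_j)$, the conditional law on the window is again an $L^2$-Mallows distribution" and that the boundary corrections are $o(k)$. The first claim overstates what the resampling machinery gives: conditioning on the complement leaves a Gibbs measure on bijections between the (random, scattered) sets $\{i\in B_j:\sigma(i)\in B_j\}$ and $\{j'\in B_j:\sigma^{-1}(j')\in B_j\}$, not $\tilde{\mathbb{P}}_{k,\beta_n}$ on $[k]$. More importantly, the boundary correction is \emph{not} $o(k)$: by Proposition~\ref{P2.2.2} the number of indices $i\in B_j$ with $\sigma(i)\notin B_j$ is $\Theta(\beta_n^{-1/2})$, which with your choice $k\sim\sqrt{\theta}\,\beta_n^{-1/2}$ is a fraction $\Theta(\theta^{-1/2})$ of the block --- small when $\theta$ is large, but certainly not negligible. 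So $\sigma|_{B_j\times B_j}$ is not approximately $\tilde{\mathbb{P}}_{k,\beta_n}$ up to negligible corrections, and Theorem~\ref{limit_l2_1} cannot be applied per block. The paper faces exactly the same boundary-fraction issue (with $L$ playing the role of $\sqrt{\theta}$) but handles it by \emph{not} reducing to Theorem~\ref{limit_l2_1}: instead, after running the resampling algorithm it couples the placement variables to independent Bernoullis, isolates the boundary points explicitly, shows that the remaining bulk sub-permutation is \emph{conditionally uniform} on $S_R$ (the bijection-counting argument around (\ref{Eq3.18})--(\ref{Eq3.23}), repeated in Proposition~\ref{P5.3}), and then applies the uniform-LIS tail bound (Proposition~\ref{P1}). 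The constant $2\pi^{-1/4}$ comes from the infinite-volume Gaussian density $d\tilde\mu=\pi^{-1/2}e^{-(x-y)^2}\,dx\,dy$ of Proposition~\ref{P2.3.2}, not from any $\theta\to\infty$ limit of $\tilde\rho_\theta$.

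A secondary issue is that your scheme also requires the asymptotic $\tilde{\rho}_\theta(x,x)\sim\sqrt{\theta/\pi}$ as $\theta\to\infty$ (uniformly away from the boundary of $[0,1]$). This is plausible from the fixed-point equation satisfied by $\tilde a_\theta$, but it is not established anywhere in the paper and would require its own saddle-point estimate. And note the scheme is somewhat circular: Theorem~\ref{limit_l2_1} is itself proved by the refined-path, hit-and-run, and uniform-LIS machinery, so invoking it as a black box does not actually sidestep the hard conditioning argument --- you would still need the same conditional-uniformity and coupling estimates to relate the block to a Mallows permutation, and those estimates are precisely what the paper develops directly at the scale $L\beta_n^{-1/2}$.
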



\bigskip

The rest of this paper is organized as follows. In Section \ref{Sect.2}, we present background materials and preliminary results that will be used in the proofs of the main results. The proofs of Theorems \ref{limit_l1_1}, \ref{limit_l1_2}, and \ref{limit_l2_2} are given in Sections \ref{Sect.3}, \ref{Sect.4}, and \ref{Sect.5}, respectively.

\subsection{Acknowledgement}

The author wishes to thank his PhD advisor, Persi Diaconis, for encouragement, support, and many helpful conversations. The author also thanks Sumit Mukherjee and Wenpin Tang for their helpful comments.

\section{Background and preliminary results}\label{Sect.2}

In this section, we present background materials and preliminary results that will be used in the proofs of our main results. In Section \ref{Sect.2.1}, we review and adapt the notion of ``refined paths'' from \cite{MS}. Then we review hit and run algorithms for sampling from the $L^1$ and $L^2$ models in Section \ref{Sect.1.5}. Finally, we present several preliminary results in Section \ref{Sect.2.2}. 

\subsection{Refined paths and the length of the longest increasing subsequence}\label{Sect.2.1}

In this subsection, we review and adapt the notion of ``refined paths'' introduced in \cite{MS}. Consider any $A_1$, $A_2$, $B_1$, $B_2$, $T_1$, $T_2$, $K_0$ such that $0\leq A_1<A_2 \leq 1$, $0\leq B_1<B_2\leq 1$, and $T_1,T_2,K_0\in\mathbb{N}^{*}$. Let 
\begin{equation*}
    \delta_1:=(A_2-A_1)\slash T_1,\quad \delta_2:=(B_2-B_1)\slash T_2.
\end{equation*}
We assume that $\min\{T_1,T_2\}\geq 2$ throughout this subsection.

Now we decompose the rectangle $(A_1,A_2]\times (B_1,B_2]$ into $T_1T_2$ sub-rectangles. For any $k\in [T_1],k'\in [T_2]$, let
\begin{equation*}
    R_{k,k'}:=(A_1+(k-1)\delta_1,A_1+k\delta_1]\times (B_1+(k'-1)\delta_2,B_1+k'\delta_2].
\end{equation*}
Note that $\{R_{k,k'}\}_{k\in [T_1],k'\in [T_2]}$ are disjoint and 
\begin{equation}\label{Uni}
    (A_1,A_2]\times (B_1,B_2]=\bigcup_{k\in [T_1],k'\in[T_2]} R_{k,k'}.
\end{equation}
We define a \textbf{basic path} to be a sequence $(i_1,j_1),\cdots, (i_{T_1+T_2-1},j_{T_1+T_2-1})$ such that $(i_1,j_1)=(1,1)$, $(i_{T_1+T_2-1},j_{T_1+T_2-1})=(T_1,T_2)$, and for any $l\in [T_1+T_2-2]$, $(i_{l+1}-i_{l},j_{l+1}-j_{l})\in\{(1,0),(0,1)\}$. We note that for any $l\in [T_1+T_2-2]$:
\begin{itemize}
    \item If $(i_{l+1}-i_l,j_{l+1}-j_l)=(1,0)$, then
    \begin{equation*}
        \overline{R}_{i_l,j_l}\cap \overline{R}_{i_{l+1},j_{l+1}}=\{A_1+i_l\delta_1\}\times [B_1+(j_l-1)\delta_2,B_1+j_l\delta_2];
    \end{equation*}
    \item If $(i_{l+1}-i_l,j_{l+1}-j_l)=(0,1)$, then
    \begin{equation*}
        \overline{R}_{i_l,j_l}\cap \overline{R}_{i_{l+1},j_{l+1}}= [A_1+(i_l-1)\delta_1,A_1+i_l\delta_1]\times \{B_1+j_l\delta_2\}.
    \end{equation*}
\end{itemize}
Hereafter, for any set $A\subseteq \mathbb{R}^2$, we denote by $\bar{A}$ the closure of $A$.

In the following, we define \textbf{refined paths}, which are refined versions of basic paths. The set of refined paths, denoted by $\Pi_{A_1,A_2;B_1,B_2}^{T_1,T_2,K_0}$, is defined as the set of sequences $\Gamma$ of the following form:
\begin{equation}\label{G}
    (i_1,j_1),r_1,(i_2,j_2),r_2,\cdots,r_{T_1+T_2-2}, (i_{T_1+T_2-1},j_{T_1+T_2-1}),
\end{equation}
where $r_1,\cdots,r_{T_1+T_2-2}\in [K_0]$, the sequence $(i_1,j_1),\cdots,(i_{T_1+T_2-1},j_{T_1+T_2-1})$ forms a basic path, and the following condition holds: For any $l\in [T_1+T_2-3]$, if $i_l=i_{l+1}=i_{l+2}$ or $j_l=j_{l+1}=j_{l+2}$, then $r_{l+1}\geq r_l$. We also denote
\begin{equation}
    \Pi^{T_1,T_2,K_0}:=\Pi_{0,1;0,1}^{T_1,T_2,K_0}.
\end{equation}

Now we define several quantities that are associated with the refined path $\Gamma$ as given in (\ref{G}). For every $l\in [T_1+T_2-2]$, if $(i_{l+1}-i_l,j_{l+1}-j_l)=(1,0)$, we define
\begin{equation}\label{Interval}
    I_l(\Gamma):=\{A_1+i_l\delta_1\}\times \Big(B_1+(j_l-1)\delta_2+\frac{(r_l-1)\delta_2}{K_0},B_1+(j_l-1)\delta_2+\frac{r_l\delta_2}{K_0}\Big];
\end{equation}
if $(i_{l+1}-i_l,j_{l+1}-j_l)=(0,1)$, we define
\begin{equation}\label{Interval2}
    I_l(\Gamma):=\Big(A_1+(i_l-1)\delta_1+\frac{(r_l-1)\delta_1}{K_0},A_1+(i_l-1)\delta_1+\frac{r_l\delta_1}{K_0}\Big]\times \{B_1+j_l\delta_2\}.
\end{equation}
For every $l\in   [T_1+T_2-2]$, we define $(x_l(\Gamma),y_l(\Gamma))$ to be the midpoint of the interval $I_l(\Gamma)$, and define $(a_l(\Gamma),b_l(\Gamma))$, $(c_l(\Gamma),d_l(\Gamma))$ to be the two endpoints of $I_l(\Gamma)$ such that $a_l(\Gamma)\leq c_l(\Gamma)$ and $b_l(\Gamma)\leq d_l(\Gamma)$. Moreover, we let 
\begin{equation*}
    x_0(\Gamma)=a_0(\Gamma)=c_0(\Gamma)=A_1, \quad y_0(\Gamma)=b_0(\Gamma)=d_0(\Gamma)=B_1;
\end{equation*}
\begin{eqnarray*}
   && x_{T_1+T_2-1}(\Gamma)=a_{T_1+T_2-1}(\Gamma)=c_{T_1+T_2-1}(\Gamma)=A_2, \\ && y_{T_1+T_2-1}(\Gamma)=b_{T_1+T_2-1}(\Gamma)=d_{T_1+T_2-1}(\Gamma)=B_2.
\end{eqnarray*}

The following lemma gives upper and lower bounds for the length of the longest increasing subsequence of a permutation based on refined paths. It is adapted from \cite[Lemma 5.1]{MS}.

\begin{lemma}\label{L1}
Suppose that $A_1,A_2,B_1,B_2,T_1,T_2,K_0$ satisfy the conditions as stated in the preceding. Then for any $n\in\mathbb{N}^{*}$, any $\sigma\in S_n$, any $\alpha,\gamma>0$, any $\kappa\in\mathbb{R}$, and any refined path $\Gamma\in \Pi_{A_1,A_2;B_1,B_2}^{T_1,T_2,K_0}$, we have
\begin{eqnarray}
  &&  LIS(\sigma|_{(\kappa+\alpha A_1,\kappa+\alpha A_2]\times (\kappa+\gamma B_1, \kappa+\gamma B_2]}) \nonumber \\
    &\geq& \sum_{l=1}^{T_1+T_2-1}LIS(\sigma|_{(\kappa+\alpha x_{l-1}(\Gamma),\kappa+\alpha x_l(\Gamma)]\times (\kappa+\gamma y_{l-1}(\Gamma), \kappa+\gamma y_l(\Gamma)]}).
\end{eqnarray} 
Moreover, for any $n\in\mathbb{N}^{*}$, any $\sigma\in S_n$, any $\alpha,\gamma>0$, and any $\kappa\in\mathbb{R}$, we have
\begin{eqnarray}
    && LIS(\sigma|_{(\kappa+\alpha A_1, \kappa+\alpha A_2]\times (\kappa+\gamma B_1,\kappa+\gamma B_2]})    \nonumber\\
    &\leq& \max_{\Gamma\in \Pi_{A_1,A_2;B_1,B_2}^{T_1,T_2,K_0}}\sum_{l=1}^{T_1+T_2-1}LIS(\sigma|_{[\kappa+\alpha a_{l-1}(\Gamma),\kappa+\alpha c_l(\Gamma)]\times [\kappa+\gamma b_{l-1}(\Gamma),\kappa+\gamma d_l(\Gamma)]}).\nonumber\\
    &&
\end{eqnarray}
\end{lemma}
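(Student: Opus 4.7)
The overall strategy is geometric: a refined path induces a staircase decomposition of the rectangle $(A_1, A_2] \times (B_1, B_2]$, and the length of the longest increasing subsequence decomposes along this staircase.

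For the lower bound, I would fix $\Gamma \in \Pi_{A_1,A_2;B_1,B_2}^{T_1,T_2,K_0}$ as in (\ref{G}) and first verify that the midpoint sequence $(x_l(\Gamma), y_l(\Gamma))_{l=0}^{T_1+T_2-1}$ is coordinate-wise non-decreasing. This reduces to a short case analysis on the consecutive moves at steps $l$ and $l+1$: when the moves are in different directions, both coordinates strictly advance (one by close to a full $\delta_1$ or $\delta_2$, the other by at least $\delta_1/(2K_0)$ or $\delta_2/(2K_0)$); when both moves are in the same direction, one coordinate advances by exactly $\delta_1$ or $\delta_2$ while the other coordinate, which varies within a common $K_0$-fold subdivision, is non-decreasing precisely by the condition $r_{l+1} \geq r_l$ built into the definition of a refined path. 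Hence the sub-rectangles $U_l := (\kappa + \alpha x_{l-1}(\Gamma), \kappa + \alpha x_l(\Gamma)] \times (\kappa + \gamma y_{l-1}(\Gamma), \kappa + \gamma y_l(\Gamma)]$ for $l \in [T_1+T_2-1]$ are pairwise disjoint and arranged in an increasing staircase inside the big rectangle. Picking a longest increasing subsequence within each $\sigma|_{U_l}$ and concatenating produces an increasing subsequence of $\sigma$ restricted to the big rectangle of length $\sum_{l=1}^{T_1+T_2-1} LIS(\sigma|_{U_l})$, yielding the lower bound.

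For the upper bound, I would fix a longest increasing subsequence $S = (p_m)_{m=1}^L$ of $\sigma$ restricted to the big rectangle. After rescaling coordinates by $\kappa, \alpha, \gamma$ into $(A_1, A_2] \times (B_1, B_2]$, each normalized point lies in a unique sub-rectangle $R_{k_m, k_m'}$, and the sequence $(k_m, k_m')_{m=1}^L$ is coordinate-wise non-decreasing; I extend the cells visited by $S$ to a basic path $((i_l', j_l'))_{l=1}^{T_1+T_2-1}$ from $(1,1)$ to $(T_1, T_2)$. For each $l \in [T_1+T_2-2]$, if move $l$ is a right move (so $I_l$ is vertical), I set $r_l$ to be the smallest integer in $[K_0]$ such that the top of $I_l$ is at least the maximum normalized $y$-coordinate of the points of $S$ in $R_{i_l', j_l'}$ (with a canonical value inherited from $r_{l-1}$ if that cell is empty); up moves are handled symmetrically in the $x$-coordinate. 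Because the $y$-coordinates of $S$ are strictly increasing, this "maximum $y$" is itself non-decreasing along any run of same-direction moves, so the refined-path condition $r_{l+1} \geq r_l$ is automatic. Unwinding how $a_{l-1}, b_{l-1}, c_l, d_l$ depend on $r_{l-1}$ and $r_l$, one then checks that every $p_m$ whose normalized position lies in $R_{i_l', j_l'}$ is contained in $V_l := [\kappa + \alpha a_{l-1}(\Gamma), \kappa + \alpha c_l(\Gamma)] \times [\kappa + \gamma b_{l-1}(\Gamma), \kappa + \gamma d_l(\Gamma)]$, so the partition of $S$ by cells gives $L \leq \sum_{l=1}^{T_1+T_2-1} LIS(\sigma|_{V_l})$ for this particular $\Gamma$, and hence also for the maximum over $\Gamma$.

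The main obstacle is the careful management of the refinement indices $r_l$ in the upper-bound argument: one must make consistent canonical choices for cells of the basic path that are empty of points of $S$, verify point coverage through the coupled dependence of consecutive $V_l$'s on $r_{l-1}$ and $r_l$, and check the refined-path monotonicity condition in both right-right and up-up configurations. None of this is conceptually deep, but the arithmetic of the $K_0$-fold subdivisions makes the formal verification somewhat lengthy.
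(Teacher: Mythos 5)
Your proposal is correct in approach and closely mirrors the paper's proof, with one genuine variation in the upper-bound construction. For the lower bound, your case analysis (opposite-direction moves advance both coordinates; same-direction moves fix one coordinate and the other is non-decreasing exactly because $r_{l+1} \geq r_l$) is precisely the content behind the paper's terse assertion that $x_0(\Gamma)\leq\cdots\leq x_{T_1+T_2-1}(\Gamma)$ and $y_0(\Gamma)\leq\cdots\leq y_{T_1+T_2-1}(\Gamma)$, and the concatenation step is identical. For the upper bound, you define $r_l$ directly from the extremal normalized coordinate of the LIS points in each traversed cell, whereas the paper draws an explicit monotone curve in $[A_1,A_2]\times[B_1,B_2]$ through (scaled copies of) the LIS points -- rightward-then-upward between cells that advance in both directions, a straight segment otherwise -- and sets $r_l$ to be the largest index for which $I_l$ meets this curve. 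Both constructions exploit the same strict monotonicity of the LIS: it is what guarantees that extremal coordinates (or the curve's crossing heights) are non-decreasing along any run of same-direction moves, giving the refined-path constraint $r_{l+1}\geq r_l$, and that each rectangle $V_l$ catches all LIS points whose normalized position falls in $R_{i_l,j_l}$. Your version is more purely combinatorial and slightly more economical; the paper's geometric curve sidesteps the bookkeeping you flag for empty cells by always having a concrete object to reference. Your choice of ``inherited'' defaults for empty cells needs to be spelled out (e.g., within a same-direction run set $r_l := r_{l-1}$, and at the start of a run or at $l=1$ set $r_l := 1$), but with that fixed the argument goes through; the residual verification you defer is comparable in scope to the ``it can be checked'' in the paper's own proof.
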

\begin{proof}
Note that by definition,
\begin{equation*}
    x_0(\Gamma)\leq x_1(\Gamma)\leq\cdots\leq x_{T_1+T_2-1}(\Gamma), \quad y_0(\Gamma)\leq y_1(\Gamma)\leq\cdots\leq y_{T_1+T_2-1}(\Gamma).
\end{equation*}
For any $l\in [T_1+T_2-1]$, let
\begin{equation*}
    t_l:=LIS(\sigma|_{(\kappa+\alpha x_{l-1}(\Gamma),\kappa+\alpha x_l(\Gamma)]\times (\kappa+\gamma y_{l-1}(\Gamma),\kappa+\gamma y_l(\Gamma)]}).
\end{equation*}
Suppose that $i_{l,1},\cdots,i_{l,t_l}\in [n]$ satisfy
\begin{equation*}
    \kappa+\alpha x_{l-1}(\Gamma)<i_{l,1}<\cdots<i_{l,t_l}\leq \kappa+\alpha x_l(\Gamma),
\end{equation*}
\begin{equation*}
    \kappa+\gamma y_{l-1}(\Gamma)<\sigma(i_{l,1})<\cdots<\sigma(i_{l,t_l})\leq \kappa+\gamma y_l(\Gamma).
\end{equation*}
Now we have 
\begin{eqnarray*}
    && \kappa+\alpha A_1=\kappa+\alpha x_0(\Gamma)<i_{1,1}<\cdots<i_{1,t_1}\leq  \kappa+\alpha x_1(\Gamma)<i_{2,1}<\cdots<i_{2,t_2}\nonumber\\
    && \leq\cdots 
     <i_{T_1+T_2-1,1}<\cdots<i_{T_1+T_2-1,t_{T_1+T_2-1}}\leq \kappa+\alpha x_{T_1+T_2-1}(\Gamma)=\kappa+\alpha A_2,
\end{eqnarray*}
\begin{eqnarray*}
    && \kappa+\gamma B_1=\kappa+\gamma y_0(\Gamma)<\sigma(i_{1,1})<\cdots<\sigma(i_{1,t_1})\leq \kappa+\gamma y_1(\Gamma)<\sigma(i_{2,1})<\cdots<\sigma(i_{2,t_2})\nonumber\\
    &&\leq \cdots  <\sigma(i_{T_1+T_2-1,1})<\cdots<\sigma(i_{T_1+T_2-1,t_{T_1+T_2-1}})\leq \kappa+\gamma y_{T_1+T_2-1}(\Gamma)=\kappa+\gamma B_2.
\end{eqnarray*}
Hence
\begin{eqnarray*}
   && LIS(\sigma|_{(\kappa+\alpha A_1,\kappa+\alpha A_2]\times (\kappa+\gamma B_1,\kappa+\gamma B_2]})\geq \sum_{l=1}^{T_1+T_2-1}t_l\nonumber\\
   &=&\sum_{l=1}^{T_1+T_2-1}LIS(\sigma|_{(\kappa+\alpha x_{l-1}(\Gamma), \kappa+\alpha x_l(\Gamma)]\times (\kappa+\gamma y_{l-1}(\Gamma), \kappa+\gamma y_l(\Gamma)]}).
\end{eqnarray*}

Now let 
\begin{equation*}
    N:=LIS(\sigma|_{(\kappa+\alpha A_1, \kappa+\alpha A_2]\times (\kappa+\gamma B_1, \kappa+\gamma B_2]}).
\end{equation*}
Suppose that $k_1,\cdots,k_N\in [n]$ satisfy
\begin{equation*}
 \kappa+ \alpha A_1<k_1<\cdots<k_N\leq \kappa+\alpha A_2,\quad \kappa+\gamma B_1<\sigma(k_1)<\cdots<\sigma(k_N)\leq \kappa+\gamma B_2.
\end{equation*}
We construct a refined path $\Gamma\in \Pi_{A_1,A_2;B_1,B_2}^{T_1,T_2,K_0}$ by the following procedure. 

For each $t\in [N]$, we take $p_t\in [T_1]$ and $q_t\in [T_2]$ such that
\begin{equation*}
    (\alpha^{-1}(k_t-\kappa),\gamma^{-1}(\sigma(k_t)-\kappa))\in R_{p_t,q_t}.
\end{equation*}
Note that if $N\geq 2$, for any $t\in [N-1]$, we have $p_{t+1}\geq p_t,q_{t+1}\geq q_t$. We take $\delta\in (0,\min\{\delta_1\slash 2,\delta_2\slash 2, \alpha^{-1}(k_1-\kappa)-A_1,\gamma^{-1}(\sigma(k_1)-\kappa)-B_1\})$, and draw a path (moving first rightward and then upward) between the two points $(A_1+\delta,B_1+\delta)$ and $(\alpha^{-1}(k_1-\kappa),\gamma^{-1}(\sigma(k_1)-\kappa))$. If $N\geq 2$, for every $t\in [N-1]$, we do the following:
\begin{itemize}
    \item If $p_{t+1}>p_t$ and $q_{t+1}>q_t$, we draw a path (moving first rightward and then upward) between the two points $(\alpha^{-1}(k_t-\kappa),\gamma^{-1}(\sigma(k_t)-\kappa))$ and $(\alpha^{-1}(k_{t+1}-\kappa),\gamma^{-1}(\sigma(k_{t+1})-\kappa))$.
    \item If $p_{t+1}=p_t$ or $q_{t+1}=q_t$, we draw a straight line between the two points $(\alpha^{-1}(k_t-\kappa),\gamma^{-1}(\sigma(k_t)-\kappa))$ and $(\alpha^{-1}(k_{t+1}-\kappa),\gamma^{-1}(\sigma(k_{t+1})-\kappa))$.
\end{itemize}
Finally, we draw a path (moving first rightward and then upward) between the two points $(\alpha^{-1}(k_N-\kappa),\gamma^{-1}(\sigma(k_N)-\kappa))$ and $(A_2,B_2)$. 

Now we take the basic path as the sequence formed by those $(k,k')$ with $k\in [T_1],k'\in [T_2]$ such that $R_{k,k'}$ has a non-empty intersection with the path described in the previous paragraph (with the obvious ordering of the sequence). Below we denote this basic path by $(i_1,j_1),\cdots,(i_{T_1+T_2-1},j_{T_1+T_2-1})$. For each $l \in[T_1+T_2-2]$, we pick the largest $r_l\in [K_0]$ such that the corresponding interval as defined in (\ref{Interval}) and (\ref{Interval2}) (where we take the aforementioned basic path; note that the interval only depends on the basic path and $r_l$) has a non-empty intersection with the path specified in the previous paragraph. Let $\Gamma$ be the following sequence:
\begin{equation*}
    (i_1,j_1), r_1, (i_2,j_2),r_2,\cdots, r_{T_1+T_2-2}, (i_{T_1+T_2-1},j_{T_1+T_2-1}).
\end{equation*}
It can be checked that $\Gamma$ is a refined path in $\Pi_{A_1,A_2;B_1,B_2}^{T_1,T_2,K_0}$. We also note that for any $l\in [T_1+T_2-1]$ and $t\in [N]$ such that $(\alpha^{-1}(k_t-\kappa),\gamma^{-1}(\sigma(k_t)-\kappa))\in R_{i_l,j_l}$, we have
\begin{equation*}
    (\alpha^{-1}(k_t-\kappa),\gamma^{-1}(\sigma(k_t)-\kappa))\in [a_{l-1}(\Gamma),c_l(\Gamma)]\times [b_{l-1}(\Gamma),d_l(\Gamma)].
\end{equation*}
Hence 
\begin{eqnarray}\label{Eq1.4.2}
   && \#\{t\in [N]:(\alpha^{-1}(k_t-\kappa),\gamma^{-1}(\sigma(k_t)-\kappa))\in R_{i_l,j_l}\} \nonumber \\
   &\leq&LIS(\sigma|_{[\kappa+\alpha a_{l-1}(\Gamma),\kappa+\alpha c_l(\Gamma)]\times [\kappa+\gamma b_{l-1}(\Gamma),\kappa+\gamma d_l(\Gamma)]}).
\end{eqnarray}
Note that for any $t\in [N]$, we have
\begin{equation}\label{Eq1.4.4}
    (\alpha^{-1}(k_t-\kappa),\gamma^{-1}(\sigma(k_t)-\kappa))\in \bigcup_{l=1}^{T_1+T_2-1} R_{i_l,j_l}.
\end{equation}
By (\ref{Eq1.4.2}) and (\ref{Eq1.4.4}), we conclude that
\begin{eqnarray*}
   &&  LIS(\sigma|_{(\kappa+\alpha A_1,\kappa+\alpha A_2]\times (\kappa+\gamma B_1,\kappa+\gamma  B_2]})=N\nonumber\\
   &\leq&
     \sum_{l=1}^{T_1+T_2-1}LIS(\sigma|_{[\kappa+\alpha a_{l-1}(\Gamma),\kappa+\alpha c_l(\Gamma)]\times [\kappa+\gamma b_{l-1}(\Gamma),\kappa+\gamma d_l(\Gamma)]})\nonumber  \\
   &\leq& \max_{\Gamma\in \Pi_{A_1,A_2;B_1,B_2}^{T_1,T_2,K_0}}\sum_{l=1}^{T_1+T_2-1}LIS(\sigma|_{[\kappa+\alpha a_{l-1}(\Gamma),\kappa+\alpha c_l(\Gamma)]\times [\kappa+\gamma b_{l-1}(\Gamma),\kappa+\gamma d_l(\Gamma)]}).
\end{eqnarray*}

\end{proof}

\subsection{Hit and run algorithms for sampling from the $L^1$ and $L^2$ models}\label{Sect.1.5}

Hit and run algorithms are a broad class of Markov chain Monte Carlo algorithms that includes the celebrated Swendsen-Wang algorithm for sampling from the Ising model. We refer the reader to \cite{AD} for a comprehensive overview of hit and run algorithms. 

The proofs of the main results of this paper are based on hit and run algorithms for sampling from Mallows permutation models with $L^1$ and $L^2$ distances. The algorithm for the $L^2$ model is introduced in \cite{AD}, and the algorithm for the $L^1$ model is introduced in \cite{Zho1}. In this subsection, we briefly review both algorithms, and refer to \cite[Section 2]{Zho1} for further details.



In the proof of Theorem \ref{limit_l2_2}, a resampling algorithm for the $L^2$ model is used. The resampling algorithm preserves the probability measure $\tilde{\mathbb{P}}_{n,\beta}$, and is related to the hit and run algorithm for the $L^2$ model. We introduce the resampling algorithm at the end of this subsection.

\paragraph{Hit and run algorithm for the $L^1$ model}

For any $n\in\mathbb{N}^{*}$ and $\beta>0$, the hit and run algorithm for sampling from the $L^1$ model $\mathbb{P}_{n,\beta}$ is a Markov chain on $S_n$ whose each step consists of the following two sequential parts:
\begin{itemize}
  \item Starting from $\sigma$, for each $i\in [n]$, independently sample $u_i$ from the uniform distribution on $[0, e^{-2\beta (\sigma(i)-i)_{+} }]$. Let $b_i=i-\log(u_i)\slash (2\beta)$ for every $i\in [n]$.
  \item Sample $\sigma'$ uniformly from the set $\{\tau\in S_n: \tau(i)\leq b_i\text{ for every }i\in [n]\}$, and move to the new state $\sigma'$.
\end{itemize}

The sampling problem in the second part can be efficiently implemented as follows: Look at places $i$ where $b_i\geq n$, and place the symbol $n$ at a uniform choice among these places; look at places where $b_i\geq n-1$, and place the symbol $n-1$ at a uniform choice among these places (with the place where the symbol $n$ was placed deleted); and so on. This gives the permutation $\sigma'$. Here, we say that the symbol $j$ is placed at the place $i$ if $\sigma'(i)=j$.

The stationary distribution of the above Markov chain is $\mathbb{P}_{n,\beta}$.

\paragraph{Hit and run algorithm for the $L^2$ model}

For any $n\in\mathbb{N}^{*}$ and $\beta>0$, the hit and run algorithm for sampling from the $L^2$ model $\tilde{\mathbb{P}}_{n,\beta}$ is a Markov chain on $S_n$ whose each step consists of the following two sequential parts:
\begin{itemize}
  \item Starting from $\sigma$, for each $i\in [n]$, independently sample $u_i$ from the uniform distribution on $[0, e^{2\beta i\sigma(i)}]$. Let $b_i=\log(u_i)\slash (2\beta i)$ for every $i\in [n]$.
  \item Sample $\sigma'$ uniformly from the set $\{\tau\in S_n: \tau(i)\geq b_i\text{ for every }i\in [n]\}$, and move to the new state $\sigma'$.
\end{itemize}

Again, the sampling problem in the second part can be efficiently implemented: Look at places $i$ where $b_i\leq 1$, and place the symbol $1$ at a uniform choice among these places; look at places where $b_i\leq 2$, and place the symbol $2$ at a uniform choice among these places (with the place where the symbol $1$ was placed deleted); and so on. This gives the permutation $\sigma'$. 

The stationary distribution of the above Markov chain is $\tilde{\mathbb{P}}_{n,\beta}$.

\paragraph{A resampling algorithm for the $L^2$ model} 

In the following, we introduce a resampling algorithm for the $L^2$ model. The resampling algorithm is related to the hit and run algorithm for the $L^2$ model.

We assume that $n\in\mathbb{N}^{*}$ and $\beta>0$. The inputs of the resampling algorithm are given by a permutation $\sigma\in S_n$, two sets $S_X,S_Y\subseteq [n]$, and a real number $t_0<\min\{i\in [n]: i\in S_X\}$. The output of the resampling algorithm is a permutation $\sigma'\in S_n$ obtained by the following two sequential steps:
\begin{itemize}
    \item Suppose that $\{i\in S_X:\sigma(i)\in S_Y\}=\{i_1,\cdots,i_k\}$ (with $i_1<\cdots<i_k$) and $\{j\in S_Y: \sigma^{-1}(j)\in S_X\}=\{j_1,\cdots,j_k\}$ (with $j_1<\cdots<j_k$). For each $t\in [k]$, we independently sample $u_t$ from the uniform distribution on $[0,e^{2\beta (i_t-t_0) \sigma(i_t)}]$, and let $b_t=\log(u_t)\slash (2\beta (i_t-t_0))$.
    \item Sample $\sigma'$ uniformly from the set
    \begin{equation*}
        \{\tau\in S_n: \tau(i_t)\geq b_t \text{ for every }t\in [k], \tau(i)=\sigma(i) \text{ for every }i\in [n]\backslash \{i_1,\cdots,i_k\}\}.
    \end{equation*}
\end{itemize}

The second step can be implemented as follows: Look at places $i_t$ (where $t\in [k]$) such that $b_t\leq j_1$, and place the symbol $j_1$ at a uniform choice among these places; look at the remaining places $i_t$ (where $t\in [k]$) such that $b_t\leq j_2$ (with the place where $j_1$ was placed deleted), and place the symbol $j_2$ at a uniform choice among these places; and so on. We further take $\sigma'(i)=\sigma(i)$ for every $i\in [n]\backslash\{i_1,\cdots,i_k\}$. This gives the permutation $\sigma'$.

The following lemma shows that the above resampling algorithm preserves the probability measure $\tilde{\mathbb{P}}_{n,\beta}$.

\begin{lemma}\label{L2.2}
Assume that $n\in\mathbb{N}^{*}$ and $\beta>0$. For any two non-random sets $S_X,S_Y\subseteq [n]$ and any fixed $t_0<\min\{i\in [n]: i\in S_X\}$, the following holds. Let $\sigma$ be drawn from $\tilde{\mathbb{P}}_{n,\beta}$, and let $\sigma'$ be the output of the above resampling algorithm with inputs $\sigma,S_X,S_Y, t_0$. Then the distribution of $\sigma'$ is given by $\tilde{\mathbb{P}}_{n,\beta}$.
\end{lemma}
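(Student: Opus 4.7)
The approach is to condition on $\sigma$ restricted to positions outside $\{i_1, \ldots, i_k\}$ and show that the resampling step preserves the conditional law there. The sets $\{i_1,\ldots,i_k\}$ and $\{j_1,\ldots,j_k\}$ are measurable functions of $\sigma|_{[n]\setminus\{i_1,\ldots,i_k\}}$ (the first is precisely the complement of the domain of the known part, the second the complement of its range), so conditioning on this restriction is well-defined. Moreover, the resampling rule enforces $\sigma'(i)=\sigma(i)$ off $\{i_1,\ldots,i_k\}$ and produces a bijection onto $\{j_1,\ldots,j_k\}$ on this set, so this restricted permutation (and hence the two index sets) is unchanged. It therefore suffices to prove that the conditional law of the bijection $\tau := \sigma|_{\{i_1,\ldots,i_k\}} : \{i_1,\ldots,i_k\}\to\{j_1,\ldots,j_k\}$ under $\tilde{\mathbb{P}}_{n,\beta}$ is preserved.

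Expanding $(\sigma(i)-i)^2 = \sigma(i)^2 - 2i\sigma(i) + i^2$ and using that $\sum_i \sigma(i)^2$ and $\sum_i i^2$ are constants, the weight $\tilde{\mathbb{P}}_{n,\beta}(\sigma)$ is proportional to $\exp(2\beta\sum_i i\sigma(i))$. Freezing $\sigma|_{[n]\setminus\{i_1,\ldots,i_k\}}$, the conditional weight on $\tau$ is proportional to $\exp(2\beta\sum_{t=1}^{k} i_t \tau(i_t))$. Since $\sum_t \tau(i_t)=\sum_s j_s$ is constant across all bijections $\tau:\{i_1,\ldots,i_k\}\to\{j_1,\ldots,j_k\}$, the factor $\exp(2\beta t_0\sum_t\tau(i_t))$ is an irrelevant constant, so the conditional law is
\begin{equation*}
\pi(\tau) \;\propto\; \exp\Bigl(2\beta\sum_{t=1}^{k}(i_t-t_0)\,\tau(i_t)\Bigr).
\end{equation*}
The hypothesis $t_0<\min\{i\in S_X\}\le i_1$ ensures $i_t-t_0>0$ for all $t$, which is the reason $t_0$ appears: it produces positive effective weights while not perturbing the law.

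Next I would recognize the resampling step as a standard data-augmentation (hit-and-run) sampler for $\pi$. Introduce auxiliary variables $u_1,\ldots,u_k$ and consider the joint density
\begin{equation*}
g(\tau,u) \;\propto\; \prod_{t=1}^{k}\mathbbm{1}\bigl\{u_t\in[0,\,\exp(2\beta(i_t-t_0)\tau(i_t))]\bigr\}
\end{equation*}
on bijections $\tau:\{i_1,\ldots,i_k\}\to\{j_1,\ldots,j_k\}$ paired with $u\in\mathbb{R}^{k}_{\ge 0}$. Integrating out $u$ returns $\pi(\tau)$. Its conditional on $u$ given $\tau$ is product-uniform on $[0,\exp(2\beta(i_t-t_0)\tau(i_t))]$, which is exactly the first step of the resampling algorithm. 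Its conditional of $\tau$ given $u$ is the uniform distribution on bijections satisfying $\tau(i_t)\ge b_t = \log u_t/(2\beta(i_t-t_0))$ for all $t$, which is exactly the second step (the place-the-smallest-remaining-symbol implementation is the standard efficient realization of this uniform sample, correct for the same reason as in the hit-and-run algorithm for the $L^2$ model reviewed above). Running these two conditional samplers in sequence is a two-step Gibbs sampler on $(\tau,u)\sim g$, which preserves the marginal $\pi$.

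Combining the two ingredients, the resampling step preserves the conditional law of $\sigma|_{\{i_1,\ldots,i_k\}}$ given $\sigma|_{[n]\setminus\{i_1,\ldots,i_k\}}$, and by construction leaves the latter unchanged; hence it preserves the joint law $\tilde{\mathbb{P}}_{n,\beta}$. The only genuine delicacy is bookkeeping around the fact that $\{i_1,\ldots,i_k\}$ and $\{j_1,\ldots,j_k\}$ are random, and I would handle this by conditioning on a fixed realization of $\sigma|_{[n]\setminus\{i_1,\ldots,i_k\}}$ (which fixes both index sets) and then integrating; no additional Jacobian or measurability issue arises because the algorithm never changes the conditioning $\sigma$-algebra.
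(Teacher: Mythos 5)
Your proof is correct, but it takes a genuinely different route from the paper's. The paper writes out the transition kernel $K(\tau,\tau')$ of the resampling step as an explicit integral over the auxiliary variables, observes by inspection the symmetric form in $\min\{\tau(i_t),\tau'(i_t)\}$, and derives the detailed balance relation $\tilde{\mathbb{P}}_{n,\beta}(\tau)K(\tau,\tau')=\tilde{\mathbb{P}}_{n,\beta}(\tau')K(\tau',\tau)$ directly, handling the degenerate case $K=0$ separately; stationarity then follows by summing over $\tau$. You instead condition on the frozen part $\sigma|_{[n]\setminus\{i_1,\ldots,i_k\}}$ (which is preserved by the algorithm and determines $\{i_1,\ldots,i_k\}$, $\{j_1,\ldots,j_k\}$), identify the conditional target $\pi(\tau)\propto\exp(2\beta\sum_t(i_t-t_0)\tau(i_t))$ after discarding the constant factors, and then recognize the two steps of the resampling as a single Gibbs sweep on the data-augmented joint $g(\tau,u)$ whose $\tau$-marginal is $\pi$, quoting the general fact that a two-step Gibbs sweep preserves its target. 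Your approach buys conceptual clarity and makes visible \emph{why} the factor $i_t-t_0$ appears (and why $t_0$ must lie below $\min S_X$: to make the effective weights positive while only shifting by an irrelevant constant); it also automatically dispenses with the degenerate-kernel case that the paper handles by hand, since conditioning restricts to the relevant fiber. The paper's approach is more self-contained: it never appeals to the abstract ``Gibbs preserves the target'' principle and keeps the whole computation at the level of permutation kernels, which fits the style used elsewhere in the paper where explicit kernels are manipulated.
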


\begin{proof}

For any $\tau,\tau'\in S_n$, we denote by $K(\tau,\tau')$ the probability that the resampling algorithm with inputs $\tau,S_X,S_Y,t_0$ outputs $\tau'$. Note that if $K(\tau,\tau')\neq 0$, then necessarily 
\begin{equation}\label{Eq4.1.1}
    S(\tau)\cap ([n]^2\backslash(S_X\times S_Y))=S(\tau')\cap ([n]^2\backslash(S_X\times S_Y)).
\end{equation}
Below we assume that (\ref{Eq4.1.1}) holds, and let $i_1,\cdots,i_k$ and $j_1,\cdots,j_k$ be defined as in the first step of the resampling algorithm (with inputs $\tau,S_X,S_Y,t_0$). We have
\begin{eqnarray*}
    &&K(\tau,\tau')\\
    &=& e^{-2\beta\sum_{t=1}^k (i_t-t_0)\tau(i_t)}\int_{\prod_{i=1}^n[0,e^{2\beta(i_t-t_0)\tau(i_t)}]}du_1\cdots du_n\\
    &&\frac{\mathbbm{1}_{\tau'(i_t)\geq b_t,\forall t\in [k] \text{ and } \tau'(i)=\tau(i), \forall i\in [n]\backslash \{i_1,\cdots,i_k\}}}{|\{\kappa\in S_n:\kappa(i_t)\geq b_t,\forall t\in [k] \text{ and }  \kappa(i)=\tau(i),\forall i\in [n]\backslash \{i_1,\cdots,i_k\}\}|}\\
   &=& e^{-2\beta\sum_{t=1}^k (i_t-t_0)\tau(i_t)}\int_{\prod_{i=1}^n[0,e^{2\beta(i_t-t_0)\min\{\tau(i_t),\tau'(i_t)\}}]}du_1\cdots du_n\\
    &&\frac{1}{|\{\kappa\in S_n:\kappa(i_t)\geq \log(u_t)\slash (2\beta(i_t-t_0)),\forall t\in [k]\text{ and } \kappa(i)=\tau(i),\forall i\in [n]\backslash \{i_1,\cdots,i_k\}\}|},
\end{eqnarray*}
where $b_t=\log(u_t)\slash (2\beta (i_t-t_0))$ for every $t\in [k]$. Similarly,
\begin{eqnarray*}
&&K(\tau',\tau)\\
&=& e^{-2\beta\sum_{t=1}^k (i_t-t_0)\tau'(i_t)}\int_{\prod_{i=1}^n[0,e^{2\beta(i_t-t_0)\min\{\tau(i_t),\tau'(i_t)\}}]}du_1\cdots du_n\\
&&\frac{1}{|\{\kappa\in S_n:\kappa(i_t)\geq \log(u_t)\slash (2\beta(i_t-t_0)),\forall t\in [k]\text{ and } \kappa(i)=\tau'(i),\forall i\in [n]\backslash \{i_1,\cdots,i_k\}\}|}\\
&=& e^{-2\beta\sum_{t=1}^k (i_t-t_0)\tau'(i_t)}\int_{\prod_{i=1}^n[0,e^{2\beta(i_t-t_0)\min\{\tau(i_t),\tau'(i_t)\}}]}du_1\cdots du_n\\
    &&\frac{1}{|\{\kappa\in S_n:\kappa(i_t)\geq \log(u_t)\slash (2\beta(i_t-t_0)), \forall t\in [k] \text{ and } \kappa(i)=\tau(i), \forall i\in [n]\backslash \{i_1,\cdots,i_k\}\}|}.
\end{eqnarray*}
Hence 
\begin{equation}\label{E1}
   e^{2\beta\sum_{t=1}^k (i_t-t_0)\tau(i_t)} K(\tau,\tau')=e^{2\beta\sum_{t=1}^k (i_t-t_0)\tau'(i_t)} K(\tau',\tau).
\end{equation}
Now note that
\begin{eqnarray*}
 && \tilde{H}(\tau,Id)= \sum_{i=1}^n i^2+\sum_{i=1}^n \tau(i)^2-2\sum_{i=1}^n i\tau(i)=2 \sum_{i=1}^n i^2  -2\sum_{i=1}^n i\tau(i)\\
 &=& 2 \sum_{i=1}^n i^2  -2\sum_{i\in [n]\backslash \{i_1,\cdots,i_k\}} i\tau(i)-2t_0\sum_{t=1}^k \tau(i_t)-2\sum_{t=1}^k (i_t-t_0)\tau(i_t) \nonumber\\
 &=& 2 \sum_{i=1}^n i^2  -2\sum_{i\in [n]\backslash \{i_1,\cdots,i_k\}} i\tau(i)-2t_0\sum_{t=1}^k j_t-2\sum_{t=1}^k (i_t-t_0)\tau(i_t).
\end{eqnarray*}
Similarly, we have 
\begin{eqnarray*}
&&\tilde{H}(\tau',Id)=2\sum_{i=1}^n i^2-2\sum_{i=1}^n i\tau'(i)\nonumber\\
&=&  2 \sum_{i=1}^n i^2  -2\sum_{i\in [n]\backslash \{i_1,\cdots,i_k\}} i\tau'(i)-2t_0\sum_{t=1}^k \tau'(i_t)-2\sum_{t=1}^k (i_t-t_0)\tau'(i_t) \nonumber\\
 &=& 2 \sum_{i=1}^n i^2  -2\sum_{i\in [n]\backslash \{i_1,\cdots,i_k\}} i\tau(i)-2t_0\sum_{t=1}^k j_t-2\sum_{t=1}^k (i_t-t_0)\tau'(i_t).
\end{eqnarray*}
Hence
\begin{equation}\label{E2}
    \frac{\tilde{\mathbb{P}}_{n,\beta}(\tau)}{\tilde{\mathbb{P}}_{n,\beta}(\tau')}=\frac{e^{2\beta\sum_{t=1}^k(i_t-t_0)\tau(i_t)}}{e^{2\beta \sum_{t=1}^k(i_t-t_0)\tau'(i_t)}}.
\end{equation}
Combining (\ref{E1}) and (\ref{E2}), we obtain that
\begin{equation}\label{E3}
    \tilde{\mathbb{P}}_{n,\beta}(\tau)K(\tau,\tau')=\tilde{\mathbb{P}}_{n,\beta}(\tau')K(\tau',\tau).
\end{equation}
Note that when $S(\tau)\cap ([n]^2\backslash(S_X\times S_Y))\neq S(\tau')\cap ([n]^2\backslash(S_X\times S_Y))$, we have $K(\tau,\tau')=K(\tau',\tau)=0$, and (\ref{E3}) still holds.

Now let $\sigma,\sigma'$ be given as in the statement of the lemma. For any $\tau'\in S_n$, noting (\ref{E3}), we obtain that
\begin{eqnarray*}
    \mathbb{P}(\sigma'=\tau')&=&\sum_{\tau\in S_n}\mathbb{P}(\sigma=\tau)K(\tau,\tau')=\sum_{\tau\in S_n}\tilde{\mathbb{P}}_{n,\beta}(\tau)K(\tau,\tau')\nonumber\\
    &=& \sum_{\tau\in S_n} \tilde{\mathbb{P}}_{n,\beta}(\tau')K(\tau',\tau)=\tilde{\mathbb{P}}_{n,\beta}(\tau').
\end{eqnarray*}
Hence the distribution of $\sigma'$ is given by $\tilde{\mathbb{P}}_{n,\beta}$.

\end{proof}

\subsection{Preliminary results}\label{Sect.2.2}

In this subsection, we present several preliminary results, which will be used in the proofs of the main results.

The following tail bound on the length of the longest increasing subsequence of a uniformly random permutation follows from \cite[Theorem 1.1]{LM} and \cite[Theorem 1.1]{LMR}.

\begin{proposition}\label{P1}
For any $\delta_0\in (0,1\slash 3)$, there exists a positive constant $C_{\delta_0}$ that only depends on $\delta_0$, such that the following holds. For any $n\in\mathbb{N}^{*}$, when $\sigma$ is drawn from the uniform distribution on $S_n$, we have
\begin{equation}
    \mathbb{P}(|LIS(\sigma)-2\sqrt{n}|>n^{1\slash 2-\delta_0})\leq C_{\delta_0} \exp(-n^{(1-3\delta_0)\slash 2}).
\end{equation}
\end{proposition}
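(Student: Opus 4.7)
The plan is to reduce this statement to a union bound of the two large deviation estimates that are already cited: an upper tail bound for $LIS(\sigma)-2\sqrt{n}$ coming from \cite[Theorem 1.1]{LM} and a matching lower tail bound from \cite[Theorem 1.1]{LMR}. The first step is to rescale: writing the threshold as $n^{1/2-\delta_0} = \epsilon_n \sqrt{n}$ with $\epsilon_n := n^{-\delta_0}$, the event $\{|LIS(\sigma) - 2\sqrt{n}| > n^{1/2-\delta_0}\}$ becomes the event $\{|LIS(\sigma) - 2\sqrt{n}| > \epsilon_n \sqrt{n}\}$ where $\epsilon_n \in (0,1)$ since $\delta_0 > 0$. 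This puts us squarely in the moderate-deviation regime covered by the two cited papers, rather than in the Tracy--Widom window of scale $n^{1/6}$ or in the large-deviation regime where $\epsilon$ is order one.

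Next, I would apply the two cited results separately. The bound from \cite[Theorem 1.1]{LM} is of the form $\mathbb{P}(LIS(\sigma) - 2\sqrt{n} > \epsilon \sqrt{n}) \le C_1 \exp(-c_1 \epsilon^{3/2}\sqrt{n})$ valid uniformly for $\epsilon \in (0, \epsilon_0)$, and the bound from \cite[Theorem 1.1]{LMR} gives the matching estimate $\mathbb{P}(LIS(\sigma) - 2\sqrt{n} < -\epsilon \sqrt{n}) \le C_2 \exp(-c_2 \epsilon^{3/2}\sqrt{n})$ in the same regime. Plugging in $\epsilon = \epsilon_n = n^{-\delta_0}$ makes the exponent become $c \epsilon_n^{3/2}\sqrt{n} = c\, n^{(1-3\delta_0)/2}$, which is exactly the rate appearing in the statement. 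The restriction $\delta_0 < 1/3$ in the hypothesis of the proposition is precisely what keeps this exponent positive and going to infinity, so that the bound is nontrivial for large $n$.

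Finally, a union bound over the upper and lower tails yields
\begin{equation*}
\mathbb{P}(|LIS(\sigma)-2\sqrt{n}| > n^{1/2-\delta_0}) \le (C_1+C_2)\exp(-\min(c_1,c_2)\, n^{(1-3\delta_0)/2}),
\end{equation*}
and setting $C_{\delta_0}$ to absorb the constants and the (at most) polynomial loss needed to adjust the exponential rate gives the claimed inequality. For small $n$, the constant $C_{\delta_0}$ can be enlarged so the trivial bound $1$ suffices.

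I do not expect a genuine obstacle here: both cited theorems are quantitative tail bounds in the exact form needed, so the proof is essentially a bookkeeping exercise matching the parameter $\epsilon$ in the source estimates with $n^{-\delta_0}$ and tracking the resulting exponent. The only point requiring mild care is ensuring $\epsilon_n$ falls inside the range of validity $\epsilon \in (0,\epsilon_0)$ assumed in the cited theorems, which holds for all sufficiently large $n$ since $\epsilon_n \to 0$.
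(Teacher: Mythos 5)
Your proposal is correct and is essentially the argument the paper has in mind: the paper itself gives no proof of Proposition \ref{P1}, merely asserting that it follows from \cite[Theorem 1.1]{LM} and \cite[Theorem 1.1]{LMR}, and your reduction (set $\epsilon=n^{-\delta_0}$, apply the two cited tail bounds, union bound, absorb constants and small $n$ into $C_{\delta_0}$) is the standard way to extract the stated inequality from those sources. The only point worth keeping in mind is the one you already flag, namely verifying that $n^{-\delta_0}$ lies in the validity range of the cited estimates for $n$ large and handling small $n$ by enlarging $C_{\delta_0}$.
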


In the following, we recall several results from \cite{Zhong2}. These results describe the behavior of the $L^1$ model (when $n^{-1}\ll \beta \ll 1$) and the $L^2$ model (when $n^{-2}\ll \beta \ll 1$). We assume that $n\in\mathbb{N}^{*}$ throughout the rest of this subsection. 

We start with the following three definitions.

\begin{definition}\label{Def2.2}
For every $i\in [n]$ and every $\sigma\in S_n$, we let
\begin{equation}\label{Defnd}
     \mathcal{D}_i(\sigma):=\{j\in [n]: j\leq i, \sigma(j)\geq i+1\}, 
\end{equation}
\begin{equation}
    \mathcal{D}_i'(\sigma):=\{j\in [n]: j\geq i+1, \sigma(j)\leq i\}.
\end{equation}
Note that
\begin{equation}\label{Eq5.1.1}
    |\mathcal{D}_i(\sigma)|=i-|\{j\in [n]:j\leq i,\sigma(j)\leq i\}|=|\mathcal{D}'_i(\sigma)|.
\end{equation}
\end{definition}

\begin{definition}\label{Def2.1}
For any $t_0\in [n]$ and any $\sigma\in S_n$, we define
\begin{equation}
      \mu_{n,t_0}=\beta\sum_{i=1}^n\delta_{(\beta(i-t_0),\beta(\sigma(i)-t_0))},
\end{equation}
\begin{equation}
      \tilde{\mu}_{n,t_0}=\beta^{1\slash 2}\sum_{i=1}^n\delta_{(\beta^{1\slash 2}(i-t_0),\beta^{1\slash 2}(\sigma(i)-t_0))}.
\end{equation}
We also define
\begin{equation}
    d\mu=\frac{1}{2}e^{-|x-y|}dxdy, \quad d\tilde{\mu}=\frac{1}{\sqrt{\pi}}e^{-(x-y)^2}dxdy.
\end{equation}
\end{definition}

\begin{definition}\label{Def2.3}
For any $K>0$, we define $\mathbb{B}_K$ to be the set of Borel measurable functions $f(x,y)$ on $\mathbb{R}^2$ such that $\supp(f) \subseteq [-K,K]^2$ and $\|f\|_{Lip},\|f\|_{\infty}\leq 1$. Here, $\|f\|_{\infty}:=\sup_{\mathbf{x}\in\mathbb{R}^2}|f(\mathbf{x})|$.
\end{definition}

The following two propositions give tail bounds on $|\mathcal{D}_i(\sigma)|$ for any $i\in [n]$ when $\sigma$ is drawn from the $L^1$ or $L^2$ model.

\begin{proposition}[\cite{Zhong2}, Proposition 5.3.1]\label{P2.2}
Assume that $0< \beta\leq C_0$ for a fixed positive constant $C_0$ (independent of $n$). Let $\sigma$ be drawn from $\mathbb{P}_{n,\beta}$. Then there exists a positive constant $C$ that only depends on $C_0$, such that for any $u\geq C\beta^{-1}$ and any $i\in [n]$, 
\begin{equation}
     \mathbb{P}(|\mathcal{D}_i(\sigma)|\geq u)\leq 3\exp(-u\slash 4). 
\end{equation}
\end{proposition}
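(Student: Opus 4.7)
My approach is a swap-based rearrangement argument that uses the Mallows $L^1$ weight to penalize permutations with many crossings at level $i$. For $\sigma$ with $|\mathcal{D}_i(\sigma)| = m$, I construct a canonical modification $\tau$ with $|\mathcal{D}_i(\tau)| = 0$ and $H(\tau, Id) < H(\sigma, Id)$; the resulting weight ratio will dominate the combinatorial multiplicity of preimages.

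Writing $\mathcal{D}_i(\sigma) = \{a_1 < \cdots < a_m\} \subseteq [i]$ and $\mathcal{D}_i'(\sigma) = \{b_1 < \cdots < b_m\} \subseteq [n]\setminus[i]$, I define $\tau$ from $\sigma$ by exchanging the values $\sigma(a_l)$ and $\sigma(b_l)$ for every $l \in [m]$. Since the exchanges occur at disjoint positions, $\tau$ is a permutation with $|\mathcal{D}_i(\tau)| = 0$. A short case analysis (splitting on whether $\sigma(b_l) < a_l$ and whether $\sigma(a_l) > b_l$) shows that each individual exchange contributes at least $2$ to the $H$-decrease, so $H(\sigma, Id) - H(\tau, Id) \geq 2m$. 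Using further the pigeonhole bounds $a_l \leq i - m + l$, $b_l \geq i + l$, and analogous constraints on the values $\sigma(a_l)$ and $\sigma(b_l)$ (which are $m$ distinct integers in $[i+1, n]$ and $[1, i]$ respectively), one promotes this to the stronger bound $H(\sigma, Id) - H(\tau, Id) \geq c_0 m^2$ for an absolute constant $c_0 > 0$.

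The map $\sigma \mapsto \tau$ has fibers of size at most $\binom{i}{m}\binom{n-i}{m}$, since reconstructing $\sigma$ from $\tau$ amounts to choosing $m$ positions in $[i]$ and $m$ positions in $[n] \setminus [i]$ where the exchanges occur. This yields the preliminary estimate
\begin{equation*}
    \mathbb{P}_{n,\beta}(|\mathcal{D}_i(\sigma)| = m) \leq \binom{i}{m}\binom{n-i}{m} e^{-c_0 \beta m^2}.
\end{equation*}
Because this combinatorial prefactor is not uniform in $n$, I then localize: the mass of $\{|\mathcal{D}_i| = m\}$ concentrates on permutations whose crossing positions and values lie in a window of width $O(m)$ around $i$, since any crossing at distance $d$ from $i$ forces an extra displacement $\geq d$ in $H(\cdot, Id)$ and is therefore suppressed by a factor $e^{-\beta d}$. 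After splitting the sum according to the maximal distance of any crossing from $i$ and applying the above estimate on each piece, the effective fiber count can be taken of the form $(Cm)^{2m}$. Choosing $C_0$ large enough in the threshold $m \geq C_0 \beta^{-1}$ then gives $(Cm)^{2m} e^{-c_0 \beta m^2} \leq e^{-m/4}$, and summing the geometric series $\sum_{m \geq u} e^{-m/4} \leq 3 e^{-u/4}$ yields the desired tail bound.

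The main technical obstacle is the interplay between the quadratic gain $c_0 m^2$ and the localization step: the naive fiber count is too large by a factor depending on $n$, so the localization must be handled carefully by bookkeeping the extra $H$-cost incurred by long-range crossings at each distance scale. An alternative approach is to remove crossings iteratively one at a time with carefully controlled single-swap fibers, which may simplify the mixed-case analysis for the quadratic lower bound at the cost of a more delicate inductive argument.
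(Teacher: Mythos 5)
This proposition is imported from \cite{Zhong2} (Proposition 5.3.1); the present paper cites it in Section~\ref{Sect.2.2} without giving a proof, so there is no in-paper argument to compare against. Assessed on its own terms, your argument has a genuine gap at the localization step. The exact $H$-decrease from your swap is
\begin{equation*}
H(\sigma,Id)-H(\tau,Id)=\sum_{l=1}^m\Bigl[2\min\bigl(i-a_l,\;i-\sigma(b_l)\bigr)+2\min\bigl(\sigma(a_l)-(i+1),\;b_l-(i+1)\bigr)+2\Bigr],
\end{equation*}
and each summand is a \emph{minimum}, so a crossing far from $i$ is not automatically penalized relative to $\tau$. Concretely, with $m=2$, $i=100$, and $\sigma$ the double transposition interchanging $1\leftrightarrow 200$ and $100\leftrightarrow 101$, the crossings reach distance $100$ from $i$, yet $H(\sigma,Id)=400$, $H(\tau,Id)=396$, so the decrease is only $4=2m$. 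Thus, when you partition by the maximal crossing distance $D$, the fiber count is of order $\binom{D}{m}^2$ with no compensating gain in $D$, and the sum over $D$ is not controlled. Separately, even granting an effective fiber count $(Cm)^{2m}$, the inequality $(Cm)^{2m}e^{-c_0\beta m^2}\le e^{-m/4}$ for all $m\ge C_0\beta^{-1}$ fails as $\beta\to 0$: taking logarithms, it requires $c_0\beta m\ge 2\log(Cm)+1/4$, and at $m=K\beta^{-1}$ this forces $K\gtrsim\log(1/\beta)$, i.e.\ a threshold of order $\beta^{-1}\log\beta^{-1}$, weaker than claimed.

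The swap comparison can still be closed, but by a different accounting that makes both the quadratic $H$-gain and the window argument unnecessary. Since $\sigma(a_l)=\tau(b_l)$ and $\sigma(b_l)=\tau(a_l)$, the weight ratio $e^{-\beta(H(\sigma)-H(\tau))}$, once $\tau$ is fixed, factors into a product of one function of $A=\{a_l\}$ and one of $B=\{b_l\}$. Because $\tau$ with $\mathcal{D}_i(\tau)=\emptyset$ restricts to a permutation of $[i]$, at most $2(k+1)$ positions $a\in[i]$ can have $\min(i-a,\,i-\tau(a))\le k$, whence $\sum_{a\in[i]}e^{-2\beta\min(i-a,\,i-\tau(a))}\le 2/(1-e^{-2\beta})\le C(C_0)\,\beta^{-1}$ uniformly in $\tau$ and $n$, and likewise for $B$. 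Summing the ratio over all $\binom{i}{m}\binom{n-i}{m}$ choices of $(A,B)$ then yields $\mathbb{P}(|\mathcal{D}_i(\sigma)|=m)\le e^{-2\beta m}(C/\beta)^{2m}/(m!)^2\le(Ce/(\beta m))^{2m}$, which is at most $e^{-m/2}$ once $m\ge C'\beta^{-1}$ with $C'$ large, and a geometric sum over $m\ge u$ gives the stated $3e^{-u/4}$. Note that the pigeonhole counting you invoke works only because $\tau$ is a bijection; the distinct-value structure is essential, and the cruder ``crossing at distance $d$ costs $e^{-\beta d}$'' heuristic does not capture it.
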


\begin{proposition}[\cite{Zhong2}, Proposition 5.4.1]\label{P2.2.2}
Assume that $0<\beta\leq C_0$ for a fixed positive constant $C_0$ (independent of $n$). Let $\sigma$ be drawn from $\tilde{\mathbb{P}}_{n,\beta}$. Then there exists a positive constant $C$ that only depends on $C_0$, such that for any $u\geq C\beta^{-1\slash 2}$ and any $i\in [n]$,
\begin{equation}
    \mathbb{P}(|\mathcal{D}_i(\sigma)|\geq u)\leq 3\exp(-u\slash 4). 
\end{equation}
\end{proposition}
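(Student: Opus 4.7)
The proposition gives an exponential tail on the level-$i$ crossing count $|\mathcal{D}_i(\sigma)|$ under the $L^2$ Mallows measure $\tilde{\mathbb{P}}_{n,\beta}$, activating at the natural $L^2$ band-width scale $u\gtrsim\beta^{-1/2}$. My plan is a Peierls-type exchange argument paralleling the one used for the $L^1$ model in Proposition~\ref{P2.2} (see \cite[Proposition~5.3.1]{Zhong2}), with squared displacements replacing absolute ones. The intuition is that destroying $m=|\mathcal{D}_i(\sigma)|$ crossings at level $i$ requires moving pairs of indices and values by distances of order $m$, producing a total energy gain of order $m^3$ and hence an exponential suppression $e^{-c\beta m^3}$.

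Quantitatively, write the sorted sets $\mathcal{D}_i(\sigma)=\{j_1<\cdots<j_m\}$ (with $j_t\leq i-m+t$ since these are $m$ distinct integers $\leq i$) and $\sigma(\mathcal{D}_i(\sigma))=\{v_1<\cdots<v_m\}$ (with $v_t\geq i+t$), so $v_t-j_t\geq m$. The rearrangement inequality yields
\[
\sum_{j\in\mathcal{D}_i(\sigma)}(\sigma(j)-j)^2\;\geq\;\sum_{t=1}^m(v_t-j_t)^2\;\geq\;m^3,
\]
and the analogous bound for $\mathcal{D}_i'(\sigma)$. Define an exchange map $\Phi(\sigma)$ by letting $J=\mathcal{D}_i(\sigma)\cup\mathcal{D}_i'(\sigma)$, replacing $\sigma|_J$ by the unique order-preserving bijection from $J$ onto $\sigma(J)$, and leaving $\sigma$ unchanged on $[n]\setminus J$. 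This annihilates all crossings at level $i$, and a second use of the rearrangement inequality (comparing $\sigma|_J$ to the monotone bijection $J\to\sigma(J)$, which minimizes $\sum_J(\cdot-j)^2$ over all bijections $J\to\sigma(J)$) yields
\[
\tilde{H}(\sigma,Id)-\tilde{H}(\Phi(\sigma),Id)\;\geq\;c\,m^3
\]
for an absolute constant $c>0$ (the tight case being $J=\sigma(J)$ with $\sigma|_J$ a shift, where the gap equals $2m^3$). Grouping $\{\sigma:|\mathcal{D}_i(\sigma)|=m\}$ by $\sigma^*=\Phi(\sigma)$ then gives
\[
\mathbb{P}(|\mathcal{D}_i(\sigma)|=m)\;\leq\;e^{-c\beta m^3}\cdot\max_{\sigma^*}|\Phi^{-1}(\sigma^*)|.
\]
Since $\beta\leq C_0$, the assumption $m\geq u\geq C\beta^{-1/2}$ gives $\beta m^3\geq C^2 m$, so choosing $C$ large enough (in terms of $C_0$) produces per-$m$ decay $e^{-m/2}$; a geometric sum over $m\geq u$ then produces $3e^{-u/4}$.

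The main obstacle is controlling the fiber $\max_{\sigma^*}|\Phi^{-1}(\sigma^*)|$: from $\sigma^*$ alone neither the set $J$ nor the specific scrambling of $\sigma|_J$ is visible, and a naive count (choosing $J$ of size $2m$ among $n$ indices, then the scrambling among $(2m)!$ bijections) gives a fiber bound of $n^{2m}(2m)!$, easily overwhelming the $e^{-c\beta m^3}$ savings. The fix (as in \cite[Proposition~5.4.1]{Zhong2}) is a two-step localization: first restrict $J$ to lie within a band of width $O(m\beta^{-1/2})$ around $i$ — either by a union bound over the ``bounding box'' of $J$ or by a bootstrap using the same exchange applied to neighboring levels — which reduces the index entropy to $O(m\log(1/\beta))$; then use the rigidity of the monotone rearrangement to bound the scrambling entropy by $e^{O(m)}$. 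Choosing the threshold $C$ large enough in $u\geq C\beta^{-1/2}$ absorbs both contributions into the $e^{-c\beta m^3}$ savings, which is what delivers the precise exponent $1/4$ and the prefactor $3$ in the stated tail.
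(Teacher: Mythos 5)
First, note that the paper does not actually prove this statement: Proposition \ref{P2.2.2} is imported verbatim from \cite{Zhong2} (Proposition 5.4.1), so there is no internal proof to compare your argument against; what follows assesses your proposal on its own. The energy half of your exchange argument is sound: with $m=|\mathcal{D}_i(\sigma)|$, sorting $\mathcal{D}_i(\sigma)$, $\mathcal{D}_i'(\sigma)$ and their image sets and pairing by rank, each quadruple contributes $2(b_t-a_t)(d_t-c_t)\geq 2m^2$ to $\tilde{H}(\sigma,Id)-\tilde{H}(\Phi(\sigma),Id)$, so the gap is indeed at least $2m^3$, and it even grows by $2m$ times the total ``slack'' when the crossings are spread out, which is what lets the positional entropy be controlled (the slack sequences are partitions, so their count is $e^{O(\sqrt{\Sigma})}$, not $e^{O(m\log(1/\beta))}$ as you estimated). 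The genuine gap is in the other entropy term: your claim that the scrambling entropy can be reduced to $e^{O(m)}$ by ``the rigidity of the monotone rearrangement'' is false within the single-configuration Peierls comparison you set up.

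Concretely, take $\sigma^*$ to act as the identity on the block $\{i-m+1,\dots,i+m\}$. Its fiber $\Phi^{-1}(\sigma^*)$ (intersected with $\{|\mathcal{D}_i|=m\}$) contains, for every pair $\mu,\nu\in S_m$, the permutation sending $i-m+t\mapsto i+\mu(t)$ and $i+t\mapsto i-m+\nu(t)$, and the exact energy increment of such a $\sigma$ over $\sigma^*$ is $2m^3+\sum_t(\mu(t)-t)^2+\sum_t(\nu(t)-t)^2$. Hence the quantity your argument must control, $\max_{\sigma^*}\sum_{\sigma\in\Phi^{-1}(\sigma^*)}e^{-\beta(\tilde{H}(\sigma,Id)-\tilde{H}(\sigma^*,Id))}$, is at least $e^{-2\beta m^3}\,\tilde{Z}_{m,\beta}^{\,2}$, where $\tilde{Z}_{m,\beta}$ is the normalizing constant of $\tilde{\mathbb{P}}_{m,\beta}$; since $\log\tilde{Z}_{m,\beta}\gtrsim m\log\min\{m,\beta^{-1/2}\}-O(m)$ (permute within blocks of length $\asymp\beta^{-1/2}$), at the stated threshold $m\asymp C\beta^{-1/2}$ the scrambling entropy is of order $m\log(1/\beta)$ while the energy savings $2\beta m^3$ is only $\asymp C^2m$. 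Because $C$ must depend only on $C_0$ whereas $\log(1/\beta)$ is unbounded in exactly the regime this paper uses ($\beta=\beta_n\to 0$), the bound becomes vacuous for small $\beta$; the same objection already defeats your own accounting of ``index entropy $O(m\log(1/\beta))$ absorbed by choosing $C$ large.'' As written, the method only yields the tail bound for $u\gtrsim\beta^{-1/2}\sqrt{\log(1/\beta)}$, not $u\geq C\beta^{-1/2}$. To reach the stated threshold you would need to cancel the $(m!)^2$-type scrambling freedom against the comparable local rearrangement freedom of non-crossing configurations (i.e., compare restricted partition functions, mapping each bad $\sigma$ to a family of good configurations rather than to the single $\Phi(\sigma)$), or argue by an entirely different route such as the conditional-sampling/hit-and-run techniques used elsewhere in this paper and in \cite{Zhong2}; that cancellation is the missing idea.
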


The following two propositions describe the behavior of the measures $\mu_{n,t_0}$ and $\tilde{\mu}_{n,t_0}$ defined in Definition \ref{Def2.1}.

\begin{proposition}[\cite{Zhong2}, Theorem 4.2.2]\label{P2.3}
For any $\delta_0\in (0,1)$ and $K>0$, there exist positive constants $C_0,c_0,C_1,C_2$ that only depend on $\delta_0, K$, such that the following holds. For any $\beta>0$, any $C_1\leq r\leq \log(1+\beta^{-1})^{8}$, and any $t_0\in [n]$ such that $r\beta^{-1}+1\leq t_0\leq n-r\beta^{-1}$, when $\sigma$ is drawn from $\mathbb{P}_{n,\beta}$, we have 
\begin{equation}
    \mathbb{P}\Big(\sup_{f\in\mathbb{B}_K}\Big|\int f d\mu_{n,t_0}-\int fd\mu\Big|>C_2(\log{r})^{1\slash 4}r^{-1\slash 8}\Big)\leq C_0 \exp(-c_0\beta^{-(1-\delta_0)}).
\end{equation}
\end{proposition}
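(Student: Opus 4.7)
The plan is to reduce the uniform deviation to three ingredients: a \emph{localization} argument based on Proposition \ref{P2.2}, a \emph{pointwise concentration} bound for each fixed test function in $\mathbb{B}_K$, and an \emph{$\varepsilon$-net argument} that upgrades the pointwise bound to a uniform one.

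First I would localize. Applying Proposition \ref{P2.2} to the indices at the two ends of the window $W:=\{i\in[n]:|i-t_0|\leq 2Kr\beta^{-1}\}$ and using (\ref{Eq5.1.1}) to control the bijection's overshoot in either direction, I would show that, on an event of probability at least $1-C\exp(-cr)$, every rescaled atom of $\mu_{n,t_0}$ that falls inside $[-K,K]^2$ originates from an index in $W$. On this event $\int f\,d\mu_{n,t_0}$ is a function of $\sigma|_W$ alone, and the conditional law of $\sigma|_W$ given the complementary values can be coupled at cost $\exp(-cr)$ with the corresponding restriction of a two-sided infinite-volume $L^1$ Mallows measure on $\mathbb{Z}$. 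The scaling limit of the latter has density $\tfrac12 e^{-|x-y|}$, which identifies the limiting integral with $\int f\,d\mu$.

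Next I would establish pointwise concentration. For each fixed $f\in\mathbb{B}_K$, the integral is a sum over the $O(\beta^{-1})$ atoms in $[-K,K]^2$, each of mass $\beta$. Exploiting the finite correlation length $O(\beta^{-1})$ of the $L^1$ Mallows model (itself a consequence of Proposition \ref{P2.2}) and using the hit-and-run dynamics of Section \ref{Sect.1.5} as an engine of bounded-differences perturbations driven by the independent uniforms $(u_i)$, I would obtain a sub-Gaussian tail of the form
\begin{equation*}
\mathbb{P}\Bigl(\Bigl|\int f\,d\mu_{n,t_0}-\mathbb{E}\!\int f\,d\mu_{n,t_0}\Bigr|>s\Bigr)\leq 2\exp\bigl(-c s^{2}\beta^{-1}\bigr).
\end{equation*}
I would then cover $\mathbb{B}_K$ by a minimal $\varepsilon$-net of cardinality at most $\exp(C_K\varepsilon^{-2})$, union-bound, and choose $\varepsilon\asymp(\log r)^{1/4}r^{-1/8}$ with $s\asymp\varepsilon$. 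This balances the covering entropy against the Gaussian exponent, and in the range $r\leq\log(1+\beta^{-1})^{8}$ yields (after adding the $C\exp(-cr)$ localization error and the mean-level approximation error) both the displayed rate and the failure probability $\exp(-c_0\beta^{-(1-\delta_0)})$; the $\delta_0$ loss arises from absorbing the covering-number term into the exponential.

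The main obstacle will be the quantitative control of the mean $\mathbb{E}\int f\,d\mu_{n,t_0}\approx \int f\,d\mu$ at a polynomial-in-$r$ rate. Decoupling in probability from Proposition \ref{P2.2} is exponentially strong, but translating that into convergence of local statistics to the infinite-volume limit requires comparing the finite-volume Mallows law on $W$ with its infinite-volume counterpart, and this comparison produces only a polynomial correction that must be tracked carefully. It is this quantitative convergence which sets the $r^{-1/8}$ exponent in the rate, and it is the most delicate analytic step in the argument.
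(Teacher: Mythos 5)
The paper does not prove Proposition \ref{P2.3}; it is imported verbatim as a citation to [\cite{Zhong2}, Theorem 4.2.2], and no argument for it appears anywhere in the present text. There is therefore no ``paper's own proof'' to compare your proposal against, and a genuine evaluation would require consulting the cited reference.

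That said, a few remarks on your sketch on its own terms. The three-stage plan (localization via the tail bounds of Proposition \ref{P2.2}, pointwise concentration for a fixed $f\in\mathbb{B}_K$, $\varepsilon$-net to upgrade to a uniform statement) is a natural blueprint, and the metric entropy estimate $\exp(C_K\varepsilon^{-2})$ for the class of $1$-Lipschitz, $1$-bounded functions supported in $[-K,K]^2$ is the right order. However, the claimed pointwise sub-Gaussian bound $\exp(-cs^2\beta^{-1})$ is asserted rather than derived: the hit-and-run uniforms $(u_i)$ are independent, but the permutation $\sigma$ produced in the second stage of the algorithm is built by a sequential sampling in which each $Y_i$ depends on all previously placed symbols, so a naive bounded-differences argument in the $(u_i)$ does not go through without additional work (and is exactly the kind of delicate comparison that would have to be tracked). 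You correctly flag the mean-level approximation $\mathbb{E}\int f\,d\mu_{n,t_0}\approx\int f\,d\mu$ as the analytic bottleneck setting the exponent $r^{-1/8}$, but the sketch does not actually explain how the polynomial rate would be extracted from the exponential decoupling, nor why the loss in the failure probability is precisely $\beta^{-(1-\delta_0)}$ rather than some other power. As a research plan the outline is coherent, but it has not yet cleared the two quantitative steps that carry the entire content of the statement.
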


\begin{proposition}[\cite{Zhong2}, Theorem 4.2.4]\label{P2.3.2}
For any $\delta_0\in (0,1)$ and $K>0$, there exist positive constants $C_0,c_0,C_1,C_2$ that only depend on $\delta_0, K$, such that the following holds. For any $\beta>0$, any $C_1\leq r\leq \log(1+\beta^{-1\slash 2})^{4}$, and any $t_0\in [n]$ such that $r\beta^{-1\slash 2}+1\leq t_0\leq n-r\beta^{-1\slash 2}$, when $\sigma$ is drawn from $\tilde{\mathbb{P}}_{n,\beta}$, we have 
\begin{equation}
    \mathbb{P}\Big(\sup_{f\in\mathbb{B}_K}\Big|\int f d\tilde{\mu}_{n,t_0}-\int fd\tilde{\mu}\Big|>C_2(\log{r})^{1\slash 4}r^{-1\slash 20}\Big)\leq C_0 \exp(-c_0\beta^{-(1-\delta_0)\slash 2}).
\end{equation}
\end{proposition}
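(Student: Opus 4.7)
The plan is to prove the supremum concentration via localization to a window of radius $r\beta^{-1/2}$ around $t_0$, followed by a quantitative comparison between the conditional law of $\sigma$ on that window and the infinite-volume limit $\tilde\mu$. The feature that enables localization is the support constraint $\supp(f)\subseteq[-K,K]^2$, so $\int f\,d\tilde\mu_{n,t_0}$ only involves pairs $(i,\sigma(i))$ with $\beta^{1/2}|i-t_0|,\beta^{1/2}|\sigma(i)-t_0|\leq K$, while $r$ will be chosen much larger than $K$ so that essentially all such pairs lie inside the window.

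First I would set $W_r=[t_0-r\beta^{-1/2},t_0+r\beta^{-1/2}]\cap[n]$ and apply Proposition \ref{P2.2.2} at its two endpoints: outside an event of probability $\leq C\exp(-c\beta^{-(1-\delta_0)/2})$, the number of indices $i\in W_r$ with $\sigma(i)\notin W_r$ is at most $C\beta^{-1/2}$, a fraction $\lesssim 1/r$ of $|W_r|\asymp r\beta^{-1/2}$. Conditioning on this boundary data, the law of $\sigma|_{W_r'}$, where $W_r':=\{i\in W_r:\sigma(i)\in W_r\}$, is a Mallows-type measure on bijections between two subsets of $W_r$ of equal cardinality, with weight $\propto\exp(-\beta\sum(i-\sigma(i))^2)$. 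Next I would compare this conditional measure with $\tilde\mu$ after the scaling $(i,\sigma(i))\mapsto\beta^{1/2}(i-t_0,\sigma(i)-t_0)$, which sends $W_r$ to $[-r,r]$ and matches the discrete cost with the continuous $L^2$ cost defining $\tilde\mu$. Since the $L^2$ Mallows model lacks exactly-solvable structure, the comparison is carried out using the hit and run algorithm from Section \ref{Sect.1.5} as a coupling device: one samples the auxiliary variables $u_i$ in both regimes and compares the uniform draws from the resulting constrained sets, yielding a coupling whose error on $[-K,K]^2$ decays as $r^{-1/20}$ after optimizing the trade-off between boundary influence and window volume.

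Finally, the pointwise estimate is promoted to a supremum over $\mathbb{B}_K$ by a covering argument: the uniform Lipschitz bound gives an $\varepsilon$-net of $\mathbb{B}_K$ in sup norm of cardinality $\exp(O(K^2/\varepsilon^2))$, and setting $\varepsilon=(\log r)^{1/4}r^{-1/20}$ makes the log-cardinality $O(r^{1/10}(\log r)^{-1/2})$, absorbed by the deviation exponent $\beta^{-(1-\delta_0)/2}$ since $r\leq\log(1+\beta^{-1/2})^4$. The main obstacle will be the quantitative coupling step: showing that the conditional finite-window measure is close to $\tilde\mu$ at a polynomial rate in $r$, uniformly in the boundary data, is the heart of the argument. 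Because the hit-and-run chain updates every coordinate of $\sigma$ in each step, one must decouple the interior of $W_r$ from its exterior and track how residual boundary dependence decays along the chain; this is where the specific exponent $1/20$ (rather than a naively expected $1/r$) enters, reflecting the cost of propagating information through a two-dimensional window using one-dimensional hit-and-run updates.
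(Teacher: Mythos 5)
There is a genuine gap, and it sits exactly at the step you yourself flag as ``the heart of the argument.'' Note first that the paper does not prove this proposition at all: it is imported verbatim as \cite[Theorem 4.2.4]{Zhong2}, so the entire content of the statement is the quantitative local-limit result that you defer to your ``coupling step.'' Your localization via Proposition \ref{P2.2.2} is fine (controlling the number of indices in the window $W_r$ whose image escapes $W_r$, with probability cost $\exp(-c\beta^{-1/2})$, which is stronger than required), and the conditional description of $\sigma|_{W_r'}$ as a Gibbs measure on bijections with weight $\exp(-\beta\sum_i(i-\sigma(i))^2)$ given the boundary data is correct. But the assertion that the hit and run algorithm ``yields a coupling whose error on $[-K,K]^2$ decays as $r^{-1/20}$ after optimizing the trade-off between boundary influence and window volume'' is not an argument; it is a restatement of the conclusion. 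Nothing in the paper (nor in your sketch) provides a quantitative mixing or contraction estimate for the hit-and-run chain for the $L^2$ model, uniform over boundary configurations, and without such an estimate there is no mechanism producing any polynomial rate in $r$, let alone the specific exponent $1/20$, nor the deviation probability $\exp(-c_0\beta^{-(1-\delta_0)/2})$ for a single test function $f$. Establishing that empirical measures of the conditioned finite-window model concentrate around the explicit Gaussian-kernel density $\tilde\mu$ is precisely what the cited theorem accomplishes, and it requires substantive machinery beyond the single step of Proposition \ref{P2.2.2}.

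The peripheral steps are also only roughly right. The covering argument for $\sup_{f\in\mathbb{B}_K}$ works in principle (the metric entropy of $1$-Lipschitz, $1$-bounded functions supported in $[-K,K]^2$ at scale $\varepsilon$ is $\exp(O(K^2/\varepsilon^2))$, and with $\varepsilon=(\log r)^{1/4}r^{-1/20}$ and $r\leq\log(1+\beta^{-1/2})^4$ the net cardinality is indeed negligible against $\exp(c\beta^{-(1-\delta_0)/2})$), but it is contingent on the per-function tail bound you have not supplied. You also need to verify that replacing $\int f\,d\tilde\mu_{n,t_0}$ (which sums over all $i\in[n]$) by the sum over the window introduces an error controlled by the boundary count times $\|f\|_\infty\beta^{1/2}$, i.e.\ of order $r^{0}\cdot\beta^{1/2}\cdot\beta^{-1/2}=O(1)$ unless one uses the support constraint more carefully; the correct statement is that points with $\beta^{1/2}|i-t_0|\leq K$ but $\sigma(i)$ far away contribute, and bounding their number again needs the displacement estimates, which you should make explicit. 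In short: the scaffolding (localize, condition, compare, union bound) is the natural one, but the decisive quantitative comparison with $\tilde\mu$ is assumed rather than proved, so the proposal does not constitute a proof of the proposition.
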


\section{Proof of Theorem \ref{limit_l1_1}}\label{Sect.3}

In this section, we give the proof of Theorem \ref{limit_l1_1}. The proof uses the notion of refined paths as discussed in Section \ref{Sect.2.1} together with the hit and run algorithm for sampling from the $L^1$ model. We first establish a preliminary proposition in Section \ref{Sect.3.1}, and then finish the proof of Theorem \ref{limit_l1_1} in Section \ref{Sect.3.2}.

\subsection{A preliminary proposition}\label{Sect.3.1}

In this subsection, we establish the following proposition, which will be used in the proof of Theorem \ref{limit_l1_1}. We recall the setup in Section \ref{Sect.2.1}. 

\begin{proposition}\label{P3.1}
Let $(\beta_n)_{n=1}^{\infty}$ be an arbitrary sequence of positive numbers such that $\lim_{n\rightarrow\infty} n\beta_n=\theta>0$, and let $\sigma$ be drawn from $\mathbb{P}_{n,\beta_n}$. Consider any $T,K_0\in\mathbb{N}^{*}$ such that $T\geq 4$, any refined path $\Gamma\in \Pi^{T,T,K_0}$, and any $l\in [2T-1]$. There exist positive constants $C_1,T_0\geq 4$ that only depend on $\theta$ and positive constants $C_2,c_2,N_0$ that only depend on $T,K_0$ and the sequence $\{\beta_n\}$, such that the following holds. 

Let
\begin{eqnarray*}
   && Q_{\Gamma,l}:=(x_{l-1}(\Gamma),x_l(\Gamma)]\times ( y_{l-1}(\Gamma), y_l(\Gamma)],\\
   &&Q_{\Gamma,l}':=[ a_{l-1}(\Gamma),   c_l(\Gamma)]\times [ b_{l-1}(\Gamma), d_l(\Gamma)],
\end{eqnarray*}
where the endpoints are defined in Section \ref{Sect.2.1}. Let $\mathscr{A}_{\Gamma,l}$ be the event that 
\begin{eqnarray}\label{Eq10.2}
 &&\Big|LIS(\sigma|_{nQ_{\Gamma,l}})-2\sqrt{n}\Big(\int_{Q_{\Gamma,l}}\rho_{\theta}(x,y)dxdy\Big)^{1\slash 2}\Big|\nonumber\\
 &\leq& C_1 T^{-1\slash 2} n^{1\slash 2} (x_l(\Gamma)-x_{l-1}(\Gamma)+y_l(\Gamma)-y_{l-1}(\Gamma))\nonumber\\
 && +C_1(T^{-5} n^{1\slash 2}+T^{-2\slash 3}n^{1\slash 3}),
\end{eqnarray}
and let $\mathscr{B}_{\Gamma,l}$ be the event that
\begin{eqnarray}\label{Eq10.18}
 &&\Big|LIS(\sigma|_{nQ_{\Gamma,l}'})-2\sqrt{n}\Big(\int_{Q_{\Gamma,l}'}\rho_{\theta}(x,y)dxdy\Big)^{1\slash 2}\Big|\nonumber\\
 &\leq& C_1 T^{-1\slash 2} n^{1\slash 2} (c_l(\Gamma)-a_{l-1}(\Gamma)+d_l(\Gamma)-b_{l-1}(\Gamma))\nonumber\\
 && +C_1(T^{-5} n^{1\slash 2}+T^{-2\slash 3}n^{1\slash 3}).
\end{eqnarray}
When $T\geq T_0$ and $n\geq N_0$, we have
\begin{equation}
    \mathbb{P}((\mathscr{A}_{\Gamma,l})^c)\leq C_2n\exp(-c_2 n^{1\slash 4}), \quad  \mathbb{P}((\mathscr{B}_{\Gamma,l})^c)\leq C_2n\exp(-c_2 n^{1\slash 4}).
\end{equation}
\end{proposition}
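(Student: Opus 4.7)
The plan is to prove both (\ref{Eq10.2}) and (\ref{Eq10.18}) by the same two-step strategy: first quantify how many points of $S(\sigma)$ fall inside the relevant rectangle, then compare the induced sub-permutation to a uniformly random permutation of the same size and invoke Proposition \ref{P1}. Throughout let $Q = Q_{\Gamma,l}$; the argument for $Q_{\Gamma,l}'$ is essentially identical, since $Q_{\Gamma,l}'$ enlarges $Q_{\Gamma,l}$ by at most $O(1/(TK_0))$ in each direction (the half-length of the interval $I_l(\Gamma)$), an error that can be absorbed into the $T,K_0$-dependent constants $C_2,c_2,N_0$.

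Write $m := |S(\sigma)\cap nQ|$. The first step is to upgrade Proposition \ref{Densi.l1} (which only asserts weak convergence in probability) to a quantitative deviation statement: with probability at least $1 - Cn\exp(-cn^{1/4})$, one has $|m - n\mu_\theta(Q)| \leq n^{1-c'}$ for some small absolute $c'>0$. I expect this to follow by rerunning the large-deviation and Fisher-information argument used to deduce Proposition \ref{Densi.l1} from \cite{M1,M2}, now applied to a Lipschitz approximation of $\mathbbm{1}_Q$. Since $m_\theta \leq \rho_\theta \leq M_\theta$ forces $\mu_\theta(Q)$ to be bounded above and below by positive constants depending only on $T$ and $\theta$, the square-root function is Lipschitz in the relevant range and this count estimate translates into control of $|2\sqrt{m} - 2\sqrt{n\mu_\theta(Q)}|$ at a level that absorbs into the $T^{-5}n^{1/2}$ term of (\ref{Eq10.2}).

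The second, more delicate step is to show that, conditional on the sets of $x$- and $y$-coordinates of points falling in $nQ$, the sub-permutation $\pi := \sigma|_{nQ}$ is close enough to a uniformly random bijection that Proposition \ref{P1} applies. The conditional law of $\pi$ has density proportional to $\exp(-\beta_n\sum_i|\pi(i)-i|)$, and a direct Radon--Nikodym comparison with uniform gives a ratio as large as $\exp(\Theta(\beta_n n^2 T^{-3})) = \exp(\Theta(nT^{-3}))$, which is useless. To avoid this I use the hit-and-run algorithm of Section \ref{Sect.1.5}: since it preserves $\mathbb{P}_{n,\beta_n}$, I may assume $\sigma$ is the output of one hit-and-run step, so that given the auxiliary thresholds $b_i$, $\sigma$ is uniform on the set $A = \{\tau : \tau(i)\leq b_i,\ \forall i\}$. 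On a high-probability event for the $b_i$'s (whose increments $b_i - \max(\sigma(i),i)$ are independent $\mathrm{Exp}(2\beta_n)$, hence $O(n)$ in unrescaled units and $O(1)$ after dividing by $n$, combined with the bulk information from Proposition \ref{Densi.l1}), the uniform measure on $A$ approximately factorizes into an ``inside the rectangle'' part and an ``outside'' part; the inside part is, up to small total-variation error, the uniform distribution on bijections between the two relevant sets of coordinates. Identifying this inside bijection with a uniform permutation of $[m]$ via order-preserving maps preserves $LIS$, and Proposition \ref{P1} applied with $\delta_0 = 1/6$ yields a fluctuation of order $m^{1/3} \lesssim (n/T^2)^{1/3}$, accounting for the $T^{-2/3}n^{1/3}$ contribution.

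Combining the two steps produces (\ref{Eq10.2}); the linear term $C_1 T^{-1/2}n^{1/2}(x_l(\Gamma)-x_{l-1}(\Gamma)+y_l(\Gamma)-y_{l-1}(\Gamma))$ absorbs the discrepancy between $2\sqrt{n\int_Q\rho_\theta}$ and a pointwise-density approximation, controlled by the uniform continuity of $\rho_\theta$ together with the bounds $m_\theta\leq\rho_\theta\leq M_\theta$. The bound for $\mathscr{B}_{\Gamma,l}$ then follows by the same argument on the slightly enlarged closed rectangle $Q_{\Gamma,l}'$. The main obstacle is unquestionably the second step: matching the sub-permutation inside the rectangle to a uniform bijection with total-variation error small enough to preserve the sharp $m^{1/3}$ LIS fluctuation is the core technical content of the proposition, and the hit-and-run decoupling of $A$ into an inside factor and an outside factor is where most of the work lies.
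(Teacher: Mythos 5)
Your plan identifies the right main tool (the hit-and-run representation, under which $\sigma$ is uniform on $\{\tau : \tau(i)\le b_i\ \forall i\}$ given the thresholds $b_i$) and the right high-level structure (count the points inside the rectangle, compare the induced sub-permutation to uniform, invoke Proposition~\ref{P1}). But your proposed mechanism for the core second step has a genuine gap, and it differs from what the paper actually does. You propose to show that the ``inside'' part of the uniform measure on $\{\tau : \tau(i)\le b_i\}$ is close in \emph{total variation} to the uniform distribution on bijections between the relevant coordinate sets. That TV claim is not established in the paper and is unlikely to hold at the strength you need: the constraint $\tau(i)\le b_i$ is binding for a macroscopic fraction of indices $i$ in the rectangle (those with $b_i$ not much above $\sigma_0(i)$), so the conditional law really is biased toward the diagonal, and making the TV error small enough to transfer the $m^{1/3}$ LIS fluctuation looks out of reach.

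The paper avoids this entirely. Instead of an approximate factorization, it splits the indices inside the rectangle into two sets: $\mathcal{S}_{1,l}$ (indices $i\in\{s_1,\dots,s_2\}$ not yet placed whose threshold $b_i$ exceeds the top $s_2'$ of the $y$-range, so the constraint is inactive over the whole rectangle) and $\mathcal{S}_{2,l}$ (those with $s_1'\le b_i\le s_2'$, whose constraint is binding). On the $\sigma$-algebra $\mathcal{B}_l'$ generated by $\sigma_0$, the $b_i$'s, the already-placed symbols, and the coordinate sets of the $\mathcal{S}_{1,l}$ points, the induced bijection $\tau$ on the $\mathcal{S}_{1,l}$ points is shown to be \emph{exactly} uniform, by exhibiting an explicit bijection between the sets $M_{r,\eta_1}$ and $M_{r,\eta_2}$ (equations~(\ref{E3.2})--(\ref{Eq3.23})). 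So Proposition~\ref{P1} applies exactly, with no TV error term. The contribution of $\mathcal{S}_{2,l}$ to the LIS ($L_{2,l}$) is then bounded \emph{directly} by a first-moment counting argument (equation~(\ref{Eq10})) rather than by comparison to uniform, giving a term of order $T^{-1/2}n^{1/2}(x_l-x_{l-1}+y_l-y_{l-1})$ that is absorbed into the first error term of (\ref{Eq10.2}). This $\mathcal{S}_{1,l}/\mathcal{S}_{2,l}$ decomposition is the missing idea; the ``inside/outside'' factorization you describe does not produce it. The paper also handles one additional wrinkle you do not mention: for the number of available slots $N_i$ to be of order $n$ one needs $y_{l-1}(\Gamma)$ bounded away from zero, and the case $y_{l-1}(\Gamma)<1/3$ is reduced to the other case by the reversal $i\mapsto n+1-\sigma(n+1-i)$.

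Your first step (concentration of the point count) is essentially what the paper does via Lemma~\ref{Lemma2} (a large-deviation estimate descending from \cite{M1}), and your observation that $m_\theta\le\rho_\theta\le M_\theta$ lets you Lipschitz-ify the square root is also used. So the plan is salvageable once the $\mathcal{S}_{1,l}/\mathcal{S}_{2,l}$ split and the exact conditional uniformity are put in place of the TV argument.
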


The rest of this subsection is devoted to the proof of Proposition \ref{P3.1}. We present the proof for $\mathbb{P}((\mathscr{A}_{\Gamma,l})^c)$, and the proof for $\mathbb{P}((\mathscr{B}_{\Gamma,l})^c)$ is similar.

Throughout the rest of this subsection, we fix any sequence of positive numbers $(\beta_n)_{n=1}^{\infty}$ such that $\lim_{n\rightarrow\infty} n\beta_n=\theta>0$. Note that there exists a positive constant $n_0$ that only depends on the sequence $\{\beta_n\}$, such that for any $n\in \mathbb{N}^{*}$ with $n\geq n_0$, 
\begin{equation}\label{Eq4.4}
    \theta\slash 2\leq n \beta_n\leq 2\theta.
\end{equation}
We assume that $n\in\mathbb{N}^{*}$ and $n\geq n_0$. We also fix any $T,K_0\in \mathbb{N}^{*}$ such that $T\geq 4$, any refined path $\Gamma\in \Pi^{T,T,K_0}$, and any $l\in [2T-1]$. We denote $Q_{l}:=Q_{\Gamma,l}$ and $Q_l':=Q_{\Gamma,l}'$ to simplify the notations. We denote by $C',c'$ positive constants that only depend on $\theta$, and denote by $\tilde{C},\tilde{c}$ positive constants that only depend on $T,K_0$ and the sequence $\{\beta_n\}$. The values of these constants may change from line to line.

\subsubsection{Preliminary estimates}\label{Sect.3.1.1}

In this part, we present some preliminary estimates that will be used in Section \ref{Sect.3.1.2}. We start with the following elementary lemma.

\begin{lemma}\label{Lemma3.1}
For any $m,d\in\mathbb{N}^{*}$ such that $d\leq m$, we have 
\begin{equation}
    \binom{m}{d}\leq \Big(\frac{em}{d}\Big)^d.
\end{equation}
\end{lemma}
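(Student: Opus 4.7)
The plan is to combine two very standard ingredients: a trivial upper bound on $\binom{m}{d}$ coming from writing out the falling factorial in its numerator, and a classical lower bound on $d!$ of the form $d!\geq (d/e)^d$. First I would write
\[
\binom{m}{d}=\frac{m(m-1)(m-2)\cdots(m-d+1)}{d!}
\]
and bound each of the $d$ factors in the numerator by $m$, which yields $\binom{m}{d}\leq m^d/d!$.

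Then I would lower bound $d!$ by $(d/e)^d$. The quickest derivation is to observe that every term in the Taylor expansion of $e^d$ is nonnegative, so $e^d=\sum_{k=0}^{\infty}d^k/k!\geq d^d/d!$, and rearranging gives $d!\geq d^d/e^d=(d/e)^d$. Plugging this into the previous bound produces
\[
\binom{m}{d}\leq \frac{m^d}{(d/e)^d}=\Big(\frac{em}{d}\Big)^d,
\]
which is exactly the claim. There is no real obstacle here since both ingredients are textbook estimates; the only point to verify is that the Taylor series argument is valid for every $d\in\mathbb{N}^{*}$, which is immediate.
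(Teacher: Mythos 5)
Your proof is correct and takes essentially the same route as the paper's: bound the numerator of $\binom{m}{d}=m(m-1)\cdots(m-d+1)/d!$ by $m^d$, and lower bound $d!$ via the Taylor-series observation $e^d=\sum_{k\ge 0}d^k/k!\ge d^d/d!$. The only difference is cosmetic bookkeeping in how the factor $d^d$ is introduced.
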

\begin{proof}
We have
\begin{equation*}
    \binom{m}{d}=\frac{m(m-1)\cdots (m-d+1)}{d!}\leq \frac{m^d}{d^d}\frac{d^d}{d!}.
\end{equation*}
Note that 
\begin{equation*}
    e^d=\sum_{k=0}^{\infty}\frac{d^k}{k!}\geq \frac{d^d}{d!}.
\end{equation*}
Hence
\begin{equation*}
    \binom{m}{d}\leq \Big(\frac{em}{d}\Big)^d.
\end{equation*}
\end{proof}



The following lemma bounds the number of points from $\{(i,\sigma(i))\}_{i=1}^n$ that lie in the rectangle $n Q_l$ when $\sigma$ is drawn from $\mathbb{P}_{n,\beta_n}$. The proof of this lemma is similar to that of \cite[Theorem 1.5]{M1} and is presented in the appendix.

\begin{lemma}\label{Lemma2}
Assume the setup as given in the preceding and recall the definition of $\rho_{\theta}(\cdot,\cdot)$ from Proposition \ref{Densi.l1}. Let $\sigma$ be drawn from $\mathbb{P}_{n,\beta_n}$. For any $\delta>0$, there exist positive constants $C_0, c_0$ that only depend on $T, K_0,\delta$ and the sequence $\{\beta_n\}$, such that for any $\Gamma\in \Pi^{T,T,K_0}$ and $l\in [2T-1]$,
\begin{equation}
    \mathbb{P}\Big(\Big|n^{-1}|S(\sigma)\cap n Q_l|-\int_{Q_l}\rho_{\theta}(x,y)dxdy\Big|\geq \delta \Big)\leq C_0\exp(-c_0 n).
\end{equation}
\end{lemma}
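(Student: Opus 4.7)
The plan is to recognize that $n^{-1}|S(\sigma)\cap nQ_l|$ is precisely $\nu_{n,\sigma}(Q_l)$ in the notation of Definition \ref{Def1.5}, and that $\int_{Q_l} \rho_\theta(x,y)\,dx\,dy = \mu_\theta(Q_l)$ by Proposition \ref{Densi.l1}. The lemma is therefore the exponential-scale quantitative refinement of the weak convergence $\nu_{n,\sigma}\Rightarrow\mu_\theta$, restricted to the single rectangle $Q_l$. I would follow the large deviations strategy of \cite[Theorem 1.5]{M1}, adapted to the rectangle test set.

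First I would address the fact that $\mathbf{1}_{Q_l}$ is not continuous on $[0,1]^2$ by sandwiching it between continuous bump functions $\phi_\varepsilon^{\pm}:[0,1]^2\to[0,1]$ satisfying $\phi_\varepsilon^{-}\le \mathbf{1}_{Q_l}\le \phi_\varepsilon^{+}$, with $\supp(\phi_\varepsilon^{+}-\phi_\varepsilon^{-})$ contained in the $\varepsilon$-neighborhood of $\partial Q_l$. The $L^\infty$ bound $\rho_\theta\le M_\theta$ from Proposition \ref{Densi.l1}, combined with the fact that $\partial Q_l$ has length bounded in terms of $T,K_0$, yields
$$\int \bigl(\phi_\varepsilon^{+}-\phi_\varepsilon^{-}\bigr)\,d\mu_\theta \;\le\; C(T,K_0,\theta)\,\varepsilon.$$
Choosing $\varepsilon$ small in terms of $\delta,T,K_0,\theta$ makes the right side smaller than $\delta/2$, so the event in the lemma is contained in $\{\langle \phi_\varepsilon^{+},\nu_{n,\sigma}\rangle>\langle \phi_\varepsilon^{+},\mu_\theta\rangle+\delta/4\}\cup\{\langle \phi_\varepsilon^{-},\nu_{n,\sigma}\rangle<\langle \phi_\varepsilon^{-},\mu_\theta\rangle-\delta/4\}$, reducing the task to an exponential concentration bound for a \emph{continuous} linear functional of $\nu_{n,\sigma}$.

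Second, for a fixed continuous test function $\phi$, I would obtain exponential tails for $\langle \phi,\nu_{n,\sigma}\rangle$ around $\langle \phi,\mu_\theta\rangle$ via the tilted-measure apparatus underlying Proposition \ref{Densi.l1} (i.e.\ \cite[Theorem 1.5]{M1}). Writing the exponential moment as a ratio of partition functions,
$$\mathbb{E}_{n,\beta_n}\exp\!\Big(\lambda\sum_{i=1}^n \phi\bigl(i/n,\sigma(i)/n\bigr)\Big)=\frac{1}{Z_{n,\beta_n}}\sum_{\tau\in S_n}\exp\!\Big(-\beta_n H(\tau,Id)+\lambda\sum_{i=1}^n \phi\bigl(i/n,\tau(i)/n\bigr)\Big),$$
both numerator and denominator admit $n\to\infty$ asymptotics given by a Gibbs variational principle on $\mathcal{M}$ for the convex rate functional $J_\theta(\mu)=\theta\int|x-y|\,d\mu+H(\mu\,\|\,\mathrm{Leb})$. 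Strict convexity pins down $\mu_\theta$ as the unique minimizer on $\mathcal{M}$, so $\inf\{J_\theta(\mu)-J_\theta(\mu_\theta):|\langle\phi,\mu-\mu_\theta\rangle|\ge \delta/4\}>0$. Optimizing over $\lambda$ then produces the desired $\exp(-c_0 n)$ bound with $c_0$ depending on $\phi,\delta$, and $\{\beta_n\}$, hence on $T,K_0,\delta,\{\beta_n\}$ after substituting $\phi=\phi_\varepsilon^{\pm}$.

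The main obstacle will be the bookkeeping in the second step: tracking that the partition function asymptotics hold uniformly over the tilt $\lambda\phi$ used, and verifying that the extracted rate $c_0$ depends only on the quantities permitted in the statement. These are routine adaptations of the Mukherjee-style permanent estimates developed in \cite{M1}, which is why the author defers the detailed argument to the appendix with only a remark on its similarity to \cite[Theorem 1.5]{M1}.
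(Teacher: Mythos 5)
Your strategy matches the paper's appendix proof in its essential ingredients: recognize the statement as a rate of weak convergence of $\nu_{n,\sigma}$ to $\mu_\theta$, invoke the LDP for $\nu_{n,\tau}$ under the uniform measure from \cite{M1}, transfer to $\mathbb{P}_{n,\beta_n}$ by a Gibbs/Varadhan argument, and close via the uniqueness of $\mu_\theta$ as the optimizer, using $\rho_\theta\le M_\theta$ to control the discontinuity of $\mathbbm{1}_{Q_l}$. Two implementation choices differ but are interchangeable: where you sandwich $\mathbbm{1}_{Q_l}$ between continuous bump functions, the paper instead shows directly that the event is contained in the complement of a L\'evy--Prokhorov ball around $\mu_\theta$ (same key input, $\rho_\theta\le M_\theta$, to control boundary-neighborhood mass); and where you propose a scalar Chernoff bound over the tilt $\lambda$, the paper applies the Varadhan-type upper bound to that closed set via a finite-cover argument over compact level sets of the good rate function (adapting \cite[Lemmas 4.3.4, 4.3.6]{DZ2}). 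One point to flag in your second step: the inequality $\inf\{J_\theta(\mu)-J_\theta(\mu_\theta):|\langle\phi,\mu-\mu_\theta\rangle|\ge\delta/4\}>0$ does not follow from strict convexity and uniqueness of the minimizer alone --- you also need compactness (of $\mathcal{M}$, or equivalently of the level sets of the rate function) together with lower semicontinuity of $J_\theta$, so that the infimum is attained away from $\mu_\theta$; the paper makes this explicit by noting that the set $V_\delta\cap\mathcal{M}$ over which the relevant supremum is taken is compact and does not contain $\mu_\theta$, and it handles uniformity over the finitely many $(\Gamma,l)$ by a final sup/inf over that finite index set.
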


\subsubsection{Analysis using the hit and run algorithm}\label{Sect.3.1.2}

In this part, based on the hit and run algorithm for the $L^1$ model as introduced in Section \ref{Sect.1.5}, we give the proof of Proposition \ref{P3.1}. Let $M_{\theta}$ and $m_{\theta}$ be defined as in Proposition \ref{Densi.l1}. In the following, we assume that
\begin{equation}\label{Eq7.4}
    n\geq \max\{8 K_0 T,K_0^2 T^3\}, \quad T\geq \max\Big\{\frac{1000 e^{5\theta}}{m_{\theta}},4\Big\}.
\end{equation}
If $x_{l-1}(\Gamma)=x_l(\Gamma)$ or $y_{l-1}(\Gamma)=y_l(\Gamma)$, then $Q_l=\emptyset$ and $LIS(\sigma|_{n Q_l})=0$ for any $\sigma\in S_n$. In the following, we assume that $x_{l-1}(\Gamma)<x_l(\Gamma)$ and $y_{l-1}(\Gamma)<y_l(\Gamma)$. Note that 
\begin{equation}\label{Eq4.1}
    (2 K_0 T)^{-1}\leq x_l(\Gamma)-x_{l-1}(\Gamma)\leq T^{-1},\quad  (2 K_0 T)^{-1}\leq y_l(\Gamma)-y_{l-1}(\Gamma)\leq T^{-1},
\end{equation}
which implies
\begin{equation}\label{Eq3.4}
   \min\{n(x_l(\Gamma)-x_{l-1}(\Gamma)),n(y_l(\Gamma)-y_{l-1}(\Gamma))\}\geq \frac{n}{2K_0 T}\geq  4.
\end{equation}
In the following, we assume that
\begin{eqnarray}\label{Eq1}
 && (n x_{l-1}(\Gamma), n x_l(\Gamma)]\cap \mathbb{N}^{*}=\{s_1,s_1+1,\cdots,s_2\},\nonumber\\
 && (n y_{l-1}(\Gamma),n y_l(\Gamma)]\cap\mathbb{N}^{*}=\{s_1',s_1'+1,\cdots,s_2'\}.
\end{eqnarray}

We consider the two cases $y_{l-1}(\Gamma)\geq 1\slash 3$ and $y_{l-1}(\Gamma)< 1\slash 3$ in \textbf{Cases 1-2} as follows.

\paragraph{Case 1: $y_{l-1}(\Gamma)\geq 1\slash 3$}

We generate $\sigma\in S_n$ through the following procedure. We sample $\sigma_0\in S_n$ from $\mathbb{P}_{n,\beta_n}$, and then run one step of the hit and run algorithm for the $L^1$ model to obtain $\sigma$. As $\mathbb{P}_{n,\beta_n}$ is the stationary distribution of the hit and run algorithm, the distribution of $\sigma$ is given by $\mathbb{P}_{n,\beta_n}$. 

We recall that in the hit and run algorithm, starting from $\sigma_0$, for every $i\in [n]$, we independently sample $u_i$ from the uniform distribution on $[0, e^{-2\beta_n(\sigma_0(i)-i)_{+}}]$ and take $b_i=i-\log(u_i)\slash (2\beta_n)$. For every $i\in [n]$, let
\begin{equation*}
    N_i:=|\{j\in [n]: b_j\geq i\}|-n+i.
\end{equation*}
Then we sample $\sigma$ uniformly from the set 
\begin{equation*}
    \{\tau\in S_n: \tau(i)\leq b_i\text{ for every }i\in [n]\}
\end{equation*}
through the following procedure. Look at the $N_n$ integers $i\in [n]$ with $b_i\geq n$, and pick $Y_n$ uniformly from these integers; then look at the $N_{n-1}$ remaining integers $i\in[n]$ with $b_i\geq n-1$ (with $Y_n$ deleted from the list), and pick $Y_{n-1}$ uniformly from these integers; and so on. In this way we obtain $\{Y_i\}_{i=1}^n$. Finally, we let $\sigma\in S_n$ be such that $\sigma(Y_i)=i$ for every $i\in [n]$.

We bound $N_i$ for each $i\in [n]$ as follows. As $b_j\geq j$ for every $j\in [n]$, we have
\begin{equation*}
    N_i=1+\sum_{j=1}^{i-1}\mathbbm{1}_{b_j\geq i}.
\end{equation*}
If $i=1$, we have $N_i=1$. Below we assume that $i\geq 2$. Let $X_j:=\mathbbm{1}_{b_j\geq i}$ for every $j\in[i-1]$. Note that conditional on $\sigma_0$, $X_1,\cdots,X_{i-1}$ are mutually independent, and for any $j\in [i-1]$, $X_j$ follows the Bernoulli distribution with 
\begin{eqnarray*}
   && \mathbb{P}(X_j=1|\sigma_0)=\mathbb{P}(b_j\geq i|\sigma_0)=\mathbb{P}(u_j\leq e^{-2\beta_n(i-j)}|\sigma_0)\\
   &=& \min\{1,e^{-2\beta_n((i-j)-(\sigma_0(j)-j)_{+})}\}\geq e^{-2\beta_n(i-j)}.
\end{eqnarray*}
By Hoeffding's inequality (see e.g. \cite[Theorem 2.8]{BLM}), for any $t\geq 0$,
\begin{equation*}
    \mathbb{P}\Big(N_i\leq 1+\sum_{j=1}^{i-1}e^{-2\beta_n (i-j)}-it\Big|\sigma_0\Big)\leq e^{-2i t^2}. 
\end{equation*}
Hence
\begin{equation*}
    \mathbb{P}\Big(N_i\leq 1+\sum_{j=1}^{i-1}e^{-2\beta_n (i-j)}-it\Big)=\mathbb{E}\Big[\mathbb{P}\Big(N_i\leq 1+\sum_{j=1}^{i-1}e^{-2\beta_n (i-j)}-it\Big|\sigma_0\Big)\Big]\leq e^{-2i t^2}. 
\end{equation*}
By (\ref{Eq4.4}), $2\beta_n (i-j)\leq 2\beta_n n\leq 4\theta$ for any $j\in [i-1]$, hence
\begin{equation*}
    \mathbb{P}(N_i\leq (e^{-4\theta}-t)i)\leq e^{-2i t^2}. 
\end{equation*}
Setting $t=e^{-4\theta}\slash 2$, we obtain that for every $i\in [n]$,
\begin{equation}\label{Eq7}
    \mathbb{P}(N_i\leq  e^{-4\theta}i\slash 2)\leq \exp(-e^{-8\theta} i\slash 2).
\end{equation}
Note that (\ref{Eq7}) holds trivially for $i=1$.

Recall the definitions of $s_1,s_2,s_1',s_2'$ from (\ref{Eq1}). We set
\begin{eqnarray}\label{Eqs}
 && \mathcal{S}_{1,l}:=\{i\in \{s_1,\cdots,s_2\}\backslash \{Y_{s_2'+1},\cdots,Y_n\}:b_i> s_2'\}, \nonumber\\
 && \mathcal{S}_{2,l}:=\{i\in \{s_1,\cdots,s_2\}\backslash \{Y_{s_2'+1},\cdots,Y_n\}:s_1'\leq b_i\leq s_2'\},\nonumber\\
 && \mathcal{S}_l':=\{i\in \{s_1,\cdots,s_2\}: s_1'\leq   b_i \leq s_2' \},\quad W_l:=|\mathcal{S}_l'|.
\end{eqnarray}
Note that $\mathcal{S}_{2,l}\subseteq \mathcal{S}_l'$. We also let
\begin{eqnarray}\label{Eq3}
 && D_l:=|\{i\in [n]: (i,\sigma(i))\in n Q_l\}|, \nonumber\\
 &&  D_l':=|\{i\in [n]: (i,\sigma(i))\in n Q_l, i\in \mathcal{S}_{2,l}\}|.
\end{eqnarray}

We bound $W_l$ as follows. Note that
\begin{equation*}
    W_l=\sum_{i=s_1}^{s_2} \mathbbm{1}_{s_1'\leq b_i\leq s_2'}.
\end{equation*}
For any $i\in\{s_1,\cdots,s_2\}$,
\begin{eqnarray*}
&& \mathbb{P}(s_1'\leq b_i\leq s_2'|\sigma_0)=e^{-2\beta_n(s_1'-\max\{i,\sigma_0(i)\})_{+}}-e^{-2\beta_n(s_2'-\max\{i,\sigma_0(i)\})_{+}}\\
&\leq& 1-e^{-2\beta_n((s_2'-\max\{i,\sigma_0(i)\})_{+}-(s_1'-\max\{i,\sigma_0(i)\})_{+})}\leq 1-e^{-2\beta_n(s_2'-s_1')}\\
&\leq& 2\beta_n(s_2'-s_1')\leq 2n\beta_n(y_l(\Gamma)-y_{l-1}(\Gamma)).
\end{eqnarray*}
For any $i\in\{s_1,\cdots,s_2\}$, let $Z_i:=\mathbbm{1}_{s_1'\leq b_i\leq s_2'}$. Conditional on $\sigma_0$, $Z_{s_1},\cdots,Z_{s_2}$ are mutually independent, and for every $i\in\{s_1,\cdots,s_2\}$, $Z_i$ follows the Bernoulli distribution with parameter $\mathbb{P}(s_1'\leq b_i\leq s_2'|\sigma_0)$. Hence by Hoeffding's inequality, for any $t\geq 0$, we have
\begin{equation*}
    \mathbb{P}(W_l\geq (s_2-s_1+1)(2n\beta_n (y_l(\Gamma)-y_{l-1}(\Gamma))+t)|\sigma_0)\leq e^{-2(s_2-s_1+1)t^2}.
\end{equation*}
Taking $t=2n\beta_n(y_l(\Gamma)-y_{l-1}(\Gamma))$, we obtain that 
\begin{eqnarray}\label{Eq8}
  &&  \mathbb{P}(W_l\geq 4n\beta_n(s_2-s_1+1)(y_l(\Gamma)-y_{l-1}(\Gamma)))\nonumber\\
  &=& \mathbb{E}[\mathbb{P}(W_l\geq 4n\beta_n(s_2-s_1+1)(y_l(\Gamma)-y_{l-1}(\Gamma))|\sigma_0)] \nonumber\\
  &\leq& \exp(-8n^2\beta_n^2(s_2-s_1+1)(y_l(\Gamma)-y_{l-1}(\Gamma))^2).
\end{eqnarray}

Let
\begin{eqnarray}\label{Eq11}
  L_{1,l}:=LIS(\sigma|_{\mathcal{S}_{1,l}\times (n y_{l-1}(\Gamma), n y_l(\Gamma)]}), \nonumber\\
  L_{2,l}:=LIS(\sigma|_{\mathcal{S}_{2,l}\times (n y_{l-1}(\Gamma),n y_l(\Gamma)]}).
\end{eqnarray}
Below we show that
\begin{equation}\label{Eq2}
    L_{1,l}\leq LIS(\sigma|_{n Q_l}) \leq L_{1,l}+L_{2,l}.
\end{equation}
We denote $LIS(\sigma|_{n Q_l})$ by $L$. By the definition of $LIS(\sigma|_{n Q_l})$, there exist indices $i_1,\cdots,i_L\in [n]$, such that $i_1<\cdots<i_L$, $\sigma(i_1)<\cdots<\sigma(i_L)$, and for every $j\in [L]$, $(i_j,\sigma(i_j))\in n Q_l$. Now note that for any $j\in [L]$, $s_1\leq i_j\leq s_2$ and $s_1'\leq \sigma(i_j)\leq s_2'$, hence $b_{i_j}\geq \sigma(i_j)\geq s_1'$, $i_j\in \{s_1,\cdots,s_2\}\backslash \{Y_{s_2'+1},\cdots,Y_n\}$, and $i_j\in \mathcal{S}_{1,l}\cup\mathcal{S}_{2,l}$. Assume that $\{i_1,\cdots,i_L\}=\{k_1,\cdots,k_q\}\cup \{k_1',\cdots,k_{L-q}'\}$, where $q\in\{0\}\cup  [L]$, $k_1,\cdots,k_q\in\mathcal{S}_{1,l}$, $k_1<\cdots<k_q$, $k_1',\cdots,k_{L-q}'\in\mathcal{S}_{2,l}$, and $k_1'<\cdots<k_{L-q}'$. As $(k_1,\sigma(k_1)),\cdots,(k_q,\sigma(k_q)) \in  \mathcal{S}_{1,l}\times (n y_{l-1}(\Gamma), n y_l(\Gamma)]$ and $\sigma(k_1)<\cdots<\sigma(k_q)$, we have $L_{1,l}\geq q$. Similarly, $L_{2,l}\geq L-q$. Hence $LIS(\sigma|_{n Q_l})=L\leq L_{1,l}+L_{2,l}$. The inequality $L_{1,l}\leq LIS(\sigma|_{n Q_l})$ follows from the fact that $S_{1,l}\times (n y_{l-1}(\Gamma), n y_l(\Gamma)]\subseteq n Q_l$. We conclude that (\ref{Eq2}) holds. 

In the following, we bound $D_l'$, $L_{2,l}$, $D_l$, $L_{1,l}$ in \textbf{Steps 1-4}, respectively. Recall the definitions of these quantities in (\ref{Eq3}) and (\ref{Eq11}).

\subparagraph{Step 1}
In this step, we bound $D_l'$. Note that
\begin{eqnarray}\label{Eq4}
 D_l'\leq \sum_{i=s_1'}^{s_2'} \mathbbm{1}_{\sigma^{-1}(i)\in \mathcal{S}_{2,l}}\leq \sum_{i=s_1'}^{s_2'} \mathbbm{1}_{\sigma^{-1}(i)\in \mathcal{S}_{l}'}=\sum_{i=s_1'}^{s_2'}\mathbbm{1}_{Y_i\in\mathcal{S}_l'}.
\end{eqnarray}

Let $\mathcal{B}_l$ be the $\sigma$-algebra generated by $\sigma_0$, $\{b_i\}_{i=1}^n$, and $\{Y_i\}_{i=s_2'+1}^n$. Conditional on $\mathcal{B}_l$, we couple $\{Y_i\}_{i=s_1'}^{s_2'}$ with mutually independent Bernoulli random variables $\{Y_i'\}_{i=s_1'}^{s_2'}$ with parameters (note that $W_l$ is $\mathcal{B}_l$-measurable)
\begin{equation}\label{Eq5}
    \mathbb{P}(Y_i'=1|\mathcal{B}_l)=\min\Big\{\frac{W_l}{N_i},1\Big\}, \quad \forall i\in\{s_1',\cdots,s_2'\}
\end{equation}
as follows. Sequentially for $i=s_2',\cdots,s_1'$, we do the following. Assume that $Y_{i+1},\cdots,Y_n$ have been sampled and that $b_{Y_j}\geq j$ for any $j\in  \{i+1,\cdots,n\}$. Let  
\begin{eqnarray}\label{Eq3.3}
    \mathcal{S}''_{l,i}&:=&\mathcal{S}'_l\cap(\{j\in [n]:b_j\geq i\}\backslash \{Y_{i+1},\cdots,Y_n\})\nonumber\\
    &=&\{j\in\{s_1,\cdots,s_2\}: i\leq b_j\leq s_2'\}\backslash \{Y_{i+1},\cdots,Y_n\}.
\end{eqnarray}
As $N_i=|\{j\in [n]: b_j\geq i\}\backslash \{Y_{i+1},\cdots, Y_n\}|$, we have $|\mathcal{S}_{l,i}''|\leq N_i$ and 
\begin{equation}\label{Eq3.1}
|\{j\in [n]: b_j\geq i\}\backslash (\{Y_{i+1},\cdots,Y_n\}\cup \mathcal{S}''_{l,i})|=N_i-|\mathcal{S}_{l,i}''|\geq  \min\{W_l,N_i\}-|\mathcal{S}''_{l,i}|.
\end{equation}
Moreover, as $|\mathcal{S}_{l,i}''| \leq |\mathcal{S}_l'|=W_l$, we have
\begin{equation}\label{Eq3.2}
 \min\{W_l,N_i\}-|\mathcal{S}_{l,i}''|\geq 0.
\end{equation}
Noting (\ref{Eq3.1}) and (\ref{Eq3.2}), we let $\mathcal{S}'''_{l,i}$ be the set that consists of the smallest $\min\{W_l,N_i\}-|\mathcal{S}''_{l,i}|$ elements in the set $\{j\in [n]: b_j\geq i\}\backslash (\{Y_{i+1},\cdots,Y_n\}\cup \mathcal{S}''_{l,i})$. If $Y_i'=1$, we pick $Y_i$ uniformly from the set
$\mathcal{S}''_{l,i}\cup \mathcal{S}'''_{l,i}$. If $Y_i'=0$, we pick $Y_i$ uniformly from the set $\{j\in [n]: b_j\geq i\}\backslash(\{Y_{i+1},\cdots,Y_n\}\cup\mathcal{S}''_{l,i}\cup\mathcal{S}'''_{l,i})$. Note that $b_{Y_i}\geq i$.

It can be checked that $\{Y_i\}_{i=s_1'}^{s_2'}$ has the desired conditional distribution given $\mathcal{B}_l$ as specified by the hit and run algorithm. Therefore, the above procedure gives a valid coupling between $\{Y_i\}_{i=s_1'}^{s_2'}$ and $\{Y_i'\}_{i=s_1'}^{s_2'}$ conditional on $\mathcal{B}_l$.

Now for any $i\in\{s_1',\cdots,s_2'\}$ such that $Y_i'=0$, we have $Y_i\notin \mathcal{S}''_{l,i}$; as $Y_i\in \{j\in [n]:b_j\geq i\}\backslash \{Y_{i+1},\cdots,Y_n\}$, by (\ref{Eq3.3}), we have $Y_i\notin \mathcal{S}_l'$. Hence for any $i\in\{s_1',\cdots,s_2'\}$, we have $\mathbbm{1}_{Y_i\in\mathcal{S}_l'}\leq Y_i'$. By (\ref{Eq4}), we have
\begin{equation}\label{Eq6}
    D_l'\leq \sum_{i=s_1'}^{s_2'} Y_i'.
\end{equation}

By (\ref{Eq5}), (\ref{Eq6}), and Hoeffding's inequality, we obtain that for any $t\geq 0$, 
\begin{equation}\label{Eq9}
    \mathbb{P}\Big(D_l'\geq \sum_{i=s_1'}^{s_2'}\frac{W_l}{N_i}+(s_2'-s_1'+1)t\Big|\mathcal{B}_l\Big)\leq e^{-2(s_2'-s_1'+1)t^2}.
\end{equation}
Let $\mathcal{C}_l$ be the event that for any $i\in [n]$ such that $i\geq n\slash 3$, we have
\begin{equation}
N_i\geq \frac{1}{2}e^{-4\theta}i.
\end{equation}
Let $\mathcal{E}_l$ be the event that
\begin{equation}\label{Eq3.8}
W_l\leq 4n\beta_n(s_2-s_1+1)(y_l(\Gamma)-y_{l-1}(\Gamma)).
\end{equation}
By (\ref{Eq7}), (\ref{Eq8}), and the union bound, we have
\begin{equation}\label{Eq10.1}
    \mathbb{P}(\mathcal{C}_l^c)\leq n\exp(-e^{-8\theta}n \slash 6),\quad \mathbb{P}(\mathcal{E}_l^c)\leq \exp(-8n^2\beta_n^2(s_2-s_1+1)(y_l(\Gamma)-y_{l-1}(\Gamma))^2).  
\end{equation}
When $\mathcal{C}_l$ holds, for any $i\in \{s_1',\cdots,s_2'\}$ (note that $i\geq s_1'\geq n y_{l-1}(\Gamma)\geq n\slash 3$), 
\begin{equation}\label{Eq3.5}
N_i\geq \frac{1}{6}e^{-4\theta}n.
\end{equation}

Let $\mathcal{D}_l$ be the event that
\begin{equation}\label{E4.2}
    D_l'\geq 25 e^{4\theta} \beta_n  (s_2-s_1+1)(s_2'-s_1'+1)(y_l(\Gamma)-y_{l-1}(\Gamma)).
\end{equation}
Taking $t=\beta_n(s_2-s_1+1)(y_l(\Gamma)-y_{l-1}(\Gamma))$ in (\ref{Eq9}) and noting (\ref{Eq3.8}) and (\ref{Eq3.5}), we obtain that
\begin{eqnarray*}
   && \mathbb{P}(\mathcal{D}_l\cap\mathcal{C}_l\cap\mathcal{E}_l|\mathcal{B}_l)\nonumber\\
   & \leq  & \exp(-2\beta_n^2 (s_2-s_1+1)^2(y_l(\Gamma)-y_{l-1}(\Gamma))^2(s_2'-s_1'+1) ).
\end{eqnarray*}
Hence 
\begin{eqnarray}\label{Eq4.3}
&& \mathbb{P}(\mathcal{D}_l\cap\mathcal{C}_l\cap\mathcal{E}_l)=\mathbb{E}[\mathbb{P}(\mathcal{D}_l\cap\mathcal{C}_l\cap\mathcal{E}_l|\mathcal{B}_l)]\nonumber\\
   & \leq  & \exp(-2\beta_n^2  (s_2-s_1+1)^2(y_l(\Gamma)-y_{l-1}(\Gamma))^2(s_2'-s_1'+1) ).
\end{eqnarray}

Combining (\ref{Eq10.1}) and (\ref{Eq4.3}), by the union bound, we have
\begin{eqnarray}\label{Eq4.5}
    \mathbb{P}(\mathcal{D}_l)&\leq& n\exp(-e^{-8\theta}n \slash 6)+\exp(-8n^2\beta_n^2(s_2-s_1+1)(y_l(\Gamma)-y_{l-1}(\Gamma))^2)\nonumber\\
    &&+\exp(-2\beta_n^2  (s_2-s_1+1)^2(y_l(\Gamma)-y_{l-1}(\Gamma))^2(s_2'-s_1'+1) ).
\end{eqnarray}
By (\ref{Eq7.4}), (\ref{Eq4.1}), and (\ref{Eq1}), we have
\begin{equation}\label{Eq4.6}
    s_2-s_1\geq n(x_l(\Gamma)-x_{l-1}(\Gamma))-2\geq \frac{n}{2 K_0 T}-2\geq \frac{n}{4 K_0 T},\quad s_2'-s_1'\geq \frac{n}{4 K_0 T}. 
\end{equation}
By (\ref{Eq4.4}), (\ref{Eq4.1}), (\ref{Eq4.5}), and (\ref{Eq4.6}), we have
\begin{equation}\label{Eq4.16}
    \mathbb{P}(\mathcal{D}_l)\leq n\exp(-e^{-8\theta} n \slash 6)+2\exp(-\theta^2 n \slash (512 K_0^5 T^5)). 
\end{equation}


\subparagraph{Step 2}

Now we bound $L_{2,l}$. For any $q\in \mathbb{N}^{*}$, we define
\begin{equation}\label{Eq3.6}
    \Lambda_{l,q}:=\sum_{\substack{i_1<\cdots<i_q,j_1<\cdots<j_q\\ i_1,\cdots,i_q\in \{s_1,\cdots,s_2\}\\j_1,\cdots,j_q\in (n y_{l-1}(\Gamma), n y_l(\Gamma)]\cap\mathbb{N}^{*}}} \mathbbm{1}_{\sigma(i_1)=j_1,\cdots,\sigma(i_q)=j_q}\mathbbm{1}_{i_1,\cdots,i_q\in\mathcal{S}_{2,l}}.
\end{equation}
For any $k\in [n]$, let $\mathcal{F}_k$ be the $\sigma$-algebra generated by $\sigma_0$, $\{b_i\}_{i=1}^n$, and $\{Y_i\}_{i=k+1}^n$. For any $i_1,\cdots,i_q\in \{s_1,\cdots,s_2\}$ and $j_1,\cdots,j_q\in (n y_{l-1}(\Gamma), n y_l(\Gamma)]\cap\mathbb{N}^{*}$ such that $i_1<\cdots<i_q$ and $j_1<\cdots<j_q$, we have 
\begin{eqnarray*}
  &&\mathbb{E}[\mathbbm{1}_{\sigma(i_1)=j_1,\cdots,\sigma(i_q)=j_q}\mathbbm{1}_{i_1,\cdots,i_q\in\mathcal{S}_{2,l}}|\mathcal{B}_l]\nonumber\\
  &=& \mathbb{E}[\mathbbm{1}_{\sigma(i_1)=j_1,\cdots,\sigma(i_q)=j_q}|\mathcal{B}_l] \mathbbm{1}_{i_1,\cdots,i_q\in\mathcal{S}_{2,l}}\nonumber\\
  &=& \mathbb{E}[\mathbb{E}[\mathbbm{1}_{\sigma(i_1)=j_1}|\mathcal{F}_{j_1}]\mathbbm{1}_{\sigma(i_2)=j_2,\cdots,\sigma(i_q)=j_q}|\mathcal{B}_l]\mathbbm{1}_{i_1,\cdots,i_q\in\mathcal{S}_{2,l}}\\
  &\leq & \frac{\mathbbm{1}_{i_1,\cdots,i_q\in\mathcal{S}_{2,l}}}{N_{j_1}}\mathbb{E}[\mathbbm{1}_{\sigma(i_2)=j_2,\cdots,\sigma(i_q)=j_q}|\mathcal{B}_l] \leq \cdots\leq \frac{\mathbbm{1}_{i_1,\cdots,i_q\in\mathcal{S}_{2,l}}}{N_{j_1}N_{j_2}\cdots N_{j_q}}.
\end{eqnarray*}
Hence by (\ref{Eq3.5}), we have
\begin{eqnarray}\label{Eq3.7}
\mathbb{E}[\mathbbm{1}_{\sigma(i_1)=j_1,\cdots,\sigma(i_q)=j_q}\mathbbm{1}_{i_1,\cdots,i_q\in\mathcal{S}_{2,l}}|\mathcal{B}_l] \mathbbm{1}_{\mathcal{C}_l\cap\mathcal{E}_l}\leq \Big(\frac{6 e^{4\theta}}{n}\Big)^q \mathbbm{1}_{\mathcal{C}_l\cap\mathcal{E}_l} \mathbbm{1}_{i_1,\cdots,i_q\in\mathcal{S}_{2,l}}.
\end{eqnarray}
By (\ref{Eq3.8}), (\ref{Eq3.6}), (\ref{Eq3.7}), and Lemma \ref{Lemma3.1}, we obtain that
\begin{eqnarray*}
    &&\mathbb{E}[\Lambda_{l,q}|\mathcal{B}_l] \mathbbm{1}_{\mathcal{C}_l\cap\mathcal{E}_l} \leq  
     \Big(\frac{6 e^{4\theta}}{n}\Big)^q\binom{|\mathcal{S}_{2,l}|}{q}\binom{s_2'-s_1'+1}{q} \mathbbm{1}_{\mathcal{C}_l\cap\mathcal{E}_l}\\
    &\leq& \Big(\frac{6e^2 e^{4\theta}|\mathcal{S}_{2,l}|(s_2'-s_1'+1)}{nq^2}\Big)^q \mathbbm{1}_{\mathcal{C}_l\cap\mathcal{E}_l}\leq\Big(\frac{6e^{2+4\theta} W_l(s_2'-s_1'+1)}{nq^2}\Big)^q \mathbbm{1}_{\mathcal{C}_l\cap\mathcal{E}_l}\\
    &\leq& \Big(\frac{24e^{2+4\theta}\beta_n(s_2-s_1+1)(s_2'-s_1'+1)(y_l(\Gamma)-y_{l-1}(\Gamma))}{q^2}\Big)^q.
\end{eqnarray*}
Hence 
\begin{eqnarray}\label{Eq10}
 &&   \mathbb{P}(\{\Lambda_{l,q}\geq 1\}\cap\mathcal{C}_l\cap\mathcal{E}_l)=\mathbb{E}[\mathbb{E}[\mathbbm{1}_{\Lambda_{l,q}\geq 1}|\mathcal{B}_l]\mathbbm{1}_{\mathcal{C}_l\cap\mathcal{E}_l}]  \leq \mathbb{E}[\mathbb{E}[\Lambda_{l,q}|\mathcal{B}_l] \mathbbm{1}_{\mathcal{C}_l\cap\mathcal{E}_l}]  \nonumber\\
 && \leq \Big(\frac{24e^{2+4\theta}\beta_n(s_2-s_1+1)(s_2'-s_1'+1)(y_l(\Gamma)-y_{l-1}(\Gamma))}{q^2}\Big)^q.
\end{eqnarray}
Let
\begin{equation}
    q_0:=8 e^{1+2\theta}\beta_n^{1\slash 2} (s_2-s_1+1)^{1\slash 2}(s_2'-s_1'+1)^{1\slash 2}(y_l(\Gamma)-y_{l-1}(\Gamma))^{1\slash 2}.
\end{equation}
Taking $q=\lceil q_0\rceil$ in (\ref{Eq10}), we obtain that 
\begin{equation*}
    \mathbb{P}(\{\Lambda_{l,\lceil q_0\rceil}\geq 1\}\cap\mathcal{C}_l\cap\mathcal{E}_l)\leq 2^{-q_0},
\end{equation*}
which leads to
\begin{equation}\label{Eq7.1}
    \mathbb{P}(\{L_{2,l}\geq q_0+1\}\cap\mathcal{C}_l\cap\mathcal{E}_l)\leq 2^{-q_0}.
\end{equation}
By (\ref{Eq4.4}), (\ref{Eq7.4})-(\ref{Eq1}), (\ref{Eq4.6}), and the AM-GM inequality, 
\begin{equation}\label{Eq7.2}
 q_0 \geq c \theta^{1\slash 2} (K_0T)^{-3\slash 2} n^{1\slash 2},
\end{equation}
\begin{eqnarray}\label{Eq7.3}
    q_0+1 &\leq& C' T^{-1\slash 2} n^{1\slash 2} (x_l(\Gamma)-x_{l-1}(\Gamma))^{1\slash 2}(y_l(\Gamma)-y_{l-1}(\Gamma))^{1\slash 2}+1 \nonumber\\
    &\leq& C_{\theta} T^{-1\slash 2} n^{1\slash 2} (x_l(\Gamma)-x_{l-1}(\Gamma)+y_l(\Gamma)-y_{l-1}(\Gamma)),
\end{eqnarray}
where $C_{\theta}$ is a positive constant that only depends on $\theta$. Let $\mathscr{E}_l$ be the event that 
\begin{equation}\label{Eq4.24}
    L_{2,l}\leq C_{\theta} T^{-1\slash 2} n^{1\slash 2} (x_l(\Gamma)-x_{l-1}(\Gamma)+y_l(\Gamma)-y_{l-1}(\Gamma)).
\end{equation} 
By (\ref{Eq7.1})-(\ref{Eq7.3}), we have
\begin{equation}\label{Eq4.19}
    \mathbb{P}(\mathscr{E}_l^c\cap\mathcal{C}_l\cap\mathcal{E}_l)\leq \exp(-c\theta^{1\slash 2}(K_0T)^{-3\slash 2}n^{1\slash 2}). 
\end{equation}
By (\ref{Eq10.1}), (\ref{Eq4.19}), and the union bound, we have
\begin{eqnarray}
    \mathbb{P}(\mathscr{E}_l^c)&\leq& \exp(-c\theta^{1\slash 2}(K_0T)^{-3\slash 2}n^{1\slash 2})+n\exp(-e^{-8\theta}n \slash 6)\nonumber\\
    && +\exp(-8n^2\beta_n^2(s_2-s_1+1)(y_l(\Gamma)-y_{l-1}(\Gamma))^2).
\end{eqnarray}
Noting (\ref{Eq4.4}), (\ref{Eq4.1}), and (\ref{Eq4.6}), we obtain that
\begin{eqnarray}\label{Eq4.22}
    \mathbb{P}(\mathscr{E}_l^c)&\leq&  \exp(-c\theta^{1\slash 2}(K_0T)^{-3\slash 2}n^{1\slash 2})+n\exp(-e^{-8\theta}n \slash 6)\nonumber\\
    &&+\exp(-\theta^2n\slash (8K_0^3T^3))\leq C n\exp(-\tilde{c} n^{1\slash 2}).
\end{eqnarray}


\subparagraph{Step 3}

Now we bound $D_l$. Note that $D_l=|S(\sigma)\cap n Q_l|$ and that the distribution of $\sigma$ is given by $\mathbb{P}_{n,\beta_n}$. For any $\delta>0$, let $\mathcal{H}_{l,\delta}$ be the event that 
\begin{equation}\label{E4.1}
    \Big|D_l-n\int_{Q_l}\rho_{\theta}(x,y)dxdy\Big|<n\delta.
\end{equation}
By Lemma \ref{Lemma2}, there exist positive constants $C_0,c_0$ that only depend on $T,K_0,\delta$ and the sequence $\{\beta_n\}$, such that
\begin{equation}\label{Eq4.17}
    \mathbb{P}((\mathcal{H}_{l,\delta})^c)\leq C_0\exp(-c_0 n). 
\end{equation}

\subparagraph{Step 4}

Finally, we bound $L_{1,l}$. Recall the definition of $\mathcal{S}_{1,l}$ in (\ref{Eqs}). Let
\begin{equation}
 R:=|\{i\in [n]: (i,\sigma(i))\in \mathcal{S}_{1,l}\times (n y_{l-1}(\Gamma), n y_l(\Gamma)]\}|.
\end{equation}
We also let $I_1,\cdots,I_n\in \{0\}\cup [n]$ and $J_1,\cdots,J_n\in \{0\}\cup [n]$ be such that
\begin{equation*}
I_{R+1}=\cdots=I_n=0, \quad J_{R+1}=\cdots=J_n=0,
\end{equation*}
\begin{equation*}
1\leq I_1<\cdots<I_R, \quad 1\leq J_1<\cdots<J_R,
\end{equation*}
\begin{equation*}
 \{I_1,\cdots,I_R\}=\{i\in [n]: (i,\sigma(i))\in \mathcal{S}_{1,l}\times (n y_{l-1}(\Gamma), n y_l(\Gamma)]\},
\end{equation*}
\begin{equation*}
 \{J_1,\cdots,J_R\}=\{i\in [n]: (\sigma^{-1}(i),i)\in \mathcal{S}_{1,l}\times (n y_{l-1}(\Gamma), n y_l(\Gamma)]\}.
\end{equation*} 

Note that for any $i\in [n]$ such that $(i,\sigma(i))\in \mathcal{S}_{1,l}\times (ny_{l-1}(\Gamma),ny_l(\Gamma)]$, we have $(i,\sigma(i))\in nQ_l$ and $i\notin \mathcal{S}_{2,l}$. Hence we have
\begin{equation}\label{Eq3.9}
   R\leq D_l-D_l'.
\end{equation}
Now consider any $i\in [n]$ such that $(i,\sigma(i))\in nQ_l$ and $i\notin \mathcal{S}_{2,l}$. We have
\begin{equation}\label{Eq3.11}
  i\in (nx_{l-1}(\Gamma),nx_l(\Gamma)]\cap\mathbb{N}^{*}=\{s_1,\cdots,s_2\},
\end{equation}
\begin{equation}
\sigma(i)\in (ny_{l-1}(\Gamma),ny_l(\Gamma)]\cap\mathbb{N}^{*}=\{s_1',\cdots,s_2'\}.
\end{equation}
If $i=Y_j$ for some $j\in\{s_2'+1,\cdots,n\}$, then 
\begin{equation*}
  \sigma(i)=\sigma(Y_j)=j \notin (ny_{l-1}(\Gamma),n y_l(\Gamma)]\cap\mathbb{N}^{*},
\end{equation*}
which leads to a contradiction. Hence
\begin{equation}\label{Eq3.13}
    i\notin \{Y_{s_2'+1},\cdots,Y_n\}.
\end{equation}
By the construction of $\{Y_j\}_{j=1}^n$ and $\sigma$, we have $b_{Y_j}\geq j$ for any $j\in [n]$ and $Y_{\sigma(i)}=i$. Hence 
\begin{equation}\label{Eq3.12}
    b_i=b_{Y_{\sigma(i)}}\geq \sigma(i)\geq s_1'.
\end{equation}
As $i\notin \mathcal{S}_{2,l}$, by (\ref{Eq3.11}), (\ref{Eq3.13}), and (\ref{Eq3.12}), we have $b_i>s_2'$, hence $i\in\mathcal{S}_{1,l}$. Therefore, we have
\begin{equation}\label{Eq3.10}
R\geq D_l-D_l'.
\end{equation}
Combining (\ref{Eq3.9}) and (\ref{Eq3.10}), we conclude that
\begin{equation}\label{E4.3}
    R=D_l-D_l'.
\end{equation}


Throughout the rest of this subsection, we let $S_0$ be the set that consists solely of the empty mapping $\tau_0:\emptyset\rightarrow\emptyset$, and let $LIS(\tau_0):=0$. If $R\geq 1$, we let $\tau\in S_R$ be such that $\sigma(I_s)=J_{\tau(s)}$ for every $s\in [R]$. If $R=0$, we let $\tau$ be the empty mapping. In the following, we condition on $\mathcal{B}_l$, and consider any $r\in [n]$, $i_1,\cdots,i_r\in [n]$, and $j_1,\cdots,j_r\in [n]$ such that 
\begin{equation*}
\mathbb{P}(R=r, I_1=i_1,\cdots,I_r=i_r,J_1=j_1,\cdots,J_r=j_r|\mathcal{B}_l)>0.
\end{equation*}
By the sampling process of the hit and run algorithm, conditional on $\mathcal{B}_l$, the distribution of $\sigma$ is given by the uniform distribution on the following set:
\begin{equation*}
    \{\kappa\in S_n: \kappa(s)\leq b_s\text{ for every }s\in [n], \kappa^{-1}(s)=Y_s\text{ for every }s\in\{s_2'+1,\cdots,n\}\},
\end{equation*}
which has cardinality $\prod_{s=1}^{s_2'}N_s$. For any $\eta\in S_r$, let $M_{r,\eta}$ be the following set (recall Definition \ref{Def1.5}):
\begin{eqnarray*}
    && \{\kappa\in S_n: \kappa(s)\leq b_s\text{ for every }s\in [n],\kappa^{-1}(s)=Y_s\text{ for every }s\in\{s_2'+1,\cdots,n\},\nonumber\\
    &&\quad\kappa(i_s)=j_{\eta(s)}\text{ for every }s\in [r],\nonumber\\
    && \quad S(\kappa)\cap(\mathcal{S}_{1,l}\times (ny_{l-1}(\Gamma),ny_l(\Gamma)])=\{(i_s,j_{\eta(s)}):s\in [r]\}\}.
\end{eqnarray*}
Then for any $\eta\in S_r$, we have
\begin{eqnarray}\label{E3.2}
   && \mathbb{P}(\{\tau=\eta\}\cap \{R=r,I_1=i_1,\cdots,I_r=i_r,J_1=j_1,\cdots,J_r=j_r\}|\mathcal{B}_l) \nonumber  \\
   && =\frac{|M_{r,\eta}|}{\prod_{s=1}^{s_2'}N_s}.
\end{eqnarray}

Now for any $\eta_1,\eta_2\in S_r$, we define a mapping $\psi_{\eta_1,\eta_2}: M_{r,\eta_1} \rightarrow M_{r,\eta_2}$ as follows. Let $\iota_{\eta_1,\eta_2}\in S_n$ be the unique permutation that maps $j_s$ to $j_{\eta_2\eta_1^{-1}(s)}$ for every $s\in [r]$ and fixes every element in $[n]\backslash \{j_1,\cdots,j_r\}$. For every $\kappa\in M_{r,\eta_1}$, we let $\psi_{\eta_1,\eta_2}(\kappa):=\iota_{\eta_1,\eta_2}\kappa$. Below we verify that $\psi_{\eta_1,\eta_2}(\kappa)\in M_{r,\eta_2}$. For every $s\in [n]\backslash\{i_1,\cdots,i_r\}$, we have $\kappa(s)\in [n]\backslash \{j_1,\cdots,j_r\}$, hence
\begin{equation}\label{E3.1.1}
\iota_{\eta_1,\eta_2}\kappa(s)=\kappa(s)\leq b_s.
\end{equation}
For every $s\in [r]$, we have
\begin{equation}\label{E3.1}
    \iota_{\eta_1,\eta_2}   \kappa (i_s)=\iota_{\eta_1,\eta_2} (j_{\eta_1(s)})=j_{\eta_2(s)}.
\end{equation}
Note that for any $s\in [r]$, $j_{\eta_2(s)}\in (ny_{l-1}(\Gamma),ny_l(\Gamma)]\cap\mathbb{N}^{*}=\{s_1',\cdots,s_2'\}$. Now for any $s\in [r]$, as $i_s\in\mathcal{S}_{1,l}$, by (\ref{E3.1}), we have $b_{i_s}>s_2'\geq j_{\eta_2(s)}=\iota_{\eta_1,\eta_2}   \kappa (i_s)$. Combining this with (\ref{E3.1.1}), we obtain that for every $s\in [n]$, 
\begin{equation}\label{E3.1.2}
\iota_{\eta_1,\eta_2}\kappa(s)\leq b_s.
\end{equation}
For any $s\in \{s_2'+1,\cdots,n\}$, we have $s\notin \{j_1,\cdots,j_r\}$, hence 
\begin{equation}\label{E3.1.3}
\iota_{\eta_1,\eta_2}\kappa(Y_s)=\iota_{\eta_1,\eta_2} (s)=s.
\end{equation}
Moreover, it can be checked that
\begin{equation}\label{E3.1.4}
S(\iota_{\eta_1,\eta_2}\kappa)\cap(\mathcal{S}_{1,l}\times (ny_{l-1}(\Gamma),ny_l(\Gamma)])=\{(i_s,j_{\eta_2(s)}):s\in [r]\}.
\end{equation}
By (\ref{E3.1})-(\ref{E3.1.4}), $\iota_{\eta_1,\eta_2}\kappa\in M_{r,\eta_2}$. We can also verify that for any $\eta_1,\eta_2\in S_n$,
\begin{equation*}
\psi_{\eta_2,\eta_1}\psi_{\eta_1,\eta_2}=Id_{M_{r,\eta_1}}, \quad \psi_{\eta_1,\eta_2}\psi_{\eta_2,\eta_1}=Id_{M_{r,\eta_2}},
\end{equation*}
where for any set $A$, $Id_A$ denotes the identity map on $A$. We conclude that for any $\eta_1,\eta_2\in S_r$, $\psi_{\eta_1,\eta_2}$ is a bijection from $M_{r,\eta_1}$ to $M_{r,\eta_2}$, hence
\begin{equation}\label{Eq3.14}
|M_{r,\eta_1}|=|M_{r,\eta_2}|.
\end{equation}

By (\ref{E3.2}) and (\ref{Eq3.14}), we conclude that for any $\eta\in S_r$,
\begin{eqnarray}\label{Eq3.18}
  &&  \frac{\mathbb{P}(\{\tau=\eta\}\cap \{R=r,I_1=i_1,\cdots,I_r=i_r,J_1=j_1,\cdots,J_r=j_r\}|\mathcal{B}_l)}{\mathbb{P}(R=r,I_1=i_1,\cdots,I_r=i_r,J_1=j_1,\cdots,J_r=j_r|\mathcal{B}_l)}\nonumber\\
  && = \frac{|M_{r,\eta}|}{\sum_{\eta'\in S_r}|M_{r,\eta'}|}=\frac{1}{r!}. 
\end{eqnarray}

Now let $\mathcal{B}'_l$ be the $\sigma$-algebra generated by $\sigma_0$, $\{b_i\}_{i=1}^n$, $\{Y_i\}_{i=s_2'+1}^n$, $R$, $\{I_i\}_{i=1}^n$, and $\{J_i\}_{i=1}^n$. In the following, we consider an arbitrary $A\in\mathcal{B}_l'$. As $\mathbbm{1}_A$ is $\mathcal{B}_l'$-measurable, there exists a Borel measurable function $g:\mathbb{R}^{5n-s_2'+1}\rightarrow \mathbb{R}$ , such that 
\begin{equation}\label{Eq3.17}
   \mathbbm{1}_A=g(\sigma_0,\{b_s\}_{s=1}^n, \{Y_s\}_{s=s_2'+1}^n, R, \{I_s\}_{s=1}^n, \{J_s\}_{s=1}^n ),
\end{equation}
where we identify $\sigma_0$ with $(\sigma_0(1),\cdots,\sigma_0(n))\in \mathbb{R}^n$. Without loss of generality, we assume that $\|g\|_{\infty}\leq 1$ (otherwise we replace $g$ by $\max\{0,\min\{g,1\}\}$). Consider any $r_0\in [n]$ and any $\eta\in S_{r_0}$. We have
\begin{eqnarray}\label{Eq3.19}
&&\mathbb{E}[\mathbbm{1}_{\tau=\eta}\mathbbm{1}_A]\nonumber\\
&=&\sum_{\substack{r\in \{0\}\cup [n],\\i_1,\cdots,i_r\in [n],\\j_1,\cdots,j_r\in [n]}}\mathbb{P}(A\cap\{R=r,I_1=i_1,\cdots,I_r=i_r,J_1=j_1,\cdots,J_r=j_r\}\cap\{\tau=\eta\})\nonumber\\
&=&  \sum_{\substack{i_1,\cdots,i_{r_0}\in [n],\\j_1,\cdots,j_{r_0}\in [n]}}\mathbb{P}(A\cap\{R=r_0,I_1=i_1,\cdots,I_{r_0}=i_{r_0},J_1=j_1,\cdots,J_{r_0}=j_{r_0}\}\cap\{\tau=\eta\}).\nonumber\\
&&
\end{eqnarray}
For any $i_1,\cdots,i_{r_0}\in [n]$ and $j_1,\cdots,j_{r_0}\in [n]$, we have
\begin{eqnarray}\label{Eq3.20}
&& \mathbb{P}(A\cap\{R=r_0,I_1=i_1,\cdots,I_{r_0}=i_{r_0},J_1=j_1,\cdots,J_{r_0}=j_{r_0}\}\cap\{\tau=\eta\}) \nonumber\\
&=& \mathbb{E}[g(\sigma_0,\{b_s\}_{s=1}^n, \{Y_s\}_{s=s_2'+1}^n, R, \{I_s\}_{s=1}^n, \{J_s\}_{s=1}^n )\mathbbm{1}_{\tau=\eta} \nonumber\\
&& \quad\quad\quad \quad\quad\quad\times\mathbbm{1}_{R=r_0,I_1=i_1,\cdots,I_{r_0}=i_{r_0}, J_1=j_1,\cdots,J_{r_0}=j_{r_0}}]\nonumber\\
&=& \mathbb{E}[g(\sigma_0,\{b_s\}_{s=1}^n, \{Y_s\}_{s=s_2'+1}^n, r_0, \{i_s\}_{s=1}^n, \{j_s\}_{s=1}^n )\mathbbm{1}_{\tau=\eta} \nonumber\\
&& \quad\quad\quad \quad\quad\quad\times\mathbbm{1}_{R=r_0,I_1=i_1,\cdots,I_{r_0}=i_{r_0}, J_1=j_1,\cdots,J_{r_0}=j_{r_0}}]\nonumber\\
&=& \mathbb{E}[g(\sigma_0,\{b_s\}_{s=1}^n, \{Y_s\}_{s=s_2'+1}^n, r_0, \{i_s\}_{s=1}^n, \{j_s\}_{s=1}^n ) \nonumber\\
&& \quad \times\mathbb{P}(\{\tau=\eta\}\cap\{R=r_0,I_1=i_1,\cdots,I_{r_0}=i_{r_0}, J_1=j_1,\cdots,J_{r_0}=j_{r_0}\}|\mathcal{B}_l)]\nonumber\\
&=& \frac{1}{r_0!}\mathbb{E}[g(\sigma_0,\{b_s\}_{s=1}^n, \{Y_s\}_{s=s_2'+1}^n, r_0, \{i_s\}_{s=1}^n, \{j_s\}_{s=1}^n ) \nonumber\\
&&  \quad\quad \times \mathbb{P}(R=r_0,I_1=i_1,\cdots,I_{r_0}=i_{r_0},J_1=j_1,\cdots,J_{r_0}=j_{r_0}|\mathcal{B}_l) ]\nonumber\\
&=& \frac{1}{r_0!} \mathbb{E}[g(\sigma_0,\{b_s\}_{s=1}^n, \{Y_s\}_{s=s_2'+1}^n, r_0, \{i_s\}_{s=1}^n, \{j_s\}_{s=1}^n ) \nonumber\\
&& \quad\quad\quad \quad\quad\quad\times\mathbbm{1}_{R=r_0,I_1=i_1,\cdots,I_{r_0}=i_{r_0}, J_1=j_1,\cdots,J_{r_0}=j_{r_0}}]\nonumber\\
&=& \frac{1}{r_0!} \mathbb{E}[g(\sigma_0,\{b_s\}_{s=1}^n, \{Y_s\}_{s=s_2'+1}^n, R, \{I_s\}_{s=1}^n, \{J_s\}_{s=1}^n ) \nonumber\\
&& \quad\quad\quad \quad\quad\quad\times\mathbbm{1}_{R=r_0,I_1=i_1,\cdots,I_{r_0}=i_{r_0}, J_1=j_1,\cdots,J_{r_0}=j_{r_0}}]\nonumber\\
&=& \frac{1}{r_0!} \mathbb{P}(A\cap \{R=r_0, I_1=i_1,\cdots,I_{r_0}=i_{r_0},J_1=j_1,\cdots,J_{r_0}=j_{r_0}\}),
\end{eqnarray}
where we take $i_{r_0+1}=\cdots=i_n=j_{r_0+1}=\cdots=j_n=0$ in the second equality, use (\ref{Eq3.17}) in the first and the last equalities, and use (\ref{Eq3.18}) in the fourth equality. By (\ref{Eq3.19}) and (\ref{Eq3.20}), for any $A\in\mathcal{B}'_l$, $r_0\in [n]$, and $\eta\in S_{r_0}$, we have
\begin{equation}\label{Eq3.21}
\mathbb{E}[\mathbbm{1}_{\tau=\eta}\mathbbm{1}_A]=\frac{1}{r_0!}\mathbb{E}[\mathbbm{1}_{R=r_0}\mathbbm{1}_A]=\mathbb{E}\Big[\frac{1}{R!}\mathbbm{1}_{R=r_0}\mathbbm{1}_A\Big]=\mathbb{E}\Big[\frac{1}{R!}\mathbbm{1}_{\eta\in S_R}\mathbbm{1}_A\Big].
\end{equation}
Now if $\eta\in S_0$, for any $A\in\mathcal{B}_l'$, we have
\begin{equation}\label{Eq3.22}
\mathbb{E}[\mathbbm{1}_{\tau=\eta}\mathbbm{1}_A]=\mathbb{E}[\mathbbm{1}_{R=0}\mathbbm{1}_A]=\mathbb{E}\Big[\frac{1}{R!}\mathbbm{1}_{\eta\in S_R}\mathbbm{1}_A\Big].
\end{equation}
By (\ref{Eq3.21}) and (\ref{Eq3.22}), for any $\eta\in\bigcup_{r=0}^n S_r $, we have
\begin{equation}\label{Eq3.23}
  \mathbb{P}(\tau=\eta|\mathcal{B}_l')=\frac{\mathbbm{1}_{\eta\in S_R}}{R!}. 
\end{equation}





Recalling the definition of $L_{1,l}$ in (\ref{Eq11}), we obtain that 
\begin{equation}\label{Eq3.24}
    L_{1,l}=LIS(\tau).
\end{equation}
Below we fix an arbitrary $\delta_0\in (0,1\slash 3)$. By (\ref{Eq3.23}) and Proposition \ref{P1}, we have
\begin{eqnarray}\label{E3.1.6}
&& \mathbb{P}(|LIS(\tau)-2\sqrt{R}|>R^{1\slash 2-\delta_0}|\mathcal{B}'_l)=\mathbb{E}\Big[\mathbbm{1}_{|LIS(\tau)-2\sqrt{R}|>R^{1\slash 2-\delta_0}}\Big|\mathcal{B}_l'\Big] \nonumber\\
&=& \sum_{r=0}^n\sum_{\eta\in S_r}\mathbb{E}\Big[\mathbbm{1}_{\tau=\eta} \mathbbm{1}_{|LIS(\tau)-2\sqrt{R}|>R^{1\slash 2-\delta_0}}\Big|\mathcal{B}_l'\Big]\nonumber\\
&=& \sum_{r=0}^n\sum_{\eta\in S_r}\mathbb{E}\Big[\mathbbm{1}_{\tau=\eta} \mathbbm{1}_{|LIS(\eta)-2\sqrt{R}|>R^{1\slash 2-\delta_0}}\Big|\mathcal{B}_l'\Big]\nonumber\\
&=& \sum_{r=0}^n\sum_{\eta\in S_r}\mathbbm{1}_{|LIS(\eta)-2\sqrt{R}|>R^{1\slash 2-\delta_0}} \mathbb{P}(\tau=\eta|\mathcal{B}_l')\nonumber\\
&=& \sum_{r=0}^n \sum_{\eta\in S_r} \mathbbm{1}_{|LIS(\eta)-2\sqrt{R}|>R^{1\slash 2-\delta_0}} \frac{\mathbbm{1}_{\eta\in S_R}}{R!}\nonumber\\
&=& \frac{1}{R!}\sum_{\eta\in S_R}\mathbbm{1}_{|LIS(\eta)-2\sqrt{R}|>R^{1\slash 2-\delta_0}}\leq C_{\delta_0} \exp(-R^{(1-3\delta_0)\slash 2}),
\end{eqnarray}
where $C_{\delta_0}$ is a positive constant that only depends on $\delta_0$.


By (\ref{E4.2}), (\ref{E4.1}), and (\ref{E4.3}), for any $\delta>0$, when $\mathcal{D}_l^c\cap\mathcal{H}_{l,\delta}$ holds, we have
\begin{eqnarray}\label{Eqn1.1}
&& n\int_{Q_l}\rho_{\theta}(x,y)dxdy-n\delta\nonumber\\
&& -25 e^{4\theta}\beta_n  (s_2-s_1+1)(s_2'-s_1'+1)(y_l(\Gamma)-y_{l-1}(\Gamma)) \nonumber\\
&\leq & R\leq  n\int_{Q_l}\rho_{\theta}(x,y)dxdy+n\delta.
\end{eqnarray}
Note that by (\ref{Eq1}), we have
\begin{equation}\label{Eqn1.2}
   s_2-s_1\leq n(x_{l}(\Gamma)-x_{l-1}(\Gamma)),\quad s_2'-s_1'\leq n(y_{l}(\Gamma)-y_{l-1}(\Gamma)). 
\end{equation}
In the following, we denote
\begin{equation}
  \Upsilon_l:=50\theta e^{4\theta}(x_{l}(\Gamma)-x_{l-1}(\Gamma)+n^{-1})(y_l(\Gamma)-y_{l-1}(\Gamma)+n^{-1})^2.
\end{equation}
By (\ref{Eq4.1}) and (\ref{Eq3.4}), we have 
\begin{equation}\label{Eq4.2}
  \Upsilon_l \leq 400e^{5\theta} T^{-1}(x_l(\Gamma)-x_{l-1}(\Gamma))(y_l(\Gamma)-y_{l-1}(\Gamma))\leq 400e^{5\theta} T^{-3}.
\end{equation}
By (\ref{Eq4.4}), (\ref{Eqn1.1}), and (\ref{Eqn1.2}), taking $\delta=(K_0 T)^{-10}$, we obtain that when the event $\mathcal{D}_l^c\cap\mathcal{H}_{l,(K_0T)^{-10}}$ holds, 
\begin{equation}\label{E3.1.5}
 n\int_{Q_l}\rho_{\theta}(x,y)dxdy-n((K_0T)^{-10}+\Upsilon_l) \leq R \leq  n\int_{Q_l}\rho_{\theta}(x,y)dxdy+n (K_0T)^{-10}.
\end{equation}
By (\ref{Eq7.4})-(\ref{Eq4.1}) and (\ref{Eq4.2})-(\ref{E3.1.5}), when the event $\mathcal{D}_l^c\cap\mathcal{H}_{l,(K_0T)^{-10}}$ holds, we have
\begin{eqnarray}\label{Eq4.7}
    R&\geq& n m_{\theta}(x_l(\Gamma)-x_{l-1}(\Gamma))(y_l(\Gamma)-y_{l-1}(\Gamma))-n((K_0T)^{-10}+\Upsilon_l)\nonumber\\
    &\geq& n(x_l(\Gamma)-x_{l-1}(\Gamma))(y_l(\Gamma)-y_{l-1}(\Gamma))(m_{\theta}-400 e^{5\theta}T^{-1})-n (K_0T)^{-10}\nonumber\\
    &\geq& \frac{1}{8}n m_{\theta}(K_0T)^{-2}-n(K_0T)^{-10}\geq \frac{1}{16}m_{\theta}(K_0T)^{-2}n.
\end{eqnarray}
By (\ref{E3.1.6}) and (\ref{Eq4.7}), taking $\delta_0=1\slash 6$, we have
\begin{eqnarray}\label{Eq4.14}
  &&  \mathbb{P}(\{|LIS(\tau)-2\sqrt{R}|>R^{1\slash 3}\}\cap\mathcal{D}_l^c\cap\mathcal{H}_{l,(K_0T)^{-10}})\nonumber\\
  &\leq&   \mathbb{P}(\{|LIS(\tau)-2\sqrt{R}|>R^{1\slash 3}\}\cap\{R\geq m_{\theta}(K_0T)^{-2}n\slash 16\})  \nonumber\\
  &=& \mathbb{E}[\mathbb{P}(|LIS(\tau)-2\sqrt{R}|>R^{1\slash 3}|\mathcal{B}_l')\mathbbm{1}_{R\geq m_{\theta}(K_0T)^{-2}n\slash 16}]\nonumber\\
  &\leq& C\mathbb{E}[\exp(-R^{1\slash 4})\mathbbm{1}_{R\geq m_{\theta}(K_0T)^{-2}n\slash 16}]\nonumber\\
  &\leq& C\exp(-c m_{\theta}^{1\slash 4} K_0^{-1\slash 2}   T^{-1\slash 2} n^{1\slash 4})\leq C\exp(-\tilde{c} n^{1\slash 4}).
\end{eqnarray}
Note that by (\ref{Eq4.1}) and (\ref{Eq4.7}), we have
\begin{equation}\label{Eq4.12}
    \int_{Q_l}\rho_{\theta}(x,y)dxdy\leq M_{\theta} (x_l(\Gamma)-x_{l-1}(\Gamma))(y_l(\Gamma)-y_{l-1}(\Gamma))\leq M_{\theta} T^{-2},
\end{equation}
\begin{equation}\label{Eq4.11}
    \int_{Q_l}\rho_{\theta}(x,y)dxdy \geq m_{\theta} (x_l(\Gamma)-x_{l-1}(\Gamma))(y_l(\Gamma)-y_{l-1}(\Gamma))\geq  (K_0T)^{-10}+\Upsilon_l.
\end{equation}
Throughout the rest of this subsection, without loss of generality, we assume that $M_{\theta}\geq 1$. When the event $\{|LIS(\tau)-2\sqrt{R}|\leq  
R^{1\slash 3}\}\cap\mathcal{D}_l^c\cap\mathcal{H}_{l,(K_0T)^{-10}}$ holds, by (\ref{Eq3.24}), (\ref{E3.1.5}), (\ref{Eq4.12}), and (\ref{Eq4.11}), we have
\begin{eqnarray}
  &&  L_{1,l}=LIS(\tau) \leq 2\sqrt{R}+R^{1\slash 3} \nonumber\\
  &\leq& 2\sqrt{n}\Big(\int_{Q_l}\rho_{\theta}(x,y) dx dy+(K_0T)^{-10}\Big)^{1\slash 2}\nonumber\\
  && +n^{1\slash 3}\Big(\int_{Q_l}\rho_{\theta}(x,y) dx dy+(K_0T)^{-10}\Big)^{1\slash 3} \nonumber\\
  &\leq& 2\sqrt{n}\Big(\int_{Q_l}\rho_{\theta}(x,y)dxdy\Big)^{1\slash 2}\Big(1+\frac{(K_0T)^{-10}}{\int_{Q_l}\rho_{\theta}(x,y)dxdy}\Big)+CM_{\theta}^{1\slash 3} T^{-2\slash 3} n^{1\slash 3}\nonumber\\
  &\leq& 2\sqrt{n}\Big(\int_{Q_l}\rho_{\theta}(x,y)dxdy\Big)^{1\slash 2}+C(K_0^{-5}T^{-5}n^{1\slash 2}+M_{\theta}^{1\slash 3}T^{-2\slash 3}n^{1\slash 3}),
\end{eqnarray}
\begin{eqnarray}
&& L_{1,l}=LIS(\tau)\geq 2\sqrt{R}-R^{1\slash 3}\nonumber\\
&\geq& 2\sqrt{n}\Big(\int_{Q_l}\rho_{\theta}(x,y) dx dy-(K_0T)^{-10}-\Upsilon_l\Big)^{1\slash 2}\nonumber\\
&&-n^{1\slash 3}\Big(\int_{Q_l}\rho_{\theta}(x,y) dx dy+(K_0T)^{-10}\Big)^{1\slash 3} \nonumber\\
&\geq& 2\sqrt{n}\Big(\int_{Q_l}\rho_{\theta}(x,y)dxdy\Big)^{1\slash 2}\Big(1-\frac{(K_0T)^{-10}+\Upsilon_l}{\int_{Q_l}\rho_{\theta}(x,y)dxdy}\Big)-CM_{\theta}^{1\slash 3} T^{-2\slash 3} n^{1\slash 3}\nonumber\\
&\geq& 2\sqrt{n}\Big(\int_{Q_l}\rho_{\theta}(x,y)dxdy\Big)^{1\slash 2}-C((K_0^{-10}T^{-10}+\Upsilon_l)^{1\slash 2}n^{1\slash 2}+M_{\theta}^{1\slash 3}T^{-2\slash 3}n^{1\slash 3})\nonumber\\
&\geq& 2\sqrt{n}\Big(\int_{Q_l}\rho_{\theta}(x,y)dxdy\Big)^{1\slash 2}-C((K_0^{-5}T^{-5}+\Upsilon_l^{1\slash 2})n^{1\slash 2}+M_{\theta}^{1\slash 3}T^{-2\slash 3}n^{1\slash 3}),\nonumber\\
&&
\end{eqnarray}
which by (\ref{Eq4.2}) and the AM-GM inequality lead to
\begin{eqnarray}\label{Eq9.1}
&& \Big|L_{1,l}-2\sqrt{n}\Big(\int_{Q_l}\rho_{\theta}(x,y)dxdy\Big)^{1\slash 2}\Big|\nonumber\\
&\leq& C((K_0^{-5}T^{-5}+\Upsilon_l^{1\slash 2})n^{1\slash 2}+M_{\theta}^{1\slash 3}T^{-2\slash 3}n^{1\slash 3}) \nonumber\\
&\leq& C_{\theta}'(T^{-5}n^{1\slash 2}+T^{-2\slash 3}n^{1\slash 3})\nonumber\\
&&+C_{\theta}' T^{-1\slash 2} n^{1\slash 2} (x_l(\Gamma)-x_{l-1}(\Gamma)+y_l(\Gamma)-y_{l-1}(\Gamma)),
\end{eqnarray}
where $C_{\theta}'$ is a positive constant that only depends on $\theta$. Let $\mathscr{E}_l'$ be the event that (\ref{Eq9.1}) holds. We have
\begin{equation*}
   \{|LIS(\tau)-2\sqrt{R}|\leq  
R^{1\slash 3}\}\cap\mathcal{D}_l^c\cap\mathcal{H}_{l,(K_0T)^{-10}} \subseteq \mathscr{E}_l'\cap\mathcal{D}_l^c\cap\mathcal{H}_{l,(K_0T)^{-10}},
\end{equation*}
hence
\begin{equation}\label{Eq8.1}
  (\mathscr{E}_l')^c\cap\mathcal{D}_l^c\cap\mathcal{H}_{l,(K_0T)^{-10}}\subseteq \{|LIS(\tau)-2\sqrt{R}|>  
R^{1\slash 3}\}\cap\mathcal{D}_l^c\cap\mathcal{H}_{l,(K_0T)^{-10}}.
\end{equation}
By (\ref{Eq4.14}) and (\ref{Eq8.1}), we have
\begin{equation}\label{Eq4.18}
\mathbb{P}((\mathscr{E}_l')^c\cap \mathcal{D}_l^c\cap\mathcal{H}_{l,(K_0T)^{-10}}) \leq C\exp(-\tilde{c} n^{1\slash 4}).
\end{equation}
By (\ref{Eq4.16}), (\ref{Eq4.17}), (\ref{Eq4.18}), and the union bound, we have
\begin{eqnarray}\label{Eq4.23}
    \mathbb{P}((\mathscr{E}_l')^c)&\leq& \mathbb{P}((\mathscr{E}_l')^c\cap \mathcal{D}_l^c\cap\mathcal{H}_{l,(K_0T)^{-10}})+\mathbb{P}(\mathcal{D}_l)+\mathbb{P}((\mathcal{H}_{l,(K_0T)^{-10}})^c)\nonumber\\
    &\leq& \tilde{C} n\exp(-\tilde{c} n^{1\slash 4} ).
\end{eqnarray}

\medskip

Recall (\ref{Eq4.24}) and (\ref{Eq9.1}), and take $C_3=C_{\theta}+C_{\theta}'$ (note that $C_3$ is a positive constant that only depends on $\theta$). Let $\mathscr{C}_l$ be the event that 
\begin{eqnarray}\label{Eq5.1}
 &&\Big|LIS(\sigma|_{nQ_l})-2\sqrt{n}\Big(\int_{Q_l}\rho_{\theta}(x,y)dxdy\Big)^{1\slash 2}\Big|\nonumber\\
 &\leq& C_3(T^{-5}n^{1\slash 2}+T^{-2\slash 3}n^{1\slash 3})\nonumber\\
 && +C_3 T^{-1\slash 2} n^{1\slash 2} (x_l(\Gamma)-x_{l-1}(\Gamma)+y_l(\Gamma)-y_{l-1}(\Gamma)).
\end{eqnarray}
By (\ref{Eq2}), (\ref{Eq4.24}), and (\ref{Eq9.1}), we have $\mathscr{E}_l\cap\mathscr{E}_l'\subseteq \mathscr{C}_l$. Hence by (\ref{Eq4.22}), (\ref{Eq4.23}), and the union bound, we have
\begin{equation}\label{Eq5.2}
    \mathbb{P}(\mathscr{C}_l^c)\leq \mathbb{P}(\mathscr{E}_l^c)+\mathbb{P}((\mathscr{E}_l')^c)\leq \tilde{C} n\exp(-\tilde{c} n^{1\slash 4}),
\end{equation}

\paragraph{Case 2: $y_{l-1}(\Gamma)< 1\slash 3$}

We generate $\sigma\in S_n$ through the following procedure. We sample $\sigma_0\in S_n$ from $\mathbb{P}_{n,\beta_n}$, and then run one step of the hit and run algorithm for the $L^1$ model to obtain $\bar{\sigma}\in S_n$. Finally, we let $\sigma\in S_n$ be such that $\sigma(i)=n+1-\bar{\sigma}(n+1-i)$ for every $i\in [n]$. As $\mathbb{P}_{n,\beta_n}$ is the stationary distribution of the hit and run algorithm, the distribution of $\bar{\sigma}$ is given by $\mathbb{P}_{n,\beta_n}$. For any $\tau\in S_n$, 
\begin{eqnarray}\label{Eq1.3.7}
  \mathbb{P}(\sigma=\tau)&=& \mathbb{P}(\bar{\sigma}(i)=n+1-\tau(n+1-i) \text{ for every }i\in [n])\nonumber\\
  &=& Z_{n,\beta_n}^{-1} \exp\Big(-\beta_n \sum_{i=1}^n |n+1-\tau(n+1-i)-i|\Big)\nonumber\\
  &=& Z_{n,\beta_n}^{-1}\exp\Big(-\beta_n  \sum_{i=1}^n|\tau(i)-i|\Big)=\mathbb{P}_{n,\beta_n}(\tau).
\end{eqnarray}
Hence the distribution of $\sigma$ is given by $\mathbb{P}_{n,\beta_n}$.

Let 
\begin{equation*}
    \tilde{Q}_l:=[1+n^{-1}-x_l(\Gamma),1+n^{-1}-x_{l-1}(\Gamma))\times [1+n^{-1}-y_l(\Gamma),1+n^{-1}-y_{l-1}(\Gamma)),
\end{equation*}
\begin{equation*}
    \bar{Q}_l:=[1-x_l(\Gamma),1-x_{l-1}(\Gamma))\times [1-y_l(\Gamma),1-y_{l-1}(\Gamma)).
\end{equation*}
Note that $LIS(\sigma|_{nQ_l})=LIS(\bar{\sigma}|_{n\tilde{Q}_l})$. As $|(S(\bar{\sigma})\cap n\tilde{Q}_l)\Delta (S(\bar{\sigma})\cap n\bar{Q}_l)|\leq 4$, 
 we have
\begin{equation}\label{Eq5.5}
    |LIS(\sigma|_{nQ_l})-LIS(\bar{\sigma}|_{n\bar{Q}_l})|=|LIS(\bar{\sigma}|_{n\tilde{Q}_l})-LIS(\bar{\sigma}|_{n\bar{Q}_l)}|\leq 4.
\end{equation}
By (\ref{Eq4.1}), as $T\geq 4$, we have
\begin{equation}\label{Eq5.4}
    y_l(\Gamma)=(y_{l}(\Gamma)-y_{l-1}(\Gamma))+y_{l-1}(\Gamma)< T^{-1}+\frac{1}{3}<\frac{2}{3}, \text{ hence }1-y_l(\Gamma)> \frac{1}{3}. 
\end{equation}
Recall (\ref{Eq5.1}) and (\ref{Eq5.2}), and note that the distribution of $\bar{\sigma}$ is given by $\mathbb{P}_{n,\beta_n}$. By (\ref{Eq5.4}), following the argument in \textbf{Case 1}, we can deduce that there exists a positive constant $C_4\geq 8$ that only depends on $\theta$, such that the following holds. Letting $\mathscr{D}_l$ be the event that 
\begin{eqnarray}\label{Eq5.3}
 &&\Big|LIS(\bar{\sigma}|_{n\bar{Q}_l})-2\sqrt{n}\Big(\int_{\bar{Q}_l}\rho_{\theta}(x,y)dxdy\Big)^{1\slash 2}\Big|\nonumber\\
 &\leq& \frac{1}{2} C_4(T^{-5}n^{1\slash 2}+T^{-2\slash 3} n^{1\slash 3})\nonumber\\
 && +C_4 T^{-1\slash 2} n^{1\slash 2} (x_l(\Gamma)-x_{l-1}(\Gamma)+y_l(\Gamma)-y_{l-1}(\Gamma)),
\end{eqnarray}
we have
\begin{equation}\label{Eq5.7}
    \mathbb{P}(\mathscr{D}_l^c)\leq \tilde{C} n \exp(-\tilde{c} n^{1\slash 4}).
\end{equation}

Now let $\tilde{\mathscr{C}}_l$ be the event that
\begin{eqnarray}\label{Eq5.8}
 &&\Big|LIS(\sigma|_{nQ_l})-2\sqrt{n}\Big(\int_{Q_l}\rho_{\theta}(x,y)dxdy\Big)^{1\slash 2}\Big|\nonumber\\
 &\leq& C_4(T^{-5}n^{1\slash 2}+T^{-2\slash 3} n^{1\slash 3})\nonumber\\
 && +C_4 T^{-1\slash 2} n^{1\slash 2} (x_l(\Gamma)-x_{l-1}(\Gamma)+y_l(\Gamma)-y_{l-1}(\Gamma)). 
\end{eqnarray}
By Proposition \ref{Densi.l1}, for any $x\in [0,1]$, $a_{\theta}(x)=a_{\theta}(1-x)$. Hence 
\begin{equation}\label{Eq5.6}
    \int_{\bar{Q}_l}\rho_{\theta}(x,y)dxdy=\int_{Q_l}\rho_{\theta}(x,y)dxdy. 
\end{equation}
By (\ref{Eq5.5}), (\ref{Eq5.3}), and (\ref{Eq5.6}), as $n\geq T^2$, we have $\mathscr{D}_l\subseteq \tilde{\mathscr{C}}_l$. Hence by (\ref{Eq5.7}), 
\begin{equation}\label{Eq10.3}
    \mathbb{P}((\tilde{\mathscr{C}}_l)^c)\leq \mathbb{P}(\mathscr{D}_l^c)  \leq \tilde{C} n \exp(-\tilde{c} n^{1\slash 4}).
\end{equation}

\bigskip

Now let 
\begin{equation*}
    N_0=\max\{8K_0 T,K_0^2T^3,n_0\}, \quad T_0=\max\Big\{\frac{1000 e^{5\theta}}{m_{\theta}},4\Big\},
\end{equation*}
and take $C_1=\max\{C_3,C_4\}$ (recall (\ref{Eq5.1}) and (\ref{Eq5.8}); note that $C_1$ is a positive constant that only depends on $\theta$). Let $\mathscr{A}_{\Gamma,l}$ be defined as in (\ref{Eq10.2}). By (\ref{Eq5.2}) and (\ref{Eq10.3}), we conclude that when $T\geq T_0$ and $n  \geq N_0$, 
\begin{equation}
    \mathbb{P}((\mathscr{A}_{\Gamma,l})^c)\leq \tilde{C} n \exp(-\tilde{c} n^{1\slash 4}).
\end{equation}

\subsection{Proof of Theorem \ref{limit_l1_1}}\label{Sect.3.2}

In this subsection, we finish the proof of Theorem \ref{limit_l1_1} based on Proposition \ref{P3.1}. We assume the assumptions that are stated in Theorem \ref{limit_l1_1}.

In the following, we consider any $T,K_0\in \mathbb{N}^{*}$ such that $T\geq 4$ and $K_0$ is odd. We let $T_0$ and $N_0$ be defined as in Proposition \ref{P3.1}, and assume that $T\geq T_0$ and $n\geq N_0$. We denote by $C',c'$ positive constants that only depend on $\theta$, and denote by $\tilde{C},\tilde{c}$ positive constants that only depend on $T,K_0$ and the sequence $\{\beta_n\}$. The values of these constants may change from line to line. 

For any $\Gamma\in \Pi^{T,T,K_0}$ and $l\in [2T-1]$, let $Q_{\Gamma,l}, Q_{\Gamma,l}',\mathscr{A}_{\Gamma,l},\mathscr{B}_{\Gamma,l}$ be defined as in Proposition \ref{P3.1}. We let
\begin{equation*}
    \mathscr{A}:=\bigcap_{\Gamma\in \Pi^{T,T,K_0}}\bigcap_{l=1}^{2T-1}\mathscr{A}_{\Gamma,l}, \quad  \mathscr{B}:=\bigcap_{\Gamma\in \Pi^{T,T,K_0}}\bigcap_{l=1}^{2T-1}\mathscr{B}_{\Gamma,l}.
\end{equation*}
By Proposition \ref{P3.1} and the union bound, we have
\begin{equation}\label{Eq11.3}
    \mathbb{P}(\mathscr{A}^c)\leq \tilde{C}n\exp(-\tilde{c} n^{1\slash 4}), \quad   \mathbb{P}(\mathscr{B}^c)\leq \tilde{C}n\exp(-\tilde{c} n^{1\slash 4}).
\end{equation}

Let $\Gamma_0\in \Pi^{T,T,K_0}$ be
\begin{equation*}
    (1,1), \frac{K_0+1}{2}, (2,1), \frac{K_0+1}{2}, (2,2), \frac{K_0+1}{2}, \cdots, (T,T-1), \frac{K_0+1}{2}, (T,T). 
\end{equation*}
We have $(x_0(\Gamma_0),y_0(\Gamma_0))=(0,0)$, $(x_{2T-1}(\Gamma_0),y_{2T-1}(\Gamma_0))=(1,1)$. For any $l\in [2T-2]$,
\begin{equation*}
    (x_l(\Gamma_0),y_l(\Gamma_0))=\Big(\frac{l+1}{2T},\frac{l}{2T}\Big).
\end{equation*}
By Lemma \ref{L1}, we have 
\begin{equation}\label{Eq10.4}
    LIS(\sigma)\geq \sum_{l=1}^{2T-1} LIS(\sigma|_{nQ_{\Gamma_0,l}}). 
\end{equation}
When the event $\mathscr{A}$ holds, by (\ref{Eq10.2}) and (\ref{Eq10.4}), we have
\begin{eqnarray}\label{Eq10.5}
&&LIS(\sigma)\geq 2\sqrt{n}\sum_{l=1}^{2T-1} \Big(\int_{Q_{\Gamma_0,l}}\rho_{\theta}(x,y)dxdy\Big)^{1\slash 2}-C'(T^{-4}n^{1\slash 2}+T^{1\slash 3}n^{1\slash 3})\nonumber\\
&&\quad\quad-C'T^{-1\slash 2}n^{1\slash 2}\sum_{l=1}^{2T-1}(x_l(\Gamma_0)-x_{l-1}(\Gamma_0)+y_l(\Gamma_0)-y_{l-1}(\Gamma_0))\nonumber\\
&&\geq 2\sqrt{n}\sum_{l=1}^{2T-1} \Big(\int_{Q_{\Gamma_0,l}}\rho_{\theta}(x,y)dxdy\Big)^{1\slash 2}-C'(T^{-1\slash 2}n^{1\slash 2}+T^{1\slash 3}n^{1\slash 3}).
\end{eqnarray}
As the function $\rho_{\theta}(\cdot,\cdot)$ is continuous on the compact set $[0,1]^2$, it is uniformly continuous. Hence for any $\epsilon>0$, there exists $\delta(\epsilon)>0$, such that for any $(x_1,y_1),(x_2,y_2)\in [0,1]^2$ satisfying $\|(x_1,y_1)-(x_2,y_2)\|_2< \delta(\epsilon)$ (where $\|\cdot\|_2$ is the Euclidean distance), we have $|\rho_{\theta}(x_1,y_1)-\rho_{\theta}(x_2,y_2)|<\epsilon$. It can be checked that for any $l\in [2T-1]$ and any $(x,y)\in Q_{\Gamma_0,l}$, 
\begin{equation*}
    \Big|x-\frac{l}{2T}\Big|\leq \frac{1}{T}, \quad  \Big|y-\frac{l}{2T}\Big|\leq \frac{1}{T}, \quad \text{hence } \Big\|(x,y)-\Big(\frac{l}{2T},\frac{l}{2T}\Big)\Big\|_2\leq \frac{2}{T}.
\end{equation*}
Below we consider any $\epsilon>0$ and assume that $T> 2\delta(\epsilon)^{-1}$. For any $l\in [2T-1]$ and any $(x,y)\in Q_{\Gamma_0,l}$, 
\begin{equation*}
    \Big|\rho_{\theta}(x,y)-\rho_{\theta}\Big(\frac{l}{2T},\frac{l}{2T}\Big)\Big|<\epsilon,
\end{equation*}
which leads to
\begin{eqnarray*}\label{Eq10.6}
   && \Big|\int_{Q_{\Gamma_0,l}}\rho_{\theta}(x,y)dxdy-\rho_{\theta}\Big(\frac{l}{2T},\frac{l}{2T}\Big)(x_l(\Gamma_0)-x_{l-1}(\Gamma_0))(y_l(\Gamma_0)-y_{l-1}(\Gamma_0))\Big|\nonumber\\
   &&\leq \epsilon (x_l(\Gamma_0)-x_{l-1}(\Gamma_0))(y_l(\Gamma_0)-y_{l-1}(\Gamma_0))\leq \epsilon T^{-2}.
\end{eqnarray*}
Hence we have
\begin{eqnarray}\label{Eq10.7}
  &&  \Big|\Big(\int_{Q_{\Gamma_0,1}}\rho_{\theta}(x,y)dxdy\Big)^{1\slash 2}-\Big(\frac{1}{2}T^{-2}\rho_{\theta}\Big(\frac{1}{2T},\frac{1}{2T}\Big)\Big)^{1\slash 2}\Big|\nonumber\\
  &\leq& \frac{\epsilon T^{-2}}{\Big(\frac{1}{2}T^{-2}\rho_{\theta}\Big(\frac{1}{2T},\frac{1}{2T}\Big)\Big)^{1\slash 2}} \leq 2\epsilon T^{-1}m_{\theta}^{-1\slash 2}\leq C'\epsilon T^{-1},
\end{eqnarray}
\begin{equation}\label{Eq10.8}
    \Big|\Big(\int_{Q_{\Gamma_0,2T-1}}\rho_{\theta}(x,y)dxdy\Big)^{1\slash 2}-\Big(\frac{1}{2}T^{-2}\rho_{\theta}\Big(\frac{2T-1}{2T},\frac{2T-1}{2T}\Big)\Big)^{1\slash 2}\Big|\leq C'\epsilon T^{-1},
\end{equation}
and for any $l\in \{2,3,\cdots,2T-2\}$, 
\begin{equation}\label{Eq10.9}
    \Big|\Big(\int_{Q_{\Gamma_0,l}}\rho_{\theta}(x,y)dxdy\Big)^{1\slash 2}-\Big(\frac{1}{4}T^{-2}\rho_{\theta}\Big(\frac{l}{2T},\frac{l}{2T}\Big)\Big)^{1\slash 2}\Big|\leq C'\epsilon T^{-1}.
\end{equation}
By (\ref{Eq10.5})-(\ref{Eq10.9}), when the event $\mathscr{A}$ holds, for any $\epsilon>0$, if $T>2\delta(\epsilon)^{-1}$, then
\begin{eqnarray}\label{Eq10.21}
&& LIS(\sigma)\geq \sqrt{2}T^{-1}\sqrt{n}\Big(\rho_{\theta}\Big(\frac{1}{2T},\frac{1}{2T}\Big)^{1\slash 2}+\rho_{\theta}\Big(\frac{2T-1}{2T},\frac{2T-1}{2T}\Big)^{1\slash 2}\Big) \nonumber\\
&&\quad\quad +T^{-1}\sqrt{n}\sum_{l=2}^{2T-2}\rho_{\theta}\Big(\frac{l}{2T},\frac{l}{2T}\Big)^{1\slash 2}-C'((\epsilon+ T^{-1\slash 2})n^{1\slash 2}+T^{1\slash 3}n^{1\slash 3})\nonumber\\
&&\geq T^{-1}\sqrt{n}\sum_{l=1}^{2T}\rho_{\theta}\Big(\frac{l}{2T},\frac{l}{2T}\Big)^{1\slash 2}-C'((\epsilon+T^{-1\slash 2})n^{1\slash 2}+T^{1\slash 3}n^{1\slash 3}).
\end{eqnarray}

Below we consider any $\Gamma\in \Pi^{T,T,K_0}$ and $\epsilon>0$, and assume that $T>2\delta(\epsilon)^{-1}$. For any $l\in [2T-1]$ such that $a_{l-1}(\Gamma)=c_l(\Gamma)$ or $b_{l-1}(\Gamma)=d_l(\Gamma)$, we have
\begin{equation}\label{Eq10.13}
    \int_{Q_{\Gamma,l}'}\rho_{\theta}(x,y)dxdy=\rho_{\theta}(x_{l-1}(\Gamma),y_{l-1}(\Gamma))(c_l(\Gamma)-a_{l-1}(\Gamma))(d_l(\Gamma)-b_{l-1}(\Gamma))=0.
\end{equation}
Now consider any $l\in [2T-1]$ such that $a_{l-1}(\Gamma)<c_l(\Gamma)$ and $b_{l-1}(\Gamma)<d_l(\Gamma)$. We have
\begin{equation}\label{Eq10.11}
   (2K_0T)^{-1}\leq c_l(\Gamma)-a_{l-1}(\Gamma)\leq T^{-1}, \quad (2K_0T)^{-1}\leq d_l(\Gamma)-b_{l-1}(\Gamma) \leq T^{-1}. 
\end{equation}
Hence for any $(x,y)\in Q_{\Gamma,l}'$, $||(x,y)-(x_{l-1}(\Gamma),y_{l-1}(\Gamma))||_2\leq 2T^{-1}<\delta(\epsilon)$, and 
\begin{eqnarray}\label{Eq10.12}
 &&  \Big| \int_{Q_{\Gamma,l}'}\rho_{\theta}(x,y)dxdy-\rho_{\theta}(x_{l-1}(\Gamma),y_{l-1}(\Gamma))(c_l(\Gamma)-a_{l-1}(\Gamma))(d_l(\Gamma)-b_{l-1}(\Gamma))\Big| \nonumber\\
 && \leq \epsilon (c_l(\Gamma)-a_{l-1}(\Gamma))(d_l(\Gamma)-b_{l-1}(\Gamma))\leq \epsilon T^{-2}. 
\end{eqnarray}
By (\ref{Eq10.11}) and (\ref{Eq10.12}), we have
\begin{eqnarray}\label{Eq10.14}
&& \Big|\Big(\int_{Q_{\Gamma,l}'}\rho_{\theta}(x,y)dxdy\Big)^{1\slash 2} \nonumber\\
&&\quad-\rho_{\theta}(x_{l-1}(\Gamma),y_{l-1}(\Gamma))^{1\slash 2}(c_l(\Gamma)-a_{l-1}(\Gamma))^{1\slash 2}(d_l(\Gamma)-b_{l-1}(\Gamma))^{1\slash 2}\Big|\nonumber\\
&\leq& \frac{\epsilon T^{-2}}{\rho_{\theta}(x_{l-1}(\Gamma),y_{l-1}(\Gamma))^{1\slash 2}(c_l(\Gamma)-a_{l-1}(\Gamma))^{1\slash 2}(d_l(\Gamma)-b_{l-1}(\Gamma))^{1\slash 2}}\nonumber\\
&\leq& \frac{\epsilon T^{-2}}{m_{\theta}^{1\slash 2} (2K_0T)^{-1}}= 2 \epsilon K_0 T^{-1} m_{\theta}^{-1\slash 2}\leq C' \epsilon K_0 T^{-1}.
\end{eqnarray}
By (\ref{Eq10.13}) and (\ref{Eq10.14}), we have
\begin{eqnarray}\label{Eq10.15}
&& \sum_{l=1}^{2T-1} \Big(\int_{Q_{\Gamma,l}'}\rho_{\theta}(x,y)dxdy\Big)^{1\slash 2}\nonumber\\
&\leq& \sum_{l=1}^{2T-1}\rho_{\theta}(x_{l-1}(\Gamma),y_{l-1}(\Gamma))^{1\slash 2}(c_l(\Gamma)-a_{l-1}(\Gamma))^{1\slash 2}(d_l(\Gamma)-b_{l-1}(\Gamma))^{1\slash 2}\nonumber\\
&&+C'\epsilon K_0.
\end{eqnarray}
By the AM-GM inequality, recalling Proposition \ref{Densi.l1}, we have
\begin{eqnarray}\label{Eq10.16}
&& \sum_{l=1}^{2T-1} \rho_{\theta}(x_{l-1}(\Gamma),y_{l-1}(\Gamma))^{1\slash 2}(c_l(\Gamma)-a_{l-1}(\Gamma))^{1\slash 2}(d_l(\Gamma)-b_{l-1}(\Gamma))^{1\slash 2}\nonumber\\
&\leq & \sum_{l=1}^{2T-1} (e^{a_{\theta}(x_{l-1}(\Gamma))}(c_l(\Gamma)-a_{l-1}(\Gamma)))^{1\slash 2} (e^{a_{\theta}(y_{l-1}(\Gamma))}(d_l(\Gamma)-b_{l-1}(\Gamma)))^{1\slash 2}\nonumber\\
&\leq& \frac{1}{2}\sum_{l=1}^{2T-1} e^{a_{\theta}(x_{l-1}(\Gamma))}(c_l(\Gamma)-a_{l-1}(\Gamma))+\frac{1}{2}\sum_{l=1}^{2T-1} e^{a_{\theta}(y_{l-1}(\Gamma))}(d_l(\Gamma)-b_{l-1}(\Gamma)).\nonumber\\
&&
\end{eqnarray}
Note that for any $x\in [0,1]$, $e^{a_{\theta}(x)}=\sqrt{\rho_{\theta}(x,x)}\leq M_{\theta}^{1\slash 2}$. Moreover, for any $l\in [2T-1]$,
\begin{eqnarray*}
  &&  |c_l(\Gamma)-x_l(\Gamma)|\leq (2K_0T)^{-1},\quad |a_{l-1}(\Gamma)-x_{l-1}(\Gamma)|\leq (2K_0T)^{-1},\nonumber\\
  &&|d_l(\Gamma)-y_l(\Gamma)|\leq (2K_0T)^{-1},\quad |b_{l-1}(\Gamma)-y_{l-1}(\Gamma)|\leq (2K_0T)^{-1}.
\end{eqnarray*}
Hence by (\ref{Eq10.15}) and (\ref{Eq10.16}), we have
\begin{eqnarray}\label{Eq10.17}
  &&  \sum_{l=1}^{2T-1} \Big(\int_{Q_{\Gamma,l}'}\rho_{\theta}(x,y)dxdy\Big)^{1\slash 2}\leq \frac{1}{2}\sum_{l=1}^{2T-1}e^{a_{\theta}(x_{l-1}(\Gamma))}(x_l(\Gamma)-x_{l-1}(\Gamma))\nonumber\\
  &&\quad\quad\quad    +\frac{1}{2}\sum_{l=1}^{2T-1}e^{a_{\theta}(y_{l-1}(\Gamma))}(y_l(\Gamma)-y_{l-1}(\Gamma))+C'(\epsilon K_0+K_0^{-1}).
\end{eqnarray}
Moreover,
\begin{eqnarray}\label{Eq10.20}
  &&  \sum_{l=1}^{2T-1}(c_l(\Gamma)-a_{l-1}(\Gamma)+d_l(\Gamma)-b_{l-1}(\Gamma))\nonumber\\
  &\leq& \sum_{l=1}^{2T-1}(x_l(\Gamma)-x_{l-1}(\Gamma)+y_l(\Gamma)-y_{l-1}(\Gamma))+CK_0^{-1}\leq C.
\end{eqnarray}

For any $\epsilon>0$, when $T>2\delta(\epsilon)^{-1}$ and the event $\mathscr{B}$ holds, by Lemma \ref{L1}, (\ref{Eq10.18}), (\ref{Eq10.17}), and (\ref{Eq10.20}), we have
\begin{eqnarray}\label{Eq10.22}
&& LIS(\sigma)\leq \max_{\Gamma\in \Pi^{T,T,K_0}}\Big\{\sum_{l=1}^{2T-1}LIS(\sigma|_{n Q_{\Gamma,l}'})\Big\} \nonumber\\
&\leq& 2\sqrt{n}\max_{\Gamma\in \Pi^{T,T,K_0}}\Big\{\sum_{l=1}^{2T-1}\Big(\int_{Q_{\Gamma,l}'}\rho_{\theta}(x,y)dxdy\Big)^{1\slash 2}\Big\}+C'(T^{-4}n^{1\slash 2}+T^{1\slash 3} n^{1\slash 3})\nonumber\\
&&+C' T^{-1\slash 2} n^{1\slash 2}\max_{\Gamma\in \Pi^{T,T,K_0}}\Big\{\sum_{l=1}^{2T-1}(c_l(\Gamma)-a_{l-1}(\Gamma)+d_l(\Gamma)-b_{l-1}(\Gamma))\Big\}\nonumber\\
&\leq&\sqrt{n}\max_{\Gamma\in \Pi^{T,T,K_0}}\Big\{\sum_{l=1}^{2T-1}\sqrt{\rho_{\theta}(x_{l-1}(\Gamma),x_{l-1}(\Gamma))}(x_l(\Gamma)-x_{l-1}(\Gamma))\Big\}\nonumber\\
&&+\sqrt{n}\max_{\Gamma\in \Pi^{T,T,K_0}}\Big\{\sum_{l=1}^{2T-1}\sqrt{\rho_{\theta}(y_{l-1}(\Gamma),y_{l-1}(\Gamma))}(y_l(\Gamma)-y_{l-1}(\Gamma))\Big\}\nonumber\\
&&+C'((T^{-1\slash 2}+\epsilon K_0+K_0^{-1})n^{1\slash 2}+T^{1\slash 3}n^{1\slash 3}).
\end{eqnarray}

By (\ref{Eq10.21}) and (\ref{Eq10.22}), for any $\epsilon>0$, when the event $\mathscr{A}\cap\mathscr{B}$ holds and $T>2\delta(\epsilon)^{-1}$, we have
\begin{eqnarray}
  &&   \Big|\frac{LIS(\sigma)}{\sqrt{n}}-2\int_0^1\sqrt{\rho_{\theta}(x,x)}dx\Big|\nonumber\\
  &\leq& 2\Big|\sum_{l=1}^{2T}\rho_{\theta}\Big(\frac{l}{2T},\frac{l}{2T}\Big)^{1\slash 2}\frac{1}{2T}-\int_0^1\sqrt{\rho_{\theta}(x,x)}dx\Big|\nonumber\\
  &&+\max_{\Gamma\in \Pi^{T,T,K_0}}\Big\{\Big|\sum_{l=1}^{2T-1}\sqrt{\rho_{\theta}(x_{l-1}(\Gamma),x_{l-1}(\Gamma))}(x_l(\Gamma)-x_{l-1}(\Gamma))-\int_0^1\sqrt{\rho_{\theta}(x,x)}dx\Big|\Big\}\nonumber\\
  &&+\max_{\Gamma\in \Pi^{T,T,K_0}}\Big\{\Big|\sum_{l=1}^{2T-1}\sqrt{\rho_{\theta}(y_{l-1}(\Gamma),y_{l-1}(\Gamma))}(y_l(\Gamma)-y_{l-1}(\Gamma))-\int_0^1\sqrt{\rho_{\theta}(x,x)}dx\Big|\Big\}\nonumber\\
  &&+C'(T^{-1\slash 2}+\epsilon K_0+K_0^{-1}+T^{1\slash 3}n^{-1\slash 6}).
\end{eqnarray}
Hence for any $\epsilon>0$, if $T>2\delta(\epsilon)^{-1}$, we have
\begin{eqnarray}\label{Eq11.1}
  && \mathbb{E}\Big[\Big|\frac{LIS(\sigma)}{\sqrt{n}}-2\int_0^1\sqrt{\rho_{\theta}(x,x)}dx\Big|\mathbbm{1}_{\mathscr{A}\cap\mathscr{B}}\Big]\nonumber\\
  &\leq& 2\Big|\sum_{l=1}^{2T}\rho_{\theta}\Big(\frac{l}{2T},\frac{l}{2T}\Big)^{1\slash 2}\frac{1}{2T}-\int_0^1\sqrt{\rho_{\theta}(x,x)}dx\Big|\nonumber\\
  &&+\max_{\Gamma\in \Pi^{T,T,K_0}}\Big\{\Big|\sum_{l=1}^{2T-1}\sqrt{\rho_{\theta}(x_{l-1}(\Gamma),x_{l-1}(\Gamma))}(x_l(\Gamma)-x_{l-1}(\Gamma))-\int_0^1\sqrt{\rho_{\theta}(x,x)}dx\Big|\Big\}\nonumber\\
  &&+\max_{\Gamma\in \Pi^{T,T,K_0}}\Big\{\Big|\sum_{l=1}^{2T-1}\sqrt{\rho_{\theta}(y_{l-1}(\Gamma),y_{l-1}(\Gamma))}(y_l(\Gamma)-y_{l-1}(\Gamma))-\int_0^1\sqrt{\rho_{\theta}(x,x)}dx\Big|\Big\}\nonumber\\
  &&+C'(T^{-1\slash 2}+\epsilon K_0+K_0^{-1}+T^{1\slash 3}n^{-1\slash 6}).
\end{eqnarray}
As $LIS(\sigma)\leq n$, by (\ref{Eq11.3}) and the union bound, we have 
\begin{eqnarray}\label{Eq11.4}
  &&  \mathbb{E}\Big[\Big|\frac{LIS(\sigma)}{\sqrt{n}}-2\int_0^1\sqrt{\rho_{\theta}(x,x)}dx\Big|\mathbbm{1}_{(\mathscr{A}\cap\mathscr{B})^c}\Big]\leq (\sqrt{n}+2M_{\theta}^{1\slash 2})\mathbb{P}((\mathscr{A}\cap\mathscr{B})^c) \nonumber\\
  &&\leq C'\sqrt{n}(\mathbb{P}(\mathscr{A}^c)+\mathbb{P}(\mathscr{B}^c))\leq \tilde{C}n^{3\slash 2}\exp(-\tilde{c} n^{1\slash 4}).
\end{eqnarray}
By (\ref{Eq11.1}) and (\ref{Eq11.4}), for any $\epsilon>0$, if $T>2\delta(\epsilon)^{-1}$, we have
\begin{eqnarray}\label{Eq11.6}
&& \mathbb{E}\Big[\Big|\frac{LIS(\sigma)}{\sqrt{n}}-2\int_0^1\sqrt{\rho_{\theta}(x,x)}dx\Big|\Big]\nonumber\\
  &\leq& 2\Big|\sum_{l=1}^{2T}\rho_{\theta}\Big(\frac{l}{2T},\frac{l}{2T}\Big)^{1\slash 2}\frac{1}{2T}-\int_0^1\sqrt{\rho_{\theta}(x,x)}dx\Big|\nonumber\\
  &&+\max_{\Gamma\in \Pi^{T,T,K_0}}\Big\{\Big|\sum_{l=1}^{2T-1}\sqrt{\rho_{\theta}(x_{l-1}(\Gamma),x_{l-1}(\Gamma))}(x_l(\Gamma)-x_{l-1}(\Gamma))-\int_0^1\sqrt{\rho_{\theta}(x,x)}dx\Big|\Big\}\nonumber\\
  &&+\max_{\Gamma\in \Pi^{T,T,K_0}}\Big\{\Big|\sum_{l=1}^{2T-1}\sqrt{\rho_{\theta}(y_{l-1}(\Gamma),y_{l-1}(\Gamma))}(y_l(\Gamma)-y_{l-1}(\Gamma))-\int_0^1\sqrt{\rho_{\theta}(x,x)}dx\Big|\Big\}\nonumber\\
  &&+C'(T^{-1\slash 2}+\epsilon K_0+K_0^{-1}+T^{1\slash 3}n^{-1\slash 6})+\tilde{C}n^{3\slash 2}\exp(-\tilde{c} n^{1\slash 4}).
\end{eqnarray}

Note that as $T\rightarrow\infty$, the first three terms on the right-hand side of (\ref{Eq11.6}) converges to $0$. In (\ref{Eq11.6}), first letting $n\rightarrow\infty$, and then letting $T\rightarrow\infty$, we obtain that for any $\epsilon>0$,
\begin{equation}\label{Eq11.2}
    \limsup_{n\rightarrow\infty} \mathbb{E}\Big[\Big|\frac{LIS(\sigma)}{\sqrt{n}}-2\int_0^1\sqrt{\rho_{\theta}(x,x)}dx\Big|\Big]\leq C'(\epsilon K_0+K_0^{-1}).
\end{equation}
In (\ref{Eq11.2}), first taking $\epsilon\rightarrow 0$, and then taking $K_0\rightarrow\infty$, we conclude that
\begin{equation}
    \lim_{n\rightarrow\infty} \mathbb{E}\Big[\Big|\frac{LIS(\sigma)}{\sqrt{n}}-2\int_0^1\sqrt{\rho_{\theta}(x,x)}dx\Big|\Big]=0.
\end{equation}




\section{Proof of Theorem \ref{limit_l1_2}}\label{Sect.4}

In this section, we give the proof of Theorem \ref{limit_l1_2}. We first establish three preliminary propositions in Section \ref{Sect.4.1}. Based on these propositions, we finish the proof of Theorem \ref{limit_l1_2} in Section \ref{Sect.4.2}.

\subsection{Three preliminary propositions}\label{Sect.4.1}

In this subsection, we establish three preliminary propositions. These propositions will be used in the proof of Theorem \ref{limit_l1_2}. 

Throughout this subsection, we fix an arbitrary sequence of positive numbers $(\beta_n)_{n=1}^{\infty}$ such that $\lim_{n\rightarrow\infty}\beta_n=0$ and $\lim_{n\rightarrow\infty} n\beta_n=\infty$. We also fix any $L\in \mathbb{N}^{*}$ such that $L\geq 4$.

Below we consider any $n\in\mathbb{N}^{*}$ such that $n\beta_n\geq 4L$ and $\beta_n\leq 1\slash 10$. For any $s\in [1,\lfloor n\beta_n\slash L\rfloor-1]\cap\mathbb{N}$, we let
\begin{equation}
    \mathcal{I}_{n,s}:=((s-1)L\beta_n^{-1},sL\beta_n^{-1}].
\end{equation}
We also let
\begin{equation}
   \mathcal{I}_{n,\lfloor n\beta_n\slash L\rfloor}:=((\lfloor n\beta_n\slash L\rfloor -1)L\beta_n^{-1},n].
\end{equation}
For any $s\in [\lfloor n\beta_n\slash L\rfloor]$, we let
\begin{equation}
    \mathcal{R}_s:=\mathcal{I}_{n,s}\times \mathcal{I}_{n,s}.
\end{equation}
For any $s\in [2,\lfloor n\beta_n\slash L\rfloor]\cap\mathbb{N}$, we let
\begin{equation}
   \mathcal{R}_s':=(0,(s-1)L\beta_n^{-1}]\times \mathcal{I}_{n,s},   \quad  \mathcal{R}_s'':=\mathcal{I}_{n,s}\times (0,(s-1)L\beta_n^{-1}].
\end{equation}
Note that $\Big(\bigcup_{s=1}^{\lfloor n\beta_n\slash L\rfloor}\mathcal{R}_s\Big) \bigcup \Big( \bigcup_{s=2}^{\lfloor n\beta_n\slash L\rfloor}\mathcal{R}_s' \Big)\bigcup \Big( \bigcup_{s=2}^{\lfloor n\beta_n\slash L\rfloor}\mathcal{R}_s'' \Big)=(0,n]^2$.
Hence for any $\sigma\in S_n$,
\begin{equation}\label{Eq3.2.1}
    LIS(\sigma)\leq \sum_{s=1}^{\lfloor n\beta_n\slash L\rfloor}LIS(\sigma|_{\mathcal{R}_s})+\sum_{s=2}^{\lfloor n\beta_n\slash L\rfloor}LIS(\sigma|_{\mathcal{R}_s'})+\sum_{s=2}^{\lfloor n\beta_n\slash L\rfloor}LIS(\sigma|_{\mathcal{R}_s''}).
\end{equation}
Moreover,
\begin{equation}\label{Eq3.2.2}
    LIS(\sigma)\geq \sum_{s=1}^{\lfloor n\beta_n\slash L\rfloor}LIS(\sigma|_{\mathcal{R}_s}).
\end{equation}

The following proposition bounds $LIS(\sigma|_{\mathcal{R}_s'})$ and $LIS(\sigma|_{\mathcal{R}_s''})$ for $\sigma$ drawn from $\mathbb{P}_{n,\beta_n}$ and any $s\in [2,\lfloor n\beta_n\slash L\rfloor]\cap\mathbb{N}$.  

\begin{proposition}\label{P4.1}
Assume that $n\beta_n\geq 4L$ and $\beta_n\leq 1\slash 10$, and let $\sigma$ be drawn from $\mathbb{P}_{n,\beta_n}$. Then there exist positive absolute constants $C,c$, such that for any $s\in [2,\lfloor n\beta_n\slash L\rfloor]\cap\mathbb{N}$, we have
\begin{equation}\label{Eq1.2.1}
    \mathbb{E}[LIS(\sigma|_{\mathcal{R}_s'})]\leq CL^{1\slash 2}\beta_n^{-1\slash 2}+CL^2\exp(-c\beta_n^{-1\slash 2}),
\end{equation}
\begin{equation}\label{Eq1.2.2}
    \mathbb{E}[LIS(\sigma|_{\mathcal{R}_s''})]\leq CL^{1\slash 2}\beta_n^{-1\slash 2}+CL^2\exp(-c\beta_n^{-1\slash 2}).
\end{equation}
\end{proposition}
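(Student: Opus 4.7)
The plan is to exploit a conditional uniform structure of $\sigma|_{\mathcal{R}_s'}$, which reduces the problem to the LIS tail bound for uniform permutations (Proposition~\ref{P1}). Define $\mathcal{A} := \{i \in [n] : (i,\sigma(i)) \in \mathcal{R}_s'\}$ and $\mathcal{B} := \sigma(\mathcal{A})$, and set $D := |\mathcal{A}|$. The crucial observation is that for every $i \in \mathcal{A}$ one has $i \leq (s-1)L\beta_n^{-1} < \sigma(i)$, so $|\sigma(i) - i| = \sigma(i) - i$, and hence
\begin{equation*}
\sum_{i \in \mathcal{A}} |\sigma(i) - i| = \sum_{j \in \mathcal{B}} j - \sum_{i \in \mathcal{A}} i
\end{equation*}
depends only on the sets $\mathcal{A}$ and $\mathcal{B}$. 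Since $\mathcal{B}$ is determined by $\mathcal{A}$ together with $\sigma|_{[n]\setminus\mathcal{A}}$, this yields the key structural fact: conditional on $\mathcal{A}$ and $\sigma|_{[n]\setminus\mathcal{A}}$, the bijection $\sigma|_\mathcal{A} : \mathcal{A} \to \mathcal{B}$ is uniformly distributed over all bijections from $\mathcal{A}$ to $\mathcal{B}$.

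I would then apply Proposition~\ref{P1} to this conditional uniform bijection of size $D$, with a fixed $\delta_0 \in (0,1/3)$. This yields $LIS(\sigma|_{\mathcal{R}_s'}) \leq 2\sqrt{D} + D^{1/2-\delta_0}$ outside a conditional event of probability at most $C\exp(-cD^{(1-3\delta_0)/2})$, on which I bound $LIS(\sigma|_{\mathcal{R}_s'}) \leq D$ trivially. The essential deterministic input is $D \leq |\mathcal{I}_{n,s} \cap \mathbb{N}^{*}| \leq L\beta_n^{-1}$, since $\mathcal{B} \subseteq \mathcal{I}_{n,s}$. Taking expectations produces a main term $\mathbb{E}[2\sqrt{D}] \leq 2\sqrt{L\beta_n^{-1}} = 2L^{1/2}\beta_n^{-1/2}$, together with a tail contribution of order $L\beta_n^{-1} \cdot C\exp(-c\beta_n^{-1/2})$, provided the $D$-regime is split so that small $D$ (say $D \leq c\beta_n^{-1}$) is absorbed into the main term via $\sqrt{D} \leq c^{1/2}\beta_n^{-1/2} \leq L^{1/2}\beta_n^{-1/2}$, while large $D$ is handled by the concentration estimate above. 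This produces (\ref{Eq1.2.1}) up to absolute constants.

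The bound (\ref{Eq1.2.2}) then follows from (\ref{Eq1.2.1}) by symmetry. Since $H(\sigma,Id) = H(\sigma^{-1},Id)$, the permutation $\sigma^{-1}$ has the same law as $\sigma$ under $\mathbb{P}_{n,\beta_n}$; moreover $LIS(\sigma|_{\mathcal{R}_s''}) = LIS(\sigma^{-1}|_{\mathcal{R}_s'})$ because reflecting across the diagonal sends $\mathcal{R}_s''$ to $\mathcal{R}_s'$ while preserving increasing subsequences of bijections. Applying (\ref{Eq1.2.1}) to $\sigma^{-1}$ then yields (\ref{Eq1.2.2}). The main obstacle I anticipate is matching the precise exponent $\beta_n^{-1/2}$ in the second term: a direct application of Proposition~\ref{P1} with fixed $\delta_0$ gives only $\exp(-c\beta_n^{-(1-3\delta_0)/2})$, and reaching the stated $\exp(-c\beta_n^{-1/2})$ requires either sending $\delta_0$ close to $0$ (which is viable since the deviation term $D^{1/2-\delta_0}$ remains absorbable into $L^{1/2}\beta_n^{-1/2}$) or invoking a sharper Talagrand-type concentration bound for the LIS of uniform permutations, which with $t \sim \sqrt{D}$ and $D \sim \beta_n^{-1}$ produces exactly the factor $\exp(-c\beta_n^{-1/2})$.
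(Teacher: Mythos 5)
Your proposal is correct, and it takes a genuinely different route from the paper's proof. The crux of your argument is an exact conditional-uniformity property: because $\mathcal{R}_s' = (0,(s-1)L\beta_n^{-1}] \times \mathcal{I}_{n,s}$ lies strictly above the diagonal, every $(i,\sigma(i)) \in \mathcal{R}_s'$ has $\sigma(i) > i$, so $\sum_{i\in\mathcal{A}}|\sigma(i)-i| = \sum_{j\in\mathcal{B}} j - \sum_{i\in\mathcal{A}} i$ depends only on the pair $(\mathcal{A},\mathcal{B})$, and hence $\sigma|_{\mathcal{A}}$ is, conditionally on $\mathcal{A}$ and $\sigma|_{[n]\setminus\mathcal{A}}$, a uniformly random bijection $\mathcal{A}\to\mathcal{B}$. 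This exchangeability is specific to Spearman's footrule and to strictly off-diagonal rectangles; it lets you apply Proposition~\ref{P1} directly and bypass the paper's machinery. The paper instead runs one step of the hit-and-run chain to produce auxiliary variables $\{b_i\}$, controls the overshoot set $W_s$ and the counts $N_i$, and closes with a first-moment bound on the number of increasing $q$-tuples $\Lambda_{s,q}$. That heavier route is chosen because it also handles the on-diagonal square $\mathcal{R}_s$ (Proposition~\ref{P4.2}, where the sign of $\sigma(i)-i$ is not forced, so your conditional uniformity fails) and adapts to the $L^2$ model, where $\sum_{i\in\mathcal{A}}(\sigma(i)-i)^2$ is not a function of $(\mathcal{A},\mathcal{B})$ alone. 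One remark on your endgame: the worry about hitting exactly $\exp(-c\beta_n^{-1/2})$ is unnecessary. With any fixed $\delta_0\in(0,1/3)$, the conditional tail contribution $D\cdot C_{\delta_0}\exp(-D^{(1-3\delta_0)/2})$ is bounded uniformly in $D\geq 0$ by an absolute constant, and since $L^{1/2}\beta_n^{-1/2}\geq 2\sqrt{10}$ under the standing hypotheses, that constant is absorbed into the main term; so you already obtain the cleaner bound $\mathbb{E}[LIS(\sigma|_{\mathcal{R}_s'})]\leq CL^{1/2}\beta_n^{-1/2}$, which implies (\ref{Eq1.2.1}), and the $CL^2\exp(-c\beta_n^{-1/2})$ slack is not needed. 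Your derivation of (\ref{Eq1.2.2}) from (\ref{Eq1.2.1}) via $\sigma\mapsto\sigma^{-1}$ is exactly what the paper does.
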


\begin{proof}

In the following, we fix an arbitrary $s\in [2,\lfloor n\beta_n\slash L\rfloor]\cap\mathbb{N}$. 

We first show (\ref{Eq1.2.1}). We sample $\sigma_0$ from $\mathbb{P}_{n,\beta_n}$, and then run one step of the hit and run algorithm for the $L^1$ model to obtain $\sigma$. As $\mathbb{P}_{n,\beta_n}$ is the stationary distribution of the hit and run algorithm, the distribution of $\sigma$ is given by $\mathbb{P}_{n,\beta_n}$. 

Recall that in the hit and run algorithm, starting from $\sigma_0$, for every $i\in [n]$, we independently sample $u_i$ from the uniform distribution on $[0, e^{-2\beta_n(\sigma_0(i)-i)_{+}}]$, and take $b_i=i-\log(u_i)\slash (2\beta_n)$. For every $i\in [n]$, let
\begin{equation}\label{Eq1.1.9}
    N_i:=|\{j\in [n]: b_j\geq i\}|-n+i.
\end{equation}
Then we sample $\sigma$ uniformly from the set 
\begin{equation}\label{Eq1.1.14}
    \{\tau\in S_n: \tau(i)\leq b_i\text{ for every }i\in [n]\}
\end{equation}
through the following procedure. Look at the $N_n$ integers $i\in [n]$ with $b_i\geq n$, and pick $Y_n$ uniformly from these integers; then look at the $N_{n-1}$ remaining integers $i\in[n]$ with $b_i\geq n-1$ (with $Y_n$ deleted from the list), and pick $Y_{n-1}$ uniformly from these integers; and so on. In this way we obtain $\{Y_i\}_{i=1}^n$. Finally, we let $\sigma\in S_n$ be such that $\sigma(Y_i)=i$ for every $i\in [n]$.

Let
\begin{equation}\label{Eq1.1.15}
    W_s:=\{     i \in [1,(s-1)L\beta_n^{-1}]\cap\mathbb{N}:  b_i\geq (s-1)L\beta_n^{-1}    \}.
\end{equation}
Recall Definition \ref{Def2.2}. Note that 
\begin{eqnarray}\label{Eq1.1.8}
    |W_s|&=& \sum_{i \in [1,(s-1)L\beta_n^{-1}]\cap\mathbb{N}} \mathbbm{1}_{b_i\geq (s-1) L\beta_n^{-1}}\nonumber\\
    &\leq& \big|\mathcal{D}_{\lfloor (s-1)L\beta_n^{-1}\rfloor}(\sigma_0)\big|+\sum_{\substack{i \in [1,(s-1)L\beta_n^{-1}]\cap\mathbb{N}:\\\sigma_0(i)\leq \lfloor (s-1)L\beta_n^{-1}\rfloor}}\mathbbm{1}_{b_i\geq (s-1) L\beta_n^{-1}}.
\end{eqnarray}

Let $\mathscr{I}_{s}$ be the set of $i\in [1,(s-1)L\beta_n^{-1}]\cap\mathbb{N}$ such that $\sigma_0(i)\leq \lfloor (s-1)L\beta_n^{-1}\rfloor$. For any $i\in [1,(s-1)L\beta_n^{-1}]\cap\mathbb{N}$, if $i\in \mathscr{I}_{s}$, we let $X_i:=\mathbbm{1}_{b_i\geq (s-1) L\beta_n^{-1}}$; otherwise we let $X_i:=0$. Note that conditional on $\sigma_0$, $\{X_i\}_{i=1}^{\lfloor (s-1)L\beta_n^{-1}  \rfloor}$ are mutually independent, and for any $i\in \mathscr{I}_{s}$, $X_i$ follows the Bernoulli distribution with
\begin{equation}\label{Eq1.1.1}
     \mathbb{P}(X_i=1|\sigma_0)= \mathbb{P}(b_i\geq (s-1)L\beta_n^{-1}|\sigma_0)=e^{-2\beta_n ((s-1)L\beta_n^{-1}-\max\{i,\sigma_0(i)\})_{+}}.
\end{equation}
For any $l\in [\lceil\log_2((s-1)L)\rceil]$, we define $u_l:=2^{l-1}\beta_n^{-1}$, and let $\mathcal{U}_l$ be the set of $i\in \mathscr{I}_{s}$ such that $u_l\leq (s-1)L\beta_n^{-1}-\max\{i,\sigma_0(i)\}\leq 2u_l$. Below we consider any $l\in [\lceil\log_2((s-1)L)\rceil]$. Note that $u_l\geq \beta_n^{-1}\geq 1$ and $|\mathcal{U}_l|\leq 2(u_l+1)\leq 4u_l$. Moreover, by (\ref{Eq1.1.1}), for any $i\in \mathcal{U}_l$, we have $\mathbb{P}(X_i=1|\sigma_0)\leq e^{-2\beta_n u_l}$. Hence for any $t\in\mathbb{N}^{*}$, by Lemma \ref{Lemma3.1}, we have
\begin{eqnarray}\label{Eq1.1.3}
  && \mathbb{P}\Big(\sum_{i\in\mathcal{U}_l}\mathbbm{1}_{b_i\geq (s-1) L\beta_n^{-1}}\geq t|\sigma_0\Big)=\mathbb{P}\Big(\sum_{i\in\mathcal{U}_l}X_i\geq t|\sigma_0\Big) \nonumber\\
  &\leq& \sum_{\substack{i_1,i_2,\cdots,i_t\in \mathcal{U}_l:\\ i_1<i_2<\cdots<i_t}} \prod_{l=1}^t \mathbb{P}(X_{i_l}=1|\sigma_0)\leq \binom{|\mathcal{U}_l|}{t}e^{-2\beta_n u_l t} \leq \Big( \frac{e|\mathcal{U}_l|}{t}\Big)^t e^{-2\beta_n u_l t}\nonumber\\
  &\leq&\Big(\frac{4e u_l}{t}\Big)^t e^{-2\beta_n u_l t}=\Big(\frac{e\cdot 2^{l+1}\beta_n^{-1}}{t}\Big)^t e^{-2^l t}.
\end{eqnarray}
Note that there exists a positive absolute constant $C_1$, such that for any $l'\in \mathbb{N}^{*}$, $2^{-l'}\log(e\cdot 2^{1+3l'\slash 2}\cdot C_1^{-1})\leq 1\slash 2$. Taking $t=\lceil C_1 2^{-l\slash 2} \beta_n^{-1} \rceil\geq C_1 2^{-l\slash 2} \beta_n^{-1}$ in (\ref{Eq1.1.3}), we obtain that
\begin{eqnarray}\label{Eq1.1.4}
  &&  \mathbb{P}\Big(\sum_{i\in\mathcal{U}_l}\mathbbm{1}_{b_i\geq(s-1) L\beta_n^{-1}}\geq C_1 2^{-l\slash 2} \beta_n^{-1}\Big)\nonumber\\
  &=& \mathbb{P}\Big(\sum_{i\in\mathcal{U}_l}\mathbbm{1}_{b_i\geq(s-1) L\beta_n^{-1}}\geq \lceil C_1 2^{-l\slash 2} \beta_n^{-1}\rceil\Big)\nonumber\\
  &=&\mathbb{E}\Big[\mathbb{P}\Big(\sum_{i\in\mathcal{U}_l}\mathbbm{1}_{b_i\geq(s-1) L\beta_n^{-1}}\geq \lceil C_1 2^{-l\slash 2} \beta_n^{-1}\rceil \Big|\sigma_0\Big)\Big]\nonumber\\
  &\leq& \exp(-\lceil C_1 2^{-l\slash 2}\beta_n^{-1}\rceil 2^l (1-2^{-l}\log(e\cdot 2^{1+3l\slash 2}\cdot C_1^{-1})))\nonumber\\
  &\leq& \exp(-C_1 2^{l\slash 2-1} \beta_n^{-1}). 
\end{eqnarray}
Let $\mathcal{V}_l$ be the event that 
\begin{equation}\label{Eq1.1.5}
      \sum_{i\in\mathcal{U}_l} \mathbbm{1}_{b_i\geq (s-1)L\beta_n^{-1}}\leq C_1 2^{-l\slash 2} \beta_n^{-1}.
\end{equation}
By (\ref{Eq1.1.4}), we have
\begin{equation}\label{Eq1.1.6}
    \mathbb{P}(\mathcal{V}_l^c)\leq \exp(-C_1 2^{l\slash 2-1} \beta_n^{-1}).
\end{equation}
When the event $\bigcap\limits_{l=1}^{\lceil\log_2((s-1)L)\rceil}\mathcal{V}_l$ holds, by (\ref{Eq1.1.5}), we have
\begin{eqnarray}
    &&\sum_{i\in\mathscr{I}_{s}}\mathbbm{1}_{b_i\geq (s-1) L\beta_n^{-1}}\leq |\{i\in \mathscr{I}_{s}: (s-1)L\beta_n^{-1}-\max\{i,\sigma_0(i)\}\leq \beta_n^{-1}\}|\nonumber\\
    &&\quad\quad\quad\quad\quad\quad\quad\quad\quad\quad+\sum_{l=1}^{\lceil\log_2((s-1)L)\rceil} \sum_{i\in\mathcal{U}_l} \mathbbm{1}_{b_i\geq (s-1)L\beta_n^{-1}}\nonumber\\
    &&\quad\quad\quad\leq 2(\beta_n^{-1}+1)+\sum_{l=1}^{\lceil\log_2((s-1)L)\rceil} C_1 2^{-l\slash 2} \beta_n^{-1}\leq C_2\beta_n^{-1},
\end{eqnarray}
where $C_2$ is a positive absolute constant. Hence letting $\mathcal{V}$ be the event that
\begin{equation}
    \sum_{i\in\mathscr{I}_{s}}\mathbbm{1}_{b_i\geq (s-1) L\beta_n^{-1}}\leq C_2\beta_n^{-1},
\end{equation}
we have $\bigcap\limits_{l=1}^{\lceil\log_2((s-1)L)\rceil}\mathcal{V}_l\subseteq \mathcal{V}$. By (\ref{Eq1.1.6}) and the union bound, we have
\begin{equation}\label{Eq1.1.7}
    \mathbb{P}(\mathcal{V}^c)\leq \sum_{l=1}^{\lceil\log_2((s-1)L)\rceil} \exp(-C_1 2^{l\slash 2-1}\beta_n^{-1})\leq C\exp(-c \beta_n^{-1}).
\end{equation}
By (\ref{Eq1.1.8}), (\ref{Eq1.1.7}), and Proposition \ref{P2.2}, there exists a positive absolute constant $C_0$, such that the event $\mathcal{W}=\{|W_s|\leq C_0\beta_n^{-1}\}$ satisfies
\begin{equation}\label{Eq1.1.18}
    \mathbb{P}(\mathcal{W}^c)\leq C\exp(-c \beta_n^{-1}).
\end{equation}

Let $\mathcal{B}_n$ be the $\sigma$-algebra generated by $\sigma_0$ and $\{b_i\}_{i=1}^n$. For any $l\in [n-1]$, let $\mathcal{B}_l$ be the $\sigma$-algebra generated by $\sigma_0$, $\{b_i\}_{i=1}^n$, and $\{Y_i\}_{i=l+1}^n$. For any $q\in \mathbb{N}^{*}$, we let
\begin{equation}\label{Eqq1.1}
    \Lambda_{s,q}:= \sum_{\substack{i_1<\cdots<i_q,j_1<\cdots<j_q\\i_1,\cdots,i_q\in [1,(s-1)L\beta_n^{-1}]\cap\mathbb{N}\\j_1,\cdots,j_q\in \mathcal{I}_{n,s}\cap\mathbb{N}^{*}}}\mathbbm{1}_{\sigma(i_1)=j_1,\cdots,\sigma(i_q)=j_q}\mathbbm{1}_{i_1,\cdots,i_q\in W_s} 
\end{equation}
For any $i_1,\cdots,i_q\in [1,(s-1)L\beta_n^{-1}]\cap\mathbb{N}$ and $j_1,\cdots,j_q\in \mathcal{I}_{n,s}\cap\mathbb{N}^{*}$ such that $i_1<\cdots<i_q$ and $j_1<\cdots<j_q$, we have
\begin{eqnarray}\label{Eq1.1.11}
  &&  \mathbb{E}[\mathbbm{1}_{\sigma(i_1)=j_1,\cdots,\sigma(i_q)=j_q}\mathbbm{1}_{i_1,\cdots,i_q\in W_s} |\mathcal{B}_n]\nonumber\\
  &=&\mathbb{E}[\mathbbm{1}_{\sigma(i_1)=j_1,\cdots,\sigma(i_q)=j_q}|\mathcal{B}_n] \mathbbm{1}_{i_1,\cdots,i_q\in W_s}\nonumber\\
  &=& \mathbb{E}[\mathbb{E}[\mathbbm{1}_{\sigma(i_1)=j_1}|\mathcal{B}_{j_1}]\mathbbm{1}_{\sigma(i_2)=j_2,\cdots,\sigma(i_q)=j_q}|\mathcal{B}_n]\mathbbm{1}_{i_1,\cdots,i_q\in W_s} \nonumber\\
  &\leq & \frac{\mathbbm{1}_{i_1,\cdots,i_q\in W_s}}{N_{j_1}}\mathbb{E}[\mathbbm{1}_{\sigma(i_2)=j_2,\cdots,\sigma(i_q)=j_q}|\mathcal{B}_n]\leq \cdots \leq \frac{\mathbbm{1}_{i_1,\cdots,i_q\in W_s}}{N_{j_1} N_{j_2}\cdots N_{j_q}}. 
\end{eqnarray}

We bound $N_i$ (recall (\ref{Eq1.1.9})) for any $i\in \mathcal{I}_{n,s}\cap\mathbb{N}^{*}$ as follows. As $b_j\geq j$ for every $j\in [n]$, we have
\begin{equation}\label{Eq1.3.1}
    N_i=1+\sum_{j=1}^{i-1}\mathbbm{1}_{b_j\geq i}. 
\end{equation}
Let $Z_j:=\mathbbm{1}_{b_j\geq i}$ for every $j\in [i-1]$. Note that conditional on $\sigma_0$, $Z_1,\cdots,Z_{i-1}$ are mutually independent, and for any $j\in [i-1]$, $Z_j$ follows the Bernoulli distribution with 
\begin{eqnarray*}
    \mathbb{P}(Z_j=1|\sigma_0)&=&\mathbb{P}(b_j\geq i|\sigma_0)=\mathbb{P}(u_j\leq e^{-2\beta_n(i-j)}|\sigma_0)\nonumber\\
    &=&e^{-2\beta_n (i-\max\{j,\sigma_0(j)\})_{+}}\geq e^{-2\beta_n(i-j)}. 
\end{eqnarray*}
Hence for any $j\in [i-\beta_n^{-1},i-1]\cap\mathbb{N}^{*}$ (note that $i-\beta_n^{-1}\geq \beta_n^{-1}\geq 1$), we have $\mathbb{P}(Z_j=1|\sigma_0)\geq e^{-2}$. For any $t\geq 0$, by Hoeffding's inequality, we have
\begin{eqnarray*}
  &&  \mathbb{P}(N_i\leq (e^{-2}-t)\lfloor \beta_n^{-1}\rfloor |\sigma_0)\nonumber\\
  &\leq& \mathbb{P}\Big(\sum_{j=i-\lfloor \beta_n^{-1}\rfloor}^{i-1} Z_j\leq  \sum_{j=i-\lfloor \beta_n^{-1}\rfloor}^{i-1}\mathbb{P}(Z_j=1|\sigma_0)-t\lfloor \beta_n^{-1}\rfloor \Big|\sigma_0 \Big)\leq\exp(-2\lfloor \beta_n^{-1}\rfloor t^2).
\end{eqnarray*}
Taking $t=e^{-2}\slash 2$, as $\lfloor \beta_n^{-1}\rfloor\geq \beta_n^{-1}-1\geq \beta_n^{-1}\slash 2$, we have
\begin{eqnarray}\label{Eq1.1.10}
   \mathbb{P}(N_i\leq e^{-2}\beta_n^{-1}\slash 4)&\leq&
    \mathbb{P}(N_i\leq e^{-2}\lfloor \beta_n^{-1}\rfloor\slash 2)=\mathbb{E}[\mathbb{P}(N_i\leq e^{-2}\lfloor \beta_n^{-1}\rfloor\slash 2|\sigma_0)]\nonumber\\
    &\leq& \exp(-c\lfloor \beta_n^{-1} \rfloor)  \leq \exp(-c\beta_n^{-1}).
\end{eqnarray}

Let $\mathcal{C}_s$ be the event that $N_i\geq e^{-2}\beta_n^{-1}\slash 4$ for every $i\in \mathcal{I}_{n,s}\cap\mathbb{N}^{*}$. By (\ref{Eq1.1.10}) and the union bound, we have
\begin{equation}\label{Eq1.1.12}
    \mathbb{P}(\mathcal{C}_s^c)\leq |\mathcal{I}_{n,s}\cap\mathbb{N}^{*}|\exp(-c\beta_n^{-1})\leq (2L\beta_n^{-1}+1) \exp(-c\beta_n^{-1})\leq CL\exp(-c\beta_n^{-1}).
\end{equation}
For any $q\in\mathbb{N}^{*}$, $i_1,\cdots,i_q\in [1,(s-1)L\beta_n^{-1}]\cap\mathbb{N}$, and $j_1,\cdots,j_q\in \mathcal{I}_{n,s}\cap\mathbb{N}^{*}$ such that $i_1<\cdots<i_q$ and $j_1<\cdots<j_q$, by (\ref{Eq1.1.11}), we have
\begin{eqnarray}
  &&  \mathbb{E}[\mathbbm{1}_{\sigma(i_1)=j_1,\cdots,\sigma(i_q)=j_q}\mathbbm{1}_{i_1,\cdots,i_q\in W_s}|\mathcal{B}_n]\mathbbm{1}_{\mathcal{C}_s\cap\mathcal{W}}\nonumber\\
  &\leq& \frac{\mathbbm{1}_{i_1,\cdots,i_q\in W_s}\mathbbm{1}_{\mathcal{C}_s\cap\mathcal{W}}}{N_{j_1} N_{j_2}\cdots N_{j_q}}\leq (4e^2\beta_n)^q \mathbbm{1}_{i_1,\cdots,i_q\in W_s}
  \mathbbm{1}_{\mathcal{C}_s\cap\mathcal{W}}.
\end{eqnarray}
Hence by (\ref{Eqq1.1}) and Lemma \ref{Lemma3.1}, for any $q\in\mathbb{N}^{*}$, we have
\begin{eqnarray*}
   && \mathbb{E}[\Lambda_{s,q}|\mathcal{B}_n]\mathbbm{1}_{\mathcal{C}_s\cap\mathcal{W}}\leq (4e^2\beta_n)^q \binom{|W_s|}{q}\binom{|\mathcal{I}_{n,s}\cap\mathbb{N}^{*}|}{q}\mathbbm{1}_{\mathcal{C}_s\cap\mathcal{W}}\nonumber\\
  &\leq& \Big(\frac{4e^4\beta_n|W_s||\mathcal{I}_{n,s}\cap\mathbb{N}^{*}|}{q^2}\Big)^q \mathbbm{1}_{\mathcal{C}_s\cap\mathcal{W}}\leq \Big(\frac{4e^4\beta_n (C_0\beta_n^{-1})(2L\beta_n^{-1}+1)}{q^2}\Big)^q \nonumber\\
  &\leq& (CL\beta_n^{-1} q^{-2})^q.
\end{eqnarray*}
Hence
\begin{equation}\label{Eq1.1.16}
    \mathbb{E}[\Lambda_{s,q} \mathbbm{1}_{\mathcal{C}_s\cap\mathcal{W}} ]=\mathbb{E}[\mathbb{E}[\Lambda_{s,q}|\mathcal{B}_n] \mathbbm{1}_{\mathcal{C}_s\cap\mathcal{W}}]\leq (CL\beta_n^{-1} q^{-2})^q.
\end{equation}

Now for any $q\in \mathbb{N}^{*}$, if $LIS(\sigma|_{\mathcal{R}_s'})\geq q$, then there exist 
\begin{equation*}
    i_1,\cdots,i_q\in [1,(s-1)L\beta_n^{-1}]\cap\mathbb{N}, \quad j_1,\cdots,j_q\in \mathcal{I}_{n,s}\cap\mathbb{N}^{*},
\end{equation*}
such that $i_1<\cdots<i_q$, $j_1<\cdots<j_q$, and $\sigma(i_l)=j_l$ for every $l\in [q]$. As $\sigma$ is sampled from the set (\ref{Eq1.1.14}), for every $l\in [q]$, we have
\begin{equation*}
  b_{i_l}\geq \sigma(i_l)=j_l\geq (s-1)L\beta_n^{-1}.
\end{equation*}
Hence $i_1,\cdots,i_q\in W_s$ (recall (\ref{Eq1.1.15})), and $\Lambda_{s,q}\geq 1$. Hence by (\ref{Eq1.1.16}), for any $q\in \mathbb{N}^{*}$, we have 
\begin{eqnarray}
   && \mathbb{P}(\{LIS(\sigma|_{\mathcal{R}_s'})\geq q\}\cap\mathcal{C}_s\cap\mathcal{W})\leq \mathbb{P}(\{\Lambda_{s,q}\geq 1\}\cap\mathcal{C}_s\cap\mathcal{W}) \nonumber\\
   &=& \mathbb{E}[\mathbbm{1}_{\Lambda_{s,q}\geq 1}\mathbbm{1}_{\mathcal{C}_s\cap\mathcal{W}}]\leq \mathbb{E}[\Lambda_{s,q}\mathbbm{1}_{\mathcal{C}_s\cap\mathcal{W}}]\leq (C_0'L\beta_n^{-1} q^{-2})^q,
\end{eqnarray}
where $C_0'\geq 1$ is a positive absolute constant. Taking $q=\lceil \sqrt{2C_0'} L^{1\slash 2}\beta_n^{-1\slash 2}\rceil$, we obtain that
\begin{eqnarray}\label{Eq1.1.17}
  &&  \mathbb{P}(\{LIS(\sigma|_{\mathcal{R}_s'})\geq 2\sqrt{2C_0'}L^{1\slash 2}\beta_n^{-1\slash 2}\}\cap \mathcal{C}_s\cap\mathcal{W}) \nonumber\\
  &\leq& \mathbb{P}(\{LIS(\sigma|_{\mathcal{R}_s'})\geq \lceil\sqrt{2C_0'}L^{1\slash 2}\beta_n^{-1\slash 2}\rceil \}\cap \mathcal{C}_s\cap\mathcal{W})\nonumber\\
  &\leq& 2^{-\lceil \sqrt{2C_0'} L^{1\slash 2}\beta_n^{-1\slash 2}\rceil}\leq \exp(-c L^{1\slash 2}\beta_n^{-1\slash 2}).
\end{eqnarray}
By (\ref{Eq1.1.18}), (\ref{Eq1.1.12}), (\ref{Eq1.1.17}), and the union bound, we have
\begin{eqnarray}
    \mathbb{P}(LIS(\sigma|_{\mathcal{R}_s'})\geq 2\sqrt{2C_0'}L^{1\slash 2}\beta_n^{-1\slash 2})&\leq& \exp(-c L^{1\slash 2}\beta_n^{-1\slash 2})+CL\exp(-c\beta_n^{-1})\nonumber\\
    &\leq& CL\exp(-c \beta_n^{-1\slash 2}).
\end{eqnarray}
Note that $LIS(\sigma|_{\mathcal{R}_s'})\leq |\mathcal{I}_{n,s}\cap\mathbb{N}^{*}|\leq 2L\beta_n^{-1}+1\leq 4L\beta_n^{-1}$. Hence 
\begin{eqnarray}
    \mathbb{E}[LIS(\sigma|_{\mathcal{R}_s'})]&\leq& (4L\beta_n^{-1}) \cdot (CL\exp(-c \beta_n^{-1\slash 2}))+2\sqrt{2C_0'}L^{1\slash 2}\beta_n^{-1\slash 2}\nonumber\\
    &\leq& CL^{1\slash 2}\beta_n^{-1\slash 2}+CL^2\exp(-c\beta_n^{-1\slash 2}).
\end{eqnarray}

In the following, we show (\ref{Eq1.2.2}). Let $\sigma$ be drawn from $\mathbb{P}_{n,\beta_n}$. Note that the distribution of $\sigma^{-1}$ is given by $\mathbb{P}_{n,\beta_n}$, and $LIS(\sigma^{-1}|_{\mathcal{R}_s'})=LIS(\sigma|_{\mathcal{R}_s''})$. Hence by (\ref{Eq1.2.1}),
\begin{equation}
    \mathbb{E}[LIS(\sigma|_{\mathcal{R}_s''})]=\mathbb{E}[LIS(\sigma^{-1}|_{\mathcal{R}_s'})]\leq CL^{1\slash 2}\beta_n^{-1\slash 2}+CL^2\exp(-c\beta_n^{-1\slash 2}).
\end{equation}

\end{proof}

The following proposition bounds $LIS(\sigma|_{\mathcal{R}_s})$ for $\sigma$ drawn from $\mathbb{P}_{n,\beta_n}$ and any $s\in [\lfloor n\beta_n\slash L \rfloor ]$.

\begin{proposition}\label{P4.2}
Assume that $n\beta_n\geq 4 L$ and $\beta_n\leq 1\slash 10$, and let $\sigma$ be drawn from $\mathbb{P}_{n,\beta_n}$. Then there exist positive absolute constants $C,c$, such that for any $s\in [\lfloor n\beta_n\slash L\rfloor]$, 
\begin{equation}\label{Eq1.3.5}
    \mathbb{E}[LIS(\sigma|_{\mathcal{R}_s})]\leq CL\beta_n^{-1\slash 2}+CL^2\exp(-c\beta_n^{-1\slash 2}).
\end{equation}
\end{proposition}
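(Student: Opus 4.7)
The plan is to carry the first-moment/hit-and-run scheme from the proof of Proposition \ref{P4.1} over to the diagonal blocks. I will generate $\sigma$ by starting from $\sigma_0\sim \mathbb{P}_{n,\beta_n}$ and running one step of the hit-and-run Markov chain for the $L^1$ model, producing thresholds $\{b_i\}_{i=1}^n$, placement order $\{Y_j\}_{j=1}^n$, and counts $N_i$ as in (\ref{Eq1.1.9}). The key structural simplification, compared with the off-diagonal rectangles of Proposition \ref{P4.1}, is that the restriction set $W_s$ becomes superfluous: every $i\in \mathcal{I}_{n,s}\cap\mathbb{N}^*$ is automatically eligible to host a value from $\mathcal{I}_{n,s}$, because $b_i\geq i\geq (s-1)L\beta_n^{-1}$. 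In effect, the whole block plays the role of ``$W_s$''.

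The core first-moment bound concerns
\[
    \Lambda_{s,q}:=\sum_{\substack{i_1<\cdots<i_q,\,j_1<\cdots<j_q\\ i_1,\ldots,i_q,\,j_1,\ldots,j_q\in \mathcal{I}_{n,s}\cap\mathbb{N}^*}}\mathbbm{1}_{\sigma(i_1)=j_1,\ldots,\sigma(i_q)=j_q}.
\]
Iterating $\mathbb{E}[\mathbbm{1}_{\sigma(i_l)=j_l}\mid\mathcal{B}_{j_l}]\leq N_{j_l}^{-1}$ exactly as in (\ref{Eq1.1.11}), restricting to the event $\mathcal{C}_s$ introduced in the proof of Proposition \ref{P4.1}, and applying Lemma \ref{Lemma3.1} together with $|\mathcal{I}_{n,s}\cap\mathbb{N}^*|\leq 4L\beta_n^{-1}$ should yield
\[
    \mathbb{E}[\Lambda_{s,q}\mathbbm{1}_{\mathcal{C}_s}]\leq (4e^2\beta_n)^q\binom{|\mathcal{I}_{n,s}\cap\mathbb{N}^*|}{q}^{2}\leq \Big(\frac{CL^2\beta_n^{-1}}{q^2}\Big)^{q}
\]
for an absolute constant $C$. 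The optimal choice $q=\lceil 2\sqrt{C}\,L\beta_n^{-1/2}\rceil$ forces the right side below $\exp(-cL\beta_n^{-1/2})$; combining this with the inclusion $\{LIS(\sigma|_{\mathcal{R}_s})\geq q\}\subseteq\{\Lambda_{s,q}\geq 1\}$, the tail $\mathbb{P}(\mathcal{C}_s^c)\leq CL\exp(-c\beta_n^{-1})$ from (\ref{Eq1.1.12}), and the deterministic bound $LIS(\sigma|_{\mathcal{R}_s})\leq 4L\beta_n^{-1}$, produces the target $CL\beta_n^{-1/2}+CL^2\exp(-c\beta_n^{-1/2})$ after using $\beta_n\leq 1/10$ to absorb polynomial prefactors into the exponential. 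The extra factor of $L$ in the target threshold (vs.\ the $L^{1/2}\beta_n^{-1/2}$ of Proposition \ref{P4.1}) comes precisely from replacing the linear factor $|W_s|\lesssim \beta_n^{-1}$ by the quadratic factor $|\mathcal{I}_{n,s}|^2\lesssim L^2\beta_n^{-2}$.

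The only genuine obstacle is boundary control at $s=1$. The $N_i$ lower bound from (\ref{Eq1.1.10}) requires $i\geq \beta_n^{-1}+1$, which fails for the small values in $\mathcal{I}_{n,1}=(0,L\beta_n^{-1}]$. To bypass this I will invoke the reversal symmetry $\tilde\sigma(i):=n+1-\sigma(n+1-i)$, which preserves $\mathbb{P}_{n,\beta_n}$ by (\ref{Eq1.3.7}) and satisfies $LIS(\sigma|_{\mathcal{R}_1})=LIS(\tilde\sigma|_{\tilde{\mathcal{R}}_1})$ for a reflected block $\tilde{\mathcal{R}}_1\subseteq [n+1-\lfloor L\beta_n^{-1}\rfloor,n]^2$. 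Since $n\beta_n\geq 4L\geq L+1$, one has $n+1-L\beta_n^{-1}\geq \beta_n^{-1}+1$, placing the reflected block squarely in the bulk regime and letting the hit-and-run argument above apply verbatim to $\tilde\sigma$. Apart from this reflection step, the proof is essentially a rewriting of the $\mathcal{R}_s'$ analysis in Proposition \ref{P4.1}.
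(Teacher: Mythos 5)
Your proposal is correct and is essentially identical to the paper's proof: the same hit-and-run/first-moment scheme with the same $\Lambda_{s,q}$ (without any analogue of $W_s$, since $b_i\geq i>(s-1)L\beta_n^{-1}$ makes the restriction automatic), the same high-probability event $\mathcal{C}_s$ from Proposition \ref{P4.1}, and the same reflection trick for $s=1$. The only cosmetic difference is that for $s=1$ the paper observes that the reflected block lies inside $\mathcal{R}_{\lfloor n\beta_n/L\rfloor}$ and invokes the bound already established for that index, whereas you verify the bulk condition $n+1-L\beta_n^{-1}\geq\beta_n^{-1}+1$ directly and re-run the argument on the reflected block; both routes yield the same conclusion.
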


\begin{proof}

We start by showing (\ref{Eq1.3.5}) for any $s\in [2,\lfloor n\beta_n\slash L\rfloor]\cap\mathbb{N}$. We sample $\sigma_0$ from $\mathbb{P}_{n,\beta_n}$, and then run one step of the hit and run algorithm for the $L^1$ model to obtain $\sigma$ following the procedure described at the beginning of the proof of Proposition \ref{P4.1} (with $\{b_i\}_{i=1}^n$, $\{N_i\}_{i=1}^n$, and $\{Y_i\}_{i=1}^n$ defined as there).

Below we fix an arbitrary $s\in [2,\lfloor n\beta_n\slash L\rfloor]\cap\mathbb{N}$. Let $\mathcal{C}_s$ be the event that $N_i\geq  e^{-2}\beta_n^{-1}\slash 4$ for every $i\in\mathcal{I}_{n,s}\cap\mathbb{N}^{*}$. We recall from (\ref{Eq1.1.12}) that
\begin{equation}\label{Eq1.3.3}
    \mathbb{P}(\mathcal{C}_s^c)\leq CL\exp(-c\beta_n^{-1}).
\end{equation}


Let $\mathcal{B}_n$ be the $\sigma$-algebra generated by $\sigma_0$ and $\{b_i\}_{i=1}^n$. For any $l\in [n-1]$, let $\mathcal{B}_l$ be the $\sigma$-algebra generated by $\sigma_0$, $\{b_i\}_{i=1}^n$, and $\{Y_i\}_{i=l+1}^n$. For any $q\in \mathbb{N}^{*}$, let 
\begin{equation}
    \Lambda_{s,q}:= \sum_{\substack{i_1<\cdots<i_q,j_1<\cdots<j_q\\i_1,\cdots,i_q\in \mathcal{I}_{n,s} \cap\mathbb{N}^{*}\\j_1,\cdots,j_q\in \mathcal{I}_{n,s }\cap\mathbb{N}^{*}}}\mathbbm{1}_{\sigma(i_1)=j_1,\cdots,\sigma(i_q)=j_q}.
\end{equation}
For any $i_1,\cdots,i_q,j_1,\cdots,j_q\in \mathcal{I}_{n,s} \cap\mathbb{N}^{*}$ such that $i_1<\cdots<i_q$ and $j_1<\cdots<j_q$, 
\begin{eqnarray}
&& \mathbb{E}[\mathbbm{1}_{\sigma(i_1)=j_1,\cdots,\sigma(i_q)=j_q}|\mathcal{B}_n] = \mathbb{E}[\mathbb{E}[\mathbbm{1}_{\sigma(i_1)=j_1}|\mathcal{B}_{j_1}] \mathbbm{1}_{\sigma(i_2)=j_2,\cdots,\sigma(i_q)=j_q}|\mathcal{B}_n] \nonumber\\
&&  \leq \frac{1}{N_{j_1}} \mathbb{E}[\mathbbm{1}_{\sigma(i_2)=j_2,\cdots,\sigma(i_q)=j_q}|\mathcal{B}_n]\leq \cdots \leq \frac{1}{N_{j_1}N_{j_2} \cdots N_{j_q}},
\end{eqnarray}
which leads to
\begin{equation}
\mathbb{E}[\mathbbm{1}_{\sigma(i_1)=j_1,\cdots,\sigma(i_q)=j_q}|\mathcal{B}_n]\mathbbm{1}_{\mathcal{C}_s}\leq \frac{1}{N_{j_1}N_{j_2} \cdots N_{j_q}} \mathbbm{1}_{\mathcal{C}_s}\leq (4e^2\beta_n)^q.
\end{equation}
Hence by Lemma \ref{Lemma3.1}, we have
\begin{eqnarray}
\mathbb{E}[\Lambda_{s,q}|\mathcal{B}_n]\mathbbm{1}_{\mathcal{C}_s}&\leq& (4e^2\beta_n)^q \binom{|\mathcal{I}_{n,s }\cap\mathbb{N}^{*}|}{q}^2\leq \Big(\frac{4e^4\beta_n |\mathcal{I}_{n,s }\cap\mathbb{N}^{*}|^2}{q^2}\Big)^q \nonumber\\
&\leq& \Big(\frac{4e^4\beta_n(2L\beta_n^{-1}+1)^2}{q^2}\Big)^q \leq \Big(\frac{36 e^4  L^2\beta_n^{-1}}{q^2}\Big)^q.
\end{eqnarray}
Hence
\begin{eqnarray}
         \mathbb{P}(\{LIS(\sigma|_{\mathcal{R}_s})\geq q\}\cap\mathcal{C}_s)&\leq& \mathbb{P}(\{\Lambda_{s,q}\geq 1\}\cap\mathcal{C}_s)=\mathbb{E}[\mathbb{E}[\mathbbm{1}_{\Lambda_{s,q}\geq 1}|\mathcal{B}_n]\mathbbm{1}_{\mathcal{C}_s}] \nonumber\\
  &\leq& \mathbb{E}[\mathbb{E}[\Lambda_{s,q}|\mathcal{B}_n]\mathbbm{1}_{\mathcal{C}_s}]\leq  \Big(\frac{36 e^4  L^2\beta_n^{-1}}{q^2}\Big)^q.
\end{eqnarray}
Taking $q=\lceil 12e^2L\beta_n^{-1\slash 2} \rceil$, we obtain that
\begin{eqnarray}\label{Eq1.3.4}
  \mathbb{P}(\{LIS(\sigma|_{\mathcal{R}_s})\geq 24 e^2 L\beta_n^{-1\slash 2}\}\cap\mathcal{C}_s)&\leq& \mathbb{P}(\{LIS(\sigma|_{\mathcal{R}_s})\geq \lceil 12 e^2 L\beta_n^{-1\slash 2}\rceil\}\cap\mathcal{C}_s) \nonumber\\
  &\leq &  2^{-q}\leq \exp(-c L \beta_n^{-1\slash 2}).
\end{eqnarray}

By (\ref{Eq1.3.3}), (\ref{Eq1.3.4}), and the union bound, we have
\begin{eqnarray}
      \mathbb{P}(LIS(\sigma|_{\mathcal{R}_{s}})\geq 24 e^2 L\beta_n^{-1\slash 2})&\leq& CL\exp(-c\beta_n^{-1})+\exp(-c L \beta_n^{-1\slash 2})\nonumber\\
   &\leq& CL \exp(-c \beta_n^{-1\slash 2}).    
\end{eqnarray}
Note that $LIS(\sigma|_{\mathcal{R}_{s}})\leq |\mathcal{I}_{n,s} \cap \mathbb{N}^{*}|\leq 2L\beta_n^{-1}+1\leq 3L\beta_n^{-1}$. Hence
\begin{eqnarray}\label{Eq1.3.6}
    \mathbb{E}[LIS(\sigma|_{\mathcal{R}_{s}})]&\leq& (3L\beta_n^{-1})(CL \exp(-c \beta_n^{-1\slash 2}))+24e^2 L \beta_n^{-1\slash 2}\nonumber\\
    &\leq& CL\beta_n^{-1\slash 2}+CL^2\exp(-c\beta_n^{-1\slash 2}).
\end{eqnarray}

In the following, we show (\ref{Eq1.3.5}) for $s=1$. Let $\sigma$ be drawn from $\mathbb{P}_{n,\beta_n}$, and let $\bar{\sigma}\in S_n$ be such that $\bar{\sigma}(i)=n+1-\sigma(n+1-i)$ for every $i\in [n]$. Arguing as in (\ref{Eq1.3.7}), we obtain that the distribution of $\bar{\sigma}$ is given by $\mathbb{P}_{n,\beta_n}$. As
\begin{equation*}
    (\lfloor n\beta_n\slash L\rfloor-1)L\beta_n^{-1}\leq n-L\beta_n^{-1}<n+1-L\beta_n^{-1},
\end{equation*}
we have
\begin{eqnarray*}
&&[n+1-L\beta_n^{-1},n]\times [n+1-L\beta_n^{-1},n] \nonumber\\
&\subseteq& ((\lfloor n\beta_n\slash L\rfloor -1)L\beta_n^{-1},n]\times ((\lfloor n\beta_n\slash L\rfloor -1)L\beta_n^{-1},n]=\mathcal{R}_{\lfloor n\beta_n \slash L \rfloor}.
\end{eqnarray*}
Hence
\begin{equation}\label{Eq1.3.8}
    LIS(\sigma|_{\mathcal{R}_1})= LIS(\bar{\sigma}|_{[n+1-L\beta_n^{-1},n]^2})\leq LIS(\bar{\sigma}|_{\mathcal{R}_{\lfloor n\beta_n \slash L \rfloor}}).
\end{equation}
By (\ref{Eq1.3.6}) (with $s=\lfloor n\beta_n\slash L\rfloor$) and (\ref{Eq1.3.8}), we have
\begin{equation}
    \mathbb{E}[LIS(\sigma|_{\mathcal{R}_1})]\leq \mathbb{E}[LIS(\bar{\sigma}|_{\mathcal{R}_{\lfloor n\beta_n \slash L \rfloor}})]\leq CL\beta_n^{-1\slash 2}+CL^2\exp(-c\beta_n^{-1\slash 2}).
\end{equation}

\end{proof}

The following proposition gives a more precise bound on $LIS(\sigma|_{\mathcal{R}_s})$ for $\sigma$ drawn from $\mathbb{P}_{n,\beta_n}$ and $s\in[2,\lfloor n\beta_n\slash L\rfloor-1]\cap\mathbb{N}$ that satisfies certain conditions.

\begin{proposition}\label{P4.3}
Let $C_1$ be the constant that appears in Proposition \ref{P2.3} (with $\delta_0=1\slash 4$ and $K=2L$; note that $C_1$ only depends on $L$). Let 
\begin{equation}\label{Eq1.11.1}
    r_s:=\frac{1}{2} \min \{(s-1)L, (\lfloor n\beta_n\slash L\rfloor-s)L,\log(1+\beta_n^{-1})\}
\end{equation}
for any $s\in [2,\lfloor n\beta_n\slash L\rfloor-1]\cap\mathbb{N}$. There exist positive constants $C_L,c_L,C_L',c_L'$ that only depend on $L$ and positive absolute constants $C,C'$ with $C'\geq 1$, such that the following holds. 

Assume that $n\beta_n\geq 4L$ and $\beta_n^{-1}\geq C'L^{10} e^{6L}$, and let $\sigma$ be drawn from $\mathbb{P}_{n,\beta_n}$. Let $\Psi_s:=(1-L^{-1}-C_L r_s^{-1\slash 10})_{+}$ for any $s\in [2,\lfloor n\beta_n\slash L\rfloor-1]\cap\mathbb{N}$. Then for any $s\in [2,\lfloor n\beta_n\slash L\rfloor-1]\cap\mathbb{N}$ such that $r_s\geq C_1$, we have
\begin{eqnarray}
&& \mathbb{E}[|  LIS(\sigma|_{\mathcal{R}_s})-\sqrt{2}L\beta_n^{-1\slash 2}|] \nonumber\\
&\leq & C_L'\beta_n^{-1}\exp(-c_L' \beta_n^{-1\slash 4} \Psi_s^{1\slash 4})+C_L'+C L^{1\slash 2}e^{-L}\beta_n^{-1\slash 2}\nonumber\\
&& +\sqrt{2}L\beta_n^{-1\slash 2}\max\{e^{4L^{-1}} (1+C_L r_s^{-1\slash 10})^{1\slash 2}(1+\max\{c_L\beta_n^{-1}\Psi_s,1\}^{-1\slash 6})-1,\nonumber\\
&&\quad\quad\quad\quad\quad\quad\quad\quad 1-e^{-6L^{-1}}\Psi_s^{1\slash 2} (1-\max\{c_L\beta_n^{-1}\Psi_s,1\}^{-1\slash 6})\}.
\end{eqnarray}

\end{proposition}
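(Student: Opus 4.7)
The plan is to mirror the strategy of Proposition \ref{P3.1} inside the rescaled picture of $\mathcal{R}_s$, in which Proposition \ref{P2.3} supplies a quantitative density-approximation estimate. First I would rescale by $\beta_n$ around the midpoint $t_0$ of $\mathcal{I}_{n,s}$, so that $\mathcal{R}_s$ becomes (essentially) the rescaled square $[0,L]^2$. Since $r_s\leq\tfrac{1}{2}\log(1+\beta_n^{-1})\leq\log(1+\beta_n^{-1})^8$ and $t_0$ lies at rescaled distance $\geq r_s$ from the endpoints of $[n]$, Proposition \ref{P2.3} (with $\delta_0=1/4$ and $K=2L$) yields
\[
\sup_{f\in\mathbb{B}_{2L}}\Bigl|\int f\,d\mu_{n,t_0}-\int f\,d\mu\Bigr|\leq C_2(\log r_s)^{1/4}r_s^{-1/8}
\]
with probability at least $1-C_0\exp(-c_0\beta_n^{-3/4})$. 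Via Lipschitz sandwiches, this controls the empirical count $|S(\sigma)\cap A|$ on any sub-rectangle $A$ of the rescaled box, up to an error of order $\beta_n^{-1}r_s^{-1/10}$ after balancing the $r_s^{-1/8}$ rate against the perimeter/Lipschitz loss incurred by replacing the indicator by an admissible test function.

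Next, I would partition the rescaled box using the zigzag refined path $\Gamma_0\in\Pi^{T,T,K_0}$ from Section \ref{Sect.3.2} for parameters $T,K_0$ growing with $L$. By Lemma \ref{L1}, $LIS(\sigma|_{\mathcal{R}_s})$ is sandwiched between $\sum_l LIS(\sigma|_{Q_l})$ along $\Gamma_0$ and $\max_{\Gamma}\sum_l LIS(\sigma|_{Q'_l})$ over all refined paths. Each interior zigzag piece $Q_l$ is a rescaled rectangle of area $L^2/(4T^2)$ within rescaled distance $L/(2T)$ of the diagonal, so $\rho\approx 1/2$ there and the count satisfies $N_l:=|S(\sigma)\cap\text{rescaled }Q_l|\approx\beta_n^{-1}L^2/(8T^2)$. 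Then I run one step of the $L^1$ hit-and-run chain (which preserves $\mathbb{P}_{n,\beta_n}$) and argue exactly as in Proposition \ref{P3.1}, Case 1: conditioning on the exterior structure, $\sigma$ restricted to $Q_l$ becomes uniform on its image, so Proposition \ref{P1} with $\delta_0=1/6$ gives $LIS(\sigma|_{Q_l})=2\sqrt{N_l}+O(N_l^{1/3})$ with probability at least $1-C\exp(-cN_l^{1/4})$.

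Summing, the $2T-3$ interior pieces of $\Gamma_0$ produce $(2T-3)\cdot 2\sqrt{\beta_n^{-1}L^2/(8T^2)}=\sqrt{2}L\beta_n^{-1/2}(1-O(1/T))$, matching the main term, while the two end pieces contribute $O(L\beta_n^{-1/2}/T)$, absorbed into $C_L'$. For the upper bound from the max over refined paths, off-diagonal pieces have density $\lesssim e^{-|k-k'|L/T}$ and sum geometrically to at most $CL^{1/2}e^{-L}\beta_n^{-1/2}$. The factor $\Psi_s=(1-L^{-1}-C_Lr_s^{-1/10})_+$ collects an $O(1/L)$ loss from trimming to a well-controlled interior together with the degraded density-approximation error $r_s^{-1/10}$, and the factors $e^{\pm O(1/L)}$ come from first-order Taylor distortion when interchanging $\sqrt{\int_{Q_l}\rho}$ with an average of $\sqrt{\rho}$ across rectangles. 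The aggregated failure probability of the measure approximation and the hit-and-run coupling is $\leq C\exp(-c\beta_n^{-1/4}\Psi_s^{1/4})$; multiplied by the trivial bound $LIS\leq|\mathcal{I}_{n,s}|\leq CL\beta_n^{-1}$ this yields the $\beta_n^{-1}\exp(-c_L'\beta_n^{-1/4}\Psi_s^{1/4})$ term. The main obstacle will be producing an $L^1$ rather than a high-probability bound: the $N_l^{-1/6}$ fluctuation from Proposition \ref{P1} must be integrated against its tail $\exp(-cN_l^{1/4})$, and the sharp constants $e^{\pm O(1/L)}$ on the main term must survive the passage through the refined-path decomposition without incurring hidden losses when the symmetric and worst-case sums are combined.
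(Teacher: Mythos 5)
Your proposal follows the paper's own strategy: run one hit-and-run step, rescale around a point $t_s$ in $\mathcal{I}_{n,s}$, use Proposition \ref{P2.3} with Lipschitz sandwiches to control point counts in the sub-rectangles of a refined-path decomposition, split each local LIS into a part ($L_{1,l}$) that is conditionally uniform (handled by Proposition \ref{P1}) and a combinatorial remainder ($L_{2,l}$), sum along a zigzag path for the lower bound and take a max over all paths for the upper bound, and convert the high-probability statement to an $L^1$ statement via the trivial cap $LIS\leq|\mathcal{I}_{n,s}|$. One narrative inaccuracy worth flagging: the term $CL^{1/2}e^{-L}\beta_n^{-1/2}$ does not arise from geometric decay of the density over off-diagonal pieces (the off-diagonal factor $e^{-|b_{l-1}(\Gamma)-a_{l-1}(\Gamma)|L/2}$ is simply bounded by $1$ in the AM--GM step, and the $\sqrt{2}L\beta_n^{-1/2}(1+O(1/L))$ main term absorbs it); rather it comes from the combinatorial bound on $L_{2,l}$ summed over the $2T-1$ pieces, with $T\asymp L^2 e^{2L}$ forcing $T^{-1/2}\asymp L^{-1}e^{-L}$. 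Since you explicitly invoke the $L_{1,l}/L_{2,l}$ machinery of Proposition \ref{P3.1}, Case 1, the executed proof would land the term correctly, so this is a misattribution rather than a gap.
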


\begin{proof}

Let $C_0,c_0,C_1,C_2$ be the constants that appear in Proposition \ref{P2.3} (with $\delta_0=1\slash 4$ and $K=2L$). Note that these constants only depend on $L$. Throughout the proof, we fix an arbitrary $s\in [2,\lfloor n\beta_n\slash L\rfloor-1]\cap\mathbb{N}$ such that $r_s\geq C_1$. We also assume that $n\beta_n\geq 4L$ and $\beta_n^{-1}\geq L^{10}$. 

In the following, we fix any $T,K_0\in \mathbb{N}^{*}$ such that $\min\{T,K_0\}\geq L^2$, any refined path $\Gamma\in \Pi^{T,T,K_0}$, and any $l\in [2T-1]$. We assume that 
\begin{equation}\label{Eq1.6.1}
     L\beta_n^{-1} \geq \max\{8 K_0 T,K_0^2 T^3\}.
\end{equation}

We let
\begin{eqnarray*}
   && Q_{\Gamma,l}:=(x_{l-1}(\Gamma),x_l(\Gamma)]\times ( y_{l-1}(\Gamma), y_l(\Gamma)],\\
   &&Q_{\Gamma,l}':=[ a_{l-1}(\Gamma),   c_l(\Gamma)]\times [ b_{l-1}(\Gamma), d_l(\Gamma)].
\end{eqnarray*}

We also let
\begin{eqnarray*}
 \tilde{Q}_{\Gamma,l} &:=&  ((s-1)L\beta_n^{-1},(s-1)L\beta_n^{-1})+L\beta_n^{-1}Q_{\Gamma,l}\\
   &=& ((s-1)L\beta_n^{-1}+L\beta_n^{-1}x_{l-1}(\Gamma),(s-1)L\beta_n^{-1}+L\beta_n^{-1}x_{l}(\Gamma)]\\
   &&\times ((s-1)L\beta_n^{-1}+L\beta_n^{-1}y_{l-1}(\Gamma),(s-1)L\beta_n^{-1}+L\beta_n^{-1}y_{l}(\Gamma)],
\end{eqnarray*}
\begin{equation*}
    \tilde{Q}_{\Gamma,l}':=((s-1)L\beta_n^{-1},(s-1)L\beta_n^{-1})+L\beta_n^{-1}Q_{\Gamma,l}'.
\end{equation*}

\paragraph{Step 1}

We start by bounding $LIS(\sigma|_{\tilde{Q}_{\Gamma,l}})$. If
\begin{equation*}
    x_{l-1}(\Gamma)=x_l(\Gamma) \text{ or } y_{l-1}(\Gamma)=y_l(\Gamma),
\end{equation*}
then $Q_{\Gamma,l}=\emptyset$ and $LIS(\tau|_{\tilde{Q}_{\Gamma,l}})=0$ for any $\tau\in S_n$. In the following, we assume that $x_{l-1}(\Gamma)<x_l(\Gamma)$ and $y_{l-1}(\Gamma)<y_l(\Gamma)$. Note that 
\begin{equation}\label{Eq1.5.5}
    (2 K_0 T)^{-1}\leq x_l(\Gamma)-x_{l-1}(\Gamma)\leq T^{-1},\quad  (2 K_0 T)^{-1}\leq y_l(\Gamma)-y_{l-1}(\Gamma)\leq T^{-1},
\end{equation}
which implies
\begin{equation}\label{Eq1.8.5}
   \min\{L\beta_n^{-1}(x_l(\Gamma)-x_{l-1}(\Gamma)),L\beta_n^{-1}(y_l(\Gamma)-y_{l-1}(\Gamma))\}\geq \frac{L\beta_n^{-1}}{2K_0 T}\geq  4.
\end{equation}
In the following, we assume that
\begin{eqnarray}\label{Eq1.5.6}
 && ((s-1)L\beta_n^{-1}+L\beta_n^{-1} x_{l-1}(\Gamma), (s-1)L\beta_n^{-1}+L\beta_n^{-1} x_l(\Gamma)]\cap \mathbb{N}^{*}\nonumber\\
 &&=\{s_1,s_1+1,\cdots,s_2\},\nonumber\\
 && ((s-1)L\beta_n^{-1}+L\beta_n^{-1} y_{l-1}(\Gamma),(s-1)L\beta_n^{-1}+L\beta_n^{-1} y_l(\Gamma)]\cap\mathbb{N}^{*}\nonumber\\
 &&=\{s_1',s_1'+1,\cdots,s_2'\}.
\end{eqnarray}

We sample $\sigma_0$ from $\mathbb{P}_{n,\beta_n}$, and then run one step of the hit and run algorithm for the $L^1$ model to obtain $\sigma$ following the procedure described at the beginning of the proof of Proposition \ref{P4.1} (with $\{b_i\}_{i=1}^n$, $\{N_i\}_{i=1}^n$, and $\{Y_i\}_{i=1}^n$ defined as there). Let $\mathcal{C}_s$ be the event that $N_i\geq e^{-2}\beta_n^{-1}\slash 4$ for every $i\in \mathcal{I}_{n,s}\cap\mathbb{N}^{*}$. We recall from (\ref{Eq1.1.12}) that 
\begin{equation}\label{Eq1.5.2}
    \mathbb{P}(\mathcal{C}_s^c)\leq CL\exp(-c\beta_n^{-1}). 
\end{equation}

We let
\begin{eqnarray}\label{Eq2.6.1}
 && \mathcal{S}_{1,l}:=\{i\in \{s_1,\cdots,s_2\}\backslash \{Y_{s_2'+1},\cdots,Y_n\}:b_i> s_2'\}, \nonumber\\
 && \mathcal{S}_{2,l}:=\{i\in \{s_1,\cdots,s_2\}\backslash \{Y_{s_2'+1},\cdots,Y_n\}:s_1'\leq b_i\leq s_2'\},\nonumber\\
 && \mathcal{S}_l':=\{i\in \{s_1,\cdots,s_2\}: s_1'\leq   b_i \leq s_2' \},\quad W_l:=|\mathcal{S}_l'|.
\end{eqnarray}
Note that $\mathcal{S}_{2,l}\subseteq \mathcal{S}_l'$. We also let
\begin{eqnarray}\label{Eq1.4.8}
 && D_l:=|\{i\in [n]: (i,\sigma(i))\in  \tilde{Q}_{\Gamma,l}\}|, \nonumber\\
 &&  D_l':=|\{i\in [n]: (i,\sigma(i))\in  \tilde{Q}_{\Gamma,l}, i\in \mathcal{S}_{2,l}\}|.
\end{eqnarray}
Let
\begin{eqnarray}\label{Eq1.4.9}
  L_{1,l}:=LIS(\sigma|_{\mathcal{S}_{1,l}\times ((s-1)L\beta_n^{-1}+L\beta_n^{-1} y_{l-1}(\Gamma),(s-1)L\beta_n^{-1}+L\beta_n^{-1} y_l(\Gamma)]}), \nonumber\\
  L_{2,l}:=LIS(\sigma|_{\mathcal{S}_{2,l}\times ((s-1)L\beta_n^{-1}+L\beta_n^{-1} y_{l-1}(\Gamma),(s-1)L\beta_n^{-1}+L\beta_n^{-1} y_l(\Gamma)]}).
\end{eqnarray}
Following the argument below (\ref{Eq2}), we can deduce that
\begin{equation}\label{Eq2.12.1}
    L_{1,l}\leq LIS(\sigma|_{\tilde{Q}_{\Gamma,l}}) \leq L_{1,l}+L_{2,l}.
\end{equation}

We bound $W_l$ as follows. Note that
\begin{equation*}
    W_l=\sum_{i=s_1}^{s_2} \mathbbm{1}_{s_1'\leq b_i\leq s_2'}.
\end{equation*}
For any $i\in\{s_1,\cdots,s_2\}$,
\begin{eqnarray*}
&& \mathbb{P}(s_1'\leq b_i\leq s_2'|\sigma_0)=e^{-2\beta_n(s_1'-\max\{i,\sigma_0(i)\})_{+}}-e^{-2\beta_n(s_2'-\max\{i,\sigma_0(i)\})_{+}}\\
&\leq& 1-e^{-2\beta_n((s_2'-\max\{i,\sigma_0(i)\})_{+}-(s_1'-\max\{i,\sigma_0(i)\})_{+})}\leq 1-e^{-2\beta_n(s_2'-s_1')}\\
&\leq& 2\beta_n(s_2'-s_1')\leq 2L(y_l(\Gamma)-y_{l-1}(\Gamma)).
\end{eqnarray*}
For any $i\in\{s_1,\cdots,s_2\}$, let $Z_i:=\mathbbm{1}_{s_1'\leq b_i\leq s_2'}$. Conditional on $\sigma_0$, $Z_{s_1},\cdots,Z_{s_2}$ are mutually independent, and for every $i\in\{s_1,\cdots,s_2\}$, $Z_i$ follows the Bernoulli distribution with parameter $\mathbb{P}(s_1'\leq b_i\leq s_2'|\sigma_0)$. Hence by Hoeffding's inequality, for any $t\geq 0$, we have
\begin{equation*}
    \mathbb{P}(W_l\geq (s_2-s_1+1)(2L (y_l(\Gamma)-y_{l-1}(\Gamma))+t)|\sigma_0)\leq e^{-2(s_2-s_1+1)t^2}.
\end{equation*}
Taking $t=2L(y_l(\Gamma)-y_{l-1}(\Gamma))$, we obtain that 
\begin{eqnarray}
  &&  \mathbb{P}(W_l\geq 4L(s_2-s_1+1)(y_l(\Gamma)-y_{l-1}(\Gamma)))\nonumber\\
  &=& \mathbb{E}[\mathbb{P}(W_l\geq 4L(s_2-s_1+1)(y_l(\Gamma)-y_{l-1}(\Gamma))|\sigma_0)] \nonumber\\
  &\leq& e^{-8L^2(s_2-s_1+1)(y_l(\Gamma)-y_{l-1}(\Gamma))^2}.
\end{eqnarray}
Let $\mathcal{E}_l$ be the event that $W_l\leq 4L(s_2-s_1+1)(y_l(\Gamma)-y_{l-1}(\Gamma))$. We have
\begin{equation}\label{Eq1.5.3}
    \mathbb{P}(\mathcal{E}_l^c)\leq e^{-8L^2(s_2-s_1+1)(y_l(\Gamma)-y_{l-1}(\Gamma))^2}.
\end{equation}

In the following, we bound $D_l'$, $L_{2,l}$, $D_l$, $L_{1,l}$ (as defined in (\ref{Eq1.4.8}) and (\ref{Eq1.4.9})) in \textbf{Sub-steps 1.1-1.4}, respectively.

\subparagraph{Sub-step 1.1}

In this sub-step, we bound $D_l'$. Let $\mathcal{B}_l$ be the $\sigma$-algebra generated by $\sigma_0$, $\{b_i\}_{i=1}^n$, and $\{Y_i\}_{i=s_2'+1}^n$. Following the argument between (\ref{Eq4}) and (\ref{Eq9}), we obtain that for any $t\geq 0$, \begin{equation}\label{Eq1.5.1}
    \mathbb{P}\Big(D_l'\geq \sum_{i=s_1'}^{s_2'}\frac{W_l}{N_i}+(s_2'-s_1'+1)t\Big|\mathcal{B}_l\Big)\leq e^{-2(s_2'-s_1'+1)t^2}.
\end{equation}

Let $\mathcal{D}_l$ be the event that 
\begin{equation}\label{Eq2.7.1}
    D_l'\geq 32e^2L\beta_n(s_2-s_1+1)(s_2'-s_1'+1)(y_l(\Gamma)-y_{l-1}(\Gamma)).
\end{equation}
Taking $t=L\beta_n(s_2-s_1+1)(y_l(\Gamma)-y_{l-1}(\Gamma))$ in (\ref{Eq1.5.1}) and noting the definitions of $\mathcal{C}_s$ and $\mathcal{E}_l$, we obtain that
\begin{equation*}
    \mathbb{P}(\mathcal{D}_l\cap\mathcal{C}_s\cap\mathcal{E}_l|\mathcal{B}_l)\leq e^{-2L^2\beta_n^2(s_2-s_1+1)^2(y_l(\Gamma)-y_{l-1}(\Gamma))^2(s_2'-s_1'+1)}. 
\end{equation*}
Hence
\begin{eqnarray}\label{Eq1.5.4}
&& \mathbb{P}(\mathcal{D}_l\cap\mathcal{C}_s\cap\mathcal{E}_l)=\mathbb{E}[\mathbb{P}(\mathcal{D}_l\cap\mathcal{C}_s\cap\mathcal{E}_l|\mathcal{B}_l)]\nonumber\\
&\leq& e^{-2L^2\beta_n^2(s_2-s_1+1)^2(y_l(\Gamma)-y_{l-1}(\Gamma))^2(s_2'-s_1'+1)}.
\end{eqnarray}

By (\ref{Eq1.5.2}), (\ref{Eq1.5.3}), (\ref{Eq1.5.4}), and the union bound, we have
\begin{eqnarray*}
 \mathbb{P}(\mathcal{D}_l)&\leq& e^{-2L^2\beta_n^2(s_2-s_1+1)^2(y_l(\Gamma)-y_{l-1}(\Gamma))^2(s_2'-s_1'+1)}+CL\exp(-c\beta_n^{-1})\nonumber\\
 &&+e^{-8L^2(s_2-s_1+1)(y_l(\Gamma)-y_{l-1}(\Gamma))^2}.
\end{eqnarray*}
By (\ref{Eq1.6.1}), (\ref{Eq1.5.5}), and (\ref{Eq1.5.6}), we have
\begin{equation}\label{Eq1.8.1}
    s_2-s_1\geq L\beta_n^{-1}(x_l(\Gamma)-x_{l-1}(\Gamma))-2\geq  \frac{L\beta_n^{-1}}{2K_0T}-2\geq \frac{L\beta_n^{-1}}{4K_0T},
\end{equation}
\begin{equation}\label{Eq1.8.2}
    s_2'-s_1'\geq L\beta_n^{-1}(y_l(\Gamma)-y_{l-1}(\Gamma))-2\geq  \frac{L\beta_n^{-1}}{2K_0T}-2\geq \frac{L\beta_n^{-1}}{4K_0T}. 
\end{equation}
Hence
\begin{equation}\label{Eq2.11.1}
    \mathbb{P}(\mathcal{D}_l)\leq CL\exp(-c\beta_n^{-1})+2\exp(-L^3 \beta_n^{-1}\slash (128K_0^5 T^5)).
\end{equation}

\subparagraph{Sub-step 1.2}

In this sub-step, we bound $L_{2,l}$. For any $q\in\mathbb{N}^{*}$, we define 
\begin{equation}\label{Eq1.7.1}
    \Lambda_{l,q}:=\sum_{\substack{i_1<\cdots<i_q,j_1<\cdots<j_q\\ i_1,\cdots,i_q\in \{s_1,\cdots,s_2\}\\j_1,\cdots,j_q\in \{s_1',\cdots,s_2'\}}} \mathbbm{1}_{\sigma(i_1)=j_1,\cdots,\sigma(i_q)=j_q} \mathbbm{1}_{i_1,\cdots,i_q\in\mathcal{S}_{2,l}}.
\end{equation}
For any $k\in [n]$, we let $\mathcal{F}_k$ be the $\sigma$-algebra generated by $\sigma_0$, $\{b_i\}_{i=1}^n$, and $\{Y_i\}_{i=k+1}^n$. For any $i_1,\cdots,i_q\in\{s_1,\cdots,s_2\}$ and $j_1,\cdots,j_q\in \{s_1',\cdots,s_2'\}$ such that $i_1<\cdots<i_q$ and $j_1<\cdots<j_q$, we have
\begin{eqnarray*}
 &&\mathbb{E}[\mathbbm{1}_{\sigma(i_1)=j_1,\cdots,\sigma(i_q)=j_q}\mathbbm{1}_{i_1,\cdots,i_q\in\mathcal{S}_{2,l}}|\mathcal{B}_l]\nonumber\\
 &=& \mathbb{E}[\mathbbm{1}_{\sigma(i_1)=j_1,\cdots,\sigma(i_q)=j_q}|\mathcal{B}_l]\mathbbm{1}_{i_1,\cdots,i_q\in\mathcal{S}_{2,l}} \nonumber\\
 &=& \mathbb{E}[\mathbb{E}[\mathbbm{1}_{\sigma(i_1)=j_1}|\mathcal{F}_{j_1}]\mathbbm{1}_{\sigma(i_2)=j_2,\cdots,\sigma(i_q)=j_q}|\mathcal{B}_l]\mathbbm{1}_{i_1,\cdots,i_q\in\mathcal{S}_{2,l}}\\
  &\leq& \frac{\mathbbm{1}_{i_1,\cdots,i_q\in\mathcal{S}_{2,l}}}{N_{j_1}}\mathbb{E}[\mathbbm{1}_{\sigma(i_2)=j_2,\cdots,\sigma(i_q)=j_q}|\mathcal{B}_l]\leq \cdots\leq \frac{\mathbbm{1}_{i_1,\cdots,i_q\in\mathcal{S}_{2,l}}}{N_{j_1}N_{j_2}\cdots N_{j_q}}. 
\end{eqnarray*}
Recalling the definition of $\mathcal{C}_s$, we obtain that
\begin{equation}\label{Eq1.7.2}
    \mathbb{E}[\mathbbm{1}_{\sigma(i_1)=j_1,\cdots,\sigma(i_q)=j_q} \mathbbm{1}_{i_1,\cdots,i_q\in\mathcal{S}_{2,l}} |\mathcal{B}_l]\mathbbm{1}_{\mathcal{C}_s\cap\mathcal{E}_l}\leq (4e^2\beta_n)^q \mathbbm{1}_{\mathcal{C}_s\cap\mathcal{E}_l}\mathbbm{1}_{i_1,\cdots,i_q\in\mathcal{S}_{2,l}}.
\end{equation}
By (\ref{Eq1.7.1}), (\ref{Eq1.7.2}), and Lemma \ref{Lemma3.1}, recalling the definition of $\mathcal{E}_l$, we have
\begin{eqnarray*}
&& \mathbb{E}[\Lambda_{l,q}|\mathcal{B}_l] \mathbbm{1}_{\mathcal{C}_s\cap\mathcal{E}_l}\leq (4e^2\beta_n)^q\binom{|\mathcal{S}_{2,l}|}{q}\binom{s_2'-s_1'+1}{q} \mathbbm{1}_{\mathcal{C}_s\cap\mathcal{E}_l} \nonumber\\
&\leq& \Big(\frac{4e^4 \beta_n |\mathcal{S}_{2,l}|(s_2'-s_1'+1)}{q^2}\Big)^q\mathbbm{1}_{\mathcal{C}_s\cap\mathcal{E}_l}\leq \Big(\frac{4e^4 \beta_n W_l(s_2'-s_1'+1)}{q^2}\Big)^q\mathbbm{1}_{\mathcal{C}_s\cap\mathcal{E}_l}\nonumber\\
&\leq& \Big(\frac{16e^4 L\beta_n (s_2-s_1+1)(s_2'-s_1'+1)(y_l(\Gamma)-y_{l-1}(\Gamma))}{q^2}\Big)^q.
\end{eqnarray*}
Hence
\begin{eqnarray}\label{Eq1.7.4}
&& \mathbb{P}(\{\Lambda_{l,q}\geq 1\}\cap\mathcal{C}_s\cap\mathcal{E}_l) =\mathbb{E}[\mathbb{E}[\mathbbm{1}_{\Lambda_{l,q}\geq 1}|\mathcal{B}_l]\mathbbm{1}_{\mathcal{C}_s\cap\mathcal{E}_l}]\leq \mathbb{E}[\mathbb{E}[\Lambda_{l,q}|\mathcal{B}_l]\mathbbm{1}_{\mathcal{C}_s\cap\mathcal{E}_l}] \nonumber\\
&&\leq  \Big(\frac{16e^4 L\beta_n (s_2-s_1+1)(s_2'-s_1'+1)(y_l(\Gamma)-y_{l-1}(\Gamma))}{q^2}\Big)^q.
\end{eqnarray}
Let
\begin{equation}
    q_0:= 8e^2 L^{1\slash 2}\beta_n^{1\slash 2} (s_2-s_1+1)^{1\slash 2}(s_2'-s_1'+1)^{1\slash 2}(y_l(\Gamma)-y_{l-1}(\Gamma))^{1\slash 2}.
\end{equation}
Taking $q=\lceil q_0 \rceil$ in (\ref{Eq1.7.4}), we obtain that 
\begin{equation*}
    \mathbb{P}(\{\Lambda_{l,\lceil q_0\rceil}\geq 1\}\cap\mathcal{C}_s\cap\mathcal{E}_l)\leq 2^{-q_0},
\end{equation*}
which leads to
\begin{equation}\label{Eq1.9.1}
    \mathbb{P}(\{L_{2,l}\geq q_0+1\}\cap\mathcal{C}_s\cap\mathcal{E}_l)\leq 2^{-q_0}.
\end{equation}

By (\ref{Eq1.6.1}) and (\ref{Eq1.5.5}), we have
\begin{eqnarray*}
   && L^{3\slash 2}\beta_n^{-1\slash 2} T^{-1\slash 2} \sqrt{(x_l(\Gamma)-x_{l-1}(\Gamma))(y_l(\Gamma)-y_{l-1}(\Gamma))}\nonumber\\
   &&\geq\frac{1}{2}L^{3\slash 2}\beta_n^{-1\slash 2} T^{-3\slash 2}K_0^{-1}\geq \frac{1}{2}L\geq 1.
\end{eqnarray*}

Hence by (\ref{Eq1.5.5}), (\ref{Eq1.8.5}), (\ref{Eq1.8.1}), (\ref{Eq1.8.2}), and the AM-GM inequality, we have
\begin{equation}\label{Eq1.8.6}
   q_0 \geq 8e^2 L^{1\slash 2}\beta_n^{1\slash 2} \cdot \frac{L \beta_n^{-1}}{4K_0 T}\cdot (2K_0T)^{-1\slash 2}\geq L^{3\slash 2}K_0^{-3\slash 2}T^{-3\slash 2}\beta_n^{-1\slash 2},
\end{equation}
\begin{eqnarray}\label{Eq1.8.7}
    q_0+1&\leq& 20e^2 L^{3\slash 2}\beta_n^{-1\slash 2} T^{-1\slash 2} \sqrt{(x_l(\Gamma)-x_{l-1}(\Gamma))(y_l(\Gamma)-y_{l-1}(\Gamma))}\nonumber\\
    &\leq& 20e^2 L^{3\slash 2}\beta_n^{-1\slash 2}  T^{-1\slash 2}(x_l(\Gamma)-x_{l-1}(\Gamma)+y_l(\Gamma)-y_{l-1}(\Gamma)).
\end{eqnarray}
Let $\mathscr{E}_l$ be the event that
\begin{equation}\label{Eq2.11.4}
    L_{2,l}\leq 20e^2 L^{3\slash 2}\beta_n^{-1\slash 2}  T^{-1\slash 2}(x_l(\Gamma)-x_{l-1}(\Gamma)+y_l(\Gamma)-y_{l-1}(\Gamma)).
\end{equation}
By (\ref{Eq1.9.1})-(\ref{Eq1.8.7}), we have
\begin{equation}\label{Eq1.9.2}
    \mathbb{P}(\mathscr{E}_l^c\cap\mathcal{C}_s\cap\mathcal{E}_l)\leq \exp(-cL^{3\slash 2}\beta_n^{-1\slash 2}\slash (K_0^{3\slash 2} T^{3\slash 2} )).
\end{equation}
By (\ref{Eq1.5.2}), (\ref{Eq1.5.3}), (\ref{Eq1.9.2}), and the union bound, we have
\begin{eqnarray}
  \mathbb{P}(\mathscr{E}_l^c)&\leq& \exp(-cL^{3\slash 2}\beta_n^{-1\slash 2}\slash (K_0^{3\slash 2} T^{3\slash 2} ))+CL\exp(-c\beta_n^{-1})\nonumber\\
  &&+\exp(-8L^2(s_2-s_1+1)(y_l(\Gamma)-y_{l-1}(\Gamma))^2).
\end{eqnarray}
Noting (\ref{Eq1.5.5}) and (\ref{Eq1.8.1}), we obtain that
\begin{equation}\label{Eq2.12.2}
    \mathbb{P}(\mathscr{E}_l^c)\leq CL\exp(-c\beta_n^{-1\slash 2}\slash (K_0^3 T^3)).
\end{equation}

\subparagraph{Sub-step 1.3}

In this sub-step, we bound $D_l$. We let 
\begin{equation}
    t_s:=\lceil (s-1) L\beta_n^{-1}\rceil.
\end{equation}
Note that $t_s\in [n]$. Recall the definition of $r_s$ from (\ref{Eq1.11.1}). As
\begin{equation*}
    \min\{(s-1)L\beta_n^{-1},(\lfloor n\beta_n\slash L\rfloor-s+1)L\beta_n^{-1}\}\geq L\beta_n^{-1}\geq 2,
\end{equation*}
we have
\begin{equation*}
    t_s-1  \geq (s-1)L\beta_n^{-1}-1\geq \frac{1}{2}(s-1)L\beta_n^{-1}\geq r_s\beta_n^{-1},
\end{equation*}
\begin{equation*}
    n-t_s\geq (\lfloor n\beta_n\slash L\rfloor-s+1)L\beta_n^{-1}-1\geq \frac{1}{2}(\lfloor n\beta_n\slash L\rfloor-s+1)L\beta_n^{-1}\geq r_s\beta_n^{-1}.
\end{equation*}
Hence
\begin{equation}\label{Eq1.11.2}
    r_s\beta_n^{-1}+1\leq t_s\leq n-r_s \beta_n^{-1}.
\end{equation}
Take $\beta=\beta_n,\delta_0=1\slash 4, K=2L, r=r_s, t_0=t_s$ in Proposition \ref{P2.3}. As $r_s\geq C_1$ and $r_s\leq \log(1+\beta_n^{-1})\leq \log(1+\beta_n^{-1})^8$, noting (\ref{Eq1.11.2}), we obtain that
\begin{equation}\label{Eq2.5.3}
    \mathbb{P}\Big(\sup_{f\in\mathbb{B}_{2L}}\Big|\int f d\mu_{n,t_s}-\int fd\mu\Big|>C_2(\log{r}_s)^{1\slash 4}r_s^{-1\slash 8}\Big)\leq C_0\exp(-c_0 \beta_n^{-3\slash 4}),
\end{equation}
where we recall from Definition \ref{Def2.1} that 
\begin{equation*}
    \mu_{n,t_s}=\beta_n\sum_{i=1}^n\delta_{(\beta_n(i-t_s),\beta_n(\sigma(i)-t_s))}, \quad d\mu=\frac{1}{2}e^{-|x-y|}dxdy.
\end{equation*}

Below we assume that the event 
\begin{equation}\label{Eq1.12.1}
    \Big\{\sup_{f\in\mathbb{B}_{2L}}\Big|\int f d\mu_{n,t_s}-\int fd\mu\Big|\leq C_2(\log{r}_s)^{1\slash 4}r_s^{-1\slash 8}\Big\}
\end{equation}
holds. For any $\mathbf{x}\in \mathbb{R}^2$, we let
\begin{equation*}
    g(\mathbf{x})=\mathbbm{1}_{\tilde{Q}_{\Gamma,l}} ((t_s,t_s)+\beta_n^{-1}\mathbf{x}).
\end{equation*}
For any $\delta\in (0,1)$, we let
\begin{eqnarray*}
    \mathscr{R}_{\Gamma,l;\delta}&:=& ((s-1)L\beta_n^{-1}+L\beta_n^{-1}x_{l-1}(\Gamma)-\delta\beta_n^{-1},(s-1)L\beta_n^{-1}+L\beta_n^{-1}x_{l}(\Gamma)+\delta\beta_n^{-1}]\\
   &&\times ((s-1)L\beta_n^{-1}+L\beta_n^{-1}y_{l-1}(\Gamma)-\delta\beta_n^{-1},(s-1)L\beta_n^{-1}+L\beta_n^{-1}y_{l}(\Gamma)+\delta\beta_n^{-1}],
\end{eqnarray*}
\begin{eqnarray*}
  \mathscr{R}_{\Gamma,l;\delta}' &:=& ((s-1)L\beta_n^{-1}+L\beta_n^{-1}x_{l-1}(\Gamma)+\delta\beta_n^{-1},(s-1)L\beta_n^{-1}+L\beta_n^{-1}x_{l}(\Gamma)-\delta\beta_n^{-1}]\\
   &&\times ((s-1)L\beta_n^{-1}+L\beta_n^{-1}y_{l-1}(\Gamma)+\delta\beta_n^{-1},(s-1)L\beta_n^{-1}+L\beta_n^{-1}y_{l}(\Gamma)-\delta\beta_n^{-1}].
\end{eqnarray*}
For any $\delta\in (0,1)$ and $\mathbf{x}\in \mathbb{R}^2$, we let
\begin{equation*}
    g_{1,\delta}(\mathbf{x})=\min\{1,\delta^{-1}\beta_n\mathbbm{1}_{\tilde{Q}_{\Gamma,l}}((t_s,t_s)+\beta_n^{-1}\mathbf{x})d((t_s,t_s)+\beta_n^{-1}\mathbf{x},\partial \tilde{Q}_{\Gamma,l})\},
\end{equation*}
\begin{equation*}
    g_{2,\delta}(\mathbf{x})=\min\{1,\delta^{-1}\beta_n \mathbbm{1}_{\mathscr{R}_{\Gamma,l;\delta}}((t_s,t_s)+\beta_n^{-1}\mathbf{x})d((t_s,t_s)+\beta_n^{-1}\mathbf{x},\partial \mathscr{R}_{\Gamma,l;\delta})\},
\end{equation*}
where for any $\mathbf{x}\in\mathbb{R}^2$ and any set $A\subseteq \mathbb{R}^2$, $d(\mathbf{x},A):=\inf_{\mathbf{z}\in A}\|\mathbf{x}-\mathbf{z}\|_2$. In the following, we consider any $\delta\in (0,1)$. It can be checked that $\|g_{1,\delta}\|_{\infty}\leq 1$, $\|g_{2,\delta}\|_{\infty}\leq 1$, $\|g_{1,\delta}\|_{Lip}\leq \delta^{-1}$, and $\|g_{2,\delta}\|_{Lip}\leq \delta^{-1}$. Note that 
\begin{eqnarray*}
&& \supp(g_{1,\delta}), \supp(g_{2,\delta}) \subseteq
    \beta_n\overline{\mathscr{R}_{\Gamma,l;\delta}}-\beta_n (t_s,t_s)\nonumber\\
    &=&[(s-1+x_{l-1}(\Gamma))L-\delta-\beta_n t_s,(s-1+x_{l}(\Gamma))L+\delta-\beta_n t_s]\\
   &&\times [(s-1+y_{l-1}(\Gamma))L-\delta-\beta_n t_s,(s-1+y_{l}(\Gamma))L+\delta-\beta_n t_s].
\end{eqnarray*}
As 
\begin{eqnarray*}
   && (s-1+\min\{x_{l-1}(\Gamma),y_{l-1}(\Gamma)\})L-\delta-\beta_n t_s\nonumber\\
   &\geq& \beta_n((s-1)L\beta_n^{-1}-\lceil (s-1) L\beta_n^{-1}\rceil)-\delta \geq -\beta_n-\delta\geq -2\geq -L,
\end{eqnarray*}
\begin{eqnarray*}
 && (s-1+\max\{x_{l}(\Gamma),y_l(\Gamma)\})L+\delta-\beta_n t_s\nonumber\\
 &\leq& \beta_n((s-1)L\beta_n^{-1}-\lceil (s-1) L\beta_n^{-1}\rceil)+L+\delta\leq L+\delta\leq 2L,
\end{eqnarray*}
we have $\supp(g_{1,\delta}), \supp( g_{2,\delta})\subseteq [-2L,2L]^2$. Hence $\delta g_{1,\delta},\delta g_{2,\delta}\in \mathbb{B}_{2L}$ (recall Definition \ref{Def2.3}). By (\ref{Eq1.12.1}), as $r_s\geq \min\{L,\log(1+\beta_n^{-1})\}\slash 2\geq 1 $, we have
\begin{equation}\label{Eq1.12.2}
    \Big|\int g_{1,\delta} d\mu_{n,t_s}-\int g_{1,\delta}d\mu\Big|\leq C_2\delta^{-1}(\log{r}_s)^{1\slash 4}r_s^{-1\slash 8}\leq C_3 \delta^{-1} r_s^{-1\slash 10}, 
\end{equation}
\begin{equation}
    \Big|\int g_{2,\delta} d\mu_{n,t_s}-\int g_{2,\delta}d\mu\Big|\leq C_2\delta^{-1} (\log{r}_s)^{1\slash 4}r_s^{-1\slash 8}\leq C_3 \delta^{-1} r_s^{-1\slash 10},
\end{equation}
where $C_3$ is a positive constant that only depends on $L$. It can be checked that
\begin{equation}
    g_{1,\delta}(\mathbf{x})\leq g(\mathbf{x})\leq g_{2,\delta}(\mathbf{x}) \text{ for any } \mathbf{x}\in \mathbb{R}^2,
\end{equation}
\begin{equation}\label{Eq1.12.3}
    \int g d\mu_{n,t_s}=\beta_n\sum_{i=1}^n \mathbbm{1}_{\tilde{Q}_{\Gamma,l}}((i,\sigma(i)))=\beta_n|S(\sigma)\cap \tilde{Q}_{\Gamma,l}|.
\end{equation}
By (\ref{Eq1.12.2})-(\ref{Eq1.12.3}), 
\begin{equation}\label{Eq2.4.1}
  D_l=  |S(\sigma)\cap \tilde{Q}_{\Gamma,l}|\geq \beta_n^{-1}\int g_{1,\delta}d\mu-C_3\beta_n^{-1}\delta^{-1}r_s^{-1\slash 10},
\end{equation}
\begin{equation}\label{Eq2.4.2}
  D_l=   |S(\sigma)\cap \tilde{Q}_{\Gamma,l}|\leq \beta_n^{-1}\int g_{2,\delta}d\mu+C_3\beta_n^{-1}\delta^{-1}r_s^{-1\slash 10}.
\end{equation}
For any $\mathbf{x}=(x_1,x_2)\in \mathbb{R}^2$,
\begin{equation}\label{Eq1.12.5}
    g_{1,\delta}(\mathbf{x})\geq \mathbbm{1}_{\mathscr{R}_{\Gamma,l;\delta}'}((t_s,t_s)+\beta_n^{-1}\mathbf{x}), \quad g_{2,\delta}(\mathbf{x})\leq \mathbbm{1}_{\mathscr{R}_{\Gamma,l;\delta}}((t_s,t_s)+\beta_n^{-1}\mathbf{x}).
\end{equation}
For any $\mathbf{x}=(x_1,x_2)\in \mathbb{R}^2$ such that $(t_s,t_s)+\beta_n^{-1}\mathbf{x}\in \mathscr{R}_{\Gamma,l;\delta}$, we have 
\begin{equation}\label{Eq1.12.4}
  (y_{l-1}(\Gamma)-x_l(\Gamma))L-2\delta\leq  x_2-x_1\leq (y_l(\Gamma)-x_{l-1}(\Gamma))L+2\delta,
\end{equation}
hence by (\ref{Eq1.5.5}),
\begin{eqnarray}\label{Eq1.12.6}
   && |x_2-x_1|\nonumber\\
   &\leq& |y_{l-1}(\Gamma)-x_{l-1}(\Gamma)|L+\max\{|y_{l}(\Gamma)-y_{l-1}(\Gamma)|,|x_l(\Gamma)-x_{l-1}(\Gamma)|\}L+2\delta\nonumber\\
    &\leq& |y_{l-1}(\Gamma)-x_{l-1}(\Gamma)|L+L T^{-1}+2\delta;
\end{eqnarray}
moreover, by (\ref{Eq1.12.4}), we have
\begin{eqnarray*}
  &&  x_2-x_1-(y_l(\Gamma)-y_{l-1}(\Gamma))L-2\delta\\
  &\leq& (y_{l-1}(\Gamma)-x_{l-1}(\Gamma))L\leq x_2-x_1+(x_l(\Gamma)-x_{l-1}(\Gamma))L+2\delta,
\end{eqnarray*}
hence by (\ref{Eq1.5.5}),
\begin{eqnarray}\label{Eq1.12.7}
 && |x_2-x_1|\nonumber\\
 &\geq& |y_{l-1}(\Gamma)-x_{l-1}(\Gamma)|L-\max\{|y_l(\Gamma)-y_{l-1}(\Gamma)|,|x_l(\Gamma)-x_{l-1}(\Gamma)|\}L-2\delta\nonumber\\
 &\geq& |y_{l-1}(\Gamma)-x_{l-1}(\Gamma)|L-LT^{-1}-2\delta.
\end{eqnarray}
By (\ref{Eq1.12.5}), (\ref{Eq1.12.6}), and (\ref{Eq1.12.7}), we have
\begin{eqnarray}\label{Eq2.4.3}
    \int g_{1,\delta}d\mu&\geq& \frac{1}{2}\int\mathbbm{1}_{\mathscr{R}_{\Gamma,l;\delta}'}((t_s,t_s)+\beta_n^{-1}\mathbf{x})e^{-|x_2-x_1|}dx_1dx_2\nonumber\\
    &\geq& \frac{1}{2}\beta_n^2 e^{-|y_{l-1}(\Gamma)-x_{l-1}(\Gamma)|L-LT^{-1}-2\delta}|\mathscr{R}_{\Gamma,l;\delta}'|\nonumber\\
    &\geq& \frac{1}{2} e^{-|y_{l-1}(\Gamma)-x_{l-1}(\Gamma)|L-LT^{-1}-2\delta}(L(x_l(\Gamma)-x_{l-1}(\Gamma))-2\delta)_{+}\nonumber\\
    &&\times (L(y_l(\Gamma)-y_{l-1}(\Gamma))-2\delta)_{+},
\end{eqnarray}
\begin{eqnarray}\label{Eq2.4.4}
 \int g_{2,\delta}d\mu&\leq& \frac{1}{2} \int \mathbbm{1}_{\mathscr{R}_{\Gamma,l;\delta}}((t_s,t_s)+\beta_n^{-1}\mathbf{x})e^{-|x_2-x_1|}dx_1dx_2\nonumber\\
 &\leq& \frac{1}{2}\beta_n^2 e^{-|y_{l-1}(\Gamma)-x_{l-1}(\Gamma)|L+LT^{-1}+2\delta}|\mathscr{R}_{\Gamma,l;\delta}|\nonumber\\
 &\leq& \frac{1}{2} e^{-|y_{l-1}(\Gamma)-x_{l-1}(\Gamma)|L+LT^{-1}+2\delta}(L(x_l(\Gamma)-x_{l-1}(\Gamma))+2\delta)\nonumber\\
 &&\times(L(y_l(\Gamma)-y_{l-1}(\Gamma))+2\delta).
\end{eqnarray}
Below we take $\delta=1\slash (4K_0T)$. By (\ref{Eq1.5.5}), we have
\begin{equation}\label{Eq2.4.5}
    \min\{x_l(\Gamma)-x_{l-1}(\Gamma),y_l(\Gamma)-y_{l-1}(\Gamma)\}\geq \frac{1}{2K_0T}=2\delta.
\end{equation}
As $\min\{T,K_0\}\geq L^2$, we have $\delta\leq 1\slash (4L^4)$. Hence by (\ref{Eq2.4.1}), (\ref{Eq2.4.2}), and (\ref{Eq2.4.3})-(\ref{Eq2.4.5}), we have
\begin{eqnarray*}
 D_l
  &\geq&  -4C_3 K_0T \beta_n^{-1}r_s^{-1\slash 10} +\frac{1}{2}L^2\beta_n^{-1} e^{-2L^{-1}}(1-L^{-1})^2 e^{-|y_{l-1}(\Gamma)-x_{l-1}(\Gamma)|L}\nonumber\\
  &&\quad\quad\quad\quad\quad\quad\quad\quad\quad\quad\times(x_l(\Gamma)-x_{l-1}(\Gamma))(y_l(\Gamma)-y_{l-1}(\Gamma)),
\end{eqnarray*}
\begin{eqnarray*}
 D_l
  &\leq&  4C_3 K_0 T\beta_n^{-1}r_s^{-1\slash 10} +\frac{1}{2}L^2\beta_n^{-1} e^{2L^{-1}}(1+L^{-1})^2 e^{-|y_{l-1}(\Gamma)-x_{l-1}(\Gamma)|L}\nonumber\\
  &&\quad\quad\quad\quad\quad\quad\quad\quad\quad\times(x_l(\Gamma)-x_{l-1}(\Gamma))(y_l(\Gamma)-y_{l-1}(\Gamma)).
\end{eqnarray*}
As $L\geq 4$, we have $1-L^{-1}\geq e^{-2L^{-1}}$ and $1+L^{-1}\leq e^{L^{-1}}$. Hence 
\begin{eqnarray}\label{Eq2.5.1}
 D_l
  &\geq&  -4C_3K_0T\beta_n^{-1}r_s^{-1\slash 10} +\frac{1}{2}L^2\beta_n^{-1} e^{-6L^{-1}} e^{-|y_{l-1}(\Gamma)-x_{l-1}(\Gamma)|L}\nonumber\\
  &&\quad\quad\quad\quad\quad\quad\quad\quad\quad\times(x_l(\Gamma)-x_{l-1}(\Gamma))(y_l(\Gamma)-y_{l-1}(\Gamma)),
\end{eqnarray}
\begin{eqnarray}\label{Eq2.5.2}
 D_l
  &\leq&  4C_3K_0T\beta_n^{-1}r_s^{-1\slash 10} +\frac{1}{2}L^2\beta_n^{-1} e^{6L^{-1}} e^{-|y_{l-1}(\Gamma)-x_{l-1}(\Gamma)|L}\nonumber\\
  &&\quad\quad\quad\quad\quad\quad\quad\quad\quad\times(x_l(\Gamma)-x_{l-1}(\Gamma))(y_l(\Gamma)-y_{l-1}(\Gamma)).
\end{eqnarray}

Let $\mathcal{H}_l$ be the event that (\ref{Eq2.5.1}) and (\ref{Eq2.5.2}) hold. By (\ref{Eq2.5.3}) and the above discussion, we have
\begin{equation}\label{Eq2.11.2}
    \mathbb{P}(\mathcal{H}_l^c)\leq C_0\exp(-c_0\beta_n^{-3\slash 4}).
\end{equation}

\subparagraph{Sub-step 1.4}

In this sub-step, we bound $L_{1,l}$. Recall the definition of $\mathcal{S}_{1,l}$ in (\ref{Eq2.6.1}). We let
\begin{equation}
    R:=|\{i\in [n]: (i,\sigma(i))\in \mathcal{S}_{1,l}\times \{s_1',s_1'+1,\cdots,s_2'\}\}|.
\end{equation}
We also let $I_1,\cdots,I_n\in \{0\}\cup [n]$ and $J_1,\cdots,J_n\in \{0\}\cup [n]$ be such that
\begin{equation*}
I_{R+1}=\cdots=I_n=0, \quad J_{R+1}=\cdots=J_n=0,
\end{equation*}
\begin{equation*}
1\leq I_1<\cdots<I_R, \quad 1\leq J_1<\cdots<J_R,
\end{equation*}
\begin{equation*}
 \{I_1,\cdots,I_R\}=\{i\in [n]: (i,\sigma(i))\in \mathcal{S}_{1,l}\times \{s_1',s_1'+1,\cdots,s_2'\}\},
\end{equation*}
\begin{equation*}
 \{J_1,\cdots,J_R\}=\{i\in [n]: (\sigma^{-1}(i),i)\in \mathcal{S}_{1,l}\times \{s_1',s_1'+1,\cdots,s_2'\}\}.
\end{equation*} 
Following the argument between (\ref{Eq3.9}) and (\ref{E4.3}), we obtain that $R=D_l-D_l'$. 

Throughout the rest of the proof, we let $S_0$ be the set that consists solely of the empty mapping $\tau_0:\emptyset\rightarrow\emptyset$, and let $LIS(\tau_0):=0$. If $R\geq 1$, we let $\tau\in S_R$ be such that $\sigma(I_s)=J_{\tau(s)}$ for every $s\in [R]$. If $R=0$, we let $\tau$ be the empty mapping. Let $\mathcal{B}_l'$ be the $\sigma$-algebra generated by $\sigma_0$, $\{b_i\}_{i=1}^n$, $\{Y_i\}_{i=s_2'+1}^n$, $R$, $\{I_i\}_{i=1}^n$, and $\{J_i\}_{i=1}^n$. Following the argument in \textbf{Step 4} of Section \ref{Sect.3.1.2}, we can deduce that for any $\delta_0\in (0,1\slash 3)$, 
\begin{equation}
    \mathbb{P}(|LIS(\tau)-2\sqrt{R}|>R^{1\slash 2-\delta_0}|\mathcal{B}_l')\leq C_{\delta_0}\exp(-R^{(1-3\delta_0)\slash 2}),
\end{equation}
where $C_{\delta_0}$ is a positive constant that only depends on $\delta_0$. Taking $\delta_0=1\slash 6$ and noting that $L_{1,l}=LIS(\tau)$, we obtain that
\begin{equation}\label{Eq2.10.3}
    \mathbb{P}(|L_{1,l}-2\sqrt{R}|>R^{1\slash 3}|\mathcal{B}_l')\leq C \exp(-R^{1\slash 4}).
\end{equation}

By (\ref{Eq2.7.1}), (\ref{Eq2.5.1}), and (\ref{Eq2.5.2}), when the event $\mathcal{D}_l^c\cap\mathcal{H}_l$ holds, we have
\begin{eqnarray}\label{Eq2.8.1}
    R&\leq& 4C_3K_0 T\beta_n^{-1}r_s^{-1\slash 10} +\frac{1}{2}L^2\beta_n^{-1} e^{6L^{-1}} e^{-|y_{l-1}(\Gamma)-x_{l-1}(\Gamma)|L}\nonumber\\
  &&\quad\quad\quad\quad\quad\quad\quad\times(x_l(\Gamma)-x_{l-1}(\Gamma))(y_l(\Gamma)-y_{l-1}(\Gamma)),
\end{eqnarray}
\begin{eqnarray}
 R &\geq & -32e^2L\beta_n(s_2-s_1+1)(s_2'-s_1'+1)(y_l(\Gamma)-y_{l-1}(\Gamma))-4C_3K_0T\beta_n^{-1}r_s^{-1\slash 10}\nonumber\\
 &&+\frac{1}{2}L^2\beta_n^{-1} e^{-6L^{-1}} e^{-|y_{l-1}(\Gamma)-x_{l-1}(\Gamma)|L}(x_l(\Gamma)-x_{l-1}(\Gamma))(y_l(\Gamma)-y_{l-1}(\Gamma)).\nonumber\\
 &&
\end{eqnarray}
By (\ref{Eq1.8.5}) and (\ref{Eq1.5.6}), we have
\begin{equation*}
    s_2-s_1+1 \leq L\beta_n^{-1}(x_l(\Gamma)-x_{l-1}(\Gamma))+1\leq 2L\beta_n^{-1}(x_l(\Gamma)-x_{l-1}(\Gamma)),
\end{equation*}
\begin{equation*}
    s_2'-s_1'+1\leq L\beta_n^{-1}(y_l(\Gamma)-y_{l-1}(\Gamma))+1\leq 2L\beta_n^{-1}(y_l(\Gamma)-y_{l-1}(\Gamma)),
\end{equation*}
which by (\ref{Eq1.5.5}) lead to
\begin{eqnarray}
  &&  32e^2L\beta_n(s_2-s_1+1)(s_2'-s_1'+1)(y_l(\Gamma)-y_{l-1}(\Gamma))\nonumber\\
  &\leq& 1000\beta_n^{-1}L^3 T^{-1}(x_l(\Gamma)-x_{l-1}(\Gamma))(y_l(\Gamma)-y_{l-1}(\Gamma))\nonumber\\
  &\leq& \frac{1}{2}L^2\beta_n^{-1} e^{-6L^{-1}} e^{-|y_{l-1}(\Gamma)-x_{l-1}(\Gamma)|L}(x_l(\Gamma)-x_{l-1}(\Gamma))(y_l(\Gamma)-y_{l-1}(\Gamma))  \nonumber\\
  && \times 2000 L e^{2L} T^{-1}.
\end{eqnarray}
Moreover, by (\ref{Eq1.5.5}),
\begin{eqnarray}\label{Eq2.8.2}
&& 4C_3K_0T\beta_n^{-1}r_s^{-1\slash 10}\nonumber\\
&\leq&  \frac{1}{2}L^2\beta_n^{-1} e^{-6L^{-1}} e^{-|y_{l-1}(\Gamma)-x_{l-1}(\Gamma)|L}(x_l(\Gamma)-x_{l-1}(\Gamma))(y_l(\Gamma)-y_{l-1}(\Gamma))\nonumber\\
&& \times 8C_3K_0TL^{-2}e^{2L} (2K_0T)^2 r_s^{-1\slash 10}\nonumber\\
&\leq& \frac{1}{2}L^2\beta_n^{-1} e^{-6L^{-1}} e^{-|y_{l-1}(\Gamma)-x_{l-1}(\Gamma)|L}(x_l(\Gamma)-x_{l-1}(\Gamma))(y_l(\Gamma)-y_{l-1}(\Gamma))\nonumber\\
&&\times C_4 K_0^3 T^3 r_s^{-1\slash 10},
\end{eqnarray}
where $C_4$ is a positive constant that only depends on $L$. 

By (\ref{Eq2.8.1})-(\ref{Eq2.8.2}), when the event $\mathcal{D}_l^c\cap\mathcal{H}_l$ holds, we have
\begin{eqnarray}\label{Eq2.10.1}
 R&\leq& \frac{1}{2}L^2\beta_n^{-1}  e^{-|y_{l-1}(\Gamma)-x_{l-1}(\Gamma)|L}(x_l(\Gamma)-x_{l-1}(\Gamma))(y_l(\Gamma)-y_{l-1}(\Gamma))\nonumber\\
 && \times e^{6L^{-1}} (1+C_4K_0^3T^3 r_s^{-1\slash 10}),
\end{eqnarray}
\begin{eqnarray}\label{Eq2.9.1}
 R &\geq& \frac{1}{2}L^2\beta_n^{-1}  e^{-|y_{l-1}(\Gamma)-x_{l-1}(\Gamma)|L}(x_l(\Gamma)-x_{l-1}(\Gamma))(y_l(\Gamma)-y_{l-1}(\Gamma))\nonumber\\
 && \times e^{-6L^{-1}} (1-C_4K_0^3T^3 r_s^{-1\slash 10}-2000Le^{2L} T^{-1})_{+}.
\end{eqnarray}
Note that (\ref{Eq1.5.5}) and (\ref{Eq2.9.1}) imply that
\begin{equation}\label{Eq2.10.2}
    R\geq \frac{1}{8}\beta_n^{-1}L^2e^{-2L} K_0^{-2} T^{-2}(1-C_4K_0^3T^3 r_s^{-1\slash 10}-2000Le^{2L} T^{-1})_{+}.
\end{equation}
We let
\begin{equation}\label{Eq2.14.1}
    \Phi_1:=e^{6L^{-1}} (1+C_4K_0^3T^3 r_s^{-1\slash 10}),
\end{equation}
\begin{equation}
    \Phi_2:=e^{-6L^{-1}} (1-C_4K_0^3T^3 r_s^{-1\slash 10}-2000Le^{2L} T^{-1})_{+},
\end{equation}
\begin{equation}
 \Phi_3:=\max\Big\{\frac{1}{8}\beta_n^{-1}L^2e^{-2L} K_0^{-2} T^{-2}(1-C_4K_0^3T^3 r_s^{-1\slash 10}-2000Le^{2L} T^{-1})_{+},1\Big\},
\end{equation}
\begin{equation}\label{Eq2.14.2}
 \Phi_4:=\frac{1}{8}\beta_n^{-1}L^2e^{-2L} K_0^{-2} T^{-2}(1-C_4K_0^3T^3 r_s^{-1\slash 10}-2000Le^{2L} T^{-1})_{+}.
\end{equation}
By (\ref{Eq2.10.1})-(\ref{Eq2.10.2}), when the event $\{|L_{1,l}-2\sqrt{R}|\leq R^{1\slash 3}\}\cap \mathcal{D}_l^c\cap\mathcal{H}_l$ holds, 
\begin{eqnarray}
&& L_{1,l}\leq 2\sqrt{R}+2R^{1\slash 3}= 2\sqrt{R}(1+\max\{R,1\}^{-1\slash 6})\nonumber\\
&\leq& \sqrt{2}L\beta_n^{-1\slash 2}e^{-|y_{l-1}(\Gamma)-x_{l-1}(\Gamma)|L\slash 2}\sqrt{(x_l(\Gamma)-x_{l-1}(\Gamma))(y_l(\Gamma)-y_{l-1}(\Gamma))}\nonumber\\
&& \times \Phi_1^{1\slash 2}(1+\Phi_3^{-1\slash 6}),
\end{eqnarray}
\begin{eqnarray}
 && L_{1,l}\geq 2\sqrt{R}-2R^{1\slash 3}= 2\sqrt{R}(1-\max\{R,1\}^{-1\slash 6})\nonumber\\
 &\geq& \sqrt{2}L\beta_n^{-1\slash 2}e^{-|y_{l-1}(\Gamma)-x_{l-1}(\Gamma)|L\slash 2}\sqrt{(x_l(\Gamma)-x_{l-1}(\Gamma))(y_l(\Gamma)-y_{l-1}(\Gamma))}\nonumber\\
&& \times \Phi_2^{1\slash 2}(1-\Phi_3^{-1\slash 6}).
\end{eqnarray}
Let $\mathscr{E}_l'$ be the event that \begin{eqnarray}\label{Eq2.11.5}
&&\frac{L_{1,l}}{\sqrt{2}L\beta_n^{-1\slash 2}e^{-|y_{l-1}(\Gamma)-x_{l-1}(\Gamma)|L\slash 2}\sqrt{(x_l(\Gamma)-x_{l-1}(\Gamma))(y_l(\Gamma)-y_{l-1}(\Gamma))}}\nonumber\\
&&\in[\Phi_2^{1\slash 2}(1-\Phi_3^{-1\slash 6}),\Phi_1^{1\slash 2}(1+\Phi_3^{-1\slash 6})].
\end{eqnarray}
We have $\{|L_{1,l}-2\sqrt{R}|\leq R^{1\slash 3}\}\cap \mathcal{D}_l^c\cap\mathcal{H}_l\subseteq \mathscr{E}_l'\cap \mathcal{D}_l^c\cap\mathcal{H}_l$, which by (\ref{Eq2.10.2}) leads to 
\begin{eqnarray}
  (\mathscr{E}_l')^c\cap \mathcal{D}_l^c\cap\mathcal{H}_l &\subseteq&  \{|L_{1,l}-2\sqrt{R}|> R^{1\slash 3}\}\cap \mathcal{D}_l^c\cap\mathcal{H}_l \nonumber\\
  &\subseteq& \{|L_{1,l}-2\sqrt{R}|> R^{1\slash 3}\}\cap\{R\geq \Phi_4\}.
\end{eqnarray}
Hence by (\ref{Eq2.10.3}),
\begin{eqnarray}\label{Eq2.11.3}
\mathbb{P}((\mathscr{E}_l')^c\cap \mathcal{D}_l^c\cap\mathcal{H}_l)&\leq& \mathbb{P}(\{|L_{1,l}-2\sqrt{R}|> R^{1\slash 3}\}\cap\{R\geq \Phi_4\}) \nonumber\\
&=& \mathbb{E}[\mathbb{P}(|L_{1,l}-2\sqrt{R}|>R^{1\slash 3}|\mathcal{B}_l') \mathbbm{1}_{R\geq \Phi_4} ]\nonumber\\
&\leq& C \mathbb{E}[\exp(-R^{1\slash 4})\mathbbm{1}_{R\geq \Phi_4}] \leq C\exp(-\Phi_4^{1\slash 4}).
\end{eqnarray}
By (\ref{Eq2.11.1}), (\ref{Eq2.11.2}), (\ref{Eq2.11.3}), and the union bound, we have
\begin{eqnarray}\label{Eq2.12.3}
&& \mathbb{P}((\mathscr{E}_l')^c)\leq \mathbb{P}((\mathscr{E}_l')^c\cap \mathcal{D}_l^c\cap\mathcal{H}_l)+\mathbb{P}(\mathcal{D}_l)+\mathbb{P}(\mathcal{H}_l^c) \nonumber\\
&\leq& C\exp(-\Phi_4^{1\slash 4})+CL\exp(-c\beta_n^{-1}\slash (K_0^5 T^5))+C_0\exp(-c_0\beta_n^{-3\slash 4}). \nonumber\\
&&
\end{eqnarray}

\medskip

Let $\mathscr{C}_{\Gamma,l}$ be the event that 
\begin{eqnarray}\label{Eq2.15.1}
 && \sqrt{2}L\beta_n^{-1\slash 2}e^{-|y_{l-1}(\Gamma)-x_{l-1}(\Gamma)|L\slash 2}\sqrt{(x_l(\Gamma)-x_{l-1}(\Gamma))(y_l(\Gamma)-y_{l-1}(\Gamma))}\nonumber\\
 &&\times \Phi_2^{1\slash 2}(1-\Phi_3^{-1\slash 6}) \nonumber\\
 &\leq& LIS(\sigma|_{\tilde{Q}_{\Gamma,l}})\nonumber\\
 &\leq& 200 L^{3\slash 2}\beta_n^{-1\slash 2}  T^{-1\slash 2}(x_l(\Gamma)-x_{l-1}(\Gamma)+y_l(\Gamma)-y_{l-1}(\Gamma))\nonumber\\
 &&+ \sqrt{2}L\beta_n^{-1\slash 2}e^{-|y_{l-1}(\Gamma)-x_{l-1}(\Gamma)|L\slash 2}\sqrt{(x_l(\Gamma)-x_{l-1}(\Gamma))(y_l(\Gamma)-y_{l-1}(\Gamma))}\nonumber\\
 &&\quad\times \Phi_1^{1\slash 2}(1+\Phi_3^{-1\slash 6}).
\end{eqnarray}
By (\ref{Eq2.12.1}), (\ref{Eq2.11.4}), and (\ref{Eq2.11.5}), we have $\mathscr{E}_l\cap \mathscr{E}_l'\subseteq \mathscr{C}_{\Gamma,l}$. Hence by (\ref{Eq2.12.2}), (\ref{Eq2.12.3}), and the union bound, we have
\begin{eqnarray}
    &&\mathbb{P}((\mathscr{C}_{\Gamma,l})^c)\leq \mathbb{P}(\mathscr{E}_l^c)+\mathbb{P}((\mathscr{E}_l')^c)\nonumber\\
    &\leq& C\exp(-\Phi_4^{1\slash 4})+ CL\exp(-c\beta_n^{-1\slash 2}\slash (K_0^5 T^5))+C_0\exp(-c_0\beta_n^{-3\slash 4}).\nonumber\\
    &&
\end{eqnarray}

\paragraph{Step 2}

Throughout the rest of the proof, we take $T=\lceil 2000 L^2 e^{2L} \rceil$ and $K_0=2L^2+1$. Note that $\min\{T,K_0\}\geq L^2$ and $\max\{8K_0T,K_0^2T^3\}\leq C' L^{10}e^{6L}$, where $C'\geq 1$ is an absolute constant. We also assume that $\beta_n^{-1}\geq C' L^{10} e^{6L}$. Note that this implies (\ref{Eq1.6.1}) and $\beta_n^{-1}\geq L^{10}$. We denote by $C_L',c_L'$ positive constants that only depend on $L$. The values of these constants may change from line to line.

Recalling (\ref{Eq2.14.1})-(\ref{Eq2.14.2}), we have
\begin{equation}
    \Phi_1\leq e^{6L^{-1}}(1+C_L r_s^{-1\slash 10}), \quad \Phi_2\geq e^{-6L^{-1}}(1-L^{-1}-C_L r_s^{-1\slash 10})_{+},
\end{equation}
\begin{equation}\label{Eq2.15.2}
    \Phi_3\geq \max\{c_L\beta_n^{-1}(1-L^{-1}-C_L r_s^{-1\slash 10})_{+},1\}, \quad \Phi_4\geq c_L\beta_n^{-1}(1-L^{-1}-C_L r_s^{-1\slash 10})_{+},
\end{equation}
where $C_L,c_L$ are positive constants that only depend on $L$. In the following, we denote
\begin{equation}
    \Psi_s:=(1-L^{-1}-C_L r_s^{-1\slash 10})_{+}.
\end{equation}

For any $\Gamma\in \Pi^{T,T,K_0}$ and any $l\in [2T-1]$, we let $\mathscr{D}_{\Gamma,l}$ be the event that 
\begin{eqnarray}\label{Eq2.18.1}
 && \sqrt{2}L\beta_n^{-1\slash 2}e^{-|y_{l-1}(\Gamma)-x_{l-1}(\Gamma)|L\slash 2}\sqrt{(x_l(\Gamma)-x_{l-1}(\Gamma))(y_l(\Gamma)-y_{l-1}(\Gamma))}\nonumber\\
 &&\times e^{-3L^{-1}}\Psi_s^{1\slash 2} (1-\max\{c_L\beta_n^{-1}\Psi_s,1\}^{-1\slash 6}) \nonumber\\
 &\leq& LIS(\sigma|_{\tilde{Q}_{\Gamma,l}})\nonumber\\
 &\leq& 5 L^{1\slash 2} e^{-L} \beta_n^{-1\slash 2}  (x_l(\Gamma)-x_{l-1}(\Gamma)+y_l(\Gamma)-y_{l-1}(\Gamma))+1\nonumber\\
 &&+ \sqrt{2}L\beta_n^{-1\slash 2}e^{-|y_{l-1}(\Gamma)-x_{l-1}(\Gamma)|L\slash 2}\sqrt{(x_l(\Gamma)-x_{l-1}(\Gamma))(y_l(\Gamma)-y_{l-1}(\Gamma))}\nonumber\\
 &&\quad\times e^{3L^{-1}}(1+C_L r_s^{-1\slash 10})^{1\slash 2}(1+\max\{c_L\beta_n^{-1}\Psi_s,1\}^{-1\slash 6}).
\end{eqnarray}
By (\ref{Eq2.15.1})-(\ref{Eq2.15.2}), we have
\begin{eqnarray}\label{Eq2.16.1}
    \mathbb{P}((\mathscr{D}_{\Gamma,l})^c) &\leq& C\exp(-\Phi_4^{1\slash 4})+ CL\exp(-c\beta_n^{-1\slash 2}\slash (K_0^5 T^5))+C_0\exp(-c_0\beta_n^{-3\slash 4})\nonumber\\
    &\leq& C_L'\exp(-c_L' \beta_n^{-1\slash 4} \Psi_s^{1\slash 4}).
\end{eqnarray}

For any $\Gamma\in \Pi^{T,T,K_0}$ and any $l\in [2T-1]$, we let $\mathscr{D}_{\Gamma,l}'$ be the event that 
\begin{eqnarray}\label{Eq2.19.1}
 && \sqrt{2}L\beta_n^{-1\slash 2}e^{-|b_{l-1}(\Gamma)-a_{l-1}(\Gamma)|L\slash 2}\sqrt{(c_l(\Gamma)-a_{l-1}(\Gamma))(d_l(\Gamma)-b_{l-1}(\Gamma))}\nonumber\\
 &&\times e^{-3L^{-1}}\Psi_s^{1\slash 2} (1-\max\{c_L\beta_n^{-1}\Psi_s,1\}^{-1\slash 6}) \nonumber\\
 &\leq& LIS(\sigma|_{\tilde{Q}_{\Gamma,l}'})\nonumber\\
 &\leq& 5 L^{1\slash 2} e^{-L} \beta_n^{-1\slash 2}  (c_l(\Gamma)-a_{l-1}(\Gamma)+d_l(\Gamma)-b_{l-1}(\Gamma))+1\nonumber\\
 &&+ \sqrt{2}L\beta_n^{-1\slash 2}e^{-|b_{l-1}(\Gamma)-a_{l-1}(\Gamma)|L\slash 2}\sqrt{(c_l(\Gamma)-a_{l-1}(\Gamma))(d_l(\Gamma)-b_{l-1}(\Gamma))}\nonumber\\
 &&\quad\times e^{3L^{-1}}(1+C_L r_s^{-1\slash 10})^{1\slash 2}(1+\max\{c_L\beta_n^{-1}\Psi_s,1\}^{-1\slash 6}).
\end{eqnarray}
Similarly, we have
\begin{equation}\label{Eq2.16.2}
    \mathbb{P}((\mathscr{D}_{\Gamma,l}')^c)\leq C_L'\exp(-c_L' \beta_n^{-1\slash 4} \Psi_s^{1\slash 4}).
\end{equation}

Now we let 
\begin{equation}
    \mathscr{A}:=\bigcap_{\Gamma\in \Pi^{T,T,K_0}}\bigcap_{l=1}^{2T-1}(\mathscr{D}_{\Gamma,l}\cap \mathscr{D}_{\Gamma,l}').
\end{equation}
By (\ref{Eq2.16.1}), (\ref{Eq2.16.2}), and the union bound, we have
\begin{equation}\label{Eq2.21.3}
    \mathbb{P}(\mathscr{A}^c)\leq C_L'\exp(-c_L' \beta_n^{-1\slash 4} \Psi_s^{1\slash 4}).
\end{equation}

\paragraph{Step 3}

Let $\Gamma_0\in \Pi^{T,T,K_0}$ be 
\begin{equation*}
    (1,1), \frac{K_0+1}{2}, (2,1), \frac{K_0+1}{2}, (2,2), \frac{K_0+1}{2}, \cdots, (T,T-1), \frac{K_0+1}{2}, (T,T). 
\end{equation*}
We have $(x_0(\Gamma_0),y_0(\Gamma_0))=(0,0)$, $(x_{2T-1}(\Gamma_0),y_{2T-1}(\Gamma_0))=(1,1)$. For any $l\in [2T-2]$,
\begin{equation*}
    (x_l(\Gamma_0),y_l(\Gamma_0))=\Big(\frac{l+1}{2T},\frac{l}{2T}\Big).
\end{equation*}
By Lemma \ref{L1}, we have 
\begin{equation}\label{Eq2.17.1}
    LIS(\sigma|_{\mathcal{R}_s})\geq \sum_{l=1}^{2T-1} LIS(\sigma|_{\tilde{Q}_{\Gamma_0,l}}). 
\end{equation}
When the event $\mathscr{A}$ holds, by (\ref{Eq2.18.1}) and (\ref{Eq2.17.1}), we have 
\begin{eqnarray}\label{Eq2.21.1}
 LIS(\sigma|_{\mathcal{R}_s})&\geq& \sqrt{2} L \beta_n^{-1\slash 2}  \cdot \frac{2T-3}{2T} \cdot e^{-4L^{-1}}\Psi_s^{1\slash 2} (1-\max\{c_L\beta_n^{-1}\Psi_s,1\}^{-1\slash 6}) \nonumber\\
 &\geq& \sqrt{2} L \beta_n^{-1\slash 2} e^{-6L^{-1}}\Psi_s^{1\slash 2} (1-\max\{c_L\beta_n^{-1}\Psi_s,1\}^{-1\slash 6}),
\end{eqnarray}
where we use the fact that $1-3\slash (2T)\geq 1-L^{-1}\geq e^{-2L^{-1}}$. 

Below we consider any $\Gamma\in \Pi^{T,T,K_0}$. When the event $\mathscr{A}$ holds, by (\ref{Eq2.19.1}), we have 
\begin{eqnarray}\label{Eq2.19.3}
&& \sum_{l=1}^{2T-1}  LIS(\sigma|_{\tilde{Q}_{\Gamma,l}'})\nonumber\\
&\leq&  5L^{1\slash 2} e^{-L}\beta_n^{-1\slash 2}\sum_{l=1}^{2T-1}(c_l(\Gamma)-a_{l-1}(\Gamma)+d_l(\Gamma)-b_{l-1}(\Gamma))+2T-1\nonumber\\
&&+\sqrt{2}L\beta_n^{-1\slash 2}e^{3L^{-1}}(1+C_L r_s^{-1\slash 10})^{1\slash 2}(1+\max\{c_L\beta_n^{-1}\Psi_s,1\}^{-1\slash 6})\nonumber\\
&& \quad\times \sum_{l=1}^{2T-1}\sqrt{(c_l(\Gamma)-a_{l-1}(\Gamma))(d_l(\Gamma)-b_{l-1}(\Gamma))}.
\end{eqnarray}
Note that for any $l\in [2T-1]$,
\begin{eqnarray*}
  &&  |c_l(\Gamma)-x_l(\Gamma)|\leq (2K_0T)^{-1},\quad |a_{l-1}(\Gamma)-x_{l-1}(\Gamma)|\leq (2K_0T)^{-1},\nonumber\\
  &&|d_l(\Gamma)-y_l(\Gamma)|\leq (2K_0T)^{-1},\quad |b_{l-1}(\Gamma)-y_{l-1}(\Gamma)|\leq (2K_0T)^{-1}.
\end{eqnarray*}
Hence by the AM-GM inequality, we have
\begin{eqnarray}\label{Eq2.19.4}
 && \sum_{l=1}^{2T-1}\sqrt{(c_l(\Gamma)-a_{l-1}(\Gamma))(d_l(\Gamma)-b_{l-1}(\Gamma))} \nonumber\\
 &\leq& \frac{1}{2}\sum_{l=1}^{2T-1}(c_l(\Gamma)-a_{l-1}(\Gamma)+d_l(\Gamma)-b_{l-1}(\Gamma))\nonumber\\
 &\leq& \frac{1}{2}\sum_{l=1}^{2T-1}(x_l(\Gamma)-x_{l-1}(\Gamma)+y_l(\Gamma)-y_{l-1}(\Gamma))+\frac{2T-1}{K_0T}\nonumber\\
 &\leq& 1+\frac{2}{K_0}\leq 1+L^{-1}.
\end{eqnarray}
By (\ref{Eq2.19.3}) and (\ref{Eq2.19.4}), when the event $\mathscr{A}$ holds, we have
\begin{eqnarray}\label{Eq2.19.5}
 &&\sum_{l=1}^{2T-1}  LIS(\sigma|_{\tilde{Q}_{\Gamma,l}'})\nonumber\\
 &\leq& \sqrt{2}L\beta_n^{-1\slash 2}e^{4L^{-1}}(1+C_L r_s^{-1\slash 10})^{1\slash 2}(1+\max\{c_L\beta_n^{-1}\Psi_s,1\}^{-1\slash 6})\nonumber\\
 &&+20L^{1\slash 2}e^{-L}\beta_n^{-1\slash 2}+5000L^2 e^{2L}.
\end{eqnarray}

By Lemma \ref{L1} and (\ref{Eq2.19.5}), when the event $\mathscr{A}$ holds, we have
\begin{eqnarray}\label{Eq2.21.2}
   &&  LIS(\sigma|_{\mathcal{R}_s})\leq \max_{\Gamma\in \Pi^{T,T,K_0}}\Big\{\sum_{l=1}^{2T-1}LIS(\sigma|_{\tilde{Q}_{\Gamma,l}'})\Big\} \nonumber\\
    &\leq & \sqrt{2}L\beta_n^{-1\slash 2}e^{4L^{-1}}(1+C_L r_s^{-1\slash 10})^{1\slash 2}(1+\max\{c_L\beta_n^{-1}\Psi_s,1\}^{-1\slash 6})\nonumber\\
 &&+20L^{1\slash 2}e^{-L}\beta_n^{-1\slash 2}+5000L^2 e^{2L}.
\end{eqnarray}

By (\ref{Eq2.21.1}) and (\ref{Eq2.21.2}), when the event $\mathscr{A}$ holds, we have
\begin{eqnarray}\label{Eq2.21.4}
&&|  LIS(\sigma|_{\mathcal{R}_s})-\sqrt{2}L\beta_n^{-1\slash 2}|\leq 20L^{1\slash 2}e^{-L}\beta_n^{-1\slash 2}+5000L^2 e^{2L}\nonumber\\
&& +\sqrt{2}L\beta_n^{-1\slash 2}\max\Big\{e^{4L^{-1}} (1+C_L r_s^{-1\slash 10})^{1\slash 2}(1+\max\{c_L\beta_n^{-1}\Psi_s,1\}^{-1\slash 6})-1,\nonumber\\
&&\quad\quad\quad\quad\quad\quad\quad\quad\quad 1-e^{-6L^{-1}}\Psi_s^{1\slash 2} (1-\max\{c_L\beta_n^{-1}\Psi_s,1\}^{-1\slash 6})\Big\}.
\end{eqnarray}
Note that $LIS(\sigma|_{\mathcal{R}_s})\leq |\mathcal{I}_{n,s}\cap\mathbb{N}^{*}|\leq 2L\beta_n^{-1}+1\leq 3L\beta_n^{-1} $. Hence by (\ref{Eq2.21.3}) and (\ref{Eq2.21.4}), we have
\begin{eqnarray}
&& \mathbb{E}[|  LIS(\sigma|_{\mathcal{R}_s})-\sqrt{2}L\beta_n^{-1\slash 2}|] \nonumber\\
&\leq& (3L\beta_n^{-1})(C_L'\exp(-c_L' \beta_n^{-1\slash 4} \Psi_s^{1\slash 4}))+20L^{1\slash 2}e^{-L}\beta_n^{-1\slash 2}+5000L^2 e^{2L}\nonumber\\
&&+\sqrt{2}L\beta_n^{-1\slash 2}\max\{e^{4L^{-1}} (1+C_L r_s^{-1\slash 10})^{1\slash 2}(1+\max\{c_L\beta_n^{-1}\Psi_s,1\}^{-1\slash 6})-1,\nonumber\\
&&\quad\quad\quad\quad\quad\quad\quad\quad\quad 1-e^{-6L^{-1}}\Psi_s^{1\slash 2} (1-\max\{c_L\beta_n^{-1}\Psi_s,1\}^{-1\slash 6})\}\nonumber\\
&\leq & C_L'\beta_n^{-1}\exp(-c_L' \beta_n^{-1\slash 4} \Psi_s^{1\slash 4})+C_L'+C L^{1\slash 2}e^{-L}\beta_n^{-1\slash 2}\nonumber\\
&& +\sqrt{2}L\beta_n^{-1\slash 2}\max\{e^{4L^{-1}} (1+C_L r_s^{-1\slash 10})^{1\slash 2}(1+\max\{c_L\beta_n^{-1}\Psi_s,1\}^{-1\slash 6})-1,\nonumber\\
&&\quad\quad\quad\quad\quad\quad\quad\quad\quad 1-e^{-6L^{-1}}\Psi_s^{1\slash 2} (1-\max\{c_L\beta_n^{-1}\Psi_s,1\}^{-1\slash 6})\}.
\end{eqnarray}

\end{proof}

\subsection{Proof of Theorem \ref{limit_l1_2}}\label{Sect.4.2}

In this subsection, we finish the proof of Theorem \ref{limit_l1_2} based on Propositions \ref{P4.1}-\ref{P4.3}.

\begin{proof}[Proof of Theorem \ref{limit_l1_2}]

Throughout the proof, we fix an arbitrary sequence of positive numbers $(\beta_n)_{n=1}^{\infty}$ such that $\lim_{n\rightarrow\infty}\beta_n=0$ and $\lim_{n\rightarrow\infty} n\beta_n=\infty$. For each $n\in \mathbb{N}^{*}$, we let $\gamma_n:=\sqrt{n\beta_n}$. Note that
\begin{equation}\label{Eq3.5.2}
 \lim_{n\rightarrow\infty}\gamma_n=\infty, \quad \lim_{n\rightarrow}\frac{\gamma_n}{n\beta_n}=0.
\end{equation}
We fix any $L\in \mathbb{N}^{*}$ such that $L\geq 4$ ($L$ is independent of $n$). 

Let $C_1,C_L,c_L,C_L',c_L',C'$ and $r_s,\Psi_s$ be defined as in Proposition \ref{P4.3}. In the following, we assume that $n\in\mathbb{N}^{*}$ is sufficiently large, so that
\begin{eqnarray}\label{Eq3.1.1}
  &&  n\beta_n\geq 20L, \quad \beta_n^{-1}\geq C'L^{10} e^{6L},\quad   \gamma_n\in [2,n\beta_n\slash (4L)],  \nonumber\\
  && \min\{(\gamma_n-1)L,\log(1+\beta_n^{-1})\}\geq 2\max\{(C_L L)^{10},C_1\}.
\end{eqnarray}

Let $\mathcal{S}_1:=[\gamma_n,n\beta_n\slash L-\gamma_n]\cap\mathbb{N}$. As
\begin{equation*}
    \gamma_n\geq 2, \quad n\beta_n\slash L-\gamma_n\leq n\beta_n\slash L -2\leq \lfloor n\beta_n\slash L\rfloor -1,
\end{equation*}
we have $\mathcal{S}_1\subseteq [2, \lfloor n\beta_n\slash L\rfloor -1]\cap\mathbb{N}$. Let $\mathcal{S}_2:=[\lfloor n\beta_n\slash L \rfloor]\backslash \mathcal{S}_1$. Note that 
\begin{equation}\label{Eq3.4.1}
    |\mathcal{S}_1|\leq n\beta_n\slash L, \quad |\mathcal{S}_1|\geq n\beta_n\slash L-2\gamma_n-1\geq n\beta_n\slash L-3\gamma_n,
\end{equation}
\begin{equation}\label{Eq3.3.1}
    |\mathcal{S}_2|\leq n\beta_n\slash L-|\mathcal{S}_1|\leq 3\gamma_n.
\end{equation}

By (\ref{Eq3.1.1}), for any $s\in  \mathcal{S}_1$, we have
\begin{equation*}
    r_s \geq \frac{1}{2}\min\{(\gamma_n-1)L,\log(1+\beta_n^{-1})\} \geq \max\{(C_L L)^{10},C_1\}, 
\end{equation*}
hence $\Psi_s\geq 1-2L^{-1}\geq 1\slash 2$. By Proposition \ref{P4.3}, for any $s\in  \mathcal{S}_1$, we have 
\begin{eqnarray}\label{Eq3.3.3}
&&  \mathbb{E}[|  LIS(\sigma|_{\mathcal{R}_s})-\sqrt{2}L\beta_n^{-1\slash 2}|] \nonumber\\
&\leq& C_L'\beta_n^{-1} \exp(-c_L'\beta_n^{-1\slash 4} \slash 2)+C_L'+C L^{1\slash 2}e^{-L}\beta_n^{-1\slash 2}\nonumber\\
&& +\sqrt{2} L\beta_n^{-1\slash 2} \max\{e^{4L^{-1}} (1+L^{-1})^{1\slash 2} (1+\max\{c_L\beta_n^{-1}\slash 2,1\}^{-1\slash 6})-1, \nonumber\\
&& \quad\quad\quad\quad 1-e^{-6L^{-1}}(1-2L^{-1})^{1\slash 2} (1-\max\{c_L\beta_n^{-1}\slash 2,1\}^{-1\slash 6})\}.
\end{eqnarray}

By (\ref{Eq3.2.1}) and (\ref{Eq3.2.2}), we have
\begin{eqnarray}\label{Eq3.3.4}
  && \mathbb{E}[|LIS(\sigma)-n\sqrt{2\beta_n}|] \nonumber\\
  &\leq&  \sum_{s\in \mathcal{S}_1}\mathbb{E}[|LIS(\sigma|_{\mathcal{R}_s})-\sqrt{2}L\beta_n^{-1\slash 2}|]+|n\sqrt{2\beta_n}-\sqrt{2}L\beta_n^{-1\slash 2}|\mathcal{S}_1||\nonumber\\
  &&+\sum_{s\in \mathcal{S}_2}\mathbb{E}[LIS(\sigma|_{\mathcal{R}_s})]+\sum_{s=2}^{\lfloor n\beta_n\slash L\rfloor}\mathbb{E}[LIS(\sigma|_{\mathcal{R}_s'})]+\sum_{s=2}^{\lfloor n\beta_n\slash L\rfloor}\mathbb{E}[LIS(\sigma|_{\mathcal{R}_s''})].\nonumber\\
  &&
\end{eqnarray}
By (\ref{Eq3.4.1}) and (\ref{Eq3.3.3}), 
\begin{eqnarray}
&&  \sum_{s\in\mathcal{S}_1}\mathbb{E}[|  LIS(\sigma|_{\mathcal{R}_s})-\sqrt{2}L\beta_n^{-1\slash 2}|] \nonumber\\
&\leq& C_L'n \exp(-c_L'\beta_n^{-1\slash 4} \slash 2)+C_L'n\beta_n+C L^{-1\slash 2}e^{-L}n\sqrt{\beta_n}\nonumber\\
&& +n\sqrt{2\beta_n}  \max\{e^{4L^{-1}} (1+L^{-1})^{1\slash 2} (1+\max\{c_L\beta_n^{-1}\slash 2,1\}^{-1\slash 6})-1, \nonumber\\
&& \quad\quad\quad 1-e^{-6L^{-1}}(1-2L^{-1})^{1\slash 2} (1-\max\{c_L\beta_n^{-1}\slash 2,1\}^{-1\slash 6})\}.
\end{eqnarray}
By (\ref{Eq3.4.1}),
\begin{equation}
   0\leq  n\sqrt{2\beta_n}-\sqrt{2}L\beta_n^{-1\slash 2}|\mathcal{S}_1|\leq C L\gamma_n\beta_n^{-1\slash 2}.
\end{equation}
By Proposition \ref{P4.2}, (\ref{Eq3.1.1}), and (\ref{Eq3.3.1}), 
\begin{eqnarray}
    \sum_{s\in \mathcal{S}_2} \mathbb{E}[LIS(\sigma|_{\mathcal{R}_s})]&\leq& CL\beta_n^{-1\slash 2}|\mathcal{S}_2|+CL^2\exp(-c\beta_n^{-1\slash 2})|\mathcal{S}_2|\nonumber\\
    &\leq& CL\gamma_n \beta_n^{-1\slash 2}+CL^2\gamma_n\exp(-c\beta_n^{-1\slash 2})\nonumber\\
    &\leq& CL\gamma_n \beta_n^{-1\slash 2}+CL^2n\beta_n\exp(-c\beta_n^{-1\slash 2}).
\end{eqnarray}
By Proposition \ref{P4.1},
\begin{eqnarray}
    \sum_{s=2}^{\lfloor n\beta_n\slash L\rfloor}\mathbb{E}[LIS(\sigma|_{\mathcal{R}_s'})]&\leq& (n\beta_n\slash L)(CL^{1\slash 2}\beta_n^{-1\slash 2}+CL^2\exp(-c\beta_n^{-1\slash 2}))\nonumber\\
    &\leq& CL^{-1\slash 2}n\sqrt{\beta_n}+CLn\beta_n\exp(-c\beta_n^{-1\slash 2}),
\end{eqnarray}
\begin{eqnarray}\label{Eq3.3.5}
    \sum_{s=2}^{\lfloor n\beta_n\slash L\rfloor}\mathbb{E}[LIS(\sigma|_{\mathcal{R}_s''})]&\leq& (n\beta_n\slash L)(CL^{1\slash 2}\beta_n^{-1\slash 2}+CL^2\exp(-c\beta_n^{-1\slash 2}))\nonumber\\
    &\leq& CL^{-1\slash 2}n\sqrt{\beta_n}+CLn\beta_n\exp(-c\beta_n^{-1\slash 2}).
\end{eqnarray}
By (\ref{Eq3.3.4})-(\ref{Eq3.3.5}), we have
\begin{eqnarray}
&& \frac{\mathbb{E}[|LIS(\sigma)-n\sqrt{2\beta_n}|]}{n\sqrt{\beta_n}}\nonumber\\
&\leq& \frac{CL\gamma_n}{n\beta_n}+CL^2\beta_n^{1\slash 2}\exp(-c\beta_n^{-1\slash 2})+CL^{-1\slash 2}\nonumber\\
&& +C_L'\beta_n^{-1\slash 2}\exp(-c_L'\beta_n^{-1\slash 4}\slash 2) +C_L'\beta_n^{1\slash 2}\nonumber\\
&& +\sqrt{2} \max\{e^{4L^{-1}} (1+L^{-1})^{1\slash 2} (1+\max\{c_L\beta_n^{-1}\slash 2,1\}^{-1\slash 6})-1, \nonumber\\
&& \quad\quad 1-e^{-6L^{-1}}(1-2L^{-1})^{1\slash 2} (1-\max\{c_L\beta_n^{-1}\slash 2,1\}^{-1\slash 6})\}.
\end{eqnarray}
Hence by (\ref{Eq3.5.2}),
\begin{eqnarray}
 && \limsup_{n\rightarrow\infty}\Big\{\frac{\mathbb{E}[|LIS(\sigma)-n\sqrt{2\beta_n}|]}{n\sqrt{\beta_n}}\Big\} \nonumber\\
 &\leq& CL^{-1\slash 2}+\sqrt{2}\max\{e^{4L^{-1}}(1+L^{-1})^{1\slash 2}-1,1-e^{-6L^{-1}}(1-2L^{-1})^{1\slash 2}\}.  \nonumber\\
 &&
\end{eqnarray}
Taking $L\rightarrow\infty$, we obtain that
\begin{equation}
    \limsup_{n\rightarrow\infty}\Big\{\frac{\mathbb{E}[|LIS(\sigma)-n\sqrt{2\beta_n}|]}{n\sqrt{\beta_n}}\Big\}\leq 0.
\end{equation}
Hence 
\begin{equation}
    \lim_{n\rightarrow\infty} \  \mathbb{E}\Big[\Big|\frac{LIS(\sigma)}{n\sqrt{\beta}_n}-\sqrt{2}\Big|\Big]=0, \text{ i.e., }  \frac{LIS(\sigma)}{n\sqrt{\beta}_n}\xrightarrow[]{L^1} \sqrt{2}.
\end{equation}

\end{proof}

\section{Proof of Theorem \ref{limit_l2_2}}\label{Sect.5}

In this section, we give the proof of Theorem \ref{limit_l2_2}. We first establish three preliminary propositions in Section \ref{Sect.5.1}. Based on these propositions, we finish the proof of Theorem \ref{limit_l2_2} in Section \ref{Sect.5.2}.

\subsection{Three preliminary propositions}\label{Sect.5.1}

In this subsection, we establish three preliminary propositions. These propositions will be used in the proof of Theorem \ref{limit_l2_2}.

Throughout this subsection, we fix an arbitrary sequence of positive numbers $(\beta_n)_{n=1}^{\infty}$ such that $\lim_{n\rightarrow\infty}\beta_n=0$ and $\lim_{n\rightarrow\infty} n^2\beta_n=\infty$. We denote by $C_1$ the constant $C$ that appears in Proposition \ref{P2.2.2} (with $C_0=1$). Without loss of generality, we assume that $C_1\geq 1$. We let $L_0=8C_1$, and fix any $L\geq 4$ such that $L\slash L_0 \in  \mathbb{N}^{*}$. 

Below we consider any $n\in\mathbb{N}^{*}$ such that $n\beta_n^{1\slash 2}\geq 4L$ and $\beta_n\leq 1\slash 100$. For any $s\in [\lfloor n\beta_n^{1\slash 2}\slash L\rfloor-1]$, we let
\begin{equation}
    \mathcal{I}_{n,s}:=((s-1)L\beta_n^{-1\slash 2},sL\beta_n^{-1\slash 2}].
\end{equation}
We also let
\begin{equation}
   \mathcal{I}_{n,\lfloor n\beta_n^{1\slash 2}\slash L\rfloor}:=((\lfloor n\beta_n^{1\slash 2}\slash L\rfloor -1)L\beta_n^{-1\slash 2},n].
\end{equation}
For any $s\in [\lfloor n\beta_n^{1\slash 2}\slash L\rfloor]$, we let
\begin{equation}
    \mathcal{R}_s:=\mathcal{I}_{n,s}\times \mathcal{I}_{n,s}.
\end{equation}
For any $s\in [\lfloor n\beta_n^{1\slash 2}\slash L\rfloor-1]$, we let
\begin{equation}
   \mathcal{R}_s':=(s L\beta_n^{-1\slash 2},n]\times \mathcal{I}_{n,s},   \quad  \mathcal{R}_s'':=\mathcal{I}_{n,s}\times (s L\beta_n^{-1\slash 2},n].
\end{equation}
Note that 
\begin{equation*}
    \Big(\bigcup_{s=1}^{\lfloor n\beta_n^{1\slash 2}\slash L\rfloor}\mathcal{R}_s\Big) \bigcup \Big( \bigcup_{s=1}^{\lfloor n\beta_n^{1\slash 2}\slash L\rfloor-1}\mathcal{R}_s' \Big)\bigcup \Big( \bigcup_{s=1}^{\lfloor n\beta_n^{1\slash 2}\slash L\rfloor-1}\mathcal{R}_s'' \Big)=(0,n]^2.
\end{equation*}
Hence for any $\sigma\in S_n$, we have
\begin{equation}\label{Eq48.1.2}
    LIS(\sigma)\leq \sum_{s=1}^{\lfloor n\beta_n^{1\slash 2}\slash L\rfloor}LIS(\sigma|_{\mathcal{R}_s})+\sum_{s=1}^{\lfloor n\beta_n^{1\slash 2}\slash L\rfloor-1}LIS(\sigma|_{\mathcal{R}_s'})+\sum_{s=1}^{\lfloor n\beta_n^{1\slash 2}\slash L\rfloor-1}LIS(\sigma|_{\mathcal{R}_s''}),
\end{equation}
\begin{equation}\label{Eq48.1.4}
    LIS(\sigma)\geq \sum_{s=1}^{\lfloor n\beta_n^{1\slash 2}\slash L\rfloor}LIS(\sigma|_{\mathcal{R}_s}).
\end{equation}

The following proposition bounds $LIS(\sigma|_{\mathcal{R}_s'})$ and $LIS(\sigma|_{\mathcal{R}_s''})$ for $\sigma$ drawn from $\tilde{\mathbb{P}}_{n,\beta_n}$ and any $s\in [\lfloor n\beta_n^{1\slash 2}\slash L\rfloor-1]$.  

\begin{proposition}\label{P5.1}
Assume that $n\beta_n^{1\slash 2}\geq 4L$ and $\beta_n\leq 1\slash 100$, and let $\sigma$ be drawn from $\tilde{\mathbb{P}}_{n,\beta_n}$. Then there exist positive absolute constants $C,c$, such that for any $s\in [\lfloor n\beta_n^{1\slash 2}\slash L\rfloor-1]$, we have
\begin{equation}\label{Eq7.1.1}
    \mathbb{E}[LIS(\sigma|_{\mathcal{R}_s'})]\leq CL^{1\slash 2}\beta_n^{-1\slash 4}+CL^2\exp(-c\beta_n^{-1\slash 4}),
\end{equation}
\begin{equation}\label{Eq8.1.6}
    \mathbb{E}[LIS(\sigma|_{\mathcal{R}_s''})]\leq CL^{1\slash 2}\beta_n^{-1\slash 4}+CL^2\exp(-c\beta_n^{-1\slash 4}).
\end{equation}
\end{proposition}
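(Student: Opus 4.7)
The argument parallels that of Proposition~\ref{P4.1}, with the $L^2$ hit and run algorithm from Section~\ref{Sect.1.5} replacing the $L^1$ version and with Proposition~\ref{P2.2.2} replacing Proposition~\ref{P2.2}. The bound~(\ref{Eq8.1.6}) reduces to~(\ref{Eq7.1.1}) by inversion: since $\tilde{H}(\sigma^{-1},Id)=\tilde{H}(\sigma,Id)$, the law $\tilde{\mathbb{P}}_{n,\beta_n}$ is invariant under $\sigma\mapsto\sigma^{-1}$, and because $\mathcal{R}_s''$ is the transpose of $\mathcal{R}_s'$ one has $LIS(\sigma|_{\mathcal{R}_s''})=LIS(\sigma^{-1}|_{\mathcal{R}_s'})$. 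Everything below therefore concerns~(\ref{Eq7.1.1}).

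For~(\ref{Eq7.1.1}), the plan is to sample $\sigma_0\sim\tilde{\mathbb{P}}_{n,\beta_n}$ and apply one step of the $L^2$ hit and run: draw $u_i\sim\mathrm{Unif}[0,e^{2\beta_n i\sigma_0(i)}]$, set $b_i=\log(u_i)/(2\beta_n i)$ (so $b_i\leq\sigma_0(i)$), define $N_j:=|\{i:b_i\leq j\}|-(j-1)$, and place symbols in increasing order via the resulting $Y_j$. Every pair $(i,\sigma(i))\in\mathcal{R}_s'$ satisfies $b_i\leq\sigma(i)\leq sL\beta_n^{-1/2}$, so the relevant candidate set is
\begin{equation*}
W_s:=\{i\in(sL\beta_n^{-1/2},n]\cap\mathbb{N}: b_i\leq sL\beta_n^{-1/2}\}.
\end{equation*}
Splitting $W_s$ according to whether $\sigma_0(i)\leq sL\beta_n^{-1/2}$, the first piece coincides with $\mathcal{D}'_{\lfloor sL\beta_n^{-1/2}\rfloor}(\sigma_0)$, whose cardinality equals $|\mathcal{D}_{\lfloor sL\beta_n^{-1/2}\rfloor}(\sigma_0)|$ by~(\ref{Eq5.1.1}) and is $O(\beta_n^{-1/2})$ with failure probability $\exp(-c\beta_n^{-1/2})$ via Proposition~\ref{P2.2.2}; the second piece is controlled by a dyadic decomposition in $\sigma_0(i)-sL\beta_n^{-1/2}$ using the identity $\mathbb{P}(b_i\leq sL\beta_n^{-1/2}\mid\sigma_0)=e^{-2\beta_n i(\sigma_0(i)-sL\beta_n^{-1/2})_+}$, mirroring~(\ref{Eq1.1.3})--(\ref{Eq1.1.7}). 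This yields $|W_s|\leq C_0\beta_n^{-1/2}$ with high probability, and a parallel Hoeffding argument on positions near $j$ gives a high-probability lower bound on $N_j$ uniformly in $j\in\mathcal{I}_{n,s}\cap\mathbb{N}^*$.

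With these in hand, the counting argument from~(\ref{Eqq1.1})--(\ref{Eq1.1.17}) transfers: defining
\begin{equation*}
\Lambda_{s,q}:=\sum_{\substack{i_1<\cdots<i_q,\,j_1<\cdots<j_q\\ i_1,\ldots,i_q\in W_s,\,j_1,\ldots,j_q\in\mathcal{I}_{n,s}\cap\mathbb{N}^*}}\mathbbm{1}_{\sigma(i_1)=j_1,\ldots,\sigma(i_q)=j_q},
\end{equation*}
and sequentially conditioning on the $\mathcal{B}_{j_t}$-filtration together with Lemma~\ref{Lemma3.1} leads to an estimate of the form $\mathbb{E}[\Lambda_{s,q}\mathbbm{1}_{\mathrm{good}}]\leq(CL\beta_n^{-1/2}/q^2)^q$. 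Taking $q=\lceil C'L^{1/2}\beta_n^{-1/4}\rceil$ yields $\mathbb{P}(LIS(\sigma|_{\mathcal{R}_s'})\geq CL^{1/2}\beta_n^{-1/4})\leq CL\exp(-c\beta_n^{-1/4})$, and combining with the deterministic bound $LIS(\sigma|_{\mathcal{R}_s'})\leq|\mathcal{I}_{n,s}\cap\mathbb{N}^*|\leq 3L\beta_n^{-1/2}$ produces~(\ref{Eq7.1.1}) after integration.

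The main technical obstacle is the $i$-dependent tail rate $2\beta_n i$ in the $L^2$ hit and run, in contrast with the constant rate $2\beta_n$ in the $L^1$ case. The natural dyadic shells for $|W_s|$ have width $\beta_n^{-1/2}/(sL)$ rather than $\beta_n^{-1/2}$, and the Hoeffding-based lower bound on $N_j$ uses only positions in a window of width $\sim 1/(\beta_n j)$ around $j$. The bookkeeping must therefore track the $s$-dependence through each block and exploit $2\beta_n i\geq 2sL\beta_n^{1/2}$ uniformly on $W_s$ to keep the dyadic sums geometric and the final bound uniform in $s$; the $L^{1/2}\beta_n^{-1/4}$ scaling in~(\ref{Eq7.1.1}) ultimately reflects the square-root improvement over $|\mathcal{I}_{n,s}|\sim L\beta_n^{-1/2}$ that one expects from the LIS of a near-uniform bijection on $O(L\beta_n^{-1/2})$ points.
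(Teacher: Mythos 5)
Your reduction of~(\ref{Eq8.1.6}) to~(\ref{Eq7.1.1}) by inversion is correct and matches the paper, and you correctly identify the $i$-dependent rate $2\beta_n i$ in the $L^2$ hit and run as the central obstacle. However, your proposed way around it does not close the gap, and the resulting bound would not be uniform in $s$.

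Concretely: with the global $L^2$ hit and run, the availability count $N_j$ for $j\in\mathcal{I}_{n,s}$ satisfies $N_j=1+\sum_{i:\,\sigma_0(i)>j}\mathbbm{1}_{b_i\leq j}$, and because for $i$ near $j$ one has $\mathbb{P}(b_i\leq j\mid\sigma_0)=e^{-2\beta_n i(\sigma_0(i)-j)}$ with $i\approx sL\beta_n^{-1/2}$, the effective tail rate is $\approx 2sL\beta_n^{1/2}$ and the high-probability lower bound on $N_j$ is of order $\beta_n^{-1/2}/(sL)$, \emph{not} $\beta_n^{-1/2}$. Your observation that $|W_s|$ remains $O(\beta_n^{-1/2})$ is right, but that bound is dominated by the $\mathcal{D}'$ contribution, which is $s$-independent, so the degradation of $1/N_j$ is not offset. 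Chaining these into the moment estimate gives $\mathbb{E}[\Lambda_{s,q}\mathbbm{1}_{\mathrm{good}}]\lesssim(sL^2\beta_n^{-1/2}/q^2)^q$, forcing $q\sim L\sqrt{s}\,\beta_n^{-1/4}$, and summing $\mathbb{E}[LIS(\sigma|_{\mathcal{R}_s'})]\lesssim L\sqrt{s}\,\beta_n^{-1/4}$ over $s$ up to $n\beta_n^{1/2}/L$ yields a contribution of order $L^{-1/2}n^{3/2}\beta_n^{1/2}$, which overwhelms the target $n\beta_n^{1/4}$ whenever $n^2\beta_n\to\infty$.

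The paper avoids this entirely by \emph{not} running the global hit and run. It instead applies the resampling algorithm of Lemma~\ref{L2.2} locally, with a shift parameter $\alpha_t$ chosen just below the relevant block, so that the effective rate becomes $2\beta_n(I_{t,m}-\alpha_t)$ with $I_{t,m}-\alpha_t=O(L_0\beta_n^{-1/2})$, i.e.\ the exponential weights are bounded by a constant $e^{-O(L_0^2)}$ independent of $s$. This is what makes the lower bound on the availability counts $N_{t,m}$ uniform in $s$. To set this up consistently, the paper further decomposes $\mathcal{I}_{n,s}$ into $L/L_0$ sub-blocks of width $L_0\beta_n^{-1/2}$ with $L_0=8C_1$, and splits $\mathcal{R}_s'$ into interleaved strips $\mathcal{T}_{s,1}$ and $\mathcal{T}_{s,2}$ so that the sequential resamplings over $t=1,\ldots,L/L_0$ target non-adjacent $y$-ranges and the density-type bounds from Proposition~\ref{P2.2.2} can be applied inside each sub-block. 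None of this appears in your proposal; without the shift $\alpha_t$ and the sub-block structure the argument fails as above.
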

\begin{proof}

Throughout the proof, we fix an arbitrary $s\in [\lfloor n\beta_n^{1\slash 2}\slash L\rfloor-1]$. 

We start by showing (\ref{Eq7.1.1}). We let
\begin{equation}
    \mathcal{J}_{s,1}:=\bigcup_{t=1}^{L\slash L_0}((s-1)L\beta_n^{-1\slash 2}+(t-1)L_0\beta_n^{-1\slash 2}, (s-1)L\beta_n^{-1\slash 2}+(t-1\slash 2)L_0\beta_n^{-1\slash 2}],
\end{equation}
\begin{equation}
    \mathcal{J}_{s,2}:=\bigcup_{t=1}^{L\slash L_0}((s-1)L\beta_n^{-1\slash 2}+(t-1\slash 2)L_0\beta_n^{-1\slash 2}, (s-1)L\beta_n^{-1\slash 2}+ t L_0\beta_n^{-1\slash 2}].
\end{equation}
We also let
\begin{equation}
    \mathcal{T}_{s,1}:=(sL\beta_n^{-1\slash 2},n]\times \mathcal{J}_{s,1},   \quad \mathcal{T}_{s,2}:=(sL\beta_n^{-1\slash 2},n]\times \mathcal{J}_{s,2}.
\end{equation}
Note that $\mathcal{R}_s'=\mathcal{T}_{s,1}\cup\mathcal{T}_{s,2}$. Hence
\begin{equation}\label{Eq15.1.1}
    LIS(\sigma|_{\mathcal{R}_s'})\leq LIS(\sigma|_{\mathcal{T}_{s,1}})+LIS(\sigma|_{\mathcal{T}_{s,2}}).
\end{equation}

For any $t\in [L\slash L_0]$, we let 
\begin{equation}
    \mathcal{X}_{t,1}:=[(s-1)L\beta_n^{-1\slash 2}+(t-1)L_0\beta_n^{-1\slash 2},n]\cap\mathbb{N}^{*}, 
\end{equation}
\begin{equation}
    \mathcal{Y}_{t,1}:=((s-1)L\beta_n^{-1\slash 2}+(t-1)L_0\beta_n^{-1\slash 2},(s-1)L\beta_n^{-1\slash 2}+ t L_0\beta_n^{-1\slash 2}]\cap\mathbb{N}^{*};
\end{equation}
\begin{equation}
    \mathcal{X}_{t,2}:=[(s-1)L\beta_n^{-1\slash 2}+(t-1\slash 2)L_0\beta_n^{-1\slash 2},n]\cap\mathbb{N}^{*}, 
\end{equation}
\begin{equation}
    \mathcal{Y}_{t,2}:=((s-1)L\beta_n^{-1\slash 2}+(t-1\slash 2)L_0\beta_n^{-1\slash 2},(s-1)L\beta_n^{-1\slash 2}+ (t+1\slash 2) L_0\beta_n^{-1\slash 2}]\cap\mathbb{N}^{*}.
\end{equation}

In the following, we bound $\mathbb{E}[LIS(\sigma|_{\mathcal{T}_{s,1}})]$ and $\mathbb{E}[LIS(\sigma|_{\mathcal{T}_{s,2}})]$ in \textbf{Steps 1-2}.

\paragraph{Step 1}

In this step, we bound $\mathbb{E}[LIS(\sigma|_{\mathcal{T}_{s,1}})]$. For every $t\in [L\slash L_0]$, we let $\alpha_t:=(s-1)L\beta_n^{-1\slash 2}+(t-1)L_0\beta_n^{-1\slash 2}-1$.

We sample $\sigma_0$ from $\tilde{\mathbb{P}}_{n,\beta_n}$. Then sequentially for $t=1,2,\cdots,L\slash L_0$, we run the resampling algorithm for the $L^2$ model (as described at the end of Section \ref{Sect.1.5}) with inputs $\sigma_{t-1},\mathcal{X}_{t,1}, \mathcal{Y}_{t,1}, \alpha_t$ to obtain $\sigma_t$. We let $\sigma=\sigma_{L \slash  L_0}$. By Lemma \ref{L2.2}, the distribution of $\sigma$ is given by $\tilde{\mathbb{P}}_{n,\beta_n}$. 

For any $t\in [L\slash L_0]$, let $M_t\in \mathbb{N}$, $I_{t,1},\cdots,I_{t,M_t}\in \mathcal{X}_{t,1}$, $J_{t,1},\cdots,J_{t,M_t}\in \mathcal{Y}_{t,1}$ be such that $I_{t,1}<\cdots<I_{t,M_t}$, $J_{t,1}<\cdots<J_{t,M_t}$, 
\begin{equation}
     \{i\in \mathcal{X}_{t,1}:\sigma_0(i)\in \mathcal{Y}_{t,1}\}=\{I_{t,1},\cdots,I_{t,M_t}\},
\end{equation}
\begin{equation}
    \{j\in \mathcal{Y}_{t,1}:\sigma_0^{-1}(j)\in \mathcal{X}_{t,1}\}=\{J_{t,1},\cdots,J_{t,M_t}\}.
\end{equation}
For any $t\in [L\slash L_0]$ and any $m\in [n]\backslash [M_t]$, we let $I_{t,m}=0$ and $J_{t,m}=0$. According to the resampling algorithm for the $L^2$ model, $\sigma$ can be generated as follows. Sequentially for $t=1,2,\cdots,L\slash L_0$, we do the following:
\begin{itemize}
    \item For each $m\in [M_t]$, we independently sample $u_{t,m}$ from the uniform distribution on $[0,e^{2\beta_n (I_{t,m}-\alpha_t) \sigma_0(I_{t,m})}]$, and let
    \begin{equation*}
        b_{t,m}=\log(u_{t,m})\slash (2\beta_n (I_{t,m}-\alpha_t) ).
    \end{equation*}
    For each $m\in [n]\backslash [M_t]$, we let $b_{t,m}=0$.
    \item For each $m\in [M_t]$, let 
    \begin{equation}
        N_{t,m}=|\{m'\in  [M_t]: b_{t,m'} \leq J_{t,m} \}|-m+1.
    \end{equation}
   Now look at the $N_{t,1}$ integers $m'\in [M_t]$ with $b_{t,m'}\leq J_{t,1}$, and pick $Y_{t,1}$ uniformly from these integers; then look at the $N_{t,2}$ remaining integers $m'\in [M_t]$ with $b_{t,m'}\leq J_{t,2}$ (with $Y_{t,1}$ deleted from the list), and pick $Y_{t,2}$ uniformly from these integers; and so on. In this way we obtain $\{Y_{t,m}\}_{m\in [M_t]}$. For each $m\in [n]\backslash [M_t]$, we let $N_{t,m}=0$ and $Y_{t,m}=0$.
\end{itemize}
We let $\sigma\in S_n$ be the unique permutation that satisfies the following conditions:
\begin{itemize}
    \item For any $t \in [L\slash L_0]$ and any $m\in [M_t]$, $\sigma(I_{t,Y_{t,m}})=J_{t,m}$.
    \item For any $i\in [n]\backslash \big(\bigcup_{t=1}^{L\slash L_0} \{I_{t,1},\cdots,I_{t,M_t}\} \big)$, $\sigma(i)=\sigma_0(i)$.
\end{itemize}

For any $t\in [L\slash L_0]$, let
\begin{equation}\label{Eq5.1.5}
    z_{t}:=(s-1)L\beta_n^{-1\slash 2}+(t-1\slash 2)L_0\beta_n^{-1\slash 2},\quad z_{t}':=(s-1)L\beta_n^{-1\slash 2}+tL_0\beta_n^{-1\slash 2}.
\end{equation}
Recall Definition \ref{Def2.2}. As $\lceil z_{t}\rceil-1,\lfloor z_{t}'\rfloor\in [n]$, by (\ref{Eq5.1.1}) and Proposition \ref{P2.2.2},
\begin{equation}\label{Eq5.1.2}
    \mathbb{P}(|\mathcal{D}_{\lceil z_{t}\rceil-1}(\sigma_0)|\geq C_1\beta_n^{-1\slash 2}) \leq C\exp(-c\beta_n^{-1\slash 2}),
\end{equation}
\begin{equation}\label{Eq5.1.3}
    \mathbb{P}(|\mathcal{D}'_{\lfloor z_{t}'\rfloor}(\sigma_0)|\geq C_1\beta_n^{-1\slash 2}) \leq C\exp(-c\beta_n^{-1\slash 2}).
\end{equation}
Let $\mathcal{Z}$ be the event that for any $t\in [L\slash L_0]$, $|\mathcal{D}_{\lceil z_{t}\rceil-1}(\sigma_0)|\leq C_1\beta_n^{-1\slash 2}$ and $|\mathcal{D}'_{\lfloor z_{t}'\rfloor}(\sigma_0)|\leq C_1\beta_n^{-1\slash 2}$. By (\ref{Eq5.1.2}), (\ref{Eq5.1.3}), and the union bound, we have 
\begin{equation}\label{Eq7.1.8}
    \mathbb{P}(\mathcal{Z}^c)\leq CL\exp(-c\beta_n^{-1\slash 2}).
\end{equation}

Now for any $t\in [L\slash L_0]$, when the event $\mathcal{Z}$ holds, as $L_0=8C_1$, $L_0\beta_n^{-1\slash 2}\geq 80$, and $\lfloor z_{t}'\rfloor-\lceil z_{t}\rceil \geq L_0\beta_n^{-1\slash 2}\slash 2-2$, we have
\begin{eqnarray}\label{Eq5.1.4}
   && |S(\sigma_0)\cap [z_{t},z'_{t}]^2|\geq |S(\sigma_0)\cap [\lceil z_{t}\rceil ,\lfloor z'_{t}\rfloor ]^2| \nonumber\\ 
   &\geq& |[\lceil z_{t}\rceil ,\lfloor z'_{t}\rfloor]\cap\mathbb{N}^{*}|-|\mathcal{D}_{\lceil z_{t}\rceil-1}(\sigma_0)|-|\mathcal{D}'_{\lfloor z_{t}'\rfloor}(\sigma_0)|\nonumber\\
   &\geq&  \lfloor z'_{t}\rfloor-\lceil z_{t}\rceil+1-2C_1\beta_n^{-1\slash 2}\geq \frac{1}{2}L_0\beta_n^{-1\slash 2}-1-\frac{1}{4}L_0\beta_n^{-1\slash 2}\nonumber\\
   &=& \frac{1}{4}L_0\beta_n^{-1\slash 2}-1 \geq \frac{1}{8}L_0\beta_n^{-1\slash 2}.
\end{eqnarray}

For any $t\in [L\slash L_0]$, we let $\mathscr{M}_t$ be the set of $m\in [M_t]$ that satisfies 
\begin{equation}\label{Eq5.1.6}
    J_{t,m}\in ((s-1)L\beta_n^{-1\slash 2}+(t-1)L_0\beta_n^{-1\slash 2}, (s-1)L\beta_n^{-1\slash 2}+(t-1\slash 2)L_0\beta_n^{-1\slash 2}].
\end{equation}
Below we consider any $t\in [L\slash L_0]$ and $m\in [n]$. If $m\in \mathscr{M}_t$, for any $m'\in [M_t]$ such that $\sigma_0(I_{t,m'})< J_{t,m}$ (note that there are $m-1$ such $m'$), we have that $b_{t,m'}\leq \sigma_0(I_{t,m'})< J_{t,m}$, hence
\begin{equation}\label{Eq5.1.7}
    N_{t,m}=\sum_{\substack{m'\in [M_t]: \\ \sigma_0(I_{t,m'})\geq J_{t,m}}} \mathbbm{1}_{b_{t,m'}\leq J_{t,m}}.
\end{equation}
For any $i\in [n]$ such that $(i,\sigma_0(i))\in [z_{t}, z'_{t}]^2$, we have $(i,\sigma_0(i))\in \mathcal{X}_{t,1}\times \mathcal{Y}_{t,1}$. Hence there exists some $m'\in [M_t]$, such that $i=I_{t,m'}$. Let
\begin{equation}
    \mathcal{M}_t:=\{m'\in [M_t]: (I_{t,m'},\sigma_0(I_{t,m'}))\in [z_{t},z'_{t}]^2\}.
\end{equation}
By (\ref{Eq5.1.4}), when the event $\mathcal{Z}$ holds, we have
\begin{equation}\label{Eq5.1.11}
    |\mathcal{M}_t| \geq |S(\sigma_0)\cap [z_{t},z'_{t}]^2|\geq \frac{1}{8}L_0\beta_n^{-1\slash 2}\geq \beta_n^{-1\slash 2}.
\end{equation}
If $m \in \mathscr{M}_t$, for any $m'\in \mathcal{M}_t$, we have $\sigma_0(I_{t,m'})\geq z_{t}\geq J_{t,m}$ (note (\ref{Eq5.1.6})). Hence by (\ref{Eq5.1.7}),
\begin{equation}\label{Eq5.1.8}
    N_{t,m} \geq \sum_{m'\in \mathcal{M}_t} \mathbbm{1}_{b_{t,m'}\leq J_{t,m}}.
\end{equation}
Now note that if $m\in \mathscr{M}_t$, conditional on $\sigma_0$, $\{\mathbbm{1}_{b_{t,m'}\leq J_{t,m}}\}_{m'\in \mathcal{M}_t}$ are mutually independent, and for each $m'\in \mathcal{M}_t$, $\mathbbm{1}_{b_{t,m'}\leq J_{t,m}}$ follows the Bernoulli distribution with 
\begin{eqnarray}\label{Eq5.1.9}
   \mathbb{P}(\mathbbm{1}_{b_{t,m'}\leq J_{t,m}}=1|\sigma_0)&=&\mathbb{P}(b_{t,m'}\leq J_{t,m}|\sigma_0)=\mathbb{P}(u_{t,m'}\leq e^{2\beta_n (I_{t,m'}-\alpha_t)J_{t,m}}|\sigma_0) \nonumber\\
   &=& e^{-2\beta_n (I_{t,m'}-\alpha_t)(\sigma_0(I_{t,m'})-J_{t,m})}.
\end{eqnarray}
If $m \in \mathscr{M}_t$, for any $m'\in \mathcal{M}_t$, as $(I_{t,m'},\sigma_0(I_{t,m'}))\in [z_{t},z_{t}']^2$, by (\ref{Eq5.1.6}), we have 
\begin{equation*}
    I_{t,m'}-\alpha_t\begin{cases}
     \geq z_{t}-\alpha_t\geq 1\\
     \leq z'_{t}-\alpha_t= L_0\beta_n^{-1\slash 2}+1\leq 2L_0\beta_n^{-1\slash 2}
    \end{cases},
\end{equation*}
\begin{equation*}
   0\leq  \sigma_0(I_{t,m'})-J_{t,m} \leq z'_{t}-J_{t,m}\leq L_0\beta_n^{-1\slash 2},
\end{equation*}
hence by (\ref{Eq5.1.9}), we have
\begin{equation}\label{Eq5.1.10}
    \mathbb{P}(\mathbbm{1}_{b_{t,m'}\leq J_{t,m}}=1|\sigma_0)\geq e^{-4L_0^2}.
\end{equation}
By (\ref{Eq5.1.8}), (\ref{Eq5.1.10}), and Hoeffding's inequality, for any $x\in [0,e^{-4L_0^2}]$, we have
\begin{equation*}
    \mathbb{P}(N_{t,m}\leq (e^{-4L_0^2}-x)|\mathcal{M}_t||\sigma_0)\mathbbm{1}_{m\in \mathscr{M}_t}\leq e^{-2|\mathcal{M}_t|x^2}\mathbbm{1}_{m\in \mathscr{M}_t},
\end{equation*}
which by (\ref{Eq5.1.11}) leads to 
\begin{equation*}
    \mathbb{P}(N_{t,m}\leq (e^{-4L_0^2}-x)\beta_n^{-1\slash 2}|\sigma_0) \mathbbm{1}_{m\in \mathscr{M}_t} \mathbbm{1}_{\mathcal{Z}}  \leq e^{-2\beta_n^{-1\slash 2}x^2}\mathbbm{1}_{m\in \mathscr{M}_t}.
\end{equation*}
Taking $x=e^{-4L_0^2}\slash 2$, we have
\begin{eqnarray}\label{Eq5.1.15}
  && \mathbb{P}(\{N_{t,m}\leq e^{-4L_0^2}\beta_n^{-1\slash 2} \slash 2\}\cap\{m\in \mathscr{M}_t\}\cap\mathcal{Z}|\sigma_0)\nonumber\\
   &=& \mathbb{P}(N_{t,m}\leq e^{-4L_0^2}\beta_n^{-1\slash 2} \slash 2|\sigma_0) \mathbbm{1}_{m\in \mathscr{M}_t} \mathbbm{1}_{\mathcal{Z}} \leq e^{-c\beta_n^{-1\slash 2}}\mathbbm{1}_{m\in \mathscr{M}_t}. 
\end{eqnarray}

For any $t\in [L\slash L_0]$, let $\mathcal{C}_t$ be the event that $N_{t,m}\geq e^{-4L_0^2}\beta_n^{-1\slash 2} \slash 2$ for any $m\in \mathscr{M}_t$. By (\ref{Eq5.1.15}) and the union bound,
\begin{eqnarray}
&&\mathbb{P}(\mathcal{C}_t^c\cap\mathcal{Z}|\sigma_0)\leq 
\mathbb{P}\Big(\bigcup_{m=1}^n\big(\{N_{t,m}\leq e^{-4L_0^2}\beta_n^{-1\slash 2} \slash 2\}\cap\{m\in \mathscr{M}_t\}\cap\mathcal{Z}\big)\Big|\sigma_0\Big)\nonumber\\
&\leq& \sum_{m=1}^n \mathbb{P}(\{N_{t,m}\leq e^{-4L_0^2}\beta_n^{-1\slash 2} \slash 2\}\cap\{m\in \mathscr{M}_t\}\cap\mathcal{Z}|\sigma_0)\nonumber\\ &\leq& e^{-c\beta_n^{-1\slash 2}}\sum_{m=1}^n \mathbbm{1}_{m\in \mathscr{M}_t}=|\mathscr{M}_t| e^{-c\beta_n^{-1\slash 2}}
\leq |\mathcal{Y}_{t,1}|e^{-c\beta_n^{-1\slash 2}}\nonumber\\
&\leq& (L_0\beta_n^{-1\slash 2}+1)e^{-c\beta_n^{-1\slash 2}}\leq C\beta_n^{-1\slash 2}e^{-c\beta_n^{-1\slash 2}}\leq C\exp(-c\beta_n^{-1\slash 2}).
\end{eqnarray}
Hence 
\begin{equation}\label{Eq7.1.6}
    \mathbb{P}(\mathcal{C}_t^c\cap\mathcal{Z})=\mathbb{E}[\mathbb{P}(\mathcal{C}_t^c\cap\mathcal{Z}|\sigma_0)]\leq C\exp(-c\beta_n^{-1\slash 2}).
\end{equation}
Let $\mathcal{C}:=\bigcap_{t=1}^{L\slash L_0}\mathcal{C}_t$. By (\ref{Eq7.1.8}), (\ref{Eq7.1.6}), and the union bound, 
\begin{equation}\label{Eq10.1.3}
    \mathbb{P}(\mathcal{C}^c)\leq CL\exp(-c\beta_n^{-1\slash 2}).
\end{equation}

Let 
\begin{equation}\label{Eq14.1.1}
    W_s:=\{i\in [n]: (i,\sigma_0(i))\in \mathcal{R}_s'\}.
\end{equation}
For any $i\in [n]$ such that $(i,\sigma_0(i))\in \mathcal{R}_s'$, we have 
\begin{equation*}
    i> sL\beta_n^{-1\slash 2},  \text{ hence } i\geq \lfloor sL\beta_n^{-1\slash 2}\rfloor+1,
\end{equation*}
\begin{equation*}
    \sigma_0(i)\leq sL\beta_n^{-1\slash 2},  \text{ hence } \sigma_0(i)\leq \lfloor sL\beta_n^{-1\slash 2} \rfloor.
\end{equation*}
Hence noting that $\lfloor sL\beta_n^{-1\slash 2} \rfloor\in [n]$, we have
\begin{equation}\label{Eq7.1.2}
    |W_s|\leq |\mathcal{D}'_{\lfloor sL\beta_n^{-1\slash 2}\rfloor}(\sigma_0)|=|\mathcal{D}_{\lfloor sL\beta_n^{-1\slash 2}\rfloor}(\sigma_0)|.
\end{equation}
Let $\mathcal{W}_s$ be the event that $|W_s|\leq C_1\beta_n^{-1\slash 2}$. By (\ref{Eq7.1.2}) and Proposition \ref{P2.2.2},
\begin{equation}\label{Eq10.1.4}
    \mathbb{P}(\mathcal{W}_s^c)\leq \mathbb{P}(|\mathcal{D}_{\lfloor sL\beta_n^{-1\slash 2}\rfloor}(\sigma_0)|\geq C_1\beta_n^{-1\slash 2})\leq C\exp(-c\beta_n^{-1\slash 2}). 
\end{equation}

For any $q\in \mathbb{N}^{*}$, we let
\begin{equation}\label{Eq8.1.7}
    \Lambda_{s,q,1}:=\sum_{\substack{ i_1<\cdots<i_q, j_1<\cdots<j_q\\i_1,\cdots,i_q\in (sL\beta_n^{-1\slash 2},n]\cap\mathbb{N}^{*}\\j_1,\cdots,j_q\in \mathcal{J}_{s,1}\cap\mathbb{N}^{*}}} \mathbbm{1}_{\sigma(i_1)=j_1,\cdots,\sigma(i_q)=j_q} \mathbbm{1}_{i_1,\cdots,i_q\in W_s}.
\end{equation}
In the following, we bound $\Lambda_{s,q,1}$ for any $q\in \mathbb{N}^{*}$.

Consider any $t\in [L\slash L_0]$. For any $m\in [M_t]$, we let $\mathscr{N}_{J_{t,m}}=N_{t,m}$; for any $j\in \mathcal{Y}_{t,1}\backslash \{J_{t,1},\cdots,J_{t,M_t}\}$, we let $\mathscr{N}_{j}=n$. For any $m\in [M_t]$, we let $\mathscr{Y}_{J_{t,m}}=I_{t,Y_{t,m}}$; for any $j\in \mathcal{Y}_{t,1}\backslash \{J_{t,1},\cdots,J_{t,M_t}\}$, we let $\mathscr{Y}_{j}=0$. 

We let $\mathcal{B}$ be the $\sigma$-algebra generated by $\sigma_0$ and $\{b_{t,m}\}_{t\in [L\slash L_0], m\in [n]}$. For any $j\in ((s-1)L\beta_n^{-1\slash 2},sL\beta_n^{-1\slash 2}]\cap\mathbb{N}^{*}$, we let $\mathcal{F}_{j}$ be the $\sigma$-algebra generated by $\sigma_0$, $\{b_{t,m}\}_{t\in [L\slash L_0],m\in [n]}$, and $\{\mathscr{Y}_{l}\}_{l\in [j-1]\cap ((s-1)L\beta_n^{-1\slash 2},sL\beta_n^{-1\slash 2}]\cap\mathbb{N}^{*}}$.

We assume that the event $\mathcal{C}$ holds. For any $t\in [L\slash L_0]$ and any $m\in \mathscr{M}_t$, we have $\mathscr{N}_{J_{t,m}}=N_{t,m}\geq e^{-4L_0^2}\beta_n^{-1\slash 2}\slash 2$. Hence for any $t\in [L\slash L_0]$ and any $j$ from 
\begin{eqnarray*}
    &&\{J_{t,m}:m\in [M_t]\}\nonumber\\
    &&\cap ((s-1)L\beta_n^{-1\slash 2}+(t-1)L_0\beta_n^{-1\slash 2}, (s-1)L\beta_n^{-1\slash 2}+(t-1\slash 2)L_0\beta_n^{-1\slash 2}]\cap\mathbb{N}^{*},
\end{eqnarray*}
we have $\mathscr{N}_j\geq e^{-4L_0^2}\beta_n^{-1\slash 2}\slash 2$. Now for any $t\in [L\slash L_0]$ and any $j$ from
\begin{eqnarray*}
 &&  \{J_{t,m}:m\in [M_t]\}^c\nonumber\\
 &&\cap
   ((s-1)L\beta_n^{-1\slash 2}+(t-1)L_0\beta_n^{-1\slash 2}, (s-1)L\beta_n^{-1\slash 2}+(t-1\slash 2)L_0\beta_n^{-1\slash 2}]\cap\mathbb{N}^{*},
\end{eqnarray*}
we have $\mathscr{N}_j=n\geq e^{-4L_0^2}\beta_n^{-1\slash 2}\slash 2$ (note that $n\beta_n^{1\slash 2}\geq 4L\geq 4$). Hence for any $j$ from the set
\begin{eqnarray*}
   &&\bigcup_{t=1}^{L\slash L_0} (((s-1)L\beta_n^{-1\slash 2}+(t-1)L_0\beta_n^{-1\slash 2}, (s-1)L\beta_n^{-1\slash 2}+(t-1\slash 2)L_0\beta_n^{-1\slash 2}]\cap\mathbb{N}^{*})\nonumber\\
    &&= \mathcal{J}_{s,1}\cap\mathbb{N}^{*},
\end{eqnarray*}
we have 
\begin{equation}\label{Eq8.1.2}
    \mathscr{N}_j\geq e^{-4L_0^2}\beta_n^{-1\slash 2}\slash 2.
\end{equation}

Consider any $i_1,\cdots,i_q\in (sL\beta_n^{-1\slash 2},n]\cap\mathbb{N}^{*}$ and $j_1,\cdots,j_q\in \mathcal{J}_{s,1}\cap\mathbb{N}^{*}$ such that $i_1<\cdots<i_q$ and $j_1<\cdots<j_q$. Note that 
\begin{equation}\label{Eq8.1.10}
    ((sL\beta_n^{-1\slash 2},n]\cap\mathbb{N}^{*})\times (\mathcal{J}_{s,1}\cap\mathbb{N}^{*})\subseteq \bigcup_{t=1}^{L\slash L_0} \mathcal{X}_{t,1}\times \mathcal{Y}_{t,1}. 
\end{equation}
Hence for any $l\in [q]$, if $\sigma(i_l)=j_l$, then there exists some $t\in [L \slash L_0]$, such that $(i_l,\sigma(i_l))=(i_l,j_l)\in \mathcal{X}_{t,1}\times \mathcal{Y}_{t,1}$; this implies $\mathscr{Y}_{j_l}=i_l$. Hence we have
\begin{eqnarray}\label{Eq8.1.3}
  &&  \mathbb{E}[\mathbbm{1}_{\sigma(i_1)=j_1,\cdots,\sigma(i_q)=j_q} \mathbbm{1}_{i_1,\cdots,i_q\in W_s}|\mathcal{B}]\leq \mathbb{E}[\mathbbm{1}_{\mathscr{Y}_{j_1}=i_1,\cdots,\mathscr{Y}_{j_q}=i_q}\mathbbm{1}_{i_1,\cdots,i_q\in W_s}|\mathcal{B}]\nonumber\\
  &=& \mathbbm{1}_{i_1,\cdots,i_q\in W_s}\mathbb{E}[\mathbbm{1}_{\mathscr{Y}_{j_1}=i_1,\cdots,\mathscr{Y}_{j_q}=i_q}|\mathcal{B}]\nonumber\\
  &=&\mathbbm{1}_{i_1,\cdots,i_q\in W_s}\mathbb{E}[\mathbb{E}[\mathbbm{1}_{\mathscr{Y}_{j_q}=i_q}|\mathcal{F}_{j_q}]\mathbbm{1}_{\mathscr{Y}_{j_1}=i_1,\cdots,\mathscr{Y}_{j_{q-1}}=i_{q-1}}|\mathcal{B}]\nonumber\\
  &\leq& \frac{\mathbbm{1}_{i_1,\cdots,i_q\in W_s}}{\mathscr{N}_{j_q}}\mathbb{E}[\mathbbm{1}_{\mathscr{Y}_{j_1}=i_1,\cdots,\mathscr{Y}_{j_{q-1}}=i_{q-1}}|\mathcal{B}]\leq \cdots\leq\frac{\mathbbm{1}_{i_1,\cdots,i_q\in W_s}}{\mathscr{N}_{j_1}\mathscr{N}_{j_2}\cdots \mathscr{N}_{j_q}}. 
\end{eqnarray}
By (\ref{Eq8.1.2}) and (\ref{Eq8.1.3}), we have
\begin{eqnarray}\label{Eq8.1.8}
  &&  \mathbb{E}[\mathbbm{1}_{\sigma(i_1)=j_1,\cdots,\sigma(i_q)=j_q} \mathbbm{1}_{i_1,\cdots,i_q\in W_s}|\mathcal{B}]\mathbbm{1}_{\mathcal{C}\cap\mathcal{W}_s}\nonumber\\
  &\leq& \frac{\mathbbm{1}_{i_1,\cdots,i_q\in W_s}\mathbbm{1}_{\mathcal{C}\cap\mathcal{W}_s}}{\mathscr{N}_{j_1}\mathscr{N}_{j_2}\cdots \mathscr{N}_{j_q}}\leq (2e^{4L_0^2}\beta_n^{1\slash 2})^q \mathbbm{1}_{i_1,\cdots,i_q\in W_s}\mathbbm{1}_{\mathcal{W}_s}.
\end{eqnarray}

By (\ref{Eq8.1.7}), (\ref{Eq8.1.8}), and Lemma \ref{Lemma3.1}, we have
\begin{eqnarray*}
   &&  \mathbb{E}[\Lambda_{s,q,1}|\mathcal{B}]\mathbbm{1}_{\mathcal{C}\cap\mathcal{W}_s}\leq (2e^{4L_0^2}\beta_n^{1\slash 2})^q\mathbbm{1}_{\mathcal{W}_s} \sum_{\substack{ i_1<\cdots<i_q, j_1<\cdots<j_q\\i_1,\cdots,i_q\in (sL\beta_n^{-1\slash 2},n]\cap\mathbb{N}^{*}\\j_1,\cdots,j_q\in \mathcal{J}_{s,1}\cap\mathbb{N}^{*}}}\mathbbm{1}_{i_1,\cdots,i_q\in W_s} \nonumber\\
   &\leq& (2e^{4L_0^2}\beta_n^{1\slash 2})^q\binom{|W_s|}{q}\binom{|\mathcal{J}_{s,1}\cap\mathbb{N}^{*}|}{q}\mathbbm{1}_{\mathcal{W}_s}\nonumber\\
   &\leq& \Big(\frac{2e^{2+4L_0^2}\beta_n^{1\slash 2}|W_s||\mathcal{J}_{s,1}\cap\mathbb{N}^{*}|}{q^2}\Big)^q \mathbbm{1}_{\mathcal{W}_s}\leq (CL\beta_n^{-1\slash 2}q^{-2} )^q,
\end{eqnarray*}
where we use the fact that
\begin{equation*}
    |\mathcal{J}_{s,1}\cap\mathbb{N}^{*}|\leq |((s-1)L\beta_n^{-1\slash 2},sL\beta_n^{-1\slash 2}]\cap\mathbb{N}^{*}|\leq L\beta_n^{-1\slash 2}+1\leq 2L\beta_n^{-1\slash 2}
\end{equation*}
in the last line. Hence
\begin{equation}\label{Eq10.1.1}
    \mathbb{E}[\Lambda_{s,q,1}\mathbbm{1}_{\mathcal{C}\cap\mathcal{W}_s}]=\mathbb{E}[\mathbb{E}[\Lambda_{s,q,1}|\mathcal{B}]\mathbbm{1}_{\mathcal{C}\cap\mathcal{W}_s}]\leq (CL\beta_n^{-1\slash 2}q^{-2} )^q.
\end{equation}

Now for any $q\in \mathbb{N}^{*}$, if $LIS(\sigma|_{\mathcal{T}_{s,1}})\geq q$, then there exist
\begin{equation*}
    i_1,\cdots,i_q\in (sL\beta_n^{-1\slash 2},n]\cap\mathbb{N}^{*}, \quad j_1,\cdots,j_q\in \mathcal{J}_{s,1}\cap\mathbb{N}^{*},
\end{equation*}
such that $i_1<\cdots<i_q$, $j_1<\cdots<j_q$, and $\sigma(i_l)=j_l$ for every $l \in [q]$. For any $l\in [q]$, by (\ref{Eq8.1.10}), we have $(i_l,\sigma(i_l))=(i_l,j_l)\in \mathcal{X}_{t,1} \times \mathcal{Y}_{t,1} $ for some $t\in [L\slash L_0]$, hence $(i_l,\sigma_0(i_l))\in \mathcal{X}_{t,1} \times \mathcal{Y}_{t,1}$ and $\sigma_0(i_l)\in \mathcal{Y}_{t,1} \subseteq \mathcal{I}_{n,s}\cap\mathbb{N}^{*}$; as $i_l\in (sL\beta_n^{-1\slash 2},n]\cap\mathbb{N}^{*}$, we have $(i_l,\sigma_0(i_l))\in \mathcal{R}_s'$, hence $i_l\in W_s$. Hence $\Lambda_{s,q,1}\geq 1$. We conclude that for any $q\in \mathbb{N}^{*}$,
\begin{equation}\label{Eq10.1.2}
    \{LIS(\sigma|_{\mathcal{T}_{s,1}})\geq q\}\subseteq \{\Lambda_{s,q,1}\geq  1\}.
\end{equation}

By (\ref{Eq10.1.1}) and (\ref{Eq10.1.2}), for any $q\in\mathbb{N}^{*}$, we have
\begin{eqnarray}
&&\mathbb{P}(\{LIS(\sigma|_{\mathcal{T}_{s,1}})\geq q\}\cap\mathcal{C}\cap\mathcal{W}_s)\leq \mathbb{P}(\{\Lambda_{s,q,1}\geq  1\}\cap \mathcal{C}\cap\mathcal{W}_s)\nonumber\\
&=& \mathbb{E}[\mathbbm{1}_{\Lambda_{s,q,1}\geq  1}\mathbbm{1}_{\mathcal{C}\cap\mathcal{W}_s}]\leq \mathbb{E}[\Lambda_{s,q,1}\mathbbm{1}_{\mathcal{C}\cap\mathcal{W}_s}] \leq (C_0L\beta_n^{-1\slash 2}q^{-2} )^q,
\end{eqnarray}
where $C_0\geq 1$ is a positive absolute constant. Taking $q=\lceil \sqrt{2C_0} L^{1\slash 2}\beta_n^{-1\slash 4} \rceil$, we obtain that
\begin{eqnarray}\label{Eq10.1.6}
 && \mathbb{P}(\{LIS(\sigma|_{\mathcal{T}_{s,1}})\geq 2\sqrt{2C_0}L^{1\slash 2}\beta_n^{-1\slash 4}\}\cap\mathcal{C}\cap\mathcal{W}_s) \nonumber\\
 &\leq& \mathbb{P}(\{LIS(\sigma|_{\mathcal{T}_{s,1}})\geq \lceil \sqrt{2C_0}L^{1\slash 2}\beta_n^{-1\slash 4} \rceil \}\cap\mathcal{C}\cap\mathcal{W}_s)\nonumber\\
 &\leq& 2^{-\lceil \sqrt{2C_0} L^{1\slash 2}\beta_n^{-1\slash 4} \rceil}\leq \exp(-cL^{1\slash 2}\beta_n^{-1\slash 4}).
\end{eqnarray}
By (\ref{Eq10.1.3}), (\ref{Eq10.1.4}), (\ref{Eq10.1.6}), and the union bound, we have
\begin{eqnarray}
 && \mathbb{P}(LIS(\sigma|_{\mathcal{T}_{s,1}})\geq 2\sqrt{2C_0}L^{1\slash 2}\beta_n^{-1\slash 4}) \nonumber\\
 &\leq& \exp(-cL^{1\slash 2}\beta_n^{-1\slash 4})+CL\exp(-c\beta_n^{-1\slash 2}) \leq CL\exp(-c\beta_n^{-1\slash 4}).
\end{eqnarray}
Note that $LIS(\sigma|_{\mathcal{T}_{s,1}})\leq |\mathcal{I}_{n,s}\cap\mathbb{N}^{*}|\leq L\beta_n^{-1\slash 2}+1\leq 2L\beta_n^{-1\slash 2}$. Hence
\begin{eqnarray}\label{Eq15.1.2}
  \mathbb{E}[LIS(\sigma|_{\mathcal{T}_{s,1}})]&\leq& (2L\beta_n^{-1\slash 2})(CL\exp(-c\beta_n^{-1\slash 4}))+2\sqrt{2C_0}L^{1\slash 2}\beta_n^{-1\slash 4}\nonumber\\
  &\leq& CL^{1\slash 2}\beta_n^{-1\slash 4}+CL^2\exp(-c\beta_n^{-1\slash 4}).
\end{eqnarray}

\paragraph{Step 2}

In this step, we bound $\mathbb{E}[LIS(\sigma|_{\mathcal{T}_{s,2}})]$. For every $t\in [L\slash L_0]$, we let $\tilde{\alpha}_t:=(s-1)L\beta_n^{-1\slash 2}+(t-1\slash 2)L_0\beta_n^{-1\slash 2}-1$. 

We sample $\sigma_0$ from $\tilde{\mathbb{P}}_{n,\beta_n}$. Then sequentially for $t=1,2,\cdots,L\slash L_0$, we run the resampling algorithm for the $L^2$ model (as described at the end of Section \ref{Sect.1.5}) with inputs $\sigma_{t-1},\mathcal{X}_{t,2},\mathcal{Y}_{t,2},\tilde{\alpha}_t$ to obtain $\sigma_t$. We let $\sigma=\sigma_{L\slash L_0}$. By Lemma \ref{L2.2}, the distribution of $\sigma$ is given by $\tilde{\mathbb{P}}_{n,\beta_n}$. 

For any $t\in [L\slash L_0]$, let $\tilde{M}_t\in \mathbb{N}$, $\tilde{I}_{t,1},\cdots,\tilde{I}_{t,\tilde{M}_t}\in \mathcal{X}_{t,2}$, $\tilde{J}_{t,1}, \cdots, \tilde{J}_{t,\tilde{M}_t}\in \mathcal{Y}_{t,2}$ be such that $\tilde{I}_{t,1}<\cdots<\tilde{I}_{t,\tilde{M}_t}$, $\tilde{J}_{t,1}<\cdots<\tilde{J}_{t,\tilde{M}_t}$,
\begin{equation}
     \{i\in \mathcal{X}_{t,2}:\sigma_0(i)\in \mathcal{Y}_{t,2}\}=\{\tilde{I}_{t,1},\cdots,\tilde{I}_{t,\tilde{M}_t}\},
\end{equation}
\begin{equation}
    \{j\in \mathcal{Y}_{t,2}:\sigma_0^{-1}(j)\in \mathcal{X}_{t,2}\}=\{\tilde{J}_{t,1},\cdots,\tilde{J}_{t,\tilde{M}_t}\}.
\end{equation}
For any $t\in [L\slash L_0]$ and any $m\in [n]\backslash [\tilde{M}_t]$, we let $\tilde{I}_{t,m}=0$ and $\tilde{J}_{t,m}=0$. According to the resampling algorithm for the $L^2$ model, $\sigma$ can be generated as follows. Sequentially for $t=1,2,\cdots,L\slash L_0$, we do the following:
\begin{itemize}
    \item For each $m\in [\tilde{M}_t]$, we independently sample $\tilde{u}_{t,m}$ from the uniform distribution on $[0,e^{2\beta_n (\tilde{I}_{t,m}-\tilde{\alpha}_t) \sigma_0(\tilde{I}_{t,m})}]$, and let
    \begin{equation*}
        \tilde{b}_{t,m}=\log(\tilde{u}_{t,m})\slash (2\beta_n (\tilde{I}_{t,m}-\tilde{\alpha}_t) ).
    \end{equation*}
    For each $m\in [n]\backslash [\tilde{M}_t]$, we let $\tilde{b}_{t,m}=0$.
    \item For each $m\in [\tilde{M}_t]$, let 
    \begin{equation}
        \tilde{N}_{t,m}=|\{m'\in  [\tilde{M}_t]: \tilde{b}_{t,m'} \leq \tilde{J}_{t,m} \}|-m+1.
    \end{equation}
   Now look at the $\tilde{N}_{t,1}$ integers $m'\in [\tilde{M}_t]$ with $\tilde{b}_{t,m'}\leq \tilde{J}_{t,1}$, and pick $\tilde{Y}_{t,1}$ uniformly from these integers; then look at the $\tilde{N}_{t,2}$ remaining integers $m'\in [\tilde{M}_t]$ with $\tilde{b}_{t,m'}\leq \tilde{J}_{t,2}$ (with $\tilde{Y}_{t,1}$ deleted from the list), and pick $\tilde{Y}_{t,2}$ uniformly from these integers; and so on. In this way we obtain $\{\tilde{Y}_{t,m}\}_{m\in [\tilde{M}_t]}$. For each $m\in [n]\backslash [\tilde{M}_t]$, we let $\tilde{N}_{t,m}=0$ and $\tilde{Y}_{t,m}=0$.
\end{itemize}
We let $\sigma\in S_n$ be the unique permutation that satisfies the following conditions:
\begin{itemize}
    \item For any $t \in [L\slash L_0]$ and any $m\in [\tilde{M}_t]$, $\sigma(\tilde{I}_{t,\tilde{Y}_{t,m}})=\tilde{J}_{t,m}$.
    \item For any $i\in [n]\backslash \big(\bigcup_{t=1}^{L\slash L_0} \{\tilde{I}_{t,1},\cdots,\tilde{I}_{t,\tilde{M}_t}\} \big)$, $\sigma(i)=\sigma_0(i)$.
\end{itemize}

For any $t\in [L\slash L_0]$, let
\begin{equation}
    \tilde{z}_{t}:=(s-1)L\beta_n^{-1\slash 2}+t L_0\beta_n^{-1\slash 2},\quad \tilde{z}_{t}':=(s-1)L\beta_n^{-1\slash 2}+(t+1\slash 2)L_0\beta_n^{-1\slash 2}.
\end{equation}
Recall Definition \ref{Def2.2}. As $\lceil \tilde{z}_{t}\rceil-1,\lfloor \tilde{z}_{t}'\rfloor\in [n]$, by (\ref{Eq5.1.1}) and Proposition \ref{P2.2.2},
\begin{equation}\label{Eq11.1.1}
    \mathbb{P}(|\mathcal{D}_{\lceil \tilde{z}_{t}\rceil-1}(\sigma_0)|\geq C_1\beta_n^{-1\slash 2}) \leq C\exp(-c\beta_n^{-1\slash 2}),
\end{equation}
\begin{equation}\label{Eq11.1.2}
    \mathbb{P}(|\mathcal{D}'_{\lfloor \tilde{z}_{t}'\rfloor}(\sigma_0)|\geq C_1\beta_n^{-1\slash 2}) \leq C\exp(-c\beta_n^{-1\slash 2}).
\end{equation}
Let $\tilde{\mathcal{Z}}$ be the event that for any $t\in [L\slash L_0]$, $|\mathcal{D}_{\lceil \tilde{z}_{t}\rceil-1}(\sigma_0)|\leq C_1\beta_n^{-1\slash 2}$ and $|\mathcal{D}'_{\lfloor \tilde{z}_{t}'\rfloor}(\sigma_0)|\leq C_1\beta_n^{-1\slash 2}$. By (\ref{Eq11.1.1}), (\ref{Eq11.1.2}), and the union bound, we have 
\begin{equation}\label{Eq11.1.3}
    \mathbb{P}(\tilde{\mathcal{Z}}^c)\leq CL\exp(-c\beta_n^{-1\slash 2}).
\end{equation}
Now for any $t\in [L\slash L_0]$, when the event $\tilde{\mathcal{Z}}$ holds, as $L_0=8C_1$, $L_0\beta_n^{-1\slash 2}\geq 80$, and $\lfloor \tilde{z}_{t}'\rfloor-\lceil \tilde{z}_{t}\rceil \geq L_0\beta_n^{-1\slash 2}\slash 2-2$, we have
\begin{eqnarray}\label{Eq11.1.4}
   && |S(\sigma_0)\cap [\tilde{z}_{t},\tilde{z}'_{t}]^2|\geq |S(\sigma_0)\cap [\lceil \tilde{z}_{t}\rceil ,\lfloor \tilde{z}'_{t}\rfloor ]^2| \nonumber\\ 
   &\geq& |[\lceil \tilde{z}_{t}\rceil ,\lfloor \tilde{z}'_{t}\rfloor]\cap\mathbb{N}^{*}|-|\mathcal{D}_{\lceil \tilde{z}_{t}\rceil-1}(\sigma_0)|-|\mathcal{D}'_{\lfloor \tilde{z}_{t}'\rfloor}(\sigma_0)|\nonumber\\
   &\geq&  \lfloor \tilde{z}'_{t}\rfloor-\lceil \tilde{z}_{t}\rceil+1-2C_1\beta_n^{-1\slash 2}\geq \frac{1}{2}L_0\beta_n^{-1\slash 2}-1-\frac{1}{4}L_0\beta_n^{-1\slash 2}\nonumber\\
   &=& \frac{1}{4}L_0\beta_n^{-1\slash 2}-1 \geq \frac{1}{8}L_0\beta_n^{-1\slash 2}.
\end{eqnarray}

For any $t\in [L\slash L_0]$, we let $\tilde{\mathscr{M}}_t$ be the set of $m\in [\tilde{M}_t]$ that satisfies 
\begin{equation}\label{Eq11.1.5}
    \tilde{J}_{t,m}\in ((s-1)L\beta_n^{-1\slash 2}+(t-1\slash 2)L_0\beta_n^{-1\slash 2}, (s-1)L\beta_n^{-1\slash 2}+t L_0\beta_n^{-1\slash 2}].
\end{equation}
Below we consider any $t\in [L\slash L_0]$ and $m\in [n]$. If $m\in \tilde{\mathscr{M}}_t$, for any $m'\in [\tilde{M}_t]$ such that $\sigma_0(\tilde{I}_{t,m'})< \tilde{J}_{t,m}$ (note that there are $m-1$ such $m'$), we have that $\tilde{b}_{t,m'}\leq \sigma_0(\tilde{I}_{t,m'})< \tilde{J}_{t,m}$, hence
\begin{equation}\label{Eq11.1.6}
    \tilde{N}_{t,m}=\sum_{\substack{m'\in [\tilde{M}_t]: \\ \sigma_0(\tilde{I}_{t,m'})\geq \tilde{J}_{t,m}}} \mathbbm{1}_{\tilde{b}_{t,m'}\leq \tilde{J}_{t,m}}.
\end{equation}
For any $i\in [n]$ such that $(i,\sigma_0(i))\in [\tilde{z}_{t}, \tilde{z}'_{t}]^2$, we have $(i,\sigma_0(i))\in \mathcal{X}_{t,2}\times \mathcal{Y}_{t,2}$. Hence there exists some $m'\in [\tilde{M}_t]$, such that $i=\tilde{I}_{t,m'}$. Let
\begin{equation}
    \tilde{\mathcal{M}}_t:=\{m'\in [\tilde{M}_t]: (\tilde{I}_{t,m'},\sigma_0(\tilde{I}_{t,m'}))\in [\tilde{z}_{t},\tilde{z}'_{t}]^2\}.
\end{equation}
By (\ref{Eq11.1.4}), when the event $\tilde{\mathcal{Z}}$ holds, we have
\begin{equation}\label{Eq11.1.7}
    |\tilde{\mathcal{M}}_t| \geq |S(\sigma_0)\cap [\tilde{z}_{t},\tilde{z}'_{t}]^2|\geq \frac{1}{8}L_0\beta_n^{-1\slash 2}\geq \beta_n^{-1\slash 2}.
\end{equation}
If $m \in \tilde{\mathscr{M}}_t$, for any $m'\in \tilde{\mathcal{M}}_t$, we have $\sigma_0(\tilde{I}_{t,m'})\geq \tilde{z}_{t}\geq \tilde{J}_{t,m}$ (note (\ref{Eq11.1.5})). Hence by (\ref{Eq11.1.6}),
\begin{equation}\label{Eq11.1.8}
    \tilde{N}_{t,m} \geq \sum_{m'\in \tilde{\mathcal{M}}_t} \mathbbm{1}_{\tilde{b}_{t,m'}\leq \tilde{J}_{t,m}}.
\end{equation}
Now note that if $m\in \tilde{\mathscr{M}}_t$, conditional on $\sigma_0$, $\{\mathbbm{1}_{\tilde{b}_{t,m'}\leq \tilde{J}_{t,m}}\}_{m'\in \tilde{\mathcal{M}}_t}$ are mutually independent, and for each $m'\in \tilde{\mathcal{M}}_t$, $\mathbbm{1}_{\tilde{b}_{t,m'}\leq \tilde{J}_{t,m}}$ follows the Bernoulli distribution with 
\begin{eqnarray}\label{Eq11.1.9}
   \mathbb{P}(\mathbbm{1}_{\tilde{b}_{t,m'}\leq \tilde{J}_{t,m}}=1|\sigma_0)&=&\mathbb{P}(\tilde{b}_{t,m'}\leq \tilde{J}_{t,m}|\sigma_0)=\mathbb{P}(\tilde{u}_{t,m'}\leq e^{2\beta_n (\tilde{I}_{t,m'}-\tilde{\alpha}_t)\tilde{J}_{t,m}}|\sigma_0) \nonumber\\
   &=& e^{-2\beta_n (\tilde{I}_{t,m'}-\tilde{\alpha}_t)(\sigma_0(\tilde{I}_{t,m'})-\tilde{J}_{t,m})}.
\end{eqnarray}
If $m \in \tilde{\mathscr{M}}_t$, for any $m'\in \tilde{\mathcal{M}}_t$, as $(\tilde{I}_{t,m'},\sigma_0(\tilde{I}_{t,m'}))\in [\tilde{z}_{t},\tilde{z}_{t}']^2$, by (\ref{Eq11.1.5}), we have 
\begin{equation*}
    \tilde{I}_{t,m'}-\tilde{\alpha}_t\begin{cases}
     \geq \tilde{z}_{t}-\tilde{\alpha}_t\geq 1\\
     \leq \tilde{z}'_{t}-\tilde{\alpha}_t= L_0\beta_n^{-1\slash 2}+1\leq 2L_0\beta_n^{-1\slash 2}
    \end{cases},
\end{equation*}
\begin{equation*}
   0\leq  \sigma_0(\tilde{I}_{t,m'})-\tilde{J}_{t,m} \leq \tilde{z}'_{t}-\tilde{J}_{t,m}\leq L_0\beta_n^{-1\slash 2},
\end{equation*}
hence by (\ref{Eq11.1.9}), we have
\begin{equation}\label{Eq11.1.10}
    \mathbb{P}(\mathbbm{1}_{\tilde{b}_{t,m'}\leq \tilde{J}_{t,m}}=1|\sigma_0)\geq e^{-4L_0^2}.
\end{equation}
By (\ref{Eq11.1.8}), (\ref{Eq11.1.10}), and Hoeffding's inequality, for any $x\in [0,e^{-4L_0^2}]$, we have
\begin{equation*}
    \mathbb{P}(\tilde{N}_{t,m}\leq (e^{-4L_0^2}-x)|\tilde{\mathcal{M}}_t||\sigma_0)\mathbbm{1}_{m\in \tilde{\mathscr{M}}_t}\leq e^{-2|\tilde{\mathcal{M}}_t|x^2}\mathbbm{1}_{m\in \tilde{\mathscr{M}}_t},
\end{equation*}
which by (\ref{Eq11.1.7}) leads to 
\begin{equation*}
    \mathbb{P}(\tilde{N}_{t,m}\leq (e^{-4L_0^2}-x)\beta_n^{-1\slash 2}|\sigma_0) \mathbbm{1}_{m\in \tilde{\mathscr{M}}_t} \mathbbm{1}_{\tilde{\mathcal{Z}}}  \leq e^{-2\beta_n^{-1\slash 2}x^2}\mathbbm{1}_{m\in \tilde{\mathscr{M}}_t}.
\end{equation*}
Taking $x=e^{-4L_0^2}\slash 2$, we have
\begin{eqnarray}\label{Eq11.1.11}
  && \mathbb{P}(\{\tilde{N}_{t,m}\leq e^{-4L_0^2}\beta_n^{-1\slash 2} \slash 2\}\cap\{m\in \tilde{\mathscr{M}}_t\}\cap\tilde{\mathcal{Z}}|\sigma_0)\nonumber\\
   &=& \mathbb{P}(\tilde{N}_{t,m}\leq e^{-4L_0^2}\beta_n^{-1\slash 2} \slash 2|\sigma_0) \mathbbm{1}_{m\in \tilde{\mathscr{M}}_t} \mathbbm{1}_{\tilde{\mathcal{Z}}} \leq e^{-c\beta_n^{-1\slash 2}}\mathbbm{1}_{m\in \tilde{\mathscr{M}}_t}. 
\end{eqnarray}

For any $t\in [L\slash L_0]$, let $\tilde{\mathcal{C}}_t$ be the event that $\tilde{N}_{t,m}\geq e^{-4L_0^2}\beta_n^{-1\slash 2} \slash 2$ for any $m\in \tilde{\mathscr{M}}_t$. By (\ref{Eq11.1.11}) and the union bound,
\begin{eqnarray}
&&\mathbb{P}(\tilde{\mathcal{C}}_t^c\cap\tilde{\mathcal{Z}}|\sigma_0)\leq 
\mathbb{P}\Big(\bigcup_{m=1}^n\big(\{\tilde{N}_{t,m}\leq e^{-4L_0^2}\beta_n^{-1\slash 2} \slash 2\}\cap\{m\in \tilde{\mathscr{M}}_t\}\cap\tilde{\mathcal{Z}}\big)\Big|\sigma_0\Big)\nonumber\\
&\leq& \sum_{m=1}^n \mathbb{P}(\{\tilde{N}_{t,m}\leq e^{-4L_0^2}\beta_n^{-1\slash 2} \slash 2\}\cap\{m\in \tilde{\mathscr{M}}_t\}\cap\tilde{\mathcal{Z}}|\sigma_0)\nonumber\\ &\leq& e^{-c\beta_n^{-1\slash 2}}\sum_{m=1}^n \mathbbm{1}_{m\in \tilde{\mathscr{M}}_t}=|\tilde{\mathscr{M}}_t| e^{-c\beta_n^{-1\slash 2}}
\leq |\mathcal{Y}_{t,2}|e^{-c\beta_n^{-1\slash 2}}\nonumber\\
&\leq& (L_0\beta_n^{-1\slash 2}+1)e^{-c\beta_n^{-1\slash 2}}\leq C\beta_n^{-1\slash 2}e^{-c\beta_n^{-1\slash 2}}\leq C\exp(-c\beta_n^{-1\slash 2}).
\end{eqnarray}
Hence 
\begin{equation}\label{Eq11.1.12}
    \mathbb{P}(\tilde{\mathcal{C}}_t^c\cap\tilde{\mathcal{Z}})=\mathbb{E}[\mathbb{P}(\tilde{\mathcal{C}}_t^c\cap\tilde{\mathcal{Z}}|\sigma_0)]\leq C\exp(-c\beta_n^{-1\slash 2}).
\end{equation}
Let $\tilde{\mathcal{C}}:=\bigcap_{t=1}^{L\slash L_0}\tilde{\mathcal{C}}_t$. By (\ref{Eq11.1.3}), (\ref{Eq11.1.12}), and the union bound, 
\begin{equation}\label{Eq12.1.2}
    \mathbb{P}(\tilde{\mathcal{C}}^c)\leq CL\exp(-c\beta_n^{-1\slash 2}).
\end{equation}

Let $\tilde{W}_s$ be the set of $i\in [n]$ such that
\begin{equation}
     (i,\sigma_0(i))\in (sL\beta_n^{-1\slash 2},n]\times ((s-1)L\beta_n^{-1\slash 2},sL\beta_n^{-1\slash 2}+L_0\beta_n^{-1\slash 2}\slash 2].
\end{equation}
Note that 
\begin{eqnarray}
   && (sL\beta_n^{-1\slash 2},n]\times ((s-1)L\beta_n^{-1\slash 2},sL\beta_n^{-1\slash 2}+L_0\beta_n^{-1\slash 2}\slash 2]\nonumber\\
   && \subseteq   \mathcal{R}_s'\cup ((0,n]\times (sL\beta_n^{-1\slash 2},sL\beta_n^{-1\slash 2}+L_0\beta_n^{-1\slash 2}\slash 2]).
\end{eqnarray}
Recalling the definition of $W_s$ from (\ref{Eq14.1.1}), we have
\begin{eqnarray}\label{Eq14.1.2}
   && |\tilde{W}_s|\leq |W_s|+|(sL\beta_n^{-1\slash 2},sL\beta_n^{-1\slash 2}+L_0\beta_n^{-1\slash 2}\slash 2]\cap\mathbb{N}^{*}| \nonumber\\
   &\leq& |W_s|+\frac{1}{2}L_0\beta_n^{-1\slash 2}+1\leq |W_s|+5C_1\beta_n^{-1\slash 2}. 
\end{eqnarray}
Let $\tilde{\mathcal{W}}_s$ be the event that $|\tilde{W}_s|\leq 6C_1\beta_n^{-1\slash 2}$. By (\ref{Eq10.1.4}) and (\ref{Eq14.1.2}), we have
\begin{equation}\label{Eq12.1.1}
    \mathbb{P}(\tilde{\mathcal{W}}_s^c)\leq \mathbb{P}(\mathcal{W}_s^c)\leq C\exp(-c\beta_n^{-1\slash 2}).
\end{equation}

For any $q\in \mathbb{N}^{*}$, we let
\begin{equation}\label{Eq12.1.3}
    \Lambda_{s,q,2}:=\sum_{\substack{ i_1<\cdots<i_q, j_1<\cdots<j_q\\i_1,\cdots,i_q\in (sL\beta_n^{-1\slash 2},n]\cap\mathbb{N}^{*}\\j_1,\cdots,j_q\in \mathcal{J}_{s,2}\cap\mathbb{N}^{*}}} \mathbbm{1}_{\sigma(i_1)=j_1,\cdots,\sigma(i_q)=j_q} \mathbbm{1}_{i_1,\cdots,i_q\in \tilde{W}_s}.
\end{equation}
In the following, we bound $\Lambda_{s,q,2}$ for any $q\in \mathbb{N}^{*}$.

Consider any $t\in [L\slash L_0]$. For any $m\in [\tilde{M}_t]$, we let $\tilde{\mathscr{N}}_{\tilde{J}_{t,m}}=\tilde{N}_{t,m}$; for any $j\in \mathcal{Y}_{t,2}\backslash \{\tilde{J}_{t,1},\cdots,\tilde{J}_{t,\tilde{M}_t}\}$, we let $\tilde{\mathscr{N}}_{j}=n$. For any $m\in [\tilde{M}_t]$, we let $\tilde{\mathscr{Y}}_{\tilde{J}_{t,m}}=\tilde{I}_{t,\tilde{Y}_{t,m}}$; for any $j\in \mathcal{Y}_{t,2}\backslash \{\tilde{J}_{t,1},\cdots,\tilde{J}_{t,\tilde{M}_t}\}$, we let $\tilde{\mathscr{Y}}_{j}=0$. 

We let $\tilde{\mathcal{B}}$ be the $\sigma$-algebra generated by $\sigma_0$ and $\{\tilde{b}_{t,m}\}_{t\in [L\slash L_0], m\in [n]}$. For any $j\in ((s-1)L\beta_n^{-1\slash 2}+L_0 \beta_n^{-1\slash 2}\slash 2,sL\beta_n^{-1\slash 2}+L_0  \beta_n^{-1\slash 2}\slash 2]\cap\mathbb{N}^{*}$, we let $\tilde{\mathcal{F}}_{j}$ be the $\sigma$-algebra generated by $\sigma_0$, $\{\tilde{b}_{t,m}\}_{t\in [L\slash L_0],m\in [n]}$, and
\begin{equation*}
 \{\tilde{\mathscr{Y}}_{l}\}_{l\in [j-1]\cap ((s-1)L\beta_n^{-1\slash 2}+L_0  \beta_n^{-1\slash 2}\slash 2,sL\beta_n^{-1\slash 2}+L_0  \beta_n^{-1\slash 2}\slash 2]\cap\mathbb{N}^{*}}.
\end{equation*}

We assume that the event $\tilde{\mathcal{C}}$ holds. For any $t\in [L\slash L_0]$ and any $m\in \tilde{\mathscr{M}}_t$, we have $\tilde{\mathscr{N}}_{\tilde{J}_{t,m}}=\tilde{N}_{t,m}\geq e^{-4L_0^2}\beta_n^{-1\slash 2}\slash 2$. Hence for any $t\in [L\slash L_0]$ and any $j$ from 
\begin{eqnarray*}
  &&  \{\tilde{J}_{t,m}:m\in [\tilde{M}_t]\}\nonumber\\
  &&\cap ((s-1)L\beta_n^{-1\slash 2}+(t-1\slash 2)L_0\beta_n^{-1\slash 2}, (s-1)L\beta_n^{-1\slash 2}+t L_0\beta_n^{-1\slash 2}]\cap\mathbb{N}^{*},
\end{eqnarray*}
we have $\tilde{\mathscr{N}}_j\geq e^{-4L_0^2}\beta_n^{-1\slash 2}\slash 2$. Now for any $t\in [L\slash L_0]$ and any $j$ from
\begin{eqnarray*}
   &&  \{\tilde{J}_{t,m}:m\in [\tilde{M}_t]\}^c\nonumber\\
  &&\cap ((s-1)L\beta_n^{-1\slash 2}+(t-1\slash 2)L_0\beta_n^{-1\slash 2}, (s-1)L\beta_n^{-1\slash 2}+t L_0\beta_n^{-1\slash 2}]\cap\mathbb{N}^{*},
\end{eqnarray*}
we have $\tilde{\mathscr{N}}_j=n\geq e^{-4L_0^2}\beta_n^{-1\slash 2}\slash 2$ (note that $n\beta_n^{1\slash 2}\geq 4L\geq 4$). Hence for any $j$ from the set
\begin{eqnarray*}
   && \bigcup_{t=1}^{L\slash L_0} (((s-1)L\beta_n^{-1\slash 2}+(t-1\slash 2)L_0\beta_n^{-1\slash 2}, (s-1)L\beta_n^{-1\slash 2}+t L_0\beta_n^{-1\slash 2}]\cap\mathbb{N}^{*})\nonumber\\
    && =\mathcal{J}_{s,2}\cap\mathbb{N}^{*},
\end{eqnarray*}
we have 
\begin{equation}\label{Eq12.1.4}
    \tilde{\mathscr{N}}_j\geq e^{-4L_0^2}\beta_n^{-1\slash 2}\slash 2.
\end{equation}

Consider any $i_1,\cdots,i_q\in (sL\beta_n^{-1\slash 2},n]\cap\mathbb{N}^{*}$ and $j_1,\cdots,j_q\in \mathcal{J}_{s,2}\cap\mathbb{N}^{*}$ such that $i_1<\cdots<i_q$ and $j_1<\cdots<j_q$. Note that 
\begin{equation}\label{Eq12.1.5}
    ((sL\beta_n^{-1\slash 2},n]\cap\mathbb{N}^{*})\times (\mathcal{J}_{s,2}\cap\mathbb{N}^{*})\subseteq \bigcup_{t=1}^{L\slash L_0} \mathcal{X}_{t,2}\times \mathcal{Y}_{t,2}. 
\end{equation}
Hence for any $l\in [q]$, if $\sigma(i_l)=j_l$, then there exists some $t\in [L \slash L_0]$, such that $(i_l,\sigma(i_l))=(i_l,j_l)\in \mathcal{X}_{t,2}\times \mathcal{Y}_{t,2}$; this implies $\tilde{\mathscr{Y}}_{j_l}=i_l$. Hence we have
\begin{eqnarray}\label{Eq12.1.6}
  &&  \mathbb{E}[\mathbbm{1}_{\sigma(i_1)=j_1,\cdots,\sigma(i_q)=j_q} \mathbbm{1}_{i_1,\cdots,i_q\in \tilde{W}_s}|\tilde{\mathcal{B}}]\leq \mathbb{E}[\mathbbm{1}_{\tilde{\mathscr{Y}}_{j_1}=i_1,\cdots,\tilde{\mathscr{Y}}_{j_q}=i_q}\mathbbm{1}_{i_1,\cdots,i_q\in \tilde{W}_s}|\tilde{\mathcal{B}}]\nonumber\\
  &=& \mathbbm{1}_{i_1,\cdots,i_q\in \tilde{W}_s}\mathbb{E}[\mathbbm{1}_{\tilde{\mathscr{Y}}_{j_1}=i_1,\cdots,\tilde{\mathscr{Y}}_{j_q}=i_q}|\tilde{\mathcal{B}}]\nonumber\\
  &=&\mathbbm{1}_{i_1,\cdots,i_q\in \tilde{W}_s}\mathbb{E}[\mathbb{E}[\mathbbm{1}_{\tilde{\mathscr{Y}}_{j_q}=i_q}|\tilde{\mathcal{F}}_{j_q}]\mathbbm{1}_{\tilde{\mathscr{Y}}_{j_1}=i_1,\cdots,\tilde{\mathscr{Y}}_{j_{q-1}}=i_{q-1}}|\tilde{\mathcal{B}}]\nonumber\\
  &\leq& \frac{\mathbbm{1}_{i_1,\cdots,i_q\in \tilde{W}_s}}{\tilde{\mathscr{N}}_{j_q}}\mathbb{E}[\mathbbm{1}_{\tilde{\mathscr{Y}}_{j_1}=i_1,\cdots,\tilde{\mathscr{Y}}_{j_{q-1}}=i_{q-1}}|\tilde{\mathcal{B}}]\leq \cdots\leq\frac{\mathbbm{1}_{i_1,\cdots,i_q\in \tilde{W}_s}}{\tilde{\mathscr{N}}_{j_1}\tilde{\mathscr{N}}_{j_2}\cdots \tilde{\mathscr{N}}_{j_q}}. 
\end{eqnarray}
By (\ref{Eq12.1.4}) and (\ref{Eq12.1.6}), we have
\begin{eqnarray}\label{Eq12.1.7}
  &&  \mathbb{E}[\mathbbm{1}_{\sigma(i_1)=j_1,\cdots,\sigma(i_q)=j_q} \mathbbm{1}_{i_1,\cdots,i_q\in \tilde{W}_s}|\tilde{\mathcal{B}}]\mathbbm{1}_{\tilde{\mathcal{C}}\cap \tilde{\mathcal{W}}_s}\nonumber\\
  &\leq& \frac{\mathbbm{1}_{i_1,\cdots,i_q\in \tilde{W}_s}\mathbbm{1}_{\tilde{\mathcal{C}}\cap\tilde{\mathcal{W}}_s}}{\tilde{\mathscr{N}}_{j_1}\tilde{\mathscr{N}}_{j_2}\cdots \tilde{\mathscr{N}}_{j_q}}\leq (2e^{4L_0^2}\beta_n^{1\slash 2})^q \mathbbm{1}_{i_1,\cdots,i_q\in \tilde{W}_s}\mathbbm{1}_{\tilde{\mathcal{W}}_s}.
\end{eqnarray}

By (\ref{Eq12.1.3}), (\ref{Eq12.1.7}), and Lemma \ref{Lemma3.1}, we have
\begin{eqnarray*}
   &&  \mathbb{E}[\Lambda_{s,q,2}|\tilde{\mathcal{B}}]\mathbbm{1}_{\tilde{\mathcal{C}}\cap\tilde{\mathcal{W}}_s}\leq (2e^{4L_0^2}\beta_n^{1\slash 2})^q\mathbbm{1}_{\tilde{\mathcal{W}}_s} \sum_{\substack{ i_1<\cdots<i_q, j_1<\cdots<j_q\\i_1,\cdots,i_q\in (sL\beta_n^{-1\slash 2},n]\cap\mathbb{N}^{*}\\j_1,\cdots,j_q\in \mathcal{J}_{s,2}\cap\mathbb{N}^{*}}}\mathbbm{1}_{i_1,\cdots,i_q\in \tilde{W}_s} \nonumber\\
   &\leq& (2e^{4L_0^2}\beta_n^{1\slash 2})^q\binom{|\tilde{W}_s|}{q}\binom{|\mathcal{J}_{s,2}\cap\mathbb{N}^{*}|}{q}\mathbbm{1}_{\tilde{\mathcal{W}}_s}\nonumber\\
   &\leq& \Big(\frac{2e^{2+4L_0^2}\beta_n^{1\slash 2}|\tilde{W}_s||\mathcal{J}_{s,2}\cap\mathbb{N}^{*}|}{q^2}\Big)^q \mathbbm{1}_{\tilde{\mathcal{W}}_s}\leq (CL\beta_n^{-1\slash 2}q^{-2} )^q,
\end{eqnarray*}
where we use the fact that
\begin{equation*}
    |\mathcal{J}_{s,2}\cap\mathbb{N}^{*}|\leq |((s-1)L\beta_n^{-1\slash 2},sL\beta_n^{-1\slash 2}]\cap\mathbb{N}^{*}|\leq L\beta_n^{-1\slash 2}+1\leq 2L\beta_n^{-1\slash 2}
\end{equation*}
in the last line. Hence
\begin{equation}\label{Eq12.1.8}
    \mathbb{E}[\Lambda_{s,q,2}\mathbbm{1}_{\tilde{\mathcal{C}}\cap\tilde{\mathcal{W}}_s}]=\mathbb{E}[\mathbb{E}[\Lambda_{s,q,2}|\tilde{\mathcal{B}}]\mathbbm{1}_{\tilde{\mathcal{C}}\cap\tilde{\mathcal{W}}_s}]\leq (CL\beta_n^{-1\slash 2}q^{-2} )^q.
\end{equation}

Now for any $q\in \mathbb{N}^{*}$, if $LIS(\sigma|_{\mathcal{T}_{s,2}})\geq q$, then there exist
\begin{equation*}
    i_1,\cdots,i_q\in (sL\beta_n^{-1\slash 2},n]\cap\mathbb{N}^{*}, \quad j_1,\cdots,j_q\in \mathcal{J}_{s,2}\cap\mathbb{N}^{*},
\end{equation*}
such that $i_1<\cdots<i_q$, $j_1<\cdots<j_q$, and $\sigma(i_l)=j_l$ for every $l \in [q]$. For any $l\in [q]$, by (\ref{Eq12.1.5}), we have $(i_l,\sigma(i_l))=(i_l,j_l)\in \mathcal{X}_{t,2} \times \mathcal{Y}_{t,2} $ for some $t\in [L\slash L_0]$, hence $(i_l,\sigma_0(i_l))\in \mathcal{X}_{t,2} \times \mathcal{Y}_{t,2}$ and
\begin{equation*}
    \sigma_0(i_l)\in \mathcal{Y}_{t,2}\subseteq ((s-1)L\beta_n^{-1\slash 2},sL\beta_n^{-1\slash 2}+L_0\beta_n^{-1\slash 2}\slash 2];
\end{equation*}
as $i_l\in (sL\beta_n^{-1\slash 2},n]\cap\mathbb{N}^{*}$, we have
\begin{equation*}
    (i_l,\sigma_0(i_l))\in (sL\beta_n^{-1\slash 2},n]\times ((s-1)L\beta_n^{-1\slash 2},sL\beta_n^{-1\slash 2}+L_0\beta_n^{-1\slash 2}\slash 2],
\end{equation*}
hence $i_l\in \tilde{W}_s$. Hence $\Lambda_{s,q,2}\geq 1$. We conclude that for any $q\in \mathbb{N}^{*}$,
\begin{equation}\label{Eq12.1.9}
    \{LIS(\sigma|_{\mathcal{T}_{s,2}})\geq q\}\subseteq \{\Lambda_{s,q,2}\geq  1\}.
\end{equation}

By (\ref{Eq12.1.8}) and (\ref{Eq12.1.9}), for any $q\in\mathbb{N}^{*}$, we have
\begin{eqnarray}
&&\mathbb{P}(\{LIS(\sigma|_{\mathcal{T}_{s,2}})\geq q\}\cap\tilde{\mathcal{C}}\cap\tilde{\mathcal{W}}_s)\leq \mathbb{P}(\{\Lambda_{s,q,2}\geq  1\}\cap \tilde{\mathcal{C}}\cap\tilde{\mathcal{W}}_s)\nonumber\\
&=& \mathbb{E}[\mathbbm{1}_{\Lambda_{s,q,2}\geq  1}\mathbbm{1}_{\tilde{\mathcal{C}}\cap\tilde{\mathcal{W}}_s}]\leq \mathbb{E}[\Lambda_{s,q,2}\mathbbm{1}_{\tilde{\mathcal{C}}\cap\tilde{\mathcal{W}}_s}] \leq (C_0' L\beta_n^{-1\slash 2}q^{-2} )^q,
\end{eqnarray}
where $C_0' \geq 1$ is a positive absolute constant. Taking $q=\lceil \sqrt{2C_0'} L^{1\slash 2}\beta_n^{-1\slash 4} \rceil$, we obtain that
\begin{eqnarray}\label{Eq12.1.10}
 && \mathbb{P}(\{LIS(\sigma|_{\mathcal{T}_{s,2}})\geq 2\sqrt{2C_0'}L^{1\slash 2}\beta_n^{-1\slash 4}\}\cap\tilde{\mathcal{C}}\cap\tilde{\mathcal{W}}_s) \nonumber\\
 &\leq& \mathbb{P}(\{LIS(\sigma|_{\mathcal{T}_{s,2}})\geq \lceil \sqrt{2C_0'}L^{1\slash 2}\beta_n^{-1\slash 4} \rceil \}\cap\tilde{\mathcal{C}}\cap\tilde{\mathcal{W}}_s)\nonumber\\
 &\leq& 2^{-\lceil \sqrt{2C_0'} L^{1\slash 2}\beta_n^{-1\slash 4} \rceil}\leq \exp(-cL^{1\slash 2}\beta_n^{-1\slash 4}).
\end{eqnarray}
By (\ref{Eq12.1.2}), (\ref{Eq12.1.1}), (\ref{Eq12.1.10}), and the union bound, we have
\begin{eqnarray}
 && \mathbb{P}(LIS(\sigma|_{\mathcal{T}_{s,2}})\geq 2\sqrt{2C_0'}L^{1\slash 2}\beta_n^{-1\slash 4}) \nonumber\\
 &\leq& \exp(-cL^{1\slash 2}\beta_n^{-1\slash 4})+CL\exp(-c\beta_n^{-1\slash 2}) \leq CL\exp(-c\beta_n^{-1\slash 4}).
\end{eqnarray}
Note that $LIS(\sigma|_{\mathcal{T}_{s,2}})\leq |\mathcal{I}_{n,s}\cap\mathbb{N}^{*}|\leq L\beta_n^{-1\slash 2}+1\leq 2L\beta_n^{-1\slash 2}$. Hence
\begin{eqnarray}\label{Eq15.1.3}
  \mathbb{E}[LIS(\sigma|_{\mathcal{T}_{s,2}})]&\leq& (2L\beta_n^{-1\slash 2})(CL\exp(-c\beta_n^{-1\slash 4}))+2\sqrt{2C_0'}L^{1\slash 2}\beta_n^{-1\slash 4}\nonumber\\
  &\leq& CL^{1\slash 2}\beta_n^{-1\slash 4}+CL^2\exp(-c\beta_n^{-1\slash 4}).
\end{eqnarray}

\bigskip

By (\ref{Eq15.1.1}), (\ref{Eq15.1.2}), and (\ref{Eq15.1.3}), we conclude that \begin{eqnarray}
    \mathbb{E}[LIS(\sigma|_{\mathcal{R}_s'})]&\leq& \mathbb{E}[LIS(\sigma|_{\mathcal{T}_{s,1}})]+\mathbb{E}[LIS(\sigma|_{\mathcal{T}_{s,2}})] \nonumber\\
    &\leq& CL^{1\slash 2}\beta_n^{-1\slash 4}+CL^2\exp(-c\beta_n^{-1\slash 4}).
\end{eqnarray}

\bigskip

In the following, we show (\ref{Eq8.1.6}). Let $\sigma$ be drawn from $\tilde{\mathbb{P}}_{n,\beta_n}$. Note that the distribution of $\sigma^{-1}$ is given by $\tilde{\mathbb{P}}_{n,\beta_n}$, and $LIS(\sigma^{-1}|_{\mathcal{R}_s'})=LIS(\sigma|_{\mathcal{R}_s''})$. Hence by (\ref{Eq7.1.1}),
\begin{equation}
    \mathbb{E}[LIS(\sigma|_{\mathcal{R}_s''})]=\mathbb{E}[LIS(\sigma^{-1}|_{\mathcal{R}_s'})]\leq CL^{1\slash 2}\beta_n^{-1\slash 4}+CL^2\exp(-c\beta_n^{-1\slash 4}).
\end{equation}

\end{proof}

The following proposition bounds $LIS(\sigma|_{\mathcal{R}_s})$ for $\sigma$ drawn from $\tilde{\mathbb{P}}_{n,\beta_n}$ and any $s\in [\lfloor n\beta_n^{1\slash 2} \slash L \rfloor ]$.

\begin{proposition}\label{P5.2}
Assume that $n\beta_n^{1\slash 2}\geq 4L$ and $\beta_n\leq 1\slash 100$, and let $\sigma$ be drawn from $\tilde{\mathbb{P}}_{n,\beta_n}$. Then there exist positive absolute constants $C,c$, such that for any $s\in [\lfloor n\beta_n^{1\slash 2}\slash L\rfloor]$, 
\begin{equation}\label{Eq15.2.1}
    \mathbb{E}[LIS(\sigma|_{\mathcal{R}_s})]\leq CL\beta_n^{-1\slash 4}+CL^2\exp(-c\beta_n^{-1\slash 4}).
\end{equation}
\end{proposition}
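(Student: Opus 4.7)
The plan follows the blueprint of Proposition~\ref{P4.2} with two modifications dictated by the $L^2$ geometry: the hit-and-run algorithm is replaced by the resampling algorithm of Lemma~\ref{L2.2}, and the analysis is localized to sub-intervals of width $L_0\beta_n^{-1/2}$ (as in Proposition~\ref{P5.1}) so that all exponential constants remain absolute. The natural length scale $\beta_n^{-1/2}$ (versus $\beta_n^{-1}$ for the $L^1$ model) accounts for the $\beta_n^{-1/4}$ scaling in the target bound. As a preliminary, the case $s=1$ is reduced to the case $s=\lfloor n\beta_n^{1/2}/L\rfloor$ via the reflection $\bar\sigma(i) := n+1-\sigma(n+1-i)$, which preserves $\tilde{\mathbb{P}}_{n,\beta_n}$ because $\tilde{H}(\bar\sigma,Id)=\tilde{H}(\sigma,Id)$, so it suffices to treat $s\in[2,\lfloor n\beta_n^{1/2}/L\rfloor]$.

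Fix such an $s$ and partition $\mathcal{I}_{n,s}$ into $K:=L/L_0$ strips $\mathcal{K}_{s,t}$ of width $L_0\beta_n^{-1/2}$. Any increasing subsequence in $\mathcal{R}_s$ visits the $K\times K$ grid of sub-squares $\mathcal{K}_{s,t}\times\mathcal{K}_{s,t'}$ along a monotone lattice path of length at most $2K-1$, giving the deterministic bound
\[
LIS(\sigma|_{\mathcal{R}_s}) \leq (2K-1)\max_{(t,t')\in[K]^2} LIS(\sigma|_{\mathcal{K}_{s,t}\times\mathcal{K}_{s,t'}}).
\]
For each fixed pair $(t,t')$, I represent $\sigma$ as the output of one resampling step from $\sigma_0\sim\tilde{\mathbb{P}}_{n,\beta_n}$ with $S_X=\mathcal{K}_{s,t}\cap\mathbb{N}^*$, $S_Y=\mathcal{K}_{s,t'}\cap\mathbb{N}^*$, and $t_0:=\lfloor(s-1)L\beta_n^{-1/2}+(t-1)L_0\beta_n^{-1/2}\rfloor$. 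The width-$L_0\beta_n^{-1/2}$ localization ensures $(I-t_0)|\sigma_0(I)-J|\leq 2L_0^2\beta_n^{-1}$ for every matched pair, so the resampling weight $e^{-2\beta_n(I-t_0)(\sigma_0(I)-J)}$ is bounded below by the absolute constant $e^{-4L_0^2}$.

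For the diagonal case $t=t'$, Proposition~\ref{P2.2.2} supplies at least $cL_0\beta_n^{-1/2}$ matched diagonal pairs with high probability, and Hoeffding's inequality then yields $N_m\geq c\beta_n^{-1/2}$ uniformly in $m$. Running the $\Lambda_q$-counting argument of Proposition~\ref{P5.1}~Step~1 gives $\mathbb{E}[\Lambda_q]\leq (CL_0^2\beta_n^{-1/2}/q^2)^q$, which optimized at $q\asymp L_0\beta_n^{-1/4}$ produces $LIS(\sigma|_{\mathcal{K}_{s,t}^2})\leq CL_0\beta_n^{-1/4}$ with probability at least $1-C\exp(-c\beta_n^{-1/4})$. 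For the off-diagonal case $t\neq t'$, the same counting scheme applies after replacing the binomial factors by the sharper $\binom{\tilde{M}_{t,t'}}{q}^2$, with Proposition~\ref{P2.2.2} controlling $\tilde{M}_{t,t'}\leq C\beta_n^{-1/2}$. Taking the union bound over the $K^2\leq CL^2$ sub-squares, substituting into the monotone-path bound, and using the trivial estimate $LIS(\sigma|_{\mathcal{R}_s})\leq 2L\beta_n^{-1/2}$ on the exceptional event then delivers the target expectation bound.

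The chief technical difficulty is the off-diagonal estimate, since the Hoeffding-based lower bound on the $N$-quantities degrades when $\tilde{M}_{t,t'}$ is small. Unlike the $L^1$ setting of Section~\ref{Sect.3.1.2}~Step~4, where an exact conjugation symmetry renders the induced permutation uniform and Proposition~\ref{P1} applies directly, the $L^2$ resampling is weighted by factors $e^{2\beta_n(I-t_0)J}$ that depend on the assignment. The key observation to complete the proof is that the localization to a sub-square of width $L_0\beta_n^{-1/2}$ bounds the ratio of these weights across different assignments by the absolute constant $e^{4L_0^2}$, so the induced distribution on the $\tilde{M}_{t,t'}$ matched pairs lies within a bounded factor of uniform and the $2\sqrt{\tilde{M}_{t,t'}}$-scaling of $LIS$ from Proposition~\ref{P1} transfers at the price of a multiplicative $e^{O(L_0^2)}$ in the tail.
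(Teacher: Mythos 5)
Your high-level plan (localize to width-$L_0\beta_n^{-1/2}$ strips, represent $\sigma$ via one step of the resampling algorithm from $\sigma_0\sim\tilde{\mathbb{P}}_{n,\beta_n}$, then run a $\Lambda_q$-counting argument) is the right template, and your diagonal analysis ($t=t'$) is sound. But the off-diagonal case, which you correctly identify as the crux, has a genuine gap, and the paper resolves it by a \emph{different decomposition} rather than the argument you sketch.

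The concrete problem is that your claim ``the localization to a sub-square of width $L_0\beta_n^{-1/2}$ bounds the ratio of these weights across different assignments by the absolute constant $e^{4L_0^2}$'' is false for boxes containing many matched points. Writing $a_l := i_l-\mu_X$, $b_l := \tau(i_l)-\mu_Y$ for the centered coordinates of the $k:=\tilde M_{t,t'}$ matched pairs, the assignment-dependent part of $\beta_n\sum_l(\tau(i_l)-i_l)^2$ reduces (after dropping assignment-independent terms) to $-2\beta_n\sum_l a_l b_l$; each summand is $O(L_0^2)$ in absolute value, but there are $k$ of them, and by the rearrangement inequality the range of $\sum_l a_l b_l$ over bijections is genuinely of order $k L_0^2\beta_n^{-1}$. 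So the log-ratio of conditional weights can be $\Theta(k L_0^2)$, which for $k$ of order $\beta_n^{-1/2}$ is $\Theta(\beta_n^{-1/2}L_0^2)$, not bounded. Because the $L^2$ model is \emph{biased toward increasing arrangements} inside each box (the cost $\sum(\sigma(i)-i)^2$ is minimized by the monotone bijection), this bias is not small, and a ``within a constant of uniform'' transfer of Proposition~\ref{P1} is unavailable. Your fallback via the $\Lambda_q$-count also fails in the off-diagonal case: the bound $(\ldots/q^2)^q$ needs a lower bound on the resampling denominators $N_m$, and when $S_X,S_Y$ are both narrow off-diagonal strips there is no diagonal mass inside $S_X\times S_Y$ to furnish it, so $N_m$ may be $O(1)$ rather than $\Theta(\beta_n^{-1/2})$.

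The paper's proof (Proposition~\ref{P5.2}) sidesteps exactly this by never resampling an off-diagonal box directly. It splits $\mathcal{R}_s$ into the lower triangle $\{x\geq y\}$ and upper triangle $\{x\leq y\}$, handles the upper triangle via $\sigma^{-1}$, and covers the lower triangle with strips $\mathcal{Q}_t=\mathcal{X}_t\times\mathcal{Y}_t$ where $\mathcal{Y}_t$ has width $L_0\beta_n^{-1/2}$ but $\mathcal{X}_t$ is the \emph{entire ray to $n$}. The resampling step is then run with $S_X=\mathcal{X}_t$ and $S_Y=\mathcal{J}_t$ a strip of width $2L_0\beta_n^{-1/2}$ containing $\mathcal{Y}_t$. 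The long ray $\mathcal{X}_t$ guarantees that the matched set $\{i\in S_X:\sigma_0(i)\in S_Y\}$ always contains $\Omega(\beta_n^{-1/2})$ near-diagonal pairs in $[z_t,z_t']^2$ (via Proposition~\ref{P2.2.2}), which furnish the $N_m\gtrsim\beta_n^{-1/2}$ lower bound uniformly over all $m$ with $J_m\in\mathcal{Y}_t$, including those assigned to points far from the diagonal. This is the key structural trick your box grid lacks; without it, your off-diagonal argument does not go through.

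Two minor points: (i) the reduction of an extreme $s$ by the reflection $\bar\sigma(i)=n+1-\sigma(n+1-i)$ is fine, but it is the case $s=\lfloor n\beta_n^{1/2}/L\rfloor$, not $s=1$, that the paper treats this way --- the strip bound works directly for all $s\leq\lfloor n\beta_n^{1/2}/L\rfloor-1$, and the last $s$ is the one whose interval $\mathcal{I}_{n,s}$ has nonstandard width; (ii) the monotone-path bound $LIS(\sigma|_{\mathcal{R}_s})\leq(2K-1)\max_{(t,t')}LIS(\sigma|_{\mathcal{K}_{s,t}\times\mathcal{K}_{s,t'}})$ is correct, but combined with the triangle split the paper avoids even this loss.
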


\begin{proof}

We start by showing (\ref{Eq15.2.1}) for any $s\in [\lfloor n\beta_n^{1\slash 2}\slash L\rfloor-1]$. In the following, we fix an arbitrary $s\in [\lfloor n\beta_n^{1\slash 2}\slash L\rfloor-1]$. 

\paragraph{Step 1}

For any $t\in [ L\slash L_0 ]$, we let
\begin{equation}
    \mathcal{X}_{t}:=((s-1)L\beta_n^{-1\slash 2}+(t-1)L_0\beta_n^{-1\slash 2},n]\cap\mathbb{N}^{*},
\end{equation}
\begin{equation}
    \mathcal{Y}_{t}:=((s-1)L\beta_n^{-1\slash 2}+(t-1)L_0\beta_n^{-1\slash 2},(s-1)L\beta_n^{-1\slash 2}+t L_0\beta_n^{-1\slash 2}]\cap\mathbb{N}^{*}.
\end{equation}
\begin{equation}
    \mathcal{J}_t:=((s-1)L\beta_n^{-1\slash 2}+(t-1)L_0\beta_n^{-1\slash 2},(s-1)L\beta_n^{-1\slash 2}+(t+1) L_0\beta_n^{-1\slash 2}]\cap\mathbb{N}^{*}.
\end{equation}
Note that $(s-1)L\beta_n^{-1\slash 2}+(t+1) L_0\beta_n^{-1\slash 2}\leq (s+1)L\beta_n^{-1\slash 2}\leq n$. We also let $\mathcal{Q}_{t}:=\mathcal{X}_{t}\times \mathcal{Y}_{t}$ and $\alpha_t:=(s-1)L\beta_n^{-1\slash 2}+(t-1)L_0\beta_n^{-1\slash 2}-1$. 

In the following, we fix an arbitrary $t\in [L\slash L_0]$, and bound $LIS(\sigma|_{\mathcal{Q}_{t}})$. 

We first sample $\sigma_0$ from $\tilde{\mathbb{P}}_{n,\beta_n}$, and then run the resampling algorithm for the $L^2$ model (as described at the end of Section \ref{Sect.1.5}) with inputs $\sigma_0,\mathcal{X}_{t},\mathcal{J}_{t},\alpha_t$. By Lemma \ref{L2.2}, the distribution of $\sigma$ is given by $\tilde{\mathbb{P}}_{n,\beta_n}$. 

Let $M_t\in\mathbb{N}$, $I_{t,1},\cdots,I_{t,M_t}\in \mathcal{X}_t$, and $J_{t,1},\cdots,J_{t,M_t}\in \mathcal{J}_t$ be such that $I_{t,1}<\cdots<I_{t,M_t}$, $J_{t,1}<\cdots<J_{t,M_t}$, 
\begin{equation}
     \{i\in \mathcal{X}_{t}:\sigma_0(i)\in \mathcal{J}_{t}\}=\{I_{t,1},\cdots,I_{t,M_t}\},
\end{equation}
\begin{equation}
    \{j\in \mathcal{J}_{t}:\sigma_0^{-1}(j)\in \mathcal{X}_{t}\}=\{J_{t,1},\cdots,J_{t,M_t}\}.
\end{equation}
For any $m\in [n]\backslash [M_t]$, we let $I_{t,m}=0$ and $J_{t,m}=0$. According to the resampling algorithm for the $L^2$ model, $\sigma$ can be generated as follows:
\begin{itemize}
    \item For each $m\in [M_t]$, we independently sample $u_{t,m}$ from the uniform distribution on $[0,e^{2\beta_n (I_{t,m}-\alpha_t) \sigma_0(I_{t,m})}]$, and let
    \begin{equation*}
        b_{t,m}=\log(u_{t,m})\slash (2\beta_n (I_{t,m}-\alpha_t) ).
    \end{equation*}
    For each $m\in [n]\backslash [M_t]$, we let $b_{t,m}=0$.
    \item For each $m\in [M_t]$, let 
    \begin{equation}
        N_{t,m}=|\{m'\in  [M_t]: b_{t,m'} \leq J_{t,m} \}|-m+1.
    \end{equation}
   Now look at the $N_{t,1}$ integers $m'\in [M_t]$ with $b_{t,m'}\leq J_{t,1}$, and pick $Y_{t,1}$ uniformly from these integers; then look at the $N_{t,2}$ remaining integers $m'\in [M_t]$ with $b_{t,m'}\leq J_{t,2}$ (with $Y_{t,1}$ deleted from the list), and pick $Y_{t,2}$ uniformly from these integers; and so on. In this way we obtain $\{Y_{t,m}\}_{m\in [M_t]}$. For each $m\in [n]\backslash [M_t]$, we let $N_{t,m}=0$ and $Y_{t,m}=0$.
\end{itemize}
We let $\sigma\in S_n$ be the unique permutation that satisfies the following conditions:
\begin{itemize}
    \item For any $m\in [M_t]$, $\sigma(I_{t,Y_{t,m}})=J_{t,m}$.
    \item For any $i\in [n]\backslash \{I_{t,1},\cdots,I_{t,M_t}\}$, $\sigma(i)=\sigma_0(i)$.
\end{itemize}

We let
\begin{equation}
    z_t:=(s-1)L\beta_n^{-1\slash 2}+t L_0\beta_n^{-1\slash 2}, \quad z_t':=(s-1)L\beta_n^{-1\slash 2}+(t+1)L_0\beta_n^{-1\slash 2}. 
\end{equation}
Recall Definition \ref{Def2.2}. As $\lceil z_t\rceil-1,\lfloor z_t'\rfloor \in [n]$, by (\ref{Eq5.1.1}) and Proposition \ref{P2.2.2}, 
\begin{equation}\label{Eq16.1.1}
    \mathbb{P}(|\mathcal{D}_{\lceil z_{t}\rceil-1}(\sigma_0)|\geq C_1\beta_n^{-1\slash 2}) \leq C\exp(-c\beta_n^{-1\slash 2}),
\end{equation}
\begin{equation}\label{Eq16.1.2}
    \mathbb{P}(|\mathcal{D}'_{\lfloor z_{t}'\rfloor}(\sigma_0)|\geq C_1\beta_n^{-1\slash 2}) \leq C\exp(-c\beta_n^{-1\slash 2}).
\end{equation}
Let $\mathcal{Z}_t$ be the event that $|\mathcal{D}_{\lceil z_{t}\rceil-1}(\sigma_0)|\leq C_1\beta_n^{-1\slash 2}$ and $|\mathcal{D}'_{\lfloor z_{t}'\rfloor}(\sigma_0)|\leq C_1\beta_n^{-1\slash 2}$. By (\ref{Eq16.1.1}), (\ref{Eq16.1.2}), and the union bound, we have
\begin{equation}\label{Eq16.1.12}
    \mathbb{P}(\mathcal{Z}_t^c)\leq C\exp(-c\beta_n^{-1\slash 2}).
\end{equation}
Note that $L_0=8C_1$, $L_0\beta_n^{-1\slash 2}\geq 80$, and $\lfloor z_t'\rfloor-\lceil z_t\rceil\geq z_t'-z_t-2=L_0\beta_n^{-1\slash 2}-2$. Hence when the event $\mathcal{Z}_t$ holds, we have 
\begin{eqnarray}\label{Eq16.1.3}
&&  |S(\sigma_0)\cap [z_t,z_t']^2|\geq |S(\sigma_0)\cap [\lceil z_t\rceil ,\lfloor z_t' \rfloor]^2|  \nonumber\\
&\geq& |[\lceil z_{t}\rceil ,\lfloor z'_{t}\rfloor]\cap\mathbb{N}^{*}|-|\mathcal{D}_{\lceil z_{t}\rceil-1}(\sigma_0)|-|\mathcal{D}'_{\lfloor z_{t}'\rfloor}(\sigma_0)|\nonumber\\
   &\geq&  \lfloor z'_{t}\rfloor-\lceil z_{t}\rceil+1-2C_1\beta_n^{-1\slash 2}\geq L_0\beta_n^{-1\slash 2}-1-\frac{1}{4}L_0\beta_n^{-1\slash 2}\nonumber\\
   &=& \frac{3}{4}L_0\beta_n^{-1\slash 2}-1 \geq \frac{1}{2}L_0\beta_n^{-1\slash 2}.
\end{eqnarray}

Let $\mathscr{M}_t$ be the set of $m\in [M_t]$ such that $J_{t,m}\in \mathcal{Y}_t$. Below we consider any $m\in [n]$. If $m\in\mathscr{M}_t$, for any $m'\in [M_t]$ such that $\sigma_0(I_{t,m'})<J_{t,m}$ (note that there are $m-1$ such $m'$), we have $b_{t,m'}\leq \sigma_0(I_{t,m'})<J_{t,m}$, hence
\begin{equation}\label{Eq16.1.4}
    N_{t,m}=\sum_{\substack{m'\in [M_t]:\\ \sigma_0(I_{t,m'})\geq J_{t,m}}}  \mathbbm{1}_{b_{t,m'}\leq J_{t,m}}. 
\end{equation}
For any $i\in [n]$ such that $(i,\sigma_0(i))\in [z_t,z_t']^2$, we have $(i,\sigma_0(i))\in\mathcal{X}_t\times \mathcal{J}_t$. Hence there exists some $m'\in [M_t]$, such that $i=I_{t,m'}$. Let
\begin{equation}
    \mathcal{M}_t:=\{m'\in [M_t]: (I_{t,m'},\sigma_0(I_{t,m'}))\in [z_t,z_t']^2\}. 
\end{equation}
By (\ref{Eq16.1.3}), when the event $\mathcal{Z}_t$ holds, we have
\begin{equation}\label{Eq16.1.8}
    |\mathcal{M}_t|\geq |S(\sigma_0)\cap [z_t,z_t']^2|\geq \frac{1}{2}L_0\beta_n^{-1\slash 2}\geq \beta_n^{-1\slash 2}. 
\end{equation}
If $m\in \mathscr{M}_t$, then $J_{t,m}\in \mathcal{Y}_t$, hence for any $m'\in \mathcal{M}_t$, $\sigma_0(I_{t,m'})\geq z_t\geq J_{t,m}$. Hence by (\ref{Eq16.1.4}), we have
\begin{equation}\label{Eq16.1.5}
    N_{t,m}\geq \sum_{m'\in\mathcal{M}_t} \mathbbm{1}_{b_{t,m'}\leq J_{t,m}}.
\end{equation}
Now note that if $m\in\mathscr{M}_t$, conditional on $\sigma_0$, $\{\mathbbm{1}_{b_{t,m'}\leq J_{t,m}}\}_{m'\in \mathcal{M}_t}$ are mutually independent, and for each $m'\in \mathcal{M}_t$, $\mathbbm{1}_{b_{t,m'}\leq J_{t,m}}$ follows the Bernoulli distribution with 
\begin{eqnarray}\label{Eq16.1.6}
   \mathbb{P}(\mathbbm{1}_{b_{t,m'}\leq J_{t,m}}=1|\sigma_0)&=&\mathbb{P}(b_{t,m'}\leq J_{t,m}|\sigma_0)=\mathbb{P}(u_{t,m'}\leq e^{2\beta_n (I_{t,m'}-\alpha_t)J_{t,m}}|\sigma_0) \nonumber\\
   &=& e^{-2\beta_n (I_{t,m'}-\alpha_t)(\sigma_0(I_{t,m'})-J_{t,m})}.
\end{eqnarray}
If $m\in\mathscr{M}_t$, for any $m'\in\mathcal{M}_t$, as $J_{t,m}\in\mathcal{Y}_t$ and $(I_{t,m'},\sigma_0(I_{t,m'}))\in [z_t,z_t']^2$, we have
\begin{equation*}
    I_{t,m'}-\alpha_t\begin{cases}
     \geq z_{t}-\alpha_t\geq 1\\
     \leq z'_{t}-\alpha_t= 2L_0\beta_n^{-1\slash 2}+1\leq 3L_0\beta_n^{-1\slash 2}
    \end{cases},
\end{equation*}
\begin{equation*}
   0\leq  \sigma_0(I_{t,m'})-J_{t,m} \leq z'_{t}-J_{t,m}\leq 2L_0\beta_n^{-1\slash 2},
\end{equation*}
hence by (\ref{Eq16.1.6}), we have
\begin{equation}\label{Eq16.1.7}
    \mathbb{P}(\mathbbm{1}_{b_{t,m'}\leq J_{t,m}}=1|\sigma_0)\geq e^{-12 L_0^2}.
\end{equation}
By (\ref{Eq16.1.5}), (\ref{Eq16.1.7}), and Hoeffding's inequality, for any $x\in [0,e^{-12L_0^2}]$, we have
\begin{equation*}
    \mathbb{P}(N_{t,m}\leq (e^{-12L_0^2}-x)|\mathcal{M}_t||\sigma_0)\mathbbm{1}_{m\in\mathscr{M}_t}\leq e^{-2|\mathcal{M}_t|x^2}\mathbbm{1}_{m\in\mathscr{M}_t}, 
\end{equation*}
which by (\ref{Eq16.1.8}) leads to
\begin{equation*}
    \mathbb{P}(N_{t,m}\leq (e^{-12L_0^2}-x)\beta_n^{-1\slash 2}|\sigma_0) \mathbbm{1}_{m\in \mathscr{M}_t} \mathbbm{1}_{\mathcal{Z}_t}  \leq e^{-2\beta_n^{-1\slash 2}x^2}\mathbbm{1}_{m\in \mathscr{M}_t}.
\end{equation*}
Taking $x=e^{-12L_0^2}\slash 2$, we have
\begin{eqnarray}\label{Eq16.1.9}
  && \mathbb{P}(\{N_{t,m}\leq e^{-12L_0^2}\beta_n^{-1\slash 2} \slash 2\}\cap\{m\in \mathscr{M}_t\}\cap\mathcal{Z}_t|\sigma_0)\nonumber\\
   &=& \mathbb{P}(N_{t,m}\leq e^{-12L_0^2}\beta_n^{-1\slash 2} \slash 2|\sigma_0) \mathbbm{1}_{m\in \mathscr{M}_t} \mathbbm{1}_{\mathcal{Z}_t} \leq e^{-c\beta_n^{-1\slash 2}}\mathbbm{1}_{m\in \mathscr{M}_t}. 
\end{eqnarray}

Let $\mathcal{C}_t$ be the event that $N_{t,m}\geq e^{-12L_0^2}\beta_n^{-1\slash 2} \slash 2$ for any $m\in\mathscr{M}_t$. By (\ref{Eq16.1.9}) and the union bound, \begin{eqnarray}
&& \mathbb{P}(\mathcal{C}_t^c\cap\mathcal{Z}_t|\sigma_0)\leq 
\mathbb{P}\Big(\bigcup_{m=1}^n\big(\{N_{t,m}\leq e^{-12L_0^2}\beta_n^{-1\slash 2} \slash 2\}\cap\{m\in \mathscr{M}_t\}\cap\mathcal{Z}_t\big)\Big|\sigma_0\Big)\nonumber\\
&\leq& \sum_{m=1}^n \mathbb{P}(\{N_{t,m}\leq e^{-12L_0^2}\beta_n^{-1\slash 2} \slash 2\}\cap\{m\in \mathscr{M}_t\}\cap\mathcal{Z}_t|\sigma_0)\nonumber\\ &\leq& e^{-c\beta_n^{-1\slash 2}}\sum_{m=1}^n \mathbbm{1}_{m\in \mathscr{M}_t}=|\mathscr{M}_t| e^{-c\beta_n^{-1\slash 2}}
\leq |\mathcal{Y}_{t}|e^{-c\beta_n^{-1\slash 2}}\nonumber\\
&\leq& (L_0\beta_n^{-1\slash 2}+1)e^{-c\beta_n^{-1\slash 2}}\leq C\beta_n^{-1\slash 2}e^{-c\beta_n^{-1\slash 2}}\leq C\exp(-c\beta_n^{-1\slash 2}).
\end{eqnarray}
Hence 
\begin{equation}\label{Eq16.1.10}
    \mathbb{P}(\mathcal{C}_t^c\cap\mathcal{Z}_t)=\mathbb{E}[\mathbb{P}(\mathcal{C}_t^c\cap\mathcal{Z}_t|\sigma_0)]\leq C\exp(-c\beta_n^{-1\slash 2}).
\end{equation}
By (\ref{Eq16.1.12}), (\ref{Eq16.1.10}), and the union bound, 
\begin{equation}\label{Eq16.1.11}
    \mathbb{P}(\mathcal{C}_t^c)\leq C\exp(-c\beta_n^{-1\slash 2}).
\end{equation}

Let
\begin{equation}\label{Eq16.7.1}
    W_t:=\{i\in [n]: (i,\sigma_0(i))\in \mathcal{X}_t\times \mathcal{J}_t\}.
\end{equation}
Note that
\begin{equation}\label{Eq16.6.7}
    |W_t|\leq |\mathcal{J}_t|\leq 2L_0\beta_n^{-1\slash 2}+1\leq 3L_0\beta_n^{-1\slash 2}. 
\end{equation}

For any $q\in \mathbb{N}^{*}$, we let
\begin{equation}\label{Eq16.6.1}
    \Lambda_{t,q}:=\sum_{\substack{i_1<\cdots<i_q, j_1<\cdots<j_q\\i_1,\cdots,i_q\in\mathcal{X}_t\\j_1,\cdots,j_q\in\mathcal{Y}_t}} \mathbbm{1}_{\sigma(i_1)=j_1,\cdots,\sigma(i_q)=j_q}\mathbbm{1}_{i_1,\cdots,i_q\in W_t}. 
\end{equation}
In the following, we bound $\Lambda_{t,q}$ for any $q\in\mathbb{N}^{*}$. 

For any $m\in [M_t]$, we let $\mathscr{N}_{J_{t,m}}=N_{t,m}$; for any $j\in \mathcal{J}_t \backslash\{J_{t,1},\cdots,J_{t,M_t}\}$, we let $\mathscr{N}_j=n$. Moreover, for any $m\in [M_t]$, we let $\mathscr{Y}_{J_{t,m}}=I_{t,Y_{t,m}}$; for any $j\in \mathcal{J}_t\backslash \{J_{t,1},\cdots,J_{t,M_t}\}$, we let $\mathscr{Y}_j=0$. 

Let $\mathcal{B}$ be the $\sigma$-algebra generated by $\sigma_0$ and $\{b_{t,m}\}_{m\in [n]}$. For any $j\in \mathcal{J}_t$, let $\mathcal{F}_j$ be the $\sigma$-algebra generated by $\sigma_0$, $\{b_{t,m}\}_{m\in [n]}$, and $\{\mathscr{Y}_l\}_{l\in [j-1]\cap\mathcal{J}_t}$.  

We assume that the event $\mathcal{C}_t$ holds. For any $m\in\mathscr{M}_t$, we have
\begin{equation*}
    \mathscr{N}_{J_{t,m}}=N_{t,m}\geq e^{-12L_0^2}\beta_n^{-1\slash 2}\slash 2.
\end{equation*}
Hence for any $j\in \{J_{t,m}:m\in [M_t]\}\cap \mathcal{Y}_t$, $\mathscr{N}_j\geq e^{-12L_0^2}\beta_n^{-1\slash 2}\slash 2 $. Moreover, for any $j\in \{J_{t,m}:m\in [M_t]\}^c\cap \mathcal{Y}_t$, $\mathscr{N}_j=n\geq e^{-12L_0^2}\beta_n^{-1\slash 2}\slash 2$ (note that $n\beta_n^{1\slash 2}\geq 4L\geq 4$). Hence for any $j\in \mathcal{Y}_t$,
\begin{equation}\label{Eq16.3.1}
    \mathscr{N}_j\geq e^{-12L_0^2}\beta_n^{-1\slash 2}\slash 2.
\end{equation}

Consider any $i_1,\cdots,i_q\in \mathcal{X}_t$ and $j_1,\cdots,j_q\in\mathcal{Y}_t$ such that $i_1<\cdots<i_q$ and $j_1<\cdots<j_q$. For any $l\in [q]$, if $\sigma(i_l)=j_l$, then
\begin{equation*}
(i_l,\sigma(i_l))=(i_l,j_l)\in\mathcal{X}_t\times\mathcal{Y}_t\subseteq \mathcal{X}_t\times \mathcal{J}_t,
\end{equation*}
which implies $\mathscr{Y}_{j_l}=i_l$. Hence we have
\begin{eqnarray}\label{Eq16.4.1}
  && \mathbb{E}[\mathbbm{1}_{\sigma(i_1)=j_1,\cdots,\sigma(i_q)=j_q}\mathbbm{1}_{i_1,\cdots,i_q\in W_t}|\mathcal{B}]\leq \mathbb{E}[\mathbbm{1}_{\mathscr{Y}_{j_1}=i_1,\cdots,\mathscr{Y}_{j_q}=i_q}\mathbbm{1}_{i_1,\cdots,i_q\in W_t}|\mathcal{B}]\nonumber\\
  &=& \mathbbm{1}_{i_1,\cdots,i_q\in W_t} \mathbb{E}[\mathbbm{1}_{\mathscr{Y}_{j_1}=i_1,\cdots,\mathscr{Y}_{j_q}=i_q}|\mathcal{B}]\nonumber\\
  &=&  \mathbbm{1}_{i_1,\cdots,i_q\in W_t} \mathbb{E}[\mathbb{E}[\mathbbm{1}_{\mathcal{Y}_{j_q}=i_q}|\mathcal{F}_{j_q}]\mathbbm{1}_{\mathscr{Y}_{j_1}=i_1,\cdots,\mathscr{Y}_{j_{q-1}}=i_{q-1}}|\mathcal{B}]\nonumber\\
  &\leq& \frac{\mathbbm{1}_{i_1,\cdots,i_q\in W_t}}{\mathscr{N}_{j_q}} \mathbb{E}[\mathbbm{1}_{\mathscr{Y}_{j_1}=i_1,\cdots,\mathscr{Y}_{j_{q-1}}=i_{q-1}}|\mathcal{B}]\leq \cdots\leq \frac{\mathbbm{1}_{i_1,\cdots,i_q\in W_t}}{\mathscr{N}_{j_1}\mathscr{N}_{j_2}\cdots \mathscr{N}_{j_q}}.
\end{eqnarray}
By (\ref{Eq16.3.1}) and (\ref{Eq16.4.1}), we have
\begin{eqnarray}\label{Eq16.6.2}
  &&   \mathbb{E}[\mathbbm{1}_{\sigma(i_1)=j_1,\cdots,\sigma(i_q)=j_q}\mathbbm{1}_{i_1,\cdots,i_q\in W_t}|\mathcal{B}] \mathbbm{1}_{\mathcal{C}_t} \nonumber\\
  &\leq& \frac{\mathbbm{1}_{i_1,\cdots,i_q\in W_t}\mathbbm{1}_{\mathcal{C}_t} }{\mathscr{N}_{j_1}\mathscr{N}_{j_2}\cdots \mathscr{N}_{j_q}} \leq (2e^{12L_0^2}\beta_n^{1\slash 2})^q \mathbbm{1}_{i_1,\cdots,i_q\in W_t}.
\end{eqnarray}

By (\ref{Eq16.6.7}), (\ref{Eq16.6.1}), (\ref{Eq16.6.2}), and Lemma \ref{Lemma3.1}, we have
\begin{eqnarray*}
&& \mathbb{E}[\Lambda_{t,q}|\mathcal{B}]\mathbbm{1}_{\mathcal{C}_t} \leq (2e^{12L_0^2}\beta_n^{1\slash 2})^q\sum_{\substack{i_1<\cdots<i_q, j_1<\cdots<j_q\\i_1,\cdots,i_q\in\mathcal{X}_t\\j_1,\cdots,j_q\in\mathcal{Y}_t}} \mathbbm{1}_{i_1,\cdots,i_q\in W_t} \nonumber\\
&\leq& (2e^{12L_0^2}\beta_n^{1\slash 2})^q \binom{|W_t|}{q}\binom{|\mathcal{Y}_t|}{q}\leq \Big(\frac{2e^{2+12L_0^2}\beta_n^{1\slash 2}|W_t||\mathcal{Y}_t|}{q^2}\Big)^q\nonumber\\
&\leq& (C\beta_n^{-1\slash 2} q^{-2})^q,
\end{eqnarray*}
where we use the fact that $|\mathcal{Y}_t|\leq L_0\beta_n^{-1\slash 2}+1\leq 2L_0\beta_n^{-1\slash 2}$ in the last line. Hence
\begin{equation}\label{Eq16.8.1}
    \mathbb{E}[\Lambda_{t,q}\mathbbm{1}_{\mathcal{C}_t}]=\mathbb{E}[\mathbb{E}[\Lambda_{t,q}|\mathcal{B}]\mathbbm{1}_{\mathcal{C}_t}]\leq (C\beta_n^{-1\slash 2} q^{-2})^q.
\end{equation}

Now for any $q\in\mathbb{N}^{*}$, if $LIS(\sigma|_{\mathcal{Q}_t})\geq q$, then there exist $i_1,\cdots,i_q\in\mathcal{X}_t$ and $j_1,\cdots,j_q\in\mathcal{Y}_t$, such that $i_1<\cdots<i_q$, $j_1<\cdots<j_q$, and $\sigma(i_l)=j_l$ for every $l\in [q]$. For any $l\in [q]$, we have $(i_l,\sigma(i_l))=(i_l,j_l)\in\mathcal{X}_t\times\mathcal{Y}_t\subseteq  \mathcal{X}_t\times\mathcal{J}_t$, hence $(i_l,\sigma_0(i_l))\in \mathcal{X}_t\times\mathcal{J}_t$ and $i_l\in W_t$ (recall (\ref{Eq16.7.1})). Hence $\Lambda_{t,q}\geq 1$. We conclude that for any $q\in\mathbb{N}^{*}$,
\begin{equation}\label{Eq16.8.2}
    \{ LIS(\sigma|_{\mathcal{Q}_t}) \geq q \}\subseteq \{\Lambda_{t,q}\geq 1\}.
\end{equation}

By (\ref{Eq16.8.1}) and (\ref{Eq16.8.2}), for any $q\in\mathbb{N}^{*}$, we have
\begin{eqnarray}
 && \mathbb{P}(\{ LIS(\sigma|_{\mathcal{Q}_t}) \geq q \}\cap\mathcal{C}_t)\leq \mathbb{P}(
 \{\Lambda_{t,q}\geq 1\}\cap\mathcal{C}_t) \nonumber\\
 &=& \mathbb{E}[\mathbbm{1}_{\Lambda_{t,q}\geq 1}\mathbbm{1}_{\mathcal{C}_t}]\leq \mathbb{E}[\Lambda_{t,q}\mathbbm{1}_{\mathcal{C}_t}]\leq (C_0 \beta_n^{-1\slash 2} q^{-2})^q,
\end{eqnarray}
where $C_0\geq 1$ is a positive absolute constant. Taking $q=\lceil \sqrt{2C_0}\beta_n^{-1\slash 4}\rceil$, we obtain that
\begin{eqnarray}\label{Eq16.9.1}
 &&\mathbb{P}(\{ LIS(\sigma|_{\mathcal{Q}_t}) \geq 2\sqrt{2C_0}\beta_n^{-1\slash 4} \}\cap\mathcal{C}_t)\nonumber\\
 &\leq& \mathbb{P}(\{ LIS(\sigma|_{\mathcal{Q}_t}) \geq \lceil\sqrt{2C_0}\beta_n^{-1\slash 4}\rceil \}\cap\mathcal{C}_t)\nonumber\\
 &\leq& 2^{-\lceil \sqrt{2C_0}\beta_n^{-1\slash 4}\rceil}\leq \exp(-c\beta_n^{-1\slash 4}).
\end{eqnarray}
By (\ref{Eq16.1.11}), (\ref{Eq16.9.1}), and the union bound, we have
\begin{equation}
 \mathbb{P}(LIS(\sigma|_{\mathcal{Q}_t}) \geq 2\sqrt{2C_0}\beta_n^{-1\slash 4})\leq C\exp(-c\beta_n^{-1\slash 4}).
\end{equation}
Note that $LIS(\sigma|_{\mathcal{Q}_t})\leq |\mathcal{Y}_t|\leq 2L_0\beta_n^{-1\slash 2}$. Hence
\begin{eqnarray}\label{Eq18.1.1}
   \mathbb{E}[LIS(\sigma|_{\mathcal{Q}_t})]&\leq& (2L_0\beta_n^{-1\slash 2})(C\exp(-c\beta_n^{-1\slash 4}))+2\sqrt{2C_0}\beta_n^{-1\slash 4}\nonumber\\
  &\leq& C\beta_n^{-1\slash 4}+C\exp(-c\beta_n^{-1\slash 4})\leq C\beta_n^{-1\slash 4}.
\end{eqnarray}

\paragraph{Step 2}

Now we let 
\begin{equation}
    \mathcal{R}_{s,1}:=\mathcal{R}_s\cap\{(x,y)\in\mathbb{R}^2:x\geq y\}, \quad \mathcal{R}_{s,2}:=\mathcal{R}_s\cap\{(x,y)\in\mathbb{R}^2:x\leq y\}.
\end{equation}
For any $\tau\in S_n$, we let
\begin{eqnarray}
    \mathcal{L}_{s,1}(\tau)&:=&\max\{k\in \{0\} \cup [n]: \text{ there exist }i_1,\cdots,i_k\in [n], \text{ such that }\nonumber\\
    && i_1<\cdots<i_k, \tau(i_1)<\cdots<\tau(i_k), (i_l,\tau(i_l))\in \mathcal{R}_{s,1} \text{ for every }l\in [k] \}, \nonumber\\
    &&
\end{eqnarray}
\begin{eqnarray}
    \mathcal{L}_{s,2}(\tau)&:=&\max\{k\in \{0\} \cup [n]: \text{ there exist }i_1,\cdots,i_k\in [n], \text{ such that }\nonumber\\
    && i_1<\cdots<i_k, \tau(i_1)<\cdots<\tau(i_k), (i_l,\tau(i_l))\in \mathcal{R}_{s,2} \text{ for every }l\in [k] \}. \nonumber\\
    &&
\end{eqnarray}
As $\mathcal{R}_s\subseteq\mathcal{R}_{s,1}\cup\mathcal{R}_{s,2}$, for any $\tau\in S_n$, we have
\begin{equation}\label{Eq19.1.1}
    LIS(\tau|_{\mathcal{R}_s})\leq \mathcal{L}_{s,1}(\tau)+\mathcal{L}_{s,2}(\tau).
\end{equation}

For any $(x,y)\in \mathcal{R}_{s,1}\cap [n]^2$, we have
\begin{equation*}
    y\in ((s-1)L\beta_n^{-1\slash 2},sL\beta_n^{-1\slash 2}]\cap \mathbb{N}^{*}=\bigcup_{t=1}^{L\slash L_0} \mathcal{Y}_t,
\end{equation*}
so there exists some $t\in [L\slash L_0]$ such that $y\in \mathcal{Y}_t$. Note that
\begin{equation*}
    x\geq y> (s-1)L\beta_n^{-1\slash 2}+(t-1)L_0\beta_n^{-1\slash 2}.
\end{equation*}
Hence $(x,y)\in \mathcal{X}_t\times \mathcal{Y}_t=\mathcal{Q}_t$. Therefore, $\mathcal{R}_{s,1}\cap [n]^2\subseteq \bigcup_{t=1}^{L\slash L_0}\mathcal{Q}_t$, which leads to
\begin{equation}
    \mathcal{L}_{s,1}(\sigma)\leq \sum_{t=1}^{L\slash L_0} LIS(\sigma|_{\mathcal{Q}_t}).
\end{equation}
By (\ref{Eq18.1.1}), we have
\begin{equation}\label{Eq18.1.2}
    \mathbb{E}[\mathcal{L}_{s,1}(\sigma)]\leq \sum_{t=1}^{L\slash L_0} \mathbb{E}[LIS(\sigma|_{\mathcal{Q}_t})]\leq CL\beta_n^{-1\slash 4}.
\end{equation}

Note that for $\sigma$ drawn from $\tilde{\mathbb{P}}_{n,\beta_n}$, the distribution of $\sigma^{-1}$ is also given by $\tilde{\mathbb{P}}_{n,\beta_n}$. Moreover, $\mathcal{L}_{s,2}(\sigma)=\mathcal{L}_{s,1}(\sigma^{-1})$. Hence by (\ref{Eq18.1.2}), we have
\begin{equation}\label{Eq19.1.3}
    \mathbb{E}[\mathcal{L}_{s,2}(\sigma)]=\mathbb{E}[\mathcal{L}_{s,1}(\sigma^{-1})]\leq CL\beta_n^{-1\slash 4}.
\end{equation}
By (\ref{Eq19.1.1}), (\ref{Eq18.1.2}), and (\ref{Eq19.1.3}), we conclude that
\begin{equation}\label{Eq20.1.1}
    \mathbb{E}[LIS(\sigma|_{\mathcal{R}_s})]\leq \mathbb{E}[\mathcal{L}_{s,1}(\sigma)]+\mathbb{E}[\mathcal{L}_{s,2}(\sigma)] \leq CL\beta_n^{-1\slash 4}.
\end{equation}

\bigskip

In the following, we show (\ref{Eq15.2.1}) for $s=\lfloor n\beta_n^{1\slash 2} \slash L\rfloor$. Let $\sigma$ be drawn from $\tilde{\mathbb{P}}_{n,\beta_n}$, and let $\bar{\sigma}\in S_n$ be such that $\bar{\sigma}(i)=n+1-\sigma(n+1-i)$ for every $i\in [n]$. For any $\tau\in S_n$, 
\begin{eqnarray*}
 \mathbb{P}(\bar{\sigma}=\tau)&=&\mathbb{P}(\sigma(n+1-i)=n+1-\tau(i)\text{ for every }i\in [n])\nonumber\\
 &=& \mathbb{P}(\sigma(i)=n+1-\tau(n+1-i)\text{ for every }i\in [n])\nonumber\\
 &=& \tilde{Z}_{n,\beta_n}^{-1}\exp\Big(-\beta_n \sum_{i=1}^n (n+1-\tau(n+1-i)-i)^2 \Big)\nonumber\\
 &=& \tilde{Z}_{n,\beta_n}^{-1}\exp\Big(-\beta_n \sum_{i=1}^n (\tau(i)-i)^2 \Big)=\tilde{\mathbb{P}}_{n,\beta_n}(\tau).
\end{eqnarray*}
Hence the distribution of $\bar{\sigma}$ is given by $\tilde{\mathbb{P}}_{n,\beta_n}$. Note that
\begin{eqnarray*}
    n+1-(\lfloor n\beta_n^{1\slash 2}\slash L\rfloor -1)L\beta_n^{-1\slash 2}&\leq& n+1-(n\beta_n^{1\slash 2}\slash L-2)L\beta_n^{-1\slash 2}\nonumber\\
   &=& 2L\beta_n^{-1\slash 2}+1\leq 3L\beta_n^{-1\slash 2},
\end{eqnarray*}
which leads to
\begin{equation*}
    [1,n+1-(\lfloor n\beta_n^{1\slash 2}\slash L\rfloor -1)L\beta_n^{-1\slash 2}]^2\subseteq \Big(\bigcup_{s=1}^3 \mathcal{R}_s\Big) \bigcup \Big(\bigcup_{s=1}^3 \mathcal{R}_s'\Big)\bigcup \Big(\bigcup_{s=1}^3 \mathcal{R}_s''\Big).
\end{equation*}
Hence by Proposition \ref{P5.1} and (\ref{Eq20.1.1}) (for $s\in [\lfloor n\beta_n^{1\slash 2}\slash L\rfloor-1]$), we have
\begin{eqnarray}
  &&   \mathbb{E}\big[LIS \big(\sigma|_{\mathcal{R}_{\lfloor n\beta_n^{1\slash 2} \slash L\rfloor}}\big)\big]\leq\mathbb{E}\big[LIS\big(\bar{\sigma}|_{[1,n+1-(\lfloor n\beta_n^{1\slash 2}\slash L\rfloor -1)L\beta_n^{-1\slash 2}]^2}\big)\big] \nonumber\\
  &\leq& \sum_{s=1}^3 \mathbb{E}[LIS(\bar{\sigma}|_{\mathcal{R}_s})]+\sum_{s=1}^3 \mathbb{E}[LIS(\bar{\sigma}|_{\mathcal{R}_s'})]+\sum_{s=1}^3 \mathbb{E}[LIS(\bar{\sigma}|_{\mathcal{R}_s''  })] \nonumber\\
  &\leq& CL\beta_n^{-1\slash 4}+CL^2\exp(-c\beta_n^{-1\slash 4}).
\end{eqnarray}

\end{proof}

The following proposition gives a more precise bound on $LIS(\sigma|_{\mathcal{R}_s})$ for $\sigma$ drawn from $\tilde{\mathbb{P}}_{n,\beta_n}$ and $s\in[2,\lfloor n\beta_n^{1\slash 2}\slash L\rfloor-1]\cap\mathbb{N}$ that satisfies certain conditions.

\begin{proposition}\label{P5.3}
We denote by $C_1'$ the constant $C_1$ in Proposition \ref{P2.3.2} (with $\delta_0=1\slash 4$ and $K=2L$; note that $C_1'$ only depends on $L$). Let
\begin{equation}\label{Eq30.1.5}
    r_s:=\frac{1}{2} \min \{(s-1)L, (\lfloor n\beta_n^{1\slash 2}\slash L\rfloor-s)L,\log(1+\beta_n^{-1\slash 2})\}
\end{equation}
for any $s\in [2,\lfloor n\beta_n^{1\slash 2}\slash L\rfloor-1]\cap\mathbb{N}$. There exist positive constants $C_L,c_L,C_L',c_L'$ that only depend on $L$ and positive absolute constants $C,C'$ with $C'\geq 1$, such that the following holds.

Assume that $n\beta_n^{1\slash 2}\geq 4L$ and $\beta_n^{-1\slash 2}\geq C'L^{10}e^{60L^2}$, and let $\sigma$ be drawn from $\tilde{\mathbb{P}}_{n,\beta_n}$. Let $\Psi_s:=(1-L^{-1}-C_L r_s^{-1\slash 25})_{+}$ for any $s\in [2,\lfloor n\beta_n^{1\slash 2}\slash L\rfloor-1]\cap\mathbb{N}$. Then for any $s\in [2,\lfloor n\beta_n^{1\slash 2}\slash L\rfloor-1]\cap\mathbb{N}$ such that $r_s\geq C_1'$, we have
\begin{eqnarray}
  &&  \mathbb{E}[|LIS(\sigma|_{\mathcal{R}_s})-2\pi^{-1\slash 4}L\beta_n^{-1\slash 4}|] \nonumber\\
  &\leq& C_L'\beta_n^{-1}\exp(-c_L'\beta_n^{-1\slash 8}\Psi_s^{1\slash 4})+C_L'+C L^{1\slash 2} e^{-4L^2} \beta_n^{-1\slash 4}\nonumber\\
  &&+2\pi^{-1\slash 4}L\beta_n^{-1\slash 4} \max\Big\{1-e^{-6L^{-1}}\Psi_s^{1\slash 2} (1-\max\{c_L\beta_n^{-1 \slash 2}\Psi_s,1\}^{-1\slash 6}), \nonumber\\
  && \quad\quad e^{3L^{-1}}(1+C_L r_s^{-1\slash 25})^{1\slash 2}  (1+\max\{c_L\beta_n^{-1\slash 2}\Psi_s,1\}^{-1\slash 6})-1\Big\}.
\end{eqnarray}
\end{proposition}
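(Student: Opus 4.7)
The plan is to mirror the proof of Proposition \ref{P4.3}, replacing the $L^1$ hit and run algorithm with the $L^2$ resampling algorithm from Lemma \ref{L2.2}, using Proposition \ref{P2.3.2} in place of Proposition \ref{P2.3}, and working at the natural scale $\beta_n^{-1/2}$ rather than $\beta_n^{-1}$. The target constant $2\pi^{-1/4}$ is the analogue of $\sqrt{2}$ in the $L^1$ case: since $\tilde\rho(x,x)=1/\sqrt\pi$ from $d\tilde\mu=\frac{1}{\sqrt\pi}e^{-(x-y)^2}dxdy$, the refined-path sum will telescope to $2\int_0^L\sqrt{\tilde\rho(x,x)}\,dx=2L\pi^{-1/4}$.

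First, fix $T,K_0$ with $\min\{T,K_0\}\geq L^2$ (for instance $T=\lceil 2000L^2 e^{4L^2}\rceil$ and $K_0=2L^2+1$), and decompose $\mathcal{R}_s$ using the refined paths of Section \ref{Sect.2.1} at the scaled cell size $L\beta_n^{-1/2}$. For each $\Gamma\in\Pi^{T,T,K_0}$ and $l\in[2T-1]$, introduce the affinely-rescaled cells $\tilde Q_{\Gamma,l}$ and $\tilde Q_{\Gamma,l}'$ obtained from $Q_{\Gamma,l},Q_{\Gamma,l}'$ by the map $((s-1)L\beta_n^{-1/2},(s-1)L\beta_n^{-1/2})+L\beta_n^{-1/2}(\cdot)$. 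The goal is a sharp concentration bound on $LIS(\sigma|_{\tilde Q_{\Gamma,l}})$ around $\sqrt{2}L\beta_n^{-1/2}\cdot e^{-(y_{l-1}-x_{l-1})^2 L^2/2}\pi^{-1/4}\sqrt{(x_l-x_{l-1})(y_l-y_{l-1})}$, which via Lemma \ref{L1} is then combined over cells to bound $LIS(\sigma|_{\mathcal{R}_s})$ both above and below.

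The single-cell analysis follows the four-sub-step pattern of Section \ref{Sect.3.1.2}. Sample $\sigma_0\sim\tilde{\mathbb{P}}_{n,\beta_n}$ and apply the $L^2$ resampling algorithm with inputs $\sigma_0,\mathcal{X},\mathcal{Y},\alpha$, where $\mathcal{X}$ is the set of row indices of $\tilde Q_{\Gamma,l}$, $\mathcal{Y}$ is the analogous set of column indices, and $\alpha$ sits just below $\mathcal{X}$ as in Proposition \ref{P5.1}. Define $W_l$ (symbols available to be resampled into the cell), $D_l,D_l'$ (counts of points in $\tilde Q_{\Gamma,l}$), and $L_{1,l},L_{2,l}$ splitting the LIS exactly as in (\ref{Eq2}); the same combinatorial argument yields $L_{1,l}\leq LIS(\sigma|_{\tilde Q_{\Gamma,l}})\leq L_{1,l}+L_{2,l}$. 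To estimate the "good" count $R=D_l-D_l'$, apply Proposition \ref{P2.3.2} at $t_s=\lceil(s-1)L\beta_n^{-1/2}\rceil$ (one verifies $r_s\beta_n^{-1/2}+1\leq t_s\leq n-r_s\beta_n^{-1/2}$) by testing $\tilde\mu_{n,t_s}$ against Lipschitz approximants of $\mathbbm 1_{\tilde Q_{\Gamma,l}}$ at scale $\delta=1/(4K_0T)$; this gives $R\approx\beta_n^{-1/2}\int_{Q_{\Gamma,l}}\tilde\rho(x,y)\,dxdy$ with multiplicative error of order $r_s^{-1/25}$ (worse than the $r_s^{-1/10}$ of the $L^1$ case, because Proposition \ref{P2.3.2} gives rate $r^{-1/20}$ instead of $r^{-1/8}$). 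The symmetrization argument $\psi_{\eta_1,\eta_2}$ from Step 4 of Section \ref{Sect.3.1.2} carries over verbatim to the $L^2$ resampling (the relevant weights cancel on orbits), showing that conditional on $(R,\{I_s\},\{J_s\})$ the induced sub-permutation $\tau$ is uniform on $S_R$. Hence Proposition \ref{P1} yields $|L_{1,l}-2\sqrt R|\leq R^{1/3}$ with probability $1-C\exp(-R^{1/4})$, giving the cell estimate.

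Finally, pick the diagonal basic path $\Gamma_0$ for the lower bound and take the max over refined paths for the upper bound; the Cauchy--Schwarz / AM-GM argument of Step 3 of Proposition \ref{P4.3}, together with the fact that $\sqrt{\tilde\rho(x,x)}=\pi^{-1/4}$ is constant, produces the factor $2\pi^{-1/4}L\beta_n^{-1/4}$ plus error terms matching the claimed bound. The main technical obstacle is the Gaussian resampling weight $e^{-2\beta_n(I-\alpha)(\sigma_0(I)-J)}$, which is multiplicative in two coordinates and blows up over wide strips; as in Proposition \ref{P5.1} this forces one to subdivide into strips of width $L_0\beta_n^{-1/2}$ with $L_0=8C_1$ so that the exponent stays bounded by $O(L_0^2)$ and acceptance probabilities remain $\geq e^{-4L_0^2}$. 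Tracking these $L_0^2$-sized constants through the refined-path machinery is what produces the threshold $\beta_n^{-1/2}\geq C'L^{10}e^{60L^2}$ in the hypothesis (versus the $L^{10}e^{6L}$ of Proposition \ref{P4.3}).
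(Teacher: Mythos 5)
Your overall plan is on the right track---mirror Proposition \ref{P4.3} with the $L^2$ resampling algorithm, use Proposition \ref{P2.3.2} for the density control, and compute the limit constant from $\sqrt{\tilde\rho(x,x)}=\pi^{-1/4}$---but the two structural choices you make in the middle of the argument are not what the paper does, and I believe they would break it.

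First, the resampling region. You propose to take $\mathcal{X}$ to be the set of row indices of a single cell $\tilde Q_{\Gamma,l}$ and $\mathcal{Y}$ the analogous columns, and to resample the cell in isolation. The paper instead performs \emph{one} resampling with $S_X=S_Y=\mathcal{X}_s:=((s-1)L\beta_n^{-1\slash 2},(s+1)L\beta_n^{-1\slash 2}]\cap\mathbb{N}^{*}$ and $t_0=\alpha_s$, independent of $\Gamma$ and $l$, and then analyzes every cell with respect to that single global resampling. This is essential for two reasons. (i) The quantities $\mathscr{N}_j$ must be uniformly bounded below for all columns $j\in\mathcal{I}_{n,s}$; the paper achieves $\mathscr{N}_j\geq e^{-12L^2}L\beta_n^{-1\slash 2}\slash 4$ by using the roughly $L\beta_n^{-1\slash 2}\slash 2$ points of $\sigma_0$ near the diagonal in $[z,z']^2$ (to the upper right of $\mathcal{R}_s$) as a reservoir of candidate positions. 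If you restrict $\mathcal{X},\mathcal{Y}$ to a single far-off-diagonal cell, that reservoir is gone: the candidate set $\{i\in\mathcal{X}:\sigma_0(i)\in\mathcal{Y}\}$ has size governed by the cell's own density, and the lower bounds on $\mathscr{N}_j$ you would need no longer follow from the arguments used in (\ref{Eq22.1.3})--(\ref{Eq22.1.12}). (ii) The symmetrization/orbit-cancellation argument that makes $\tau$ uniform on $S_R$ requires the conditioning $\sigma$-algebra to contain the whole sequence $\{b_m\}_{m\in[n]}$ and $\{\mathscr{Y}_l\}$ for $l$ below $s_1'$, and the enumeration of $M_{r,\eta}$ uses crucially that $\mathcal{S}_{1,l}\times\{s_1',\cdots,s_2'\}\subseteq\mathcal{X}_s\times\mathcal{X}_s$ (see (\ref{Eq37.1.3}) and (\ref{Eq37.1.5})). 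With a per-cell resampling region the bookkeeping of that argument does not carry over ``verbatim'' as you assert.

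Second, the $L_0$-strip subdivision. You claim that, ``as in Proposition \ref{P5.1}, this forces one to subdivide into strips of width $L_0\beta_n^{-1\slash 2}$'' with acceptance probabilities $\geq e^{-4L_0^2}$, and that tracking those $L_0^2$-sized constants produces the threshold $\beta_n^{-1\slash 2}\geq C'L^{10}e^{60L^2}$. Both claims are off. Propositions \ref{P5.1} and \ref{P5.2} use $L_0$-strips precisely because they need $L$-\emph{uniform} bounds (the boundary contributions must vanish after dividing by $n\beta_n^{1\slash 4}$ and taking $L\to\infty$). Proposition \ref{P5.3}, by contrast, is explicitly allowed to have $L$-dependent constants $C_L,c_L$. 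The paper therefore resamples the full block $\mathcal{X}_s\times\mathcal{X}_s$ of width $2L\beta_n^{-1\slash 2}$ directly and accepts the resulting $e^{-12L^2}$ lower bound on the acceptance probability (see (\ref{Eq22.1.7})). Since $L_0=8C_1$ is an \emph{absolute} constant, $e^{-4L_0^2}$ cannot be the source of the $e^{60L^2}$ in the hypothesis; that comes from the $L$-scale of the block and the choice $T=\lceil 300 L^2 e^{20L^2}\rceil$ (compare (\ref{Eq46.1.1})--(\ref{Eq46.1.2}) and the discussion after). Attempting to splice $L_0$-strip machinery into the refined-path cell analysis would also conflict with the coupling and symmetrization steps, so this is a genuine gap, not merely a longer route.
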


\begin{proof}

Let $C_0,c_0,C_2$ be the constants that appear in Proposition \ref{P2.3.2} (with $\delta_0=1\slash 4$ and $K=2L$). We also denote by $C_1'$ the constant $C_1$ in Proposition \ref{P2.3.2} (with $\delta_0=1\slash 4$ and $K=2L$). Note that these constants only depend on $L$. Throughout the proof, we fix an arbitrary $s\in [2,\lfloor n\beta_n^{1\slash 2}\slash L\rfloor-1]\cap\mathbb{N}$ such that $r_s\geq C_1'$. We also assume that $n\beta_n^{1\slash 2} \geq 4L$ and $\beta_n\leq 1\slash 100$. 

We denote by $C_L',c_L'$ positive constants that only depend on $L$. The values of these constants may change from line to line.

In the following, we fix any $T,K_0\in \mathbb{N}^{*}$ such that $\min\{T,K_0\}\geq L^2$, any refined path $\Gamma\in \Pi^{T,T,K_0}$, and any $l\in [2T-1]$. We assume that 
\begin{equation}\label{Eq20.2.1}
     L\beta_n^{-1\slash 2} \geq \max\{8 K_0 T,K_0^2 T^3\}.
\end{equation}

We let
\begin{eqnarray*}
   && Q_{\Gamma,l}:=(x_{l-1}(\Gamma),x_l(\Gamma)]\times ( y_{l-1}(\Gamma), y_l(\Gamma)],\\
   &&Q_{\Gamma,l}':=[ a_{l-1}(\Gamma),   c_l(\Gamma)]\times [ b_{l-1}(\Gamma), d_l(\Gamma)].
\end{eqnarray*}
We also let
\begin{eqnarray*}
 \tilde{Q}_{\Gamma,l} &:=&  ((s-1)L\beta_n^{-1\slash 2},(s-1)L\beta_n^{-1\slash 2})+L\beta_n^{-1\slash 2}Q_{\Gamma,l}\\
   &=& ((s-1)L\beta_n^{-1\slash 2}+L\beta_n^{-1\slash 2}x_{l-1}(\Gamma),(s-1)L\beta_n^{-1\slash 2}+L\beta_n^{-1\slash 2}x_{l}(\Gamma)]\\
   &&\times ((s-1)L\beta_n^{-1\slash 2}+L\beta_n^{-1\slash 2}y_{l-1}(\Gamma),(s-1)L\beta_n^{-1\slash 2}+L\beta_n^{-1\slash 2}y_{l}(\Gamma)],
\end{eqnarray*}
\begin{equation*}
    \tilde{Q}_{\Gamma,l}':=((s-1)L\beta_n^{-1\slash 2},(s-1)L\beta_n^{-1\slash 2})+L\beta_n^{-1\slash 2}Q_{\Gamma,l}'.
\end{equation*}

\paragraph{Step 1}

We start by bounding $LIS(\sigma|_{\tilde{Q}_{\Gamma,l}})$. If
\begin{equation*}
    x_{l-1}(\Gamma)=x_l(\Gamma) \text{ or } y_{l-1}(\Gamma)=y_l(\Gamma),
\end{equation*}
then $Q_{\Gamma,l}=\emptyset$ and $LIS(\tau|_{\tilde{Q}_{\Gamma,l}})=0$ for any $\tau\in S_n$. In the following, we assume that $x_{l-1}(\Gamma)<x_l(\Gamma)$ and $y_{l-1}(\Gamma)<y_l(\Gamma)$. Note that 
\begin{equation}\label{Eq26.3.1}
    (2 K_0 T)^{-1}\leq x_l(\Gamma)-x_{l-1}(\Gamma)\leq T^{-1},\quad  (2 K_0 T)^{-1}\leq y_l(\Gamma)-y_{l-1}(\Gamma)\leq T^{-1},
\end{equation}
which by (\ref{Eq20.2.1}) implies
\begin{equation}\label{Eq30.1.1}
   \min\{L\beta_n^{-1\slash 2}(x_l(\Gamma)-x_{l-1}(\Gamma)),L\beta_n^{-1\slash 2}(y_l(\Gamma)-y_{l-1}(\Gamma))\}\geq \frac{L\beta_n^{-1\slash 2}}{2K_0 T}\geq  4.
\end{equation}
In the following, we assume that
\begin{eqnarray}\label{Eq24.1.1}
 && ((s-1)L\beta_n^{-1\slash 2}+L\beta_n^{-1\slash 2} x_{l-1}(\Gamma), (s-1)L\beta_n^{-1\slash 2}+L\beta_n^{-1\slash 2} x_l(\Gamma)]\cap \mathbb{N}^{*}\nonumber\\
 &&=\{s_1,s_1+1,\cdots,s_2\},\nonumber\\
 && ((s-1)L\beta_n^{-1\slash 2}+L\beta_n^{-1\slash 2} y_{l-1}(\Gamma),(s-1)L\beta_n^{-1\slash 2}+L\beta_n^{-1\slash 2} y_l(\Gamma)]\cap\mathbb{N}^{*}\nonumber\\
 &&=\{s_1',s_1'+1,\cdots,s_2'\}.
\end{eqnarray}

We let
\begin{equation}
    \mathcal{X}_s:=((s-1)L\beta_n^{-1\slash 2}, (s+1)L\beta_n^{-1\slash 2}]\cap\mathbb{N}^{*},
\end{equation}
\begin{equation}
    \alpha_s:=(s-1)L\beta_n^{-1\slash 2}-1.
\end{equation}
We assume that
\begin{equation}\label{Eq24.1.2}
    \mathcal{X}_s=\{s_3,s_3+1,\cdots,s_4\}.
\end{equation}
Note that $s_3\leq \min\{s_1,s_1'\}$ and $s_4\geq \max\{s_2,s_2'\}$. 

We sample $\sigma_0$ from $\tilde{\mathbb{P}}_{n,\beta_n}$, and run the resampling algorithm for the $L^2$ model (as described at the end of Section \ref{Sect.1.5}) with inputs $\sigma_0, \mathcal{X}_s, \mathcal{X}_s, \alpha_s$ to obtain $\sigma$. By Lemma \ref{L2.2}, the distribution of $\sigma$ is given by $\tilde{\mathbb{P}}_{n,\beta_n}$. 

We let $M\in\mathbb{N}$, $I_1,\cdots,I_M,J_1,\cdots,J_M\in\mathcal{X}_s$ be such that $I_1<\cdots<I_M$, $J_1<\cdots<J_M$, and 
\begin{equation}
    \{i\in \mathcal{X}_s:\sigma_0(i)\in\mathcal{X}_s\}=\{I_1,\cdots,I_M\}, \quad \{i\in \mathcal{X}_s:\sigma_0^{-1}(i)\in\mathcal{X}_s\}=\{J_1,\cdots,J_M\}.
\end{equation}
For any $m\in [n]\backslash [M]$, we let $I_m=0$ and $J_m=0$. According to the resampling algorithm for the $L^2$ model, $\sigma$ can be generated as follows:
\begin{itemize}
    \item For each $m\in [M]$, we independently sample $u_m$ from the uniform distribution on $[0,e^{2\beta_n(I_m-\alpha_s)\sigma_0(I_m)}]$, and $b_m=\log(u_m)\slash (2\beta_n(I_m-\alpha_s))$. For each $m\in [n]\backslash [M]$, we let $b_m=0$.
    \item For each $m\in [M]$, let
    \begin{equation}
        N_m=|\{m'\in [M]: b_{m'}\leq J_m\}|-m+1.
    \end{equation}
    Now look at the $N_1$ integers $m'\in [M]$ with $b_{m'}\leq J_1$, and pick $Y_1$ uniformly from these integers; then look at the $N_2$ remaining integers $m'\in [M]$ with $b_{m'}\leq J_2$ (with $Y_1$ deleted from the list), and pick $Y_2$ uniformly from these integers; and so on. In this way we obtain $\{Y_m\}_{m\in [M]}$. For each $m\in [n]\backslash [M]$, we let $N_m=0$ and $Y_m=0$. 
\end{itemize}
We let $\sigma\in S_n$ be the unique permutation that satisfies the following conditions:
\begin{itemize}
    \item For any $m\in [M]$, $\sigma(I_{Y_m})=J_m$.
    \item For any $i\in [n]\backslash \{I_1,\cdots,I_M\}$, $\sigma(i)=\sigma_0(i)$.  
\end{itemize}
Note that
\begin{equation}\label{Eq37.1.1}
    \{i\in \mathcal{X}_s:\sigma(i)\in\mathcal{X}_s\}=\{I_1,\cdots,I_M\}, \quad \{i\in \mathcal{X}_s:\sigma^{-1}(i)\in\mathcal{X}_s\}=\{J_1,\cdots,J_M\}.
\end{equation}

Let 
\begin{equation}
    z:=s L\beta_n^{-1\slash 2}, \quad z':=(s+1)L\beta_n^{-1\slash 2}. 
\end{equation}
Recall Definition \ref{Def2.2}. As $\lceil z\rceil-1,\lfloor z'\rfloor \in [n]$, by (\ref{Eq5.1.1}) and Proposition \ref{P2.2.2},
\begin{equation}\label{Eq22.1.1}
    \mathbb{P}(|\mathcal{D}_{\lceil z \rceil-1}(\sigma_0)|\geq C_1\beta_n^{-1\slash 2})\leq C\exp(-c\beta_n^{-1\slash 2}),
\end{equation}
\begin{equation}\label{Eq22.1.2}
    \mathbb{P}(|\mathcal{D}'_{\lfloor z' \rfloor}(\sigma_0)|\geq C_1\beta_n^{-1\slash 2})\leq C\exp(-c\beta_n^{-1\slash 2}). 
\end{equation}
Let $\mathcal{Z}$ be the event that $|\mathcal{D}_{\lceil z \rceil-1}(\sigma_0)|\leq C_1\beta_n^{-1\slash 2}$ and $|\mathcal{D}'_{\lfloor z' \rfloor}(\sigma_0)|\leq C_1\beta_n^{-1\slash 2}$. By (\ref{Eq22.1.1}), (\ref{Eq22.1.2}), and the union bound, we have 
\begin{equation}\label{Eq22.1.15}
    \mathbb{P}(\mathcal{Z}^c)\leq C\exp(-c\beta_n^{-1\slash 2}). 
\end{equation}
When the event $\mathcal{Z}$ holds, as $\lfloor z' \rfloor-\lceil z\rceil\geq z'-z-2= L\beta_n^{-1\slash 2}-2$, $8C_1=L_0\leq L$, and $L\beta_n^{-1\slash 2}\geq 40$, we have
\begin{eqnarray}\label{Eq22.1.3}
 && |S(\sigma_0)\cap [z,z']^2| \geq |S(\sigma_0)\cap [\lceil z\rceil, \lfloor z' \rfloor]^2| \nonumber\\
 &\geq& |[\lceil z\rceil, \lfloor z' \rfloor]\cap\mathbb{N}^{*}|-|\mathcal{D}_{\lceil z \rceil-1}(\sigma_0)|-|\mathcal{D}'_{\lfloor z' \rfloor}(\sigma_0)| \nonumber\\
 &\geq& \lfloor z' \rfloor-\lceil z\rceil+1-2C_1\beta_n^{-1\slash 2}\geq L\beta_n^{-1\slash 2}-1-\frac{1}{4}L\beta_n^{-1\slash 2} \nonumber \\
 &=& \frac{3}{4}L\beta_n^{-1\slash 2}-1\geq \frac{1}{2}L\beta_n^{-1\slash 2}.
\end{eqnarray}

We let $\mathscr{M}$ be the set of $m\in [M]$ that satisfies $J_m\in \mathcal{I}_{n,s}\cap\mathbb{N}^{*}$. Below we consider any $m\in [n]$. If $m\in\mathscr{M}$, for any $m'\in [M]$ such that $\sigma_0(I_{m'})<J_m$ (note that there are $m-1$ such $m'$), we have $b_{m'}\leq \sigma_0(I_{m'})<J_m$, hence
\begin{equation}\label{Eq20.1.4}
    N_m=\sum_{\substack{m'\in [M]:\\\sigma_0(I_{m'})\geq J_m}} \mathbbm{1}_{b_{m'}\leq J_m}. 
\end{equation}
For any $i\in [n]$ such that $(i,\sigma_0(i))\in [z,z']^2$, we have $(i,\sigma_0(i))\in \mathcal{X}_s\times\mathcal{X}_s$. Hence there exists some $m'\in [M]$, such that $i=I_{m'}$. Let 
\begin{equation}
    \mathcal{M}:=\{m'\in [M]: (I_{m'},\sigma_0(I_{m'}))\in [z,z']^2\}. 
\end{equation}
By (\ref{Eq22.1.3}), when the event $\mathcal{Z}$ holds, we have
\begin{equation}\label{Eq22.1.11}
    |\mathcal{M}|\geq |S(\sigma_0)\cap [z,z']^2|\geq \frac{1}{2}L\beta_n^{-1\slash 2}. 
\end{equation}
If $m\in\mathscr{M}$, then $J_m\in \mathcal{I}_{n,s}$, hence for any $m'\in \mathcal{M}$, $\sigma_0(I_{m'})\geq z\geq J_m$. Hence by (\ref{Eq20.1.4}), we have
\begin{equation}\label{Eq22.1.6}
    N_m\geq \sum_{m'\in\mathcal{M}} \mathbbm{1}_{b_{m'}\leq J_m}.
\end{equation}
Now note that if $m\in\mathscr{M}$, conditional on $\sigma_0$, $\{\mathbbm{1}_{b_{m'}\leq J_m}\}_{m'\in\mathcal{M}}$ are mutually independent, and for each $m'\in\mathcal{M}$, $\mathbbm{1}_{b_{m'}\leq J_m}$ follows the Bernoulli distribution with 
\begin{eqnarray}\label{Eq22.1.5}
 \mathbb{P}(\mathbbm{1}_{b_{m'}\leq J_m}=1|\sigma_0)&=&\mathbb{P}(b_{m'}\leq J_m|\sigma_0)=\mathbb{P}(u_{m'}\leq e^{2\beta_n (I_{m'}-\alpha_s)J_m}|\sigma_0)\nonumber\\
 &=& e^{-2\beta_n(I_{m'}-\alpha_s)(\sigma_0(I_{m'})-J_m)}. 
\end{eqnarray}
If $m\in\mathscr{M}$, for any $m'\in\mathcal{M}$, as $(I_{m'},\sigma_0(I_{m'}))\in [z,z']^2$ and $J_m\in \mathcal{I}_{n,s}$, we have
\begin{equation*}
    I_{m'}-\alpha_s\begin{cases}
     \geq z-\alpha_s\geq 1 \\
     \leq z'-\alpha_s=2L\beta_n^{-1\slash 2}+1\leq 3L\beta_n^{-1\slash 2}
    \end{cases},
\end{equation*}
\begin{equation*}
   0\leq  \sigma_0(I_{m'})-J_{m} \leq z'-J_m\leq 2L\beta_n^{-1\slash 2}, 
\end{equation*}
hence by (\ref{Eq22.1.5}), we have
\begin{equation}\label{Eq22.1.7}
    \mathbb{P}(\mathbbm{1}_{b_{m'}\leq J_m}=1|\sigma_0)\geq e^{-12L^2}. 
\end{equation}
By (\ref{Eq22.1.6}), (\ref{Eq22.1.7}), and Hoeffding's inequality, for any $x\in [0,e^{-12L^2}]$, we have 
\begin{equation*}
    \mathbb{P}(N_m\leq (e^{-12L^2}-x)|\mathcal{M}||\sigma_0)\mathbbm{1}_{m\in\mathscr{M}}\leq e^{-2|\mathcal{M}|x^2} \mathbbm{1}_{m\in\mathscr{M}},
\end{equation*}
which by (\ref{Eq22.1.11}) leads to
\begin{equation*}
     \mathbb{P}(N_m\leq (e^{-12L^2}-x)L\beta_n^{-1\slash 2}\slash 2|\sigma_0)\mathbbm{1}_{m\in\mathscr{M}}\mathbbm{1}_{\mathcal{Z}}\leq e^{-L\beta_n^{-1\slash 2} x^2} \mathbbm{1}_{m\in\mathscr{M}}.
\end{equation*}
Taking $x=e^{-12L^2}\slash 2$, we have
\begin{eqnarray}\label{Eq22.1.8}
   && \mathbb{P}(\{N_m\leq e^{-12L^2}L\beta_n^{-1\slash 2}\slash 4\}\cap \{m\in\mathscr{M}\} \cap\mathcal{Z}|\sigma_0) \nonumber\\
   &=& \mathbb{P}(N_m\leq e^{-12L^2}L\beta_n^{-1\slash 2}\slash 4|\sigma_0) \mathbbm{1}_{m\in\mathscr{M}}\mathbbm{1}_{\mathcal{Z}}\leq e^{-c_L'\beta_n^{-1\slash 2}}\mathbbm{1}_{m\in\mathscr{M}}.
\end{eqnarray}

Let $\mathcal{C}$ be the event that $N_{m}\geq e^{-12L^2}L\beta_n^{-1\slash 2}\slash 4$ for any $m\in\mathscr{M}$. By (\ref{Eq22.1.8}) and the union bound, 
\begin{eqnarray}
&&\mathbb{P}(\mathcal{C}^c\cap\mathcal{Z}|\sigma_0)\leq 
\mathbb{P}\Big(\bigcup_{m=1}^n\big(\{N_{m}\leq e^{-12L^2}L\beta_n^{-1\slash 2} \slash 4\}\cap\{m\in \mathscr{M}\}\cap\mathcal{Z}\big)\Big|\sigma_0\Big)\nonumber\\
&\leq& \sum_{m=1}^n \mathbb{P}(\{N_{m}\leq e^{-12L^2} L \beta_n^{-1\slash 2} \slash 4\}\cap\{m\in \mathscr{M}\}\cap\mathcal{Z}|\sigma_0)\nonumber\\ &\leq& e^{-c_L'\beta_n^{-1\slash 2}}\sum_{m=1}^n \mathbbm{1}_{m\in \mathscr{M}}=|\mathscr{M}| e^{-c_L'\beta_n^{-1\slash 2}}
\leq |\mathcal{X}_s|e^{-c_L'\beta_n^{-1\slash 2}}\nonumber\\
&\leq& (2L\beta_n^{-1\slash 2}+1)e^{-c_L'\beta_n^{-1\slash 2}}\leq CL\beta_n^{-1\slash 2}\exp(-c_L'\beta_n^{-1\slash 2}).
\end{eqnarray}
Hence 
\begin{equation}\label{Eq22.1.12}
    \mathbb{P}(\mathcal{C}^c\cap\mathcal{Z})=\mathbb{E}[\mathbb{P}(\mathcal{C}^c\cap\mathcal{Z}|\sigma_0)]\leq CL\beta_n^{-1\slash 2}\exp(-c_L'\beta_n^{-1\slash 2}).
\end{equation}
By (\ref{Eq22.1.15}), (\ref{Eq22.1.12}), and the union bound, 
\begin{equation}\label{Eq26.2.1}
    \mathbb{P}(\mathcal{C}^c)\leq CL\beta_n^{-1\slash 2}\exp(-c_L'\beta_n^{-1\slash 2}).
\end{equation}

For any $m\in [M]$, we let $\mathscr{B}_{I_m}=b_m$, $\mathscr{N}_{J_{m}}=N_{m}$, and $\mathscr{Y}_{J_{m}}=I_{Y_{m}}$; for any $i\in\mathcal{X}_s\backslash \{I_1,\cdots,I_M\}$, we let $\mathscr{B}_i=n+1$; for any $j\in \mathcal{X}_s\backslash \{J_{1},\cdots,J_{M}\}$, we let $\mathscr{N}_{j}=n$ and $\mathscr{Y}_{j}=0$. We let $\mathcal{B}$ be the $\sigma$-algebra generated by $\sigma_0$ and $\{b_{m}\}_{m\in [n]}$. For any $j\in\mathcal{X}_s$, we let $\mathcal{F}_{j}$ be the $\sigma$-algebra generated by $\sigma_0$, $\{b_{m}\}_{m\in [n]}$, and $\{\mathscr{Y}_{l}\}_{l\in [j-1]\cap \mathcal{X}_s}$.

We assume that the event $\mathcal{C}$ holds. For any $m\in\mathscr{M}$, we have
\begin{equation*}
    \mathscr{N}_{J_m}=N_m\geq e^{-12L^2} L\beta_n^{-1\slash 2}\slash 4.
\end{equation*}
Hence for any $j\in \{J_1,\cdots,J_M\}\cap\mathcal{I}_{n,s}\cap\mathbb{N}^{*}$, $\mathscr{N}_j\geq e^{-12L^2} L\beta_n^{-1\slash 2}\slash 4$. Moreover, for any $j\in \{J_1,\cdots,J_M\}^c \cap\mathcal{I}_{n,s}\cap\mathbb{N}^{*}$, $\mathscr{N}_j=n\geq e^{-12L^2} L\beta_n^{-1\slash 2}\slash 4$ (note that $n\beta_n^{1\slash 2}\geq 4L$). Hence when the event $\mathcal{C}$ holds, for any $j\in \mathcal{I}_{n,s}\cap\mathbb{N}^{*}$, we have
\begin{equation}\label{Eq25.1.12}
    \mathscr{N}_j \geq e^{-12L^2} L\beta_n^{-1\slash 2}\slash 4.
\end{equation}

Recall (\ref{Eq24.1.1}) and (\ref{Eq24.1.2}). We let 
\begin{eqnarray}\label{Eq30.1.22}
&& \mathcal{S}_{1,l} :=\{i\in\{s_1,\cdots,s_2\}\backslash \{\mathscr{Y}_{s_3}, \cdots, \mathscr{Y}_{s_1'-1}\}: \mathscr{B}_i<s_1' \}, \nonumber\\
&& \mathcal{S}_{2,l} :=\{i\in\{s_1,\cdots,s_2\}\backslash \{\mathscr{Y}_{s_3}, \cdots, \mathscr{Y}_{s_1'-1}\}: s_1'\leq \mathscr{B}_i \leq s_2' \}, \nonumber\\
&& \mathcal{S}_l' :=\{i\in\{s_1,\cdots,s_2\}: s_1'\leq \mathscr{B}_i \leq s_2'\}, \quad W_l:=|\mathcal{S}_l'|.
\end{eqnarray}
Note that $\mathcal{S}_{2,l}\subseteq \mathcal{S}_l'$. We also let
\begin{eqnarray}\label{Eq25.1.2}
 && D_l:=|\{i\in [n]: (i,\sigma(i))\in  \tilde{Q}_{\Gamma,l}\}|, \nonumber\\
 &&  D_l':=|\{i\in [n]: (i,\sigma(i))\in  \tilde{Q}_{\Gamma,l}, i\in \mathcal{S}_{2,l}\}|.
\end{eqnarray}

We bound $W_l$ as follows. For any $i\in\{s_1,\cdots,s_2\}$, let $Z_i:=\mathbbm{1}_{s_1'\leq \mathscr{B}_i\leq s_2'}$. Note that $W_l=\sum_{i=s_1}^{s_2}Z_i$, and conditional on $\sigma_0$, $Z_{s_1},\cdots,Z_{s_2}$ are mutually independent. Conditional on $\sigma_0$, for any $i\in\{s_1,\cdots,s_2\}$, if $i\in \{I_1,\cdots,I_M\}$, $Z_i$ follows the Bernoulli distribution with 
\begin{eqnarray*}
  &&  \mathbb{P}(Z_i=1|\sigma_0)=e^{-2\beta_n(i-\alpha_s)(\sigma_0(i)-s_2')_{+}}-e^{-2\beta_n(i-\alpha_s)(\sigma_0(i)-s_1')_{+}} \nonumber\\
  &\leq& 1-e^{-2\beta_n(i-\alpha_s)((\sigma_0(i)-s_1')_{+}-(\sigma_0(i)-s_2')_{+})}\leq 1-e^{-2\beta_n(i-\alpha_s)(s_2'-s_1')} \nonumber\\
  &\leq& 2\beta_n (i-\alpha_s)(s_2'-s_1')\leq 2\beta_n(L\beta_n^{-1\slash 2}+1)(L\beta_n^{-1\slash 2}(y_l(\Gamma)-y_{l-1}(\Gamma)))\nonumber\\
  &\leq& 4L^2(y_l(\Gamma)-y_{l-1}(\Gamma));
\end{eqnarray*}
otherwise $Z_i=0$. Hence by Hoeffding's inequality, for any $t\geq 0$, we have
\begin{equation*}
    \mathbb{P}(W_l\geq (s_2-s_1+1)(4L^2(y_l(\Gamma)-y_{l-1}(\Gamma))+t)|\sigma_0)\leq e^{-2(s_2-s_1+1)t^2}. 
\end{equation*}
Taking $t=L^2(y_l(\Gamma)-y_{l-1}(\Gamma))$, we obtain that
\begin{eqnarray*}
&& \mathbb{P}(W_l\geq 5L^2(s_2-s_1+1)(y_l(\Gamma)-y_{l-1}(\Gamma)))\nonumber\\
&=& \mathbb{E}[\mathbb{P}(W_l\geq 5L^2(s_2-s_1+1)(y_l(\Gamma)-y_{l-1}(\Gamma))|\sigma_0)]\nonumber\\
&\leq& e^{-2L^4(s_2-s_1+1)(y_l(\Gamma)-y_{l-1}(\Gamma))^2}.
\end{eqnarray*}
Let $\mathcal{E}_l$ be the event that
\begin{equation}\label{Eq25.1.10}
    W_l\leq 5L^2(s_2-s_1+1)(y_l(\Gamma)-y_{l-1}(\Gamma)).
\end{equation}
We have
\begin{equation}\label{Eq26.2.2}
    \mathbb{P}(\mathcal{E}_l^c)\leq \exp(-2L^4(s_2-s_1+1)(y_l(\Gamma)-y_{l-1}(\Gamma))^2).
\end{equation}

Let 
\begin{eqnarray}\label{Eq25.1.3}
  L_{1,l}:=LIS(\sigma|_{\mathcal{S}_{1,l}\times ((s-1)L\beta_n^{-1\slash 2}+L\beta_n^{-1\slash 2} y_{l-1}(\Gamma),(s-1)L\beta_n^{-1\slash 2}+L\beta_n^{-1\slash 2} y_l(\Gamma)]}), \nonumber\\
  L_{2,l}:=LIS(\sigma|_{\mathcal{S}_{2,l}\times ((s-1)L\beta_n^{-1\slash 2}+L\beta_n^{-1\slash 2} y_{l-1}(\Gamma),(s-1)L\beta_n^{-1\slash 2}+L\beta_n^{-1\slash 2} y_l(\Gamma)]}).
\end{eqnarray}
Below we show that
\begin{equation}\label{Eq25.1.1}
    L_{1,l}\leq LIS(\sigma|_{\tilde{Q}_{\Gamma,l}}) \leq L_{1,l}+L_{2,l}.
\end{equation}
We denote $LIS(\sigma|_{\tilde{Q}_{\Gamma,l}})$ by $d$. By the definition of $LIS(\sigma|_{\tilde{Q}_{\Gamma,l}})$, there exist indices $i_1,\cdots,i_d\in [n]$, such that $i_1<\cdots<i_d$, $\sigma(i_1)<\cdots<\sigma(i_d)$, and for every $j\in [d]$, $(i_j,\sigma(i_j))\in \tilde{Q}_{\Gamma,l}$ (which leads to $s_1\leq i_j\leq s_2$ and $s_1'\leq \sigma(i_j)\leq s_2'$). For any $j\in [d]$, as $(i_j,\sigma(i_j))\in \tilde{Q}_{\Gamma,l}\cap [n]^2 \subseteq \mathcal{X}_s\times \mathcal{X}_s$, there exists some $m_j\in [M]$ such that $i_j=I_{m_j}$, hence $\mathscr{B}_{i_j}=\mathscr{B}_{I_{m_j}}=b_{m_j}$; according to the resampling algorithm for the $L^2$ model (see Section \ref{Sect.1.5}), $b_{m_j}\leq \sigma(I_{m_j})=\sigma(i_j)\leq s_2'$, hence $\mathscr{B}_{i_j}\leq s_2'$. For any $j\in [d]$, if $i_j=\mathscr{Y}_{r}$ for some $r\in \{s_3,\cdots,s_1'-1\}$, then we have $\mathscr{Y}_r>0$ and $r=\sigma(i_j)\geq s_1'$, which leads to a contradiction; hence $i_j\notin\{\mathscr{Y}_{s_3},\cdots,\mathscr{Y}_{s_1'-1}\}$. Thus we have $i_j\in \mathcal{S}_{1,l}\cup\mathcal{S}_{2,l}$ for any $j\in [d]$. Assume that $\{i_1,\cdots,i_d\}=\{k_1,\cdots,k_q\}\cup \{k_1',\cdots,k_{d-q}'\}$, where $q\in\{0\}\cup  [d]$, $k_1,\cdots,k_q\in\mathcal{S}_{1,l}$, $k_1<\cdots<k_q$, $k_1',\cdots,k_{d-q}'\in\mathcal{S}_{2,l}$, and $k_1'<\cdots<k_{d-q}'$. As $\sigma(k_1)<\cdots<\sigma(k_q)$ and
\begin{eqnarray*}
   && (k_1,\sigma(k_1)),\cdots,(k_q,\sigma(k_q)) \nonumber\\
   &\in&  \mathcal{S}_{1,l}\times ((s-1)L\beta_n^{-1\slash 2}+L\beta_n^{-1\slash 2} y_{l-1}(\Gamma),(s-1)L\beta_n^{-1\slash 2}+L\beta_n^{-1\slash 2} y_l(\Gamma)],
\end{eqnarray*}
we have $L_{1,l}\geq q$. Similarly, $L_{2,l}\geq d-q$. Hence $LIS(\sigma|_{\tilde{Q}_{\Gamma,l}})=d\leq L_{1,l}+L_{2,l}$. The inequality $L_{1,l}\leq LIS(\sigma|_{\tilde{Q}_{\Gamma,l}})$ follows from the fact that
\begin{eqnarray*}
 S_{1,l}\times ((s-1)L\beta_n^{-1\slash 2}+L\beta_n^{-1\slash 2} y_{l-1}(\Gamma),(s-1)L\beta_n^{-1\slash 2}+L\beta_n^{-1\slash 2} y_l(\Gamma)]\subseteq \tilde{Q}_{\Gamma,l}.
\end{eqnarray*}
We conclude that (\ref{Eq25.1.1}) holds. 

In the following, we bound $D_l'$, $L_{2,l}$, $D_l$, $L_{1,l}$ (as defined in (\ref{Eq25.1.2}) and (\ref{Eq25.1.3})) in \textbf{Sub-steps 1.1-1.4}, respectively. 

\subparagraph{Sub-step 1.1}

In this sub-step, we bound $D_l'$. Note that
\begin{equation}\label{Eq25.2.1}
    D_l'\leq \sum_{i=s_1'}^{s_2'}\mathbbm{1}_{\sigma^{-1}(i)\in \mathcal{S}_{2,l}}\leq \sum_{i=s_1'}^{s_2'}\mathbbm{1}_{\sigma^{-1}(i)\in\mathcal{S}_l'}.
\end{equation}
For any $i\in \{s_1',\cdots,s_2'\}$, if $\sigma^{-1}(i)\in \mathcal{S}_l'$, then $(\sigma^{-1}(i),i)\in \mathcal{X}_s\times \mathcal{X}_s$; according to the resampling algorithm for the $L^2$ model, $\mathscr{Y}_{i}=\sigma^{-1}(i)\in\mathcal{S}_l'$. Hence by (\ref{Eq25.2.1}), we have
\begin{equation}\label{Eq25.1.6}
    D_l'\leq \sum_{i=s_1'}^{s_2'} \mathbbm{1}_{\mathscr{Y}_i\in \mathcal{S}_l'}. 
\end{equation}

Conditional on $\mathcal{F}_{s_1'}$, we couple $\{\mathscr{Y}_i\}_{i=s_1'}^{s_2'}$ with mutually independent Bernoulli random variables $\{Y_i'\}_{i=s_1'}^{s_2'}$ with parameters (note that $W_l$ is $\mathcal{F}_{s_1'}$-measurable)
\begin{equation}\label{Eq25.1.7}
    \mathbb{P}(Y_i'=1|\mathcal{F}_{s_1'})=\min\Big\{\frac{W_l}{\mathscr{N}_i},1\Big\}, \quad \forall i\in\{s_1',\cdots,s_2'\}
\end{equation}
as follows. Sequentially for $i=s_1',\cdots,s_2'$, we do the following. If $i\notin \{J_1,\cdots,J_M\}$, we let $\mathscr{Y}_i=0$. Below we assume that $i=J_m$ for some $m\in [M]$. Assume that $\mathscr{Y}_{s_1'},\cdots, \mathscr{Y}_{i-1}$ have been sampled, and 
\begin{equation}\label{Eq25.1.4}
    \mathscr{B}_{\mathscr{Y}_j}\leq j, \quad \forall j\in \{s_3,\cdots, i-1\}\cap \{J_1,\cdots,J_M\}=\{J_1,\cdots,J_{m-1}\}.
\end{equation}
We let
\begin{eqnarray}\label{Eq25.1.5}    
\mathcal{S}_{l,i}''&:=&\mathcal{S}_l'\cap (\{j\in \{s_3,\cdots,s_4\}: \mathscr{B}_j\leq i\}\backslash \{\mathscr{Y}_{s_3},\cdots,\mathscr{Y}_{i-1}\})\nonumber\\
    &=& \{j\in \{s_1,\cdots,s_2\}: s_1'\leq \mathscr{B}_j\leq i\}\backslash \{\mathscr{Y}_{s_3},\cdots,\mathscr{Y}_{i-1}\}.
\end{eqnarray}
As $\mathscr{B}_j=n+1>i$ for any $j\in \{s_3,\cdots,s_4\}\backslash \{I_1,\cdots,I_M\}$ and $i=J_m$, we have
\begin{eqnarray*}
\mathscr{N}_i&=&\mathscr{N}_{J_m}=N_m=|\{m'\in [M]: b_{m'}\leq J_m\}|-m+1\nonumber\\
&=& |\{j\in \{s_3,\cdots,s_4\}:\mathscr{B}_j\leq i\}|-m+1\nonumber\\
&=& |\{j\in \{s_3,\cdots,s_4\}:\mathscr{B}_j\leq i\}\backslash \{\mathscr{Y}_{s_3},\cdots,\mathscr{Y}_{i-1}\}|,
\end{eqnarray*}
where we use (\ref{Eq25.1.4}) in the last line. Hence $|\mathcal{S}''_{l,i}|\leq \mathscr{N}_i$ and 
\begin{eqnarray}
   && |\{j\in \{s_3,\cdots,s_4\}:\mathscr{B}_j\leq i\}\backslash (\{\mathscr{Y}_{s_3},\cdots,\mathscr{Y}_{i-1}\}\cup \mathcal{S}''_{l,i})|\nonumber\\
   && =\mathscr{N}_i-|\mathcal{S}''_{l,i}|\geq \min\{W_l,\mathscr{N}_i\}-|\mathcal{S}''_{l,i}|\geq 0,
\end{eqnarray}
where we use the fact that $|\mathcal{S}''_{l,i}|\leq |\mathcal{S}'_l|=W_l$ in the last inequality. We let $\mathcal{S}'''_{l,i}$ be the set that consists of the smallest $\min\{W_l,\mathscr{N}_i\}-|\mathcal{S}''_{l,i}|$ elements in the set $\{j\in \{s_3,\cdots,s_4\}:\mathscr{B}_j\leq i\}\backslash (\{\mathscr{Y}_{s_3},\cdots,\mathscr{Y}_{i-1}\}\cup \mathcal{S}''_{l,i})$. If $Y_i'=1$, we pick $\mathscr{Y}_i$ uniformly from the set $\mathcal{S}_{l,i}''\cup \mathcal{S}'''_{l,i}$. If $Y_i'=0$, we pick $\mathscr{Y}_i$ uniformly from the set $\{j\in \{s_3,\cdots,s_4\}:\mathscr{B}_j\leq i\}\backslash (\{\mathscr{Y}_{s_3},\cdots,\mathscr{Y}_{i-1}\}\cup \mathcal{S}''_{l,i}\cup \mathcal{S}'''_{l,i} )$. Note that $\mathscr{B}_{\mathscr{Y}_i}\leq i$. 

It can be checked that $\{\mathscr{Y}_i\}_{i=s_1'}^{s_2'}$ has the desired conditional distribution given $\mathcal{F}_{s_1'}$ as specified by the resampling algorithm for the $L^2$ model. Therefore, the above procedure gives a valid coupling between $\{\mathscr{Y}_i\}_{i=s_1'}^{s_2'}$ and $\{Y_i'\}_{i=s_1'}^{s_2'}$ conditional on $\mathcal{F}_{s_1'}$. 

Now for any $i\in \{s_1',\cdots,s_2'\}$ such that $Y_i'=0$, we have $\mathscr{Y}_i\notin \mathcal{S}''_{l,i}$; as $\mathscr{Y}_i\in \{j\in \{s_3,\cdots,s_4\}:\mathscr{B}_j\leq i\}\backslash\{\mathscr{Y}_{s_3},\cdots,\mathscr{Y}_{i-1}\}$, by (\ref{Eq25.1.5}), we have $\mathscr{Y}_i\notin \mathcal{S}_l'$. Hence for any $i\in \{s_1',\cdots,s_2'\}$, we have $\mathbbm{1}_{\mathscr{Y}_i\in\mathcal{S}_l'}\leq Y_i'$. By (\ref{Eq25.1.6}), we have
\begin{equation}\label{Eq25.1.8}
    D_l'\leq \sum_{i=s_1'}^{s_2'} Y_i'. 
\end{equation}

By (\ref{Eq25.1.7}), (\ref{Eq25.1.8}), and Hoeffding's inequality, we obtain that for any $t\geq 0$, 
\begin{equation}\label{Eq25.1.9}
    \mathbb{P}\Big(D_l'\geq \sum_{i=s_1'}^{s_2'} \frac{W_l}{\mathscr{N}_i}+(s_2'-s_1'+1)t\Big|\mathcal{F}_{s_1'}\Big)\leq e^{-2(s_2'-s_1'+1)t^2} 
\end{equation}
Let $\mathcal{D}_l$ be the event that 
\begin{equation}\label{Eq39.1.2}
    D_l'\geq 30 L e^{12L^2} \beta_n^{1\slash 2} (s_2-s_1+1) (s_2'-s_1'+1) (y_l(\Gamma)-y_{l-1}(\Gamma)).
\end{equation}
Taking $t=L \beta_n^{1\slash 2} (s_2-s_1+1)(y_l(\Gamma)-y_{l-1}(\Gamma))$ in (\ref{Eq25.1.9}) and noting (\ref{Eq25.1.12}) and (\ref{Eq25.1.10}), we obtain that
\begin{equation*}
    \mathbb{P}(\mathcal{D}_l\cap \mathcal{C}\cap\mathcal{E}_l|\mathcal{F}_{s_1'})\leq \exp(-2L^2\beta_n (s_2-s_1+1)^2(s_2'-s_1'+1)(y_l(\Gamma)-y_{l-1}(\Gamma))^2 ). 
\end{equation*}
Hence
\begin{eqnarray}\label{Eq26.2.3}
   && \mathbb{P}(\mathcal{D}_l\cap \mathcal{C}\cap\mathcal{E}_l)=\mathbb{E}[\mathbb{P}(\mathcal{D}_l\cap \mathcal{C}\cap\mathcal{E}_l|\mathcal{F}_{s_1'})]\nonumber\\
   &\leq& \exp(-2L^2\beta_n (s_2-s_1+1)^2(s_2'-s_1'+1)(y_l(\Gamma)-y_{l-1}(\Gamma))^2 ).
\end{eqnarray}
By (\ref{Eq26.2.1}), (\ref{Eq26.2.2}), (\ref{Eq26.2.3}), and the union bound, we have
\begin{eqnarray}\label{Eq26.3.2}
    \mathbb{P}(\mathcal{D}_l)&\leq& CL\beta_n^{-1\slash 2}\exp(-c_L'\beta_n^{-1\slash 2})+\exp(-2L^4(s_2-s_1+1)(y_l(\Gamma)-y_{l-1}(\Gamma))^2) \nonumber\\
    &&+\exp(-2L^2\beta_n (s_2-s_1+1)^2(s_2'-s_1'+1)(y_l(\Gamma)-y_{l-1}(\Gamma))^2 ).
\end{eqnarray}
By (\ref{Eq20.2.1}), (\ref{Eq26.3.1}), and (\ref{Eq24.1.1}), we have
\begin{equation}\label{Eq26.3.3}
    s_2-s_1\geq L\beta_n^{-1\slash 2}(x_l(\Gamma)-x_{l-1}(\Gamma))-2\geq \frac{L\beta_n^{-1\slash 2}}{2K_0T}-2\geq \frac{L\beta_n^{-1\slash 2}}{4K_0T},
\end{equation}
\begin{equation}\label{Eq26.3.4}
    s_2'-s_1'\geq L\beta_n^{-1\slash 2}(y_l(\Gamma)-y_{l-1}(\Gamma))-2\geq \frac{L\beta_n^{-1\slash 2}}{2K_0T}-2\geq \frac{L\beta_n^{-1\slash 2}}{4K_0T}.
\end{equation}
By (\ref{Eq26.3.1}) and (\ref{Eq26.3.2})-(\ref{Eq26.3.4}), we have
\begin{equation}\label{Eq43.1.2}
    \mathbb{P}(\mathcal{D}_l)\leq CL\beta_n^{-1\slash 2}\exp(-c_L'\beta_n^{-1\slash 2})+2\exp(-L^5\beta_n^{-1\slash 2}\slash (128 K_0^5 T^5)). 
\end{equation}

\subparagraph{Sub-step 1.2}

In this sub-step, we bound $L_{2,l}$. For any $q\in \mathbb{N}^{*}$, we define 
\begin{equation}\label{Eq27.1.2}
    \Lambda_{l,q}:=\sum_{\substack{i_1<\cdots<i_q,j_1<\cdots<j_q\\ i_1,\cdots,i_q\in \{s_1,\cdots,s_2\} \\j_1,\cdots,j_q\in \{s_1',\cdots,s_2'\}}} \mathbbm{1}_{\sigma(i_1)=j_1,\cdots,\sigma(i_q)=j_q}\mathbbm{1}_{i_1,\cdots,i_q\in\mathcal{S}_{2,l}}.
\end{equation}

Consider any $i_1,\cdots,i_q\in \{s_1,\cdots,s_2\}$ and $j_1,\cdots,j_q\in \{s_1',\cdots,s_2'\}$ such that $i_1<\cdots<i_q$ and $j_1<\cdots<j_q$. For any $l\in [q]$, if $\sigma(i_l)=j_l$, then 
\begin{equation*}
    (i_l,\sigma(i_l))=(i_l,j_l)\in \{s_1,\cdots,s_2\}\times \{s_1',\cdots,s_2'\}\subseteq \mathcal{X}_s\times \mathcal{X}_s,
\end{equation*}
which implies $\mathscr{Y}_{j_l}=i_l$. Hence we have
\begin{eqnarray}\label{Eq27.1.1}
 &&   \mathbb{E}[\mathbbm{1}_{\sigma(i_1)=j_1,\cdots,\sigma(i_q)=j_q}\mathbbm{1}_{i_1,\cdots,i_q\in\mathcal{S}_{2,l}}|\mathcal{F}_{s_1'}]\leq \mathbb{E}[\mathbbm{1}_{\mathscr{Y}_{j_1}=i_1,\cdots,\mathscr{Y}_{j_q}=i_q}\mathbbm{1}_{i_1,\cdots,i_q\in\mathcal{S}_{2,l}}|\mathcal{F}_{s_1'}] \nonumber\\
 &=& \mathbbm{1}_{i_1,\cdots,i_q\in\mathcal{S}_{2,l}} \mathbb{E}[\mathbbm{1}_{\mathscr{Y}_{j_1}=i_1,\cdots,\mathscr{Y}_{j_q}=i_q}|\mathcal{F}_{s_1'}] \nonumber\\
 &=& \mathbbm{1}_{i_1,\cdots,i_q\in\mathcal{S}_{2,l}} \mathbb{E}[\mathbb{E}[\mathbbm{1}_{\mathscr{Y}_{j_q}=i_q}|\mathcal{F}_{j_q}]\mathbbm{1}_{\mathscr{Y}_{j_1}=i_1,\cdots,\mathscr{Y}_{j_{q-1}}=i_{q-1}}|\mathcal{F}_{s_1'}]\nonumber\\
 &\leq& \frac{\mathbbm{1}_{i_1,\cdots,i_q\in\mathcal{S}_{2,l}}}{\mathscr{N}_{j_q}} \mathbb{E}[\mathbbm{1}_{\mathscr{Y}_{j_1}=i_1,\cdots,\mathscr{Y}_{j_{q-1}}=i_{q-1}}|\mathcal{F}_{s_1'}]\leq \cdots\leq \frac{\mathbbm{1}_{i_1,\cdots,i_q\in\mathcal{S}_{2,l}}}{\mathscr{N}_{j_1}\mathscr{N}_{j_2}\cdots \mathscr{N}_{j_q}}.
\end{eqnarray}
By (\ref{Eq25.1.12}) and (\ref{Eq27.1.1}), we have
\begin{eqnarray}\label{Eq28.1.1}
   && \mathbb{E}[\mathbbm{1}_{\sigma(i_1)=j_1,\cdots,\sigma(i_q)=j_q}\mathbbm{1}_{i_1,\cdots,i_q\in\mathcal{S}_{2,l}}|\mathcal{F}_{s_1'}]\mathbbm{1}_{\mathcal{C}\cap\mathcal{E}_l} \nonumber\\
   &\leq& \frac{\mathbbm{1}_{i_1,\cdots,i_q\in\mathcal{S}_{2,l}}\mathbbm{1}_{\mathcal{C}\cap\mathcal{E}_l}}{\mathscr{N}_{j_1}\mathscr{N}_{j_2}\cdots \mathscr{N}_{j_q}} \leq (4e^{12L^2} L^{-1}\beta_n^{1\slash 2})^q \mathbbm{1}_{i_1,\cdots,i_q\in\mathcal{S}_{2,l}}\mathbbm{1}_{\mathcal{E}_l}.
\end{eqnarray} 

By (\ref{Eq25.1.10}), (\ref{Eq27.1.2}), (\ref{Eq28.1.1}), and Lemma \ref{Lemma3.1},
\begin{eqnarray}
 &&   \mathbb{E}[\Lambda_{l,q}|\mathcal{F}_{s_1'}]\mathbbm{1}_{\mathcal{C}\cap\mathcal{E}_l} \leq (4e^{12L^2} L^{-1}\beta_n^{1\slash 2})^q \mathbbm{1}_{\mathcal{E}_l} \sum_{\substack{i_1<\cdots<i_q,j_1<\cdots<j_q\\ i_1,\cdots,i_q\in \{s_1,\cdots,s_2\} \\j_1,\cdots,j_q\in \{s_1',\cdots,s_2'\}}} \mathbbm{1}_{i_1,\cdots,i_q\in\mathcal{S}_{2,l}} \nonumber\\
 &\leq& (4e^{12L^2} L^{-1}\beta_n^{1\slash 2})^q \mathbbm{1}_{\mathcal{E}_l}\binom{|\mathcal{S}_{2,l}|}{q}\binom{s_2'-s_1'+1}{q}\nonumber\\
 &\leq& \mathbbm{1}_{\mathcal{E}_l}\Big(\frac{4e^{12L^2+2} L^{-1}\beta_n^{1\slash 2}|\mathcal{S}_{2,l}|(s_2'-s_1'+1)}{q^2}\Big)^q \nonumber\\
 &\leq& \mathbbm{1}_{\mathcal{E}_l}\Big(\frac{4e^{12L^2+2} L^{-1}\beta_n^{1\slash 2} W_l (s_2'-s_1'+1)}{q^2}\Big)^q \nonumber\\
 &\leq& \Big(\frac{20 e^{12L^2+2} L\beta_n^{1\slash 2}(s_2-s_1+1)(s_2'-s_1'+1)(y_l(\Gamma)-y_{l-1}(\Gamma))}{q^2}\Big)^q.
\end{eqnarray}
Hence
\begin{eqnarray}\label{Eq28.1.3}
  &&  \mathbb{P}(\{\Lambda_{l,q}\geq 1\}\cap \mathcal{C}\cap\mathcal{E}_l)=\mathbb{E}[\mathbb{E}[\mathbbm{1}_{\Lambda_{l,q}\geq 1}|\mathcal{F}_{s_1'}]\mathbbm{1}_{\mathcal{C}\cap\mathcal{E}_l}]\leq \mathbb{E}[\mathbb{E}[\Lambda_{l,q}|\mathcal{F}_{s_1'}]\mathbbm{1}_{\mathcal{C}\cap\mathcal{E}_l}] \nonumber\\
&\leq&  \Big(\frac{20 e^{12L^2+2} L\beta_n^{1\slash 2}(s_2-s_1+1)(s_2'-s_1'+1)(y_l(\Gamma)-y_{l-1}(\Gamma))}{q^2}\Big)^q.\nonumber\\
&&
\end{eqnarray}
Let 
\begin{equation}
    q_0:=10e^{6L^2+1}L^{1\slash 2}\beta_n^{1\slash 4}(s_2-s_1+1)^{1\slash 2}(s_2'-s_1'+1)^{1\slash 2}(y_l(\Gamma)-y_{l-1}(\Gamma))^{1\slash 2}.
\end{equation}
Taking $q=\lceil q_0 \rceil$ in (\ref{Eq28.1.3}), we obtain that
\begin{equation*}
    \mathbb{P}(\{\Lambda_{l,\lceil q_0 \rceil }\geq 1\}\cap \mathcal{C}\cap\mathcal{E}_l) \leq 2^{-q_0},
\end{equation*}
which leads to
\begin{equation}\label{Eq30.1.2}
    \mathbb{P}(\{L_{2,l}\geq q_0+1\}\cap \mathcal{C}\cap\mathcal{E}_l) \leq 2^{-q_0}.
\end{equation}

By (\ref{Eq20.2.1}) and (\ref{Eq26.3.1}), we have
\begin{eqnarray*}
  &&  L^{3\slash 2} \beta_n^{-1\slash 4} T^{-1\slash 2}\sqrt{(x_l(\Gamma)-x_{l-1}(\Gamma))(y_l(\Gamma)-y_{l-1}(\Gamma))}   \nonumber\\
  && \geq \frac{1}{2} L^{3\slash 2}\beta_n^{-1\slash 4}T^{-3\slash 2} K_0^{-1}\geq \frac{1}{2}L \geq 1.
\end{eqnarray*}
Hence by (\ref{Eq26.3.1})-(\ref{Eq24.1.1}), (\ref{Eq26.3.3})-(\ref{Eq26.3.4}), and the AM-GM inequality, we have
\begin{equation}
    q_0\geq 10e^{6L^2+1}L^{1\slash 2}\beta_n^{1\slash 4}\cdot \frac{L\beta_n^{-1\slash 2}}{4K_0T}\cdot (2K_0T)^{-1\slash 2}\geq e^{6L^2} L^{3\slash 2} K_0^{-3\slash 2} T^{-3\slash 2} \beta_n^{-1\slash 4},
\end{equation}
\begin{eqnarray}\label{Eq30.1.3}
    q_0+1&\leq& 20e^{6L^2+1}L^{3\slash 2} \beta_n^{-1\slash 4} T^{-1\slash 2}\sqrt{(x_l(\Gamma)-x_{l-1}(\Gamma))(y_l(\Gamma)-y_{l-1}(\Gamma))}+1 \nonumber\\
    &\leq& 30 e^{6L^2+1}L^{3\slash 2} \beta_n^{-1\slash 4} T^{-1\slash 2}\sqrt{(x_l(\Gamma)-x_{l-1}(\Gamma))(y_l(\Gamma)-y_{l-1}(\Gamma))}\nonumber\\
    &\leq& 15e^{6L^2+1}L^{3\slash 2} \beta_n^{-1\slash 4} T^{-1\slash 2} (x_l(\Gamma)-x_{l-1}(\Gamma)+y_l(\Gamma)-y_{l-1}(\Gamma)).\nonumber\\
    &&
\end{eqnarray}
Let $\mathscr{E}_l$ be the event that 
\begin{equation}\label{Eq45.1.1}
    L_{2,l}\leq 15e^{6L^2+1}L^{3\slash 2} \beta_n^{-1\slash 4} T^{-1\slash 2} (x_l(\Gamma)-x_{l-1}(\Gamma)+y_l(\Gamma)-y_{l-1}(\Gamma)).
\end{equation}
By (\ref{Eq30.1.2})-(\ref{Eq30.1.3}), we have
\begin{equation}\label{Eq30.1.4}
    \mathbb{P}(\mathscr{E}_l^c\cap\mathcal{C}\cap\mathcal{E}_l) \leq \exp(-c e^{6L^2} L^{3\slash 2}\beta_n^{-1\slash 4}\slash (K_0^{3\slash 2} T^{3\slash 2})).  
\end{equation}
By (\ref{Eq26.2.1}), (\ref{Eq26.2.2}), (\ref{Eq30.1.4}), and the union bound, we have
\begin{eqnarray}
    \mathbb{P}(\mathscr{E}_l^c)&\leq& \exp(-c e^{6L^2} L^{3\slash 2}\beta_n^{-1\slash 4}\slash (K_0^{3\slash 2} T^{3\slash 2}))+CL\beta_n^{-1\slash 2}\exp(-c_L'\beta_n^{-1\slash 2})\nonumber\\
    &&+\exp(-2L^4(s_2-s_1+1)(y_l(\Gamma)-y_{l-1}(\Gamma))^2).
\end{eqnarray}
Noting (\ref{Eq26.3.1}) and (\ref{Eq26.3.3}), we obtain that 
\begin{equation}\label{Eq45.1.5}
    \mathbb{P}(\mathscr{E}_l^c)\leq CL\beta_n^{-1\slash 2} \exp(-c_L'\beta_n^{-1\slash 4}\slash(K_0^3T^3)).
\end{equation}

\subparagraph{Sub-step 1.3}

In this sub-step, we bound $D_l$. We let 
\begin{equation}
   t_s:=\lceil (s-1)L\beta_n^{-1\slash 2}\rceil.
\end{equation}
Note that $t_s\in [n]$. Recall the definition of $r_s$ from (\ref{Eq30.1.5}). 
As
\begin{equation*}
    \min\{(s-1)L\beta_n^{-1\slash 2}, (\lfloor n\beta_n^{1\slash 2}\slash L\rfloor -s +1 )L\beta_n^{-1\slash 2}\}\geq L\beta_n^{-1\slash 2}\geq 2,
\end{equation*}
we have
\begin{equation*}
    t_s-1\geq (s-1)L\beta_n^{-1\slash 2}-1\geq \frac{1}{2}(s-1)L\beta_n^{-1\slash 2}\geq r_s\beta_n^{-1\slash 2},
\end{equation*}
\begin{equation*}
  n-t_s\geq (\lfloor n\beta_n^{1\slash 2}\slash L\rfloor -s +1 )L\beta_n^{-1\slash 2}-1 \geq \frac{1}{2}(\lfloor n\beta_n^{1\slash 2}\slash L\rfloor -s +1 ) L\beta_n^{-1\slash 2}\geq r_s\beta_n^{-1\slash 2}.
\end{equation*}
Hence
\begin{equation}\label{Eq30.3.1}
    r_s\beta_n^{-1\slash 2}+1\leq t_s\leq n-r_s\beta_n^{-1\slash 2}. 
\end{equation}
Take $\beta=\beta_n$, $\delta_0=1\slash 4$, $K=2L$, $r=r_s$, $t_0=t_s$ in Proposition \ref{P2.3.2}. As $r_s\geq C_1'$ and $r_s\leq \log(1+\beta_n^{-1\slash 2})\leq \log(1+\beta_n^{-1\slash 2})^4$, noting (\ref{Eq30.3.1}), we obtain that
\begin{equation}\label{Eq30.1.20} \mathbb{P}\Big(\sup_{f\in\mathbb{B}_{2L}}\Big|\int f 
 d\tilde{\mu}_{n,t_s}-\int f d\tilde{\mu}\Big|>C_2(\log{r_s})^{1\slash 4}r_s^{-1\slash 20}\Big) \leq C_0\exp(-c_0 \beta_n^{-3\slash 8}),
\end{equation}
where we recall from Definition \ref{Def2.1} that 
\begin{equation*}
\tilde{\mu}_{n,t_s}=\beta_n^{1\slash 2}\sum_{i=1}^n\delta_{(\beta_n^{1\slash 2}(i-t_s),\beta_n^{1\slash 2}(\sigma(i)-t_s))}, \quad d\tilde{\mu}=\frac{1}{\sqrt{\pi}}e^{-(x-y)^2}dxdy.
\end{equation*}

Below we assume that the event
\begin{equation}\label{Eq31.1.1}
\sup_{f\in\mathbb{B}_{2L}}\Big|\int f 
 d\tilde{\mu}_{n,t_s}-\int f d\tilde{\mu}\Big|\leq C_2(\log{r_s})^{1\slash 4}r_s^{-1\slash 20}
\end{equation}
holds. For any $\mathbf{x}\in \mathbb{R}^2$, we let
\begin{equation*}
    g(\mathbf{x})=\mathbbm{1}_{\tilde{Q}_{\Gamma,l}} ((t_s,t_s)+\beta_n^{-1\slash 2}\mathbf{x}).
\end{equation*}
For any $\delta\in (0,1)$, we let
\begin{eqnarray*}
    \mathscr{R}_{\Gamma,l;\delta}&:=& ((s-1)L\beta_n^{-1\slash 2}+L\beta_n^{-1\slash 2}x_{l-1}(\Gamma)-\delta\beta_n^{-1\slash 2},(s-1)L\beta_n^{-1\slash 2}+L\beta_n^{-1\slash 2}x_{l}(\Gamma)+\delta\beta_n^{-1\slash 2}]\\
   &&\times ((s-1)L\beta_n^{-1\slash 2}+L\beta_n^{-1\slash 2}y_{l-1}(\Gamma)-\delta\beta_n^{-1\slash 2},(s-1)L\beta_n^{-1\slash 2}+L\beta_n^{-1\slash 2}y_{l}(\Gamma)+\delta\beta_n^{-1\slash 2}],
\end{eqnarray*}
\begin{eqnarray*}
  \mathscr{R}_{\Gamma,l;\delta}' &:=& ((s-1)L\beta_n^{-1\slash 2}+L\beta_n^{-1\slash 2}x_{l-1}(\Gamma)+\delta\beta_n^{-1\slash 2},(s-1)L\beta_n^{-1\slash 2}+L\beta_n^{-1\slash 2}x_{l}(\Gamma)-\delta\beta_n^{-1\slash 2}]\\
   &&\times ((s-1)L\beta_n^{-1\slash 2}+L\beta_n^{-1\slash 2}y_{l-1}(\Gamma)+\delta\beta_n^{-1\slash 2},(s-1)L\beta_n^{-1\slash 2}+L\beta_n^{-1\slash 2}y_{l}(\Gamma)-\delta\beta_n^{-1\slash 2}].
\end{eqnarray*}
For any $\delta\in (0,1)$ and $\mathbf{x}\in \mathbb{R}^2$, we let
\begin{equation*}
    g_{1,\delta}(\mathbf{x})=\min\{1,\delta^{-1}\beta_n^{1\slash 2}\mathbbm{1}_{\tilde{Q}_{\Gamma,l}}((t_s,t_s)+\beta_n^{-1\slash 2}\mathbf{x})d((t_s,t_s)+\beta_n^{-1\slash 2}\mathbf{x},\partial \tilde{Q}_{\Gamma,l})\},
\end{equation*}
\begin{equation*}
    g_{2,\delta}(\mathbf{x})=\min\{1,\delta^{-1}\beta_n^{1\slash 2} \mathbbm{1}_{\mathscr{R}_{\Gamma,l;\delta}}((t_s,t_s)+\beta_n^{-1\slash 2}\mathbf{x})d((t_s,t_s)+\beta_n^{-1\slash 2}\mathbf{x},\partial \mathscr{R}_{\Gamma,l;\delta})\},
\end{equation*}
where for any $\mathbf{x}\in\mathbb{R}^2$ and any set $A\subseteq \mathbb{R}^2$, $d(\mathbf{x},A):=\inf_{\mathbf{z}\in A}\|\mathbf{x}-\mathbf{z}\|_2$. In the following, we consider any $\delta\in (0,1)$. It can be checked that $\|g_{1,\delta}\|_{\infty}\leq 1$, $\|g_{2,\delta}\|_{\infty}\leq 1$, $\|g_{1,\delta}\|_{Lip}\leq \delta^{-1}$, and $\|g_{2,\delta}\|_{Lip}\leq \delta^{-1}$. Note that
\begin{eqnarray*}
   && \supp (g_{1,\delta}), \supp (g_{2,\delta}) \subseteq \beta_n^{1\slash 2} \overline{\mathscr{R}_{\Gamma,l;\delta}}-\beta_n^{1\slash 2}(t_s, t_s) \nonumber\\
   &=& [(s-1+x_{l-1}(\Gamma))L-\delta-\beta_n^{1\slash 2}t_s, (s-1+x_l(\Gamma))L+\delta-\beta_n^{1\slash 2}t_s] \nonumber\\
   && \times [(s-1+y_{l-1}(\Gamma))L-\delta-\beta_n^{1\slash 2}t_s, (s-1+y_l(\Gamma))L+\delta-\beta_n^{1\slash 2}t_s].
\end{eqnarray*}
As 
\begin{eqnarray*}
 && (s-1+\min\{x_{l-1}(\Gamma),y_{l-1}(\Gamma)\})L-\delta-\beta_n^{1\slash 2}t_s \nonumber\\
 &\geq& \beta_n^{1\slash 2}((s-1)L\beta_n^{-1\slash 2}-\lceil(s-1)L\beta_n^{-1\slash 2} \rceil)-\delta\geq -\beta_n^{1\slash 2}-\delta\geq -2\geq -L,
\end{eqnarray*}
\begin{eqnarray*}
    && (s-1+\max\{x_{l}(\Gamma),y_{l}(\Gamma)\})L+\delta-\beta_n^{1\slash 2}t_s \nonumber\\
 &\leq& \beta_n^{1\slash 2}((s-1)L\beta_n^{-1\slash 2}-\lceil(s-1)L\beta_n^{-1\slash 2} \rceil)+L+\delta\leq L+\delta\leq 2L,
\end{eqnarray*}
we have $\supp (g_{1,\delta}), \supp (g_{2,\delta}) \subseteq [-2L,2L]^2$. Hence $\delta g_{1,\delta}, \delta g_{2,\delta}\in \mathbb{B}_{2L}$ (recall Definition \ref{Def2.3}). By (\ref{Eq31.1.1}), as $r_s\geq \min\{L,\log(1+\beta_n^{-1\slash 2})\}\slash 2 \geq 1$, we have
\begin{equation}\label{Eq31.1.2}
 \Big|\int g_{1,\delta} 
 d\tilde{\mu}_{n,t_s}-\int g_{1,\delta} d\tilde{\mu}\Big|\leq C_2\delta^{-1}(\log{r_s})^{1\slash 4}r_s^{-1\slash 20}\leq C_3\delta^{-1} r_s^{-1\slash 25},
\end{equation}
\begin{equation}
 \Big|\int g_{2,\delta}
 d\tilde{\mu}_{n,t_s}-\int g_{2,\delta} d\tilde{\mu}\Big|\leq C_2\delta^{-1}(\log{r_s})^{1\slash 4}r_s^{-1\slash 20}\leq C_3\delta^{-1} r_s^{-1\slash 25},
\end{equation}
where $C_3$ is a positive constant that only depends on $L$. It can be checked that 
\begin{equation}
    g_{1,\delta}(\mathbf{x})\leq g(\mathbf{x})\leq g_{2,\delta}(\mathbf{x}) \text{ for any } \mathbf{x}\in \mathbb{R}^2,
\end{equation}
\begin{equation}\label{Eq31.1.3}
    \int g d\tilde{\mu}_{n,t_s}=\beta_n^{1\slash 2}\sum_{i=1}^n \mathbbm{1}_{\tilde{Q}_{\Gamma,l}}((i,\sigma(i)))=\beta_n^{1\slash 2}|S(\sigma)\cap \tilde{Q}_{\Gamma,l}|.
\end{equation}
By (\ref{Eq31.1.2})-(\ref{Eq31.1.3}), 
\begin{equation}\label{Eq30.1.10}
    D_l=|S(\sigma)\cap \tilde{Q}_{\Gamma,l}| \geq \beta_n^{-1\slash 2}\int g_{1,\delta} d\tilde{\mu}-C_3\beta_n^{-1\slash 2}\delta^{-1} r_s^{-1\slash 25},
\end{equation}
\begin{equation}\label{Eq30.1.11}
    D_l=|S(\sigma)\cap \tilde{Q}_{\Gamma,l}|  
\leq  \beta_n^{-1\slash 2}\int g_{2,\delta} d\tilde{\mu}+C_3\beta_n^{-1\slash 2}\delta^{-1} r_s^{-1\slash 25}.
\end{equation}
For any $\mathbf{x}=(x_1,x_2)\in\mathbb{R}^2$, 
\begin{equation}\label{Eq30.1.6}
    g_{1,\delta}(\mathbf{x})\geq \mathbbm{1}_{\mathscr{R}_{\Gamma,l;\delta}'}((t_s,t_s)+\beta_n^{-1\slash 2}\mathbf{x}), \quad g_{2,\delta}(\mathbf{x})\leq \mathbbm{1}_{\mathscr{R}_{\Gamma,l;\delta}}((t_s,t_s)+\beta_n^{-1\slash 2}\mathbf{x}).
\end{equation}
For any $\mathbf{x}=(x_1,x_2)\in \mathbb{R}^2$ such that $(t_s,t_s)+\beta_n^{-1\slash 2}\mathbf{x}\in \mathscr{R}_{\Gamma,l;\delta}$, we have  
\begin{equation}\label{Eq31.1.5}
  (y_{l-1}(\Gamma)-x_l(\Gamma))L-2\delta\leq  x_2-x_1\leq (y_l(\Gamma)-x_{l-1}(\Gamma))L+2\delta,
\end{equation}
hence by (\ref{Eq26.3.1}),
\begin{eqnarray}\label{Eq30.1.7}
   && |x_2-x_1|\nonumber\\
   &\leq& |y_{l-1}(\Gamma)-x_{l-1}(\Gamma)|L+\max\{|y_{l}(\Gamma)-y_{l-1}(\Gamma)|,|x_l(\Gamma)-x_{l-1}(\Gamma)|\}L+2\delta\nonumber\\
    &\leq& |y_{l-1}(\Gamma)-x_{l-1}(\Gamma)|L+L T^{-1}+2\delta;
\end{eqnarray}
moreover, by (\ref{Eq31.1.5}), we have
\begin{eqnarray*}
  &&  x_2-x_1-(y_l(\Gamma)-y_{l-1}(\Gamma))L-2\delta\\
  &\leq& (y_{l-1}(\Gamma)-x_{l-1}(\Gamma))L\leq x_2-x_1+(x_l(\Gamma)-x_{l-1}(\Gamma))L+2\delta,
\end{eqnarray*}
hence by (\ref{Eq26.3.1}),
\begin{eqnarray}\label{Eq30.1.9}
 && |x_2-x_1|\nonumber\\
 &\geq& |y_{l-1}(\Gamma)-x_{l-1}(\Gamma)|L-\max\{|y_l(\Gamma)-y_{l-1}(\Gamma)|,|x_l(\Gamma)-x_{l-1}(\Gamma)|\}L-2\delta\nonumber\\
 &\geq& |y_{l-1}(\Gamma)-x_{l-1}(\Gamma)|L-LT^{-1}-2\delta.
\end{eqnarray}
By (\ref{Eq30.1.6}), (\ref{Eq30.1.7}), and (\ref{Eq30.1.9}), we have
\begin{eqnarray}\label{Eq30.1.12}
    \int g_{1,\delta}d\tilde{\mu} &\geq& \frac{1}{\sqrt{\pi}}\int\mathbbm{1}_{\mathscr{R}_{\Gamma,l;\delta}'}((t_s,t_s)+\beta_n^{-1\slash 2}\mathbf{x})e^{-(x_2-x_1)^2}dx_1dx_2\nonumber\\
    &\geq& \frac{1}{\sqrt{\pi}} 
 \beta_n e^{-(|y_{l-1}(\Gamma)-x_{l-1}(\Gamma)|L+LT^{-1}+2\delta)^2}|\mathscr{R}_{\Gamma,l;\delta}'|\nonumber\\
    &\geq& \frac{1}{\sqrt{\pi}} e^{-(|y_{l-1}(\Gamma)-x_{l-1}(\Gamma)|L+LT^{-1}+2\delta)^2}(L(x_l(\Gamma)-x_{l-1}(\Gamma))-2\delta)_{+}\nonumber\\
    &&\times (L(y_l(\Gamma)-y_{l-1}(\Gamma))-2\delta)_{+},
\end{eqnarray}
\begin{eqnarray}\label{Eq30.1.14}
 \int g_{2,\delta}d\tilde{\mu} &\leq& \frac{1}{\sqrt{\pi}} \int \mathbbm{1}_{\mathscr{R}_{\Gamma,l;\delta}}((t_s,t_s)+\beta_n^{-1\slash 2}\mathbf{x})e^{-(x_2-x_1)^2}dx_1dx_2\nonumber\\
 &\leq& \frac{1}{\sqrt{\pi}}\beta_n e^{-(|y_{l-1}(\Gamma)-x_{l-1}(\Gamma)|L-LT^{-1}-2\delta)_{+}^2}|\mathscr{R}_{\Gamma,l;\delta}|\nonumber\\
 &\leq& \frac{1}{\sqrt{\pi}} e^{-(|y_{l-1}(\Gamma)-x_{l-1}(\Gamma)|L-LT^{-1}-2\delta)_{+}^2}(L(x_l(\Gamma)-x_{l-1}(\Gamma))+2\delta)\nonumber\\
 &&\times(L(y_l(\Gamma)-y_{l-1}(\Gamma))+2\delta).
\end{eqnarray}
Below we take $\delta=1\slash (4K_0T)$. By (\ref{Eq26.3.1}), we have
\begin{equation}\label{Eq30.1.15}
    \min\{x_l(\Gamma)-x_{l-1}(\Gamma),y_l(\Gamma)-y_{l-1}(\Gamma)\}\geq \frac{1}{2K_0T}=2\delta.
\end{equation}
As $\min\{T,K_0\}\geq L^2$, we have $\delta\leq 1\slash (4L^4)$. Hence by (\ref{Eq30.1.10})-(\ref{Eq30.1.11}) and (\ref{Eq30.1.12})-(\ref{Eq30.1.15}), we have
\begin{eqnarray*}
    D_l&\geq& -4C_3K_0T\beta_n^{-1\slash 2} r_s^{-1\slash 25}+\frac{1}{\sqrt{\pi}} L^2 \beta_n^{-1\slash 2}  e^{-(|y_{l-1}(\Gamma)-x_{l-1}(\Gamma)|L+2L^{-1})^2} \nonumber\\
    &&\quad\quad\quad\quad\quad\quad\quad\quad\quad\quad \times (1-L^{-1})^2 (x_l(\Gamma)-x_{l-1}(\Gamma))(y_l(\Gamma)-y_{l-1}(\Gamma)),
\end{eqnarray*}
\begin{eqnarray*}
    D_l&\leq& 4C_3K_0T\beta_n^{-1\slash 2} r_s^{-1\slash 25}+\frac{1}{\sqrt{\pi}} L^2\beta_n^{-1\slash 2} e^{-(|y_{l-1}(\Gamma)-x_{l-1}(\Gamma)|L-2L^{-1})_{+}^2}\nonumber\\
    &&\quad\quad\quad\quad\quad\quad\quad\quad\quad\quad \times (1+L^{-1})^2 (x_l(\Gamma)-x_{l-1}(\Gamma))(y_l(\Gamma)-y_{l-1}(\Gamma)).
\end{eqnarray*}
As $L\geq 4$, we have $1-L^{-1}\geq e^{-2L^{-1}}$ and $1+L^{-1}\leq e^{L^{-1}}$. Hence 
\begin{eqnarray}\label{Eq30.1.19}
    D_l&\geq& -4C_3K_0T\beta_n^{-1\slash 2} r_s^{-1\slash 25}+\frac{1}{\sqrt{\pi}} L^2 \beta_n^{-1\slash 2}  e^{-(|y_{l-1}(\Gamma)-x_{l-1}(\Gamma)|L+2L^{-1})^2-4L^{-1}} \nonumber\\
    &&\quad\quad\quad\quad\quad\quad\quad\quad\quad\quad \times   (x_l(\Gamma)-x_{l-1}(\Gamma))(y_l(\Gamma)-y_{l-1}(\Gamma)),
\end{eqnarray}
\begin{eqnarray}\label{Eq30.1.21}
    D_l&\leq& 4C_3K_0T\beta_n^{-1\slash 2} r_s^{-1\slash 25}+\frac{1}{\sqrt{\pi}} L^2\beta_n^{-1\slash 2} e^{-(|y_{l-1}(\Gamma)-x_{l-1}(\Gamma)|L-2L^{-1})_{+}^2+4L^{-1}}\nonumber\\
    &&\quad\quad\quad\quad\quad\quad\quad\quad\quad\quad \times  (x_l(\Gamma)-x_{l-1}(\Gamma))(y_l(\Gamma)-y_{l-1}(\Gamma)).
\end{eqnarray}

Let $\mathcal{H}_l$ be the event that (\ref{Eq30.1.19}) and (\ref{Eq30.1.21}) hold. By (\ref{Eq30.1.20}) and the above discussion, we have
\begin{equation}\label{Eq43.1.3}
    \mathbb{P}(\mathcal{H}_l^c)\leq C_0\exp(-c_0 \beta_n^{-3\slash 8}). 
\end{equation}

\subparagraph{Sub-step 1.4}

In this sub-step, we bound $L_{1,l}$. Recall the definition of $\mathcal{S}_{1,l}$ in (\ref{Eq30.1.22}). We let
\begin{equation}
    R:=|\{i\in [n]: (i,\sigma(i))\in \mathcal{S}_{1,l}\times \{s_1',s_1'+1,\cdots,s_2'\}\}|.
\end{equation}
We also let $\mathscr{I}_1,\cdots,\mathscr{I}_n\in \{0\}\cup [n]$ and $\mathscr{J}_1,\cdots,\mathscr{J}_n\in \{0\}\cup [n]$ be such that
\begin{equation*}
\mathscr{I}_{R+1}=\cdots=\mathscr{I}_n=0, \quad \mathscr{J}_{R+1}=\cdots=\mathscr{J}_n=0,
\end{equation*}
\begin{equation*}
1\leq \mathscr{I}_1<\cdots<\mathscr{I}_R, \quad 1\leq \mathscr{J}_1<\cdots<\mathscr{J}_R,
\end{equation*}
\begin{equation*}
 \{\mathscr{I}_1,\cdots,\mathscr{I}_R\}=\{i\in [n]: (i,\sigma(i))\in \mathcal{S}_{1,l}\times \{s_1',s_1'+1,\cdots,s_2'\}\},
\end{equation*}
\begin{equation*}
 \{\mathscr{J}_1,\cdots,\mathscr{J}_R\}=\{i\in [n]: (\sigma^{-1}(i),i)\in \mathcal{S}_{1,l}\times \{s_1',s_1'+1,\cdots,s_2'\}\}.
\end{equation*} 
As $\mathcal{S}_{1,l}\times \{s_1',\cdots,s_2'\}\subseteq \mathcal{X}_s\times \mathcal{X}_s$, by (\ref{Eq37.1.1}), we have 
\begin{equation}\label{Eq37.1.3}
\{\mathscr{I}_1,\cdots,\mathscr{I}_R\}\subseteq \{I_1,\cdots,I_M\}, \quad \{\mathscr{J}_1,\cdots,\mathscr{J}_R\}\subseteq \{J_1,\cdots,J_M\}.
\end{equation}

Note that for any $i\in [n]$ such that $(i,\sigma(i))\in \mathcal{S}_{1,l}\times \{s_1',s_1'+1,\cdots,s_2'\}$, we have $(i,\sigma(i))\in \tilde{Q}_{\Gamma,l}$ and $i\notin \mathcal{S}_{2,l}$. Hence we have
\begin{equation}\label{Eq34.1.1}
    R\leq D_l-D_l'. 
\end{equation}
Now consider any $i\in [n]$ such that $(i,\sigma(i))\in \tilde{Q}_{\Gamma,l}$ and $i\notin \mathcal{S}_{2,l}$. Note that $i\in \{s_1,\cdots,s_2\}$ and $\sigma(i)\in \{s_1',\cdots,s_2'\}$. If $i=\mathscr{Y}_{j}$ for some $j\in \{s_3,\cdots,s_1'-1\}$, then $\mathscr{Y}_j>0$ and $\sigma(i)=\sigma(\mathscr{Y}_j)=j<s_1'$, which leads to a contradiction. Hence 
\begin{equation}\label{Eq31.2.1}
    i \notin \{\mathscr{Y}_{s_3}, \cdots,\mathscr{Y}_{s_1'-1}\}.
\end{equation}
Note that $(i,\sigma(i))\in\mathcal{X}_s\times \mathcal{X}_s$; according to the resampling algorithm for the $L^2$ model, we have
\begin{equation}\label{Eq31.2.2}
 \mathscr{B}_{i}=\mathscr{B}_{\mathscr{Y}_{\sigma(i)}}\leq \sigma(i)\leq s_2'.
\end{equation}
As $i\notin \mathcal{S}_{2,l}$, by (\ref{Eq31.2.1}) and (\ref{Eq31.2.2}), we have $\mathscr{B}_i<s_1'$, hence $i\in\mathcal{S}_{1,l}$. Therefore, 
\begin{equation}\label{Eq34.1.2}
    R\geq D_l-D_l'.
\end{equation}
Combining (\ref{Eq34.1.1}) and (\ref{Eq34.1.2}), we conclude that 
\begin{equation}\label{Eq39.1.1}
    R=D_l-D_l'.
\end{equation}

Throughout the rest of the proof, we let $S_0$ be the set that consists solely of the empty mapping $\tau_0:\emptyset\rightarrow\emptyset$, and let $LIS(\tau_0):=0$. If $R\geq 1$, we let $\tau\in S_R$ be such that $\sigma(\mathscr{I}_s)=\mathscr{J}_{\tau(s)}$ for every $s\in [R]$. If $R=0$, we let $\tau$ be the empty mapping. In the following, we condition on $\mathcal{F}_{s_1'}$, and consider any $r\in [n]$, $i_1,\cdots,i_r\in [n]$, and $j_1,\cdots,j_r\in [n]$ such that 
\begin{equation*}
\mathbb{P}(R=r, \mathscr{I}_1=i_1,\cdots,\mathscr{I}_r=i_r,\mathscr{J}_1=j_1,\cdots,\mathscr{J}_r=j_r|\mathcal{F}_{s_1'})>0.
\end{equation*}
Note that by (\ref{Eq37.1.3}), 
\begin{equation}\label{Eq37.1.5}
    \{i_1,\cdots,i_r\}\subseteq \{I_1,\cdots,I_M\}, \quad \{j_1,\cdots,j_r\}\subseteq \{J_1,\cdots,J_M\}.
\end{equation}
According to the resampling algorithm for the $L^2$ model, conditional on $\mathcal{F}_{s_1'}$, the distribution of $\sigma$ is given by the uniform distribution on the following set: 
\begin{eqnarray*}
  &&  \{\kappa\in S_n: \kappa(s)\geq \mathscr{B}_s \text{ for every } s\in\{I_1,\cdots,I_M\}, \nonumber\\
  &&\quad \kappa(s)=\sigma_0(s)\text{ for every }s\in [n]\backslash \{I_1,\cdots,I_M\}, \nonumber\\
  &&\quad \kappa^{-1}(s)=\mathscr{Y}_s\text{ for every }s\in \{s_3,\cdots,s_1'-1\}\cap\{J_1,\cdots,J_M\}\},
\end{eqnarray*}
which has cardinality $\prod_{s\in \{s_1',\cdots,s_4\}\cap\{J_1,\cdots,J_M\} }\mathscr{N}_s$. For any $\eta\in S_r$, let $M_{r,\eta}$ be the following set (recall Definition \ref{Def1.5}):
\begin{eqnarray*}
  &&  \{\kappa\in S_n: \kappa(s)\geq \mathscr{B}_s \text{ for every } s\in\{I_1,\cdots,I_M\}, \nonumber\\
  &&\quad \kappa(s)=\sigma_0(s)\text{ for every }s\in [n]\backslash \{I_1,\cdots,I_M\}, \nonumber\\
  &&\quad \kappa^{-1}(s)=\mathscr{Y}_s\text{ for every }s\in \{s_3,\cdots,s_1'-1\}\cap\{J_1,\cdots,J_M\},\nonumber\\
  &&\quad S(\kappa)\cap (\mathcal{S}_{1,l}\times \{s_1',\cdots,s_2'\})=\{(i_s,j_{\eta(s)}):s\in [r]\}\}.
\end{eqnarray*}
Then for any $\eta\in S_r$, we have 
\begin{eqnarray}\label{Eq38.1.1}
    && \mathbb{P}(\{\tau=\eta\}\cap \{R=r,\mathscr{I}_1=i_1,\cdots,\mathscr{I}_r=i_r,\mathscr{J}_1=j_1,\cdots,\mathscr{J}_r=j_r\}|\mathcal{F}_{s_1'}) \nonumber  \\
   && =\frac{|M_{r,\eta}|}{\prod_{s\in \{s_1',\cdots,s_4\}\cap \{J_1,\cdots,J_M\}}\mathscr{N}_s}.
\end{eqnarray}

Now for any $\eta_1,\eta_2\in S_r$, we define a mapping $\psi_{\eta_1,\eta_2}: M_{r,\eta_1}\rightarrow M_{r,\eta_2}$ as follows. Let $\iota_{\eta_1,\eta_2}\in S_n$ be the unique permutation that maps $j_s$ to $j_{\eta_2\eta_1^{-1}(s)}$ for every $s\in [r]$ and fixes every element in $[n]\backslash \{j_1,\cdots,j_r\}$. Now for every $\kappa\in M_{r,\eta_1}$, we let $\psi_{\eta_1,\eta_2}(\kappa):=\iota_{\eta_1,\eta_2}\kappa$. Below we verify that $\psi_{\eta_1,\eta_2}(\kappa)\in M_{r,\eta_2}$. For every $s\in [n]\backslash \{i_1,\cdots,i_r\}$, we have $\kappa(s)\in [n]\backslash \{j_1,\cdots,j_r\}$. Hence for every $s\in\{I_1,\cdots,I_M\}\backslash \{i_1,\cdots,i_r\}$, 
\begin{equation}\label{Eq36.1.3}
    \iota_{\eta_1,\eta_2}\kappa(s)=\kappa(s)\geq \mathscr{B}_s;
\end{equation}
for every $s\in[n]\cap\{I_1,\cdots,I_M\}^c \cap\{i_1,\cdots,i_r\}^c=[n]\backslash \{I_1,\cdots,I_M\}$ (note (\ref{Eq37.1.5})),
\begin{equation}\label{Eq37.1.7}
    \iota_{\eta_1,\eta_2}\kappa(s)=\kappa(s)=\sigma_0(s). 
\end{equation}
For every $s\in [r]$, we have 
\begin{equation}\label{Eq36.1.1}
    \iota_{\eta_1,\eta_2}\kappa(i_s)=\iota_{\eta_1,\eta_2}(j_{\eta_1(s)})=j_{\eta_2(s)}.
\end{equation}
Note that for any $s\in [r]$, $j_{\eta_2(s)}\in\{s_1',\cdots,s_2'\}$. For any $s\in [r]$, as $i_s\in \mathcal{S}_{1,l}$, by (\ref{Eq36.1.1}), we have $\mathscr{B}_{i_s}<  s_1'\leq j_{\eta_2(s)}=\iota_{\eta_1,\eta_2}\kappa(i_s)$. Combining this with (\ref{Eq36.1.3}), we obtain that for every $s\in \{I_1,\cdots,I_M\}$, 
\begin{equation}
    \iota_{\eta_1,\eta_2}\kappa(s)\geq\mathscr{B}_{s}.
\end{equation} 
For any $s\in \{s_3,\cdots,s_1'-1\}\cap\{J_1,\cdots,J_M\}$, we have $s\notin \{j_1,\cdots,j_r\}$, hence
\begin{equation}
    \iota_{\eta_1,\eta_2}\kappa(\mathscr{Y}_s)=\iota_{\eta_1,\eta_2} (s)=s.
\end{equation}
Moreover, it can be checked that
\begin{equation}\label{Eq37.1.9}
    S(\iota_{\eta_1,\eta_2}\kappa) \cap (\mathcal{S}_{1,l}\times  \{s_1',\cdots,s_2'\}  )=\{(i_s,j_{\eta_2(s)}):s\in [r]\}.
\end{equation}
By (\ref{Eq37.1.7})-(\ref{Eq37.1.9}), $\iota_{\eta_1,\eta_2} \kappa \in M_{r,\eta_2}$. We can also verify that for any $\eta_1,\eta_2\in S_n$,
\begin{equation*}
\psi_{\eta_2,\eta_1}\psi_{\eta_1,\eta_2}=Id_{M_{r,\eta_1}}, \quad \psi_{\eta_1,\eta_2}\psi_{\eta_2,\eta_1}=Id_{M_{r,\eta_2}},
\end{equation*}
where for any set $A$, $Id_A$ denotes the identity map on $A$. We conclude that for any $\eta_1,\eta_2\in S_r$, $\psi_{\eta_1,\eta_2}$ is a bijection from $M_{r,\eta_1}$ to $M_{r,\eta_2}$, hence
\begin{equation}\label{Eq38.1.3}
|M_{r,\eta_1}|=|M_{r,\eta_2}|.
\end{equation}

By (\ref{Eq38.1.1}) and (\ref{Eq38.1.3}), we conclude that for any $\eta\in S_r$,
\begin{eqnarray}
  &&  \frac{\mathbb{P}(\{\tau=\eta\}\cap \{R=r,\mathscr{I}_1=i_1,\cdots,\mathscr{I}_r=i_r,\mathscr{J}_1=j_1,\cdots,\mathscr{J}_r=j_r\}|\mathcal{F}_{s_1'})}{\mathbb{P}(R=r,\mathscr{I}_1=i_1,\cdots,\mathscr{I}_r=i_r,\mathscr{J}_1=j_1,\cdots,\mathscr{J}_r=j_r|\mathcal{F}_{s_1'})}\nonumber\\
  && = \frac{|M_{r,\eta}|}{\sum_{\eta'\in S_r}|M_{r,\eta'}|}=\frac{1}{r!}. 
\end{eqnarray}
Let $\mathcal{B}_l'$ be the $\sigma$-algebra generated by $\sigma_0$, $\{b_m\}_{m\in [n]}$, $\{\mathscr{Y}_{l}\}_{l\in [s_1'-1]\cap \mathcal{X}_s}$, $R$, $\{\mathscr{I}_m\}_{m\in [n]}$, and $\{\mathscr{J}_m\}_{m\in [n]}$. Following the argument between (\ref{Eq3.17}) and (\ref{E3.1.6}), we can deduce that for any $\delta_0\in (0,1\slash 3)$, 
\begin{equation}
    \mathbb{P}(|LIS(\tau)-2\sqrt{R}|>R^{1\slash 2-\delta_0}|\mathcal{B}_l')\leq C_{\delta_0} \exp(-R^{(1-3\delta_0)\slash 2}),
\end{equation}
where $C_{\delta_0}$ is a positive constant that only depends on $\delta_0$. Taking $\delta_0=1\slash 6$ and noting that $L_{1,l}=LIS(\tau)$, we obtain that
\begin{equation}\label{Eq43.1.1}
    \mathbb{P}(|L_{1,l}-2\sqrt{R}|>R^{1\slash 3}|\mathcal{B}_l')\leq C \exp(-R^{1\slash 4}).
\end{equation}
By (\ref{Eq39.1.2}), (\ref{Eq30.1.19}), (\ref{Eq30.1.21}), and (\ref{Eq39.1.1}), when the event $\mathcal{D}_l^c\cap \mathcal{H}_l$ holds,
\begin{eqnarray}\label{Eq39.1.4}
    R&\leq& 4C_3K_0T\beta_n^{-1\slash 2} r_s^{-1\slash 25}+\frac{1}{\sqrt{\pi}} L^2\beta_n^{-1\slash 2} e^{-(|y_{l-1}(\Gamma)-x_{l-1}(\Gamma)|L-2L^{-1})_{+}^2+4L^{-1}}\nonumber\\
    &&\quad\quad\quad\quad\quad\quad\quad\quad\quad\quad \times  (x_l(\Gamma)-x_{l-1}(\Gamma))(y_l(\Gamma)-y_{l-1}(\Gamma)),
\end{eqnarray}
\begin{eqnarray}
    R&\geq& -4C_3K_0T\beta_n^{-1\slash 2} r_s^{-1\slash 25}+\frac{1}{\sqrt{\pi}} L^2 \beta_n^{-1\slash 2}  e^{-(|y_{l-1}(\Gamma)-x_{l-1}(\Gamma)|L+2L^{-1})^2-4L^{-1}} \nonumber\\
    &&\quad\quad\quad\quad\quad\quad\quad\quad\quad\quad \times   (x_l(\Gamma)-x_{l-1}(\Gamma))(y_l(\Gamma)-y_{l-1}(\Gamma))\nonumber\\
    &&-30 L e^{12L^2} \beta_n^{1\slash 2} (s_2-s_1+1) (s_2'-s_1'+1) (y_l(\Gamma)-y_{l-1}(\Gamma)).
\end{eqnarray}
By (\ref{Eq30.1.1}) and (\ref{Eq24.1.1}), we have
\begin{equation*}
    s_2-s_1+1\leq L\beta_n^{-1\slash 2}(x_l(\Gamma)-x_{l-1}(\Gamma))+1\leq 2L\beta_n^{-1\slash 2}(x_l(\Gamma)-x_{l-1}(\Gamma)),
\end{equation*}
\begin{equation*}
    s_2'-s_1'+1\leq L\beta_n^{-1\slash 2}(y_l(\Gamma)-y_{l-1}(\Gamma))+1\leq 2L\beta_n^{-1\slash 2}(y_l(\Gamma)-y_{l-1}(\Gamma)),
\end{equation*}
which by (\ref{Eq26.3.1}) leads to
\begin{eqnarray}
    && 30 L e^{12L^2} \beta_n^{1\slash 2} (s_2-s_1+1) (s_2'-s_1'+1) (y_l(\Gamma)-y_{l-1}(\Gamma))\nonumber\\
    &\leq& 120 \beta_n^{-1\slash 2}   L^3 e^{12L^2} T^{-1} (x_l(\Gamma)-x_{l-1}(\Gamma)) (y_l(\Gamma)-y_{l-1}(\Gamma)) \nonumber\\
    &\leq& \frac{1}{\sqrt{\pi}} L^2 \beta_n^{-1\slash 2}  e^{-(|y_{l-1}(\Gamma)-x_{l-1}(\Gamma)|L+2L^{-1})^2-4L^{-1}} \nonumber\\
    && \times (x_l(\Gamma)-x_{l-1}(\Gamma))(y_l(\Gamma)-y_{l-1}(\Gamma)) \times 300 L e^{20 L^2}T^{-1}.
\end{eqnarray}
Moreover, by (\ref{Eq26.3.1}),
\begin{eqnarray}\label{Eq39.1.5}
  &&  4C_3K_0T\beta_n^{-1\slash 2} r_s^{-1\slash 25} \nonumber\\
  &\leq& \frac{1}{\sqrt{\pi}} L^2 \beta_n^{-1\slash 2}  e^{-(|y_{l-1}(\Gamma)-x_{l-1}(\Gamma)|L+2L^{-1})^2-4L^{-1}} \nonumber\\
    && \times (x_l(\Gamma)-x_{l-1}(\Gamma))(y_l(\Gamma)-y_{l-1}(\Gamma)) \times 4\sqrt{\pi} C_3K_0T L^{-2} e^{5L^2} (2K_0T)^2 r_s^{-1\slash 25}\nonumber\\
    &\leq& \frac{1}{\sqrt{\pi}} L^2 \beta_n^{-1\slash 2}  e^{-(|y_{l-1}(\Gamma)-x_{l-1}(\Gamma)|L+2L^{-1})^2-4L^{-1}} \nonumber\\
    && \times (x_l(\Gamma)-x_{l-1}(\Gamma))(y_l(\Gamma)-y_{l-1}(\Gamma)) \times C_4 K_0^3T^3 r_s^{-1\slash 25},
\end{eqnarray}
where $C_4$ is a positive constant that only depends on $L$. 

By (\ref{Eq39.1.4})-(\ref{Eq39.1.5}), when the event $\mathcal{D}_l^c\cap \mathcal{H}_l$ holds, we have
\begin{eqnarray}\label{Eq42.1.1}
    R&\leq& \frac{1}{\sqrt{\pi}} L^2\beta_n^{-1\slash 2} e^{-(|y_{l-1}(\Gamma)-x_{l-1}(\Gamma)|L-2L^{-1})_{+}^2+4L^{-1}} (x_l(\Gamma)-x_{l-1}(\Gamma))\nonumber\\
&&\times (y_l(\Gamma)-y_{l-1}(\Gamma)) (1+C_4 K_0^3T^3r_s^{-1\slash 25}),
\end{eqnarray}
\begin{eqnarray}\label{Eq41.1.1}
  R&\geq& \frac{1}{\sqrt{\pi}} L^2 \beta_n^{-1\slash 2}  e^{-(|y_{l-1}(\Gamma)-x_{l-1}(\Gamma)|L+2L^{-1})^2-4L^{-1}} (x_l(\Gamma)-x_{l-1}(\Gamma))\nonumber\\
    && \times  (y_l(\Gamma)-y_{l-1}(\Gamma)) (1-C_4 K_0^3 T^3 r_s^{-1\slash 25}-300Le^{20L^2}T^{-1})_{+}.
\end{eqnarray}
Note that (\ref{Eq26.3.1}) and (\ref{Eq41.1.1}) imply 
\begin{equation}\label{Eq42.1.2}
    R\geq \frac{1}{8}\beta_n^{-1\slash 2} L^2 e^{-5L^2} K_0^{-2} T^{-2} (1-C_4 K_0^3 T^3 r_s^{-1\slash 25}-300Le^{20L^2}T^{-1})_{+}.
\end{equation}

We let
\begin{equation}\label{Eq46.1.1}
  \Phi_1:=e^{4L^{-1}} (1+C_4 K_0^3T^3r_s^{-1\slash 25}),
\end{equation}
\begin{equation}
    \Phi_2:=e^{-4L^{-1}}(1-C_4 K_0^3 T^3 r_s^{-1\slash 25}-300Le^{20L^2}T^{-1})_{+},
\end{equation}
\begin{equation}
    \Phi_3:=\max\Big\{\frac{1}{8}\beta_n^{-1\slash 2} L^2 e^{-5L^2} K_0^{-2} T^{-2} (1-C_4 K_0^3 T^3 r_s^{-1\slash 25}-300Le^{20L^2}T^{-1})_{+},1\Big\},
\end{equation}
\begin{equation}\label{Eq46.1.2}
    \Phi_4:=\frac{1}{8}\beta_n^{-1\slash 2} L^2 e^{-5L^2} K_0^{-2} T^{-2} (1-C_4 K_0^3 T^3 r_s^{-1\slash 25}-300Le^{20L^2}T^{-1})_{+}.
\end{equation}
By (\ref{Eq42.1.1})-(\ref{Eq42.1.2}), when the event $\{|L_{1,l}-2\sqrt{R}|\leq R^{1\slash 3}\}\cap\mathcal{D}_l^c\cap \mathcal{H}_l$ holds, 
\begin{eqnarray}
   && L_{1,l}\leq 2\sqrt{R}+2R^{1\slash 3}=2\sqrt{R}(1+\max\{R,1\}^{-1\slash 6}) \nonumber\\
   &\leq& 2\pi^{-1\slash 4}L\beta_n^{-1\slash 4}e^{-(|y_{l-1}(\Gamma)-x_{l-1}(\Gamma)|L-2L^{-1})_{+}^2\slash 2}\nonumber\\
   &&\times \sqrt{(x_l(\Gamma)-x_{l-1}(\Gamma))(y_l(\Gamma)-y_{l-1}(\Gamma))} \Phi_1^{1\slash 2}(1+\Phi_3^{-1\slash 6}),
\end{eqnarray}
\begin{eqnarray}
    && L_{1,l}\geq 2\sqrt{R}-2R^{1\slash 3} =2\sqrt{R}(1-\max\{R,1\}^{-1\slash 6})\nonumber\\
    &\geq& 2\pi^{-1\slash 4}L\beta_n^{-1\slash 4}e^{-(|y_{l-1}(\Gamma)-x_{l-1}(\Gamma)|L+2L^{-1})^2\slash 2}\nonumber\\
    && \times \sqrt{(x_l(\Gamma)-x_{l-1}(\Gamma))(y_l(\Gamma)-y_{l-1}(\Gamma))} \Phi_2^{1\slash 2}(1-\Phi_3^{-1\slash 6}).
\end{eqnarray}
Let $\mathscr{E}_l'$ be the event that
\begin{eqnarray}\label{Eq45.1.3}
   && 2\pi^{-1\slash 4}L\beta_n^{-1\slash 4}e^{-(|y_{l-1}(\Gamma)-x_{l-1}(\Gamma)|L+2L^{-1})^2\slash 2}\nonumber\\
    && \times \sqrt{(x_l(\Gamma)-x_{l-1}(\Gamma))(y_l(\Gamma)-y_{l-1}(\Gamma))} \Phi_2^{1\slash 2}(1-\Phi_3^{-1\slash 6})\nonumber\\
    &\leq& L_{1,l} \leq 2\pi^{-1\slash 4}L\beta_n^{-1\slash 4}e^{-(|y_{l-1}(\Gamma)-x_{l-1}(\Gamma)|L-2L^{-1})_{+}^2\slash 2}\nonumber\\
   &&\times \sqrt{(x_l(\Gamma)-x_{l-1}(\Gamma))(y_l(\Gamma)-y_{l-1}(\Gamma))} \Phi_1^{1\slash 2}(1+\Phi_3^{-1\slash 6}).
\end{eqnarray}
We have $\{|L_{1,l}-2\sqrt{R}|\leq R^{1\slash 3}\}\cap\mathcal{D}_l^c\cap \mathcal{H}_l \subseteq \mathscr{E}_l'\cap\mathcal{D}_l^c\cap \mathcal{H}_l $, which by (\ref{Eq42.1.2}) leads to
\begin{eqnarray}
 (\mathscr{E}_l')^c\cap\mathcal{D}_l^c\cap \mathcal{H}_l &\subseteq &  \{|L_{1,l}-2\sqrt{R}|> R^{1\slash 3}\}\cap\mathcal{D}_l^c\cap \mathcal{H}_l \nonumber\\
 &\subseteq &  \{|L_{1,l}-2\sqrt{R}|> R^{1\slash 3}\}\cap\{R\geq \Phi_4\}. 
\end{eqnarray}
Hence by (\ref{Eq43.1.1}), 
\begin{eqnarray}\label{Eq43.1.4}
    \mathbb{P}((\mathscr{E}_l')^c\cap\mathcal{D}_l^c\cap \mathcal{H}_l) &\leq& \mathbb{P}(\{|L_{1,l}-2\sqrt{R}|> R^{1\slash 3}\}\cap\{R\geq \Phi_4\})\nonumber\\
    &=&  \mathbb{E}[\mathbb{P}(|L_{1,l}-2\sqrt{R}|>R^{1\slash 3}|\mathcal{B}_l') \mathbbm{1}_{R\geq \Phi_4} ]\nonumber\\
    &\leq& C\mathbb{E}[\exp(-R^{1\slash 4})\mathbbm{1}_{R\geq \Phi_4}]\leq C\exp(-\Phi_4^{1\slash 4}).
\end{eqnarray}
By (\ref{Eq43.1.2}), (\ref{Eq43.1.3}), (\ref{Eq43.1.4}), and the union bound, we have
\begin{eqnarray}\label{Eq45.1.7}
   && \mathbb{P}((\mathscr{E}_l')^c) \leq \mathbb{P}((\mathscr{E}_l')^c\cap\mathcal{D}_l^c\cap \mathcal{H}_l)+\mathbb{P}(\mathcal{D}_l)+\mathbb{P}(\mathcal{H}_l^c)\nonumber\\
   &\leq& C\exp(-\Phi_4^{1\slash 4})+CL\beta_n^{-1\slash 2}\exp(-c_L'\beta_n^{-1\slash 2})\nonumber\\
   &&+2\exp(-L^5\beta_n^{-1\slash 2}\slash (128 K_0^5 T^5))+C_0\exp(-c_0 \beta_n^{-3\slash 8}).
\end{eqnarray}

\medskip

Let $\mathscr{C}_{\Gamma,l}$ be the event that 
\begin{eqnarray}\label{Eq45.3.1}
    && 2\pi^{-1\slash 4}L\beta_n^{-1\slash 4}e^{-(|y_{l-1}(\Gamma)-x_{l-1}(\Gamma)|L+2L^{-1})^2\slash 2}\nonumber\\
    && \times \sqrt{(x_l(\Gamma)-x_{l-1}(\Gamma))(y_l(\Gamma)-y_{l-1}(\Gamma))} \Phi_2^{1\slash 2}(1-\Phi_3^{-1\slash 6})\nonumber\\
    &\leq& LIS(\sigma|_{\tilde{Q}_{\Gamma,l}}) \nonumber\\
    &\leq& 15e^{6L^2+1}L^{3\slash 2} \beta_n^{-1\slash 4} T^{-1\slash 2} (x_l(\Gamma)-x_{l-1}(\Gamma)+y_l(\Gamma)-y_{l-1}(\Gamma))\nonumber\\
    &&+2\pi^{-1\slash 4}L\beta_n^{-1\slash 4}e^{-(|y_{l-1}(\Gamma)-x_{l-1}(\Gamma)|L-2L^{-1})_{+}^2\slash 2}\nonumber\\
   &&\quad\times\sqrt{(x_l(\Gamma)-x_{l-1}(\Gamma))(y_l(\Gamma)-y_{l-1}(\Gamma))} \Phi_1^{1\slash 2}(1+\Phi_3^{-1\slash 6}).
\end{eqnarray}
By (\ref{Eq25.1.1}), (\ref{Eq45.1.1}), and (\ref{Eq45.1.3}), we have $\mathscr{E}_l\cap\mathscr{E}'_l\subseteq \mathscr{C}_{\Gamma,l}$. Hence by (\ref{Eq45.1.5}), (\ref{Eq45.1.7}), and the union bound, we have
\begin{eqnarray}
   &&  \mathbb{P}((\mathscr{C}_{\Gamma,l})^c)\leq \mathbb{P}(\mathscr{E}_l^c)+\mathbb{P}((\mathscr{E}'_l)^c) \nonumber\\
   &\leq& C\exp(-\Phi_4^{1\slash 4})+C_L'\beta_n^{-1\slash 2}\exp(-c_L'\beta_n^{-1\slash 4}\slash (K_0^5 T^5)).
\end{eqnarray}

\paragraph{Step 2}

Throughout the rest of the proof, we take
\begin{equation*}
    T=\lceil 300L^2 e^{20L^2}\rceil, \quad K_0=2\lceil L^2\rceil +1.
\end{equation*}
We note that $\min\{T,K_0\}\geq L^2$ and $\max\{8K_0T,K_0^2T^3\}\leq C'L^{10}e^{60L^2}$, where $C'\geq 1$ is an absolute constant. We also assume that $\beta_n^{-1\slash 2}\geq C'L^{10}e^{60L^2}$. Note that this implies (\ref{Eq20.2.1}) and $\beta_n\leq 1\slash 100$. 

Recalling (\ref{Eq46.1.1})-(\ref{Eq46.1.2}), we have
\begin{equation}
    \Phi_1\leq e^{4L^{-1}}(1+C_L r_s^{-1\slash 25}), \quad \Phi_2\geq e^{-4L^{-1}}(1-L^{-1}-C_L r_s^{-1\slash 25})_{+},
\end{equation}
\begin{equation}
    \Phi_3\geq \max\{c_L\beta_n^{-1\slash 2}(1-L^{-1}-C_L r_s^{-1\slash 25})_{+},1\}, 
\end{equation}
\begin{equation}\label{Eq45.4.1}
    \Phi_4\geq c_L\beta_n^{-1\slash 2}(1-L^{-1}-C_L r_s^{-1\slash 25})_{+},
\end{equation}
where $C_L,c_L$ are positive constants that only depend on $L$. In the following, we denote
\begin{equation}
    \Psi_s:=(1-L^{-1}-C_L r_s^{-1\slash 25})_{+}.
\end{equation}

For any $\Gamma\in \Pi^{T,T,K_0}$ and any $l\in [2T-1]$, we let $\mathscr{D}_{\Gamma,l}$ be the event that 
\begin{eqnarray}\label{Eq45.2.4}
      && 2\pi^{-1\slash 4}L\beta_n^{-1\slash 4}\sqrt{(x_l(\Gamma)-x_{l-1}(\Gamma))(y_l(\Gamma)-y_{l-1}(\Gamma))}\nonumber\\
    && \times  e^{-(|y_{l-1}(\Gamma)-x_{l-1}(\Gamma)|L+2L^{-1})^2\slash 2-2L^{-1}}\Psi_s^{1\slash 2}
    (1-\max\{c_L\beta_n^{-1\slash 2}\Psi_s,1\}^{-1\slash 6})\nonumber\\
    &\leq& LIS(\sigma|_{\tilde{Q}_{\Gamma,l}}) \nonumber\\
    &\leq& 15e^{6L^2+1}L^{3\slash 2} \beta_n^{-1\slash 4} T^{-1\slash 2} (x_l(\Gamma)-x_{l-1}(\Gamma)+y_l(\Gamma)-y_{l-1}(\Gamma))+1\nonumber\\
    &&+2\pi^{-1\slash 4}L\beta_n^{-1\slash 4}\sqrt{(x_l(\Gamma)-x_{l-1}(\Gamma))(y_l(\Gamma)-y_{l-1}(\Gamma))}\nonumber\\
   &&\quad\times e^{-(|y_{l-1}(\Gamma)-x_{l-1}(\Gamma)|L-2L^{-1})_{+}^2\slash 2+2L^{-1}}  \nonumber \\
   &&\quad\times  (1+C_L r_s^{-1\slash 25})^{1\slash 2}  (1+\max\{c_L\beta_n^{-1\slash 2}\Psi_s,1\}^{-1\slash 6}).
\end{eqnarray}
By (\ref{Eq45.3.1})-(\ref{Eq45.4.1}), we have
\begin{eqnarray}\label{Eq45.5.1}
    \mathbb{P}((\mathscr{D}_{\Gamma,l})^c)&\leq& C\exp(-\Phi_4^{1\slash 4})+C_L'\beta_n^{-1\slash 2}\exp(-c_L'\beta_n^{-1\slash 4}\slash (K_0^5 T^5)) \nonumber\\ 
    &\leq& C_L'\beta_n^{-1\slash 2}\exp(-c_L'\beta_n^{-1\slash 8}\Psi_s^{1\slash 4}).
\end{eqnarray}

For any $\Gamma\in \Pi^{T,T,K_0}$ and any $l\in [2T-1]$, we let $\mathscr{D}'_{\Gamma,l}$ be the event that
\begin{eqnarray}\label{Eq46.2.1}
      && 2\pi^{-1\slash 4}L\beta_n^{-1\slash 4}\sqrt{(c_l(\Gamma)-a_{l-1}(\Gamma))(d_l(\Gamma)-b_{l-1}(\Gamma))}\nonumber\\
    && \times  e^{-(|b_{l-1}(\Gamma)-a_{l-1}(\Gamma)|L+2L^{-1})^2\slash 2-2L^{-1}}\Psi_s^{1\slash 2}
    (1-\max\{c_L\beta_n^{-1\slash 2}\Psi_s,1\}^{-1\slash 6})\nonumber\\
    &\leq& LIS(\sigma|_{\tilde{Q}'_{\Gamma,l}}) \nonumber\\
    &\leq& 15e^{6L^2+1}L^{3\slash 2} \beta_n^{-1\slash 4} T^{-1\slash 2} (c_l(\Gamma)-a_{l-1}(\Gamma)+d_l(\Gamma)-b_{l-1}(\Gamma))+1\nonumber\\
    &&+2\pi^{-1\slash 4}L\beta_n^{-1\slash 4}\sqrt{(c_l(\Gamma)-a_{l-1}(\Gamma))(d_l(\Gamma)-b_{l-1}(\Gamma))}\nonumber\\
   &&\quad\times e^{-(|b_{l-1}(\Gamma)-a_{l-1}(\Gamma)|L-2L^{-1})_{+}^2\slash 2+2L^{-1}}  \nonumber \\
   &&\quad\times  (1+C_L r_s^{-1\slash 25})^{1\slash 2}  (1+\max\{c_L\beta_n^{-1\slash 2}\Psi_s,1\}^{-1\slash 6}).
\end{eqnarray}
Similarly, we have
\begin{equation}\label{Eq45.5.2}
    \mathbb{P}((\mathscr{D}'_{\Gamma,l})^c)\leq C_L'\beta_n^{-1\slash 2}\exp(-c_L'\beta_n^{-1\slash 8}\Psi_s^{1\slash 4}).
\end{equation}

Now we let 
\begin{equation}
    \mathscr{A}:=\bigcap_{\Gamma\in \Pi^{T,T,K_0}}\bigcap_{l=1}^{2T-1}(\mathscr{D}_{\Gamma,l}\cap \mathscr{D}_{\Gamma,l}').
\end{equation}
By (\ref{Eq45.5.1}), (\ref{Eq45.5.2}), and the union bound, we have
\begin{equation}\label{Eq47.1.1}
    \mathbb{P}(\mathscr{A}^c)\leq C_L'\beta_n^{-1\slash 2}\exp(-c_L'\beta_n^{-1\slash 8}\Psi_s^{1\slash 4}).
\end{equation}

\paragraph{Step 3}

Let $\Gamma_0\in \Pi^{T,T,K_0}$ be 
\begin{equation*}
    (1,1), \frac{K_0+1}{2}, (2,1), \frac{K_0+1}{2}, (2,2), \frac{K_0+1}{2}, \cdots, (T,T-1), \frac{K_0+1}{2}, (T,T). 
\end{equation*}
We have $(x_0(\Gamma_0),y_0(\Gamma_0))=(0,0)$, $(x_{2T-1}(\Gamma_0),y_{2T-1}(\Gamma_0))=(1,1)$. For any $l\in [2T-2]$,
\begin{equation*}
    (x_l(\Gamma_0),y_l(\Gamma_0))=\Big(\frac{l+1}{2T},\frac{l}{2T}\Big).
\end{equation*}
By Lemma \ref{L1}, we have 
\begin{equation}\label{Eq45.2.2}
    LIS(\sigma|_{\mathcal{R}_s})\geq \sum_{l=1}^{2T-1} LIS(\sigma|_{\tilde{Q}_{\Gamma_0,l}}). 
\end{equation}
When the event $\mathscr{A}$ holds, by (\ref{Eq45.2.4}) and (\ref{Eq45.2.2}), we have 
\begin{eqnarray}\label{Eq45.2.3}
 && LIS(\sigma|_{\mathcal{R}_s}) \nonumber\\
 &\geq& 2\pi^{-1\slash 4}L\beta_n^{-1\slash 4} \cdot \frac{2T-3}{2T} \cdot e^{-4L^{-1}}\Psi_s^{1\slash 2} (1-\max\{c_L\beta_n^{-1 \slash 2}\Psi_s,1\}^{-1\slash 6}) \nonumber\\ 
 &\geq& 2\pi^{-1\slash 4}L\beta_n^{-1\slash 4} e^{-6L^{-1}}\Psi_s^{1\slash 2} (1-\max\{c_L\beta_n^{-1 \slash 2}\Psi_s,1\}^{-1\slash 6}),
\end{eqnarray}
where we use the fact that $1-3\slash (2T)\geq 1-L^{-1}\geq e^{-2L^{-1}}$. 

Below we consider any $\Gamma\in \Pi^{T,T,K_0}$. Following the argument in (\ref{Eq2.19.4}), we obtain that
\begin{eqnarray}\label{Eq46.2.2}
   &&  \sum_{l=1}^{2T-1}\sqrt{(c_l(\Gamma)-a_{l-1}(\Gamma))(d_l(\Gamma)-b_{l-1}(\Gamma))}\nonumber\\
   &\leq&  \frac{1}{2}\sum_{l=1}^{2T-1}(c_l(\Gamma)-a_{l-1}(\Gamma)+d_l(\Gamma)-b_{l-1}(\Gamma))\leq 1+L^{-1}.
\end{eqnarray}
When the event $\mathscr{A}$ holds, by (\ref{Eq46.2.1}) and (\ref{Eq46.2.2}), we have 
\begin{eqnarray}\label{Eq46.2.3}
   && \sum_{l=1}^{2T-1} LIS(\sigma|_{\tilde{Q}'_{\Gamma,l}}) \nonumber\\
   &\leq& 15e^{6L^2+1}L^{3\slash 2} \beta_n^{-1\slash 4} T^{-1\slash 2} \sum_{l=1}^{2T-1} (c_l(\Gamma)-a_{l-1}(\Gamma)+d_l(\Gamma)-b_{l-1}(\Gamma)) +2T-1\nonumber\\
   &&  +2\pi^{-1\slash 4}L\beta_n^{-1\slash 4}e^{2L^{-1}}(1+C_L r_s^{-1\slash 25})^{1\slash 2}  (1+\max\{c_L\beta_n^{-1\slash 2}\Psi_s,1\}^{-1\slash 6})\nonumber\\ 
   &&\quad \times \sum_{l=1}^{2T-1} \sqrt{(c_l(\Gamma)-a_{l-1}(\Gamma))(d_l(\Gamma)-b_{l-1}(\Gamma))}\nonumber\\
   &\leq& 2\pi^{-1\slash 4}L\beta_n^{-1\slash 4} e^{3L^{-1}}(1+C_L r_s^{-1\slash 25})^{1\slash 2}  (1+\max\{c_L\beta_n^{-1\slash 2}\Psi_s,1\}^{-1\slash 6})\nonumber\\
   &&+6 L^{1\slash 2} e^{-4L^2} \beta_n^{-1\slash 4}+800L^2 e^{20L^2}.
\end{eqnarray}

By Lemma \ref{L1} and (\ref{Eq46.2.3}), when the event $\mathscr{A}$ holds, we have
\begin{eqnarray}\label{Eq46.2.4}
    &&  LIS(\sigma|_{\mathcal{R}_s})\leq \max_{\Gamma\in \Pi^{T,T,K_0}}\Big\{\sum_{l=1}^{2T-1}LIS(\sigma|_{\tilde{Q}_{\Gamma,l}'})\Big\} \nonumber\\
    &\leq& 2\pi^{-1\slash 4}L\beta_n^{-1\slash 4} e^{3L^{-1}}(1+C_L r_s^{-1\slash 25})^{1\slash 2}  (1+\max\{c_L\beta_n^{-1\slash 2}\Psi_s,1\}^{-1\slash 6})\nonumber\\
   &&+6 L^{1\slash 2} e^{-4L^2} \beta_n^{-1\slash 4}+800L^2 e^{20L^2}.
\end{eqnarray}

By (\ref{Eq45.2.3}) and (\ref{Eq46.2.4}), when the event $\mathscr{A}$ holds, we have
\begin{eqnarray}\label{Eq47.1.2}
  &&  |LIS(\sigma|_{\mathcal{R}_s})-2\pi^{-1\slash 4}L\beta_n^{-1\slash 4}|\leq 6 L^{1\slash 2} e^{-4L^2} \beta_n^{-1\slash 4}+800L^2 e^{20L^2}\nonumber\\   && +2\pi^{-1\slash 4}L\beta_n^{-1\slash 4} \max\Big\{1-e^{-6L^{-1}}\Psi_s^{1\slash 2} (1-\max\{c_L\beta_n^{-1 \slash 2}\Psi_s,1\}^{-1\slash 6}), \nonumber\\
  && \quad\quad e^{3L^{-1}}(1+C_L r_s^{-1\slash 25})^{1\slash 2}  (1+\max\{c_L\beta_n^{-1\slash 2}\Psi_s,1\}^{-1\slash 6})-1\Big\}.
\end{eqnarray}
Note that $LIS(\sigma|_{\mathcal{R}_s})\leq |\mathcal{I}_{n,s}\cap\mathbb{N}^{*}|\leq L\beta_n^{-1\slash 2}+1\leq 2L\beta_n^{-1\slash 2}$. 

Hence by (\ref{Eq47.1.1}) and (\ref{Eq47.1.2}), we have
\begin{eqnarray}
  &&  \mathbb{E}[|LIS(\sigma|_{\mathcal{R}_s})-2\pi^{-1\slash 4}L\beta_n^{-1\slash 4}|] \nonumber\\
  &\leq& (2L\beta_n^{-1\slash 2})(C_L'\beta_n^{-1\slash 2}\exp(-c_L'\beta_n^{-1\slash 8}\Psi_s^{1\slash 4}))+6 L^{1\slash 2} e^{-4L^2} \beta_n^{-1\slash 4}+800L^2 e^{20L^2}\nonumber\\ 
  && +2\pi^{-1\slash 4}L\beta_n^{-1\slash 4} \max\Big\{1-e^{-6L^{-1}}\Psi_s^{1\slash 2} (1-\max\{c_L\beta_n^{-1 \slash 2}\Psi_s,1\}^{-1\slash 6}), \nonumber\\ 
  && \quad\quad\quad e^{3L^{-1}}(1+C_L r_s^{-1\slash 25})^{1\slash 2}  (1+\max\{c_L\beta_n^{-1\slash 2}\Psi_s,1\}^{-1\slash 6})-1\Big\}\nonumber\\
  &\leq& C_L'\beta_n^{-1}\exp(-c_L'\beta_n^{-1\slash 8}\Psi_s^{1\slash 4})+C_L'+C L^{1\slash 2} e^{-4L^2} \beta_n^{-1\slash 4}\nonumber\\
  &&+2\pi^{-1\slash 4}L\beta_n^{-1\slash 4} \max\Big\{1-e^{-6L^{-1}}\Psi_s^{1\slash 2} (1-\max\{c_L\beta_n^{-1 \slash 2}\Psi_s,1\}^{-1\slash 6}), \nonumber\\
  && \quad\quad\quad e^{3L^{-1}}(1+C_L r_s^{-1\slash 25})^{1\slash 2}  (1+\max\{c_L\beta_n^{-1\slash 2}\Psi_s,1\}^{-1\slash 6})-1\Big\}.
\end{eqnarray}

\end{proof}

\subsection{Proof of Theorem \ref{limit_l2_2}}\label{Sect.5.2}

In this subsection, we finish the proof of Theorem \ref{limit_l2_2} based on Propositions \ref{P5.1}-\ref{P5.3}.

\begin{proof}[Proof of Theorem \ref{limit_l2_2}]

Throughout the proof, we fix an arbitrary sequence of positive numbers $(\beta_n)_{n=1}^{\infty}$ such that $\lim_{n\rightarrow\infty}\beta_n=0$ and $\lim_{n\rightarrow\infty} n^2\beta_n=\infty$. For each $n\in \mathbb{N}^{*}$, we let $\gamma_n:=n^{1\slash 2}\beta_n^{1\slash 4}$. Note that
\begin{equation}\label{Eq49.1.9}
    \lim_{n\rightarrow\infty} \gamma_n=\infty, \quad \lim_{n\rightarrow\infty} \frac{\gamma_n}{n \beta_n^{1\slash 2}}=0.
\end{equation}
We denote by $C_1$ the constant $C$ that appears in Proposition \ref{P2.2.2} (with $C_0=1$). Without loss of generality, we assume that $C_1\geq 1$. We let $L_0=8C_1$, and fix any $L\geq 4$ such that $L\slash L_0\in\mathbb{N}^{*}$. 

Let $C_1',C_L,c_L,C_L',c_L',C'$ and $r_s,\Psi_s$ be defined as in Proposition \ref{P5.3}. In the following, we assume that $n\in\mathbb{N}^{*}$ is sufficiently large, so that 
\begin{eqnarray}\label{Eq48.1.1}
    && n\beta_n^{1\slash 2}\geq 20L, \quad \beta_n^{-1\slash 2}\geq C'L^{10}e^{60L^2}, \quad \gamma_n\in [2,n\beta_n^{1\slash 2}\slash (4L)],  \nonumber\\
    && \min\{(\gamma_n-1)L,\log(1+\beta_n^{-1\slash 2})\}\geq 2\max\{(C_L L)^{25}, C_1'\}.
\end{eqnarray}

Let $\mathcal{S}_1:=[\gamma_n,n\beta_n^{1\slash 2}\slash L-\gamma_n]\cap\mathbb{N}$. As
\begin{equation*}
    \gamma_n\geq 2,\quad n\beta_n^{1\slash 2}\slash L-\gamma_n\leq n\beta_n^{1\slash 2}\slash L-2\leq \lfloor n\beta_n^{1\slash 2}\slash L \rfloor-1, 
\end{equation*}
we have $ \mathcal{S}_1 \subseteq [2,\lfloor n\beta_n^{1\slash 2}\slash L\rfloor-1]\cap\mathbb{N}$. Let $\mathcal{S}_2:=[\lfloor n\beta_n^{1\slash 2}\slash L \rfloor]\backslash \mathcal{S}_1$. Note that
\begin{equation}\label{Eq49.1.1}
    |\mathcal{S}_1|\leq n\beta_n^{1\slash 2}\slash L, \quad |\mathcal{S}_1|\geq n\beta_n^{1\slash 2}\slash L-2\gamma_n-1\geq  n\beta_n^{1\slash 2}\slash L-3\gamma_n,
\end{equation}
\begin{equation}\label{Eq49.1.5}
    |\mathcal{S}_2|\leq n\beta_n^{1\slash 2}\slash L-|\mathcal{S}_1|\leq 3\gamma_n.
\end{equation}

By (\ref{Eq48.1.1}), for any $s\in \mathcal{S}_1$, we have 
\begin{equation*}
    r_s\geq \frac{1}{2}\min\{(\gamma_n-1)L,\log(1+\beta_n^{-1\slash 2})\}\geq \max\{(C_L L)^{25},C_1'\},
\end{equation*}
hence $\Psi_s\geq 1-2L^{-1}\geq 1\slash 2$. By Proposition \ref{P5.3}, for any $s\in\mathcal{S}_1$, we have
\begin{eqnarray}\label{Eq49.1.3}
  &&  \mathbb{E}[|LIS(\sigma|_{\mathcal{R}_s})-2\pi^{-1\slash 4}L\beta_n^{-1\slash 4}|] \nonumber\\
  &\leq& C_L'\beta_n^{-1}\exp(-c_L'\beta_n^{-1\slash 8}\slash 2)+C_L'+C L^{1\slash 2} e^{-4L^2} \beta_n^{-1\slash 4}\nonumber\\ 
  &&+2\pi^{-1\slash 4}L\beta_n^{-1\slash 4} \max\Big\{1-e^{-6L^{-1}}(1-2L^{-1})^{1\slash 2} (1-\max\{c_L\beta_n^{-1 \slash 2}\slash 2,1\}^{-1\slash 6}), \nonumber\\
  && \quad\quad\quad\quad\quad\quad\quad e^{3L^{-1}}(1+L^{-1})^{1\slash 2}  (1+\max\{c_L\beta_n^{-1\slash 2}\slash 2,1\}^{-1\slash 6})-1\Big\}.
\end{eqnarray}
By (\ref{Eq48.1.2}) and (\ref{Eq48.1.4}), we have
\begin{eqnarray}\label{Eq49.1.7}
   && \mathbb{E}[|LIS(\sigma)-2\pi^{-1\slash 4}n\beta_n^{1\slash 4}|] \nonumber\\
  &\leq&  \sum_{s\in \mathcal{S}_1}\mathbb{E}[|LIS(\sigma|_{\mathcal{R}_s})-2\pi^{-1\slash 4}L\beta_n^{-1\slash 4}|]+|2\pi^{-1\slash 4}n\beta_n^{1\slash 4}-2\pi^{-1\slash 4}L\beta_n^{-1\slash 4}|\mathcal{S}_1||\nonumber\\
  &&+\sum_{s\in \mathcal{S}_2}\mathbb{E}[LIS(\sigma|_{\mathcal{R}_s})]+\sum_{s=1}^{\lfloor n\beta_n^{1\slash 2}\slash L\rfloor-1}\mathbb{E}[LIS(\sigma|_{\mathcal{R}_s'})]+\sum_{s=1}^{\lfloor n\beta_n^{1\slash 2}\slash L\rfloor-1}\mathbb{E}[LIS(\sigma|_{\mathcal{R}_s''})].\nonumber\\
  && 
\end{eqnarray}
By (\ref{Eq49.1.1}) and (\ref{Eq49.1.3}), 
\begin{eqnarray}
  &&  \sum_{s\in \mathcal{S}_1}\mathbb{E}[|LIS(\sigma|_{\mathcal{R}_s})-2\pi^{-1\slash 4}L\beta_n^{-1\slash 4}|]\nonumber\\
  &\leq&  C_L'n \beta_n^{-1\slash 2}\exp(-c_L'\beta_n^{-1\slash 8}\slash 2)+C_L'n\beta_n^{1\slash 2}+C L^{-1\slash 2} e^{-4L^2} n \beta_n^{1\slash 4}\nonumber\\
  &&+2\pi^{-1\slash 4}n\beta_n^{1\slash 4} \max\Big\{1-e^{-6L^{-1}}(1-2L^{-1})^{1\slash 2} (1-\max\{c_L\beta_n^{-1 \slash 2}\slash 2,1\}^{-1\slash 6}), \nonumber\\
  &&  \quad\quad\quad\quad\quad\quad e^{3L^{-1}}(1+L^{-1})^{1\slash 2}   (1+\max\{c_L\beta_n^{-1\slash 2}\slash 2,1\}^{-1\slash 6})-1\Big\}.
\end{eqnarray}
By (\ref{Eq49.1.1}),
\begin{equation}
    0\leq 2\pi^{-1\slash 4}n\beta_n^{1\slash 4}-2\pi^{-1\slash 4}L\beta_n^{-1\slash 4}|\mathcal{S}_1| \leq CL \gamma_n \beta_n^{-1\slash 4}.
\end{equation}
By Proposition \ref{P5.2}, (\ref{Eq48.1.1}), and (\ref{Eq49.1.5}),
\begin{eqnarray}
    \sum_{s\in \mathcal{S}_2}\mathbb{E}[LIS(\sigma|_{\mathcal{R}_s})]&\leq& CL\beta_n^{-1\slash 4}|\mathcal{S}_2|+CL^2\exp(-c\beta_n^{-1\slash 4})|\mathcal{S}_2| \nonumber\\
    &\leq& CL\gamma_n\beta_n^{-1\slash 4}+CL^2\gamma_n\exp(-c\beta_n^{-1\slash 4})\nonumber\\
    &\leq& CL\gamma_n\beta_n^{-1\slash 4}+CL^2n\beta_n^{1\slash 2}\exp(-c\beta_n^{-1\slash 4}).
\end{eqnarray}
By Proposition \ref{P5.1}, 
\begin{eqnarray}
    \sum_{s=1}^{\lfloor n\beta_n^{1\slash 2}\slash L\rfloor-1}\mathbb{E}[LIS(\sigma|_{\mathcal{R}_s'})]&\leq& (  n\beta_n^{1\slash 2}\slash L)(CL^{1\slash 2}\beta_n^{-1\slash 4}+CL^2\exp(-c\beta_n^{-1\slash 4}))\nonumber\\
    &\leq& CL^{-1\slash 2}n\beta_n^{1\slash 4}+CLn\beta_n^{1\slash 2}\exp(-c\beta_n^{-1\slash 4}),
\end{eqnarray}
\begin{eqnarray}\label{Eq49.1.8}
    \sum_{s=1}^{\lfloor n\beta_n^{1\slash 2}\slash L\rfloor-1}\mathbb{E}[LIS(\sigma|_{\mathcal{R}_s''})]&\leq& (  n\beta_n^{1\slash 2}\slash L)(CL^{1\slash 2}\beta_n^{-1\slash 4}+CL^2\exp(-c\beta_n^{-1\slash 4}))\nonumber\\
    &\leq& CL^{-1\slash 2}n\beta_n^{1\slash 4}+CLn\beta_n^{1\slash 2}\exp(-c\beta_n^{-1\slash 4}).
\end{eqnarray}
By (\ref{Eq49.1.7})-(\ref{Eq49.1.8}), we have
\begin{eqnarray}
  && \frac{\mathbb{E}[|LIS(\sigma)-2\pi^{-1\slash 4}n\beta_n^{1\slash 4}|]}{n\beta_n^{1\slash 4}} \nonumber\\
  &\leq& C_L'\beta_n^{-3\slash 4}\exp(-c_L'\beta_n^{-1\slash 8}\slash 2)+C_L'\beta_n^{1\slash 4}+CL^{-1\slash 2}\nonumber\\
  && +2\pi^{-1\slash 4} \max\Big\{1-e^{-6L^{-1}}(1-2L^{-1})^{1\slash 2} (1-\max\{c_L\beta_n^{-1 \slash 2}\slash 2,1\}^{-1\slash 6}), \nonumber\\
  && \quad\quad\quad\quad \quad\quad  \quad e^{3L^{-1}}(1+L^{-1})^{1\slash 2}  (1+\max\{c_L\beta_n^{-1\slash 2}\slash 2,1\}^{-1\slash 6})-1\Big\}\nonumber\\
  &&+\frac{CL\gamma_n}{n\beta_n^{1\slash 2}}+CL^2\beta_n^{1\slash 4}\exp(-c\beta_n^{-1\slash 4}).
\end{eqnarray}
Hence by (\ref{Eq49.1.9}), 
\begin{eqnarray}
   && \limsup_{n\rightarrow\infty} \Big\{ \frac{\mathbb{E}[|LIS(\sigma)-2\pi^{-1\slash 4}n\beta_n^{1\slash 4}|]}{n\beta_n^{1\slash 4}} \Big\} \nonumber\\
   &\leq&   CL^{-1\slash 2}+2\pi^{-1\slash 4}\max\{1-e^{-6L^{-1}}(1-2L^{-1})^{1\slash 2},e^{3L^{-1}}(1+L^{-1})^{1\slash 2}-1\}. \nonumber\\
   &&
\end{eqnarray}
Taking $L\rightarrow\infty$, we obtain that
\begin{equation}
    \limsup_{n\rightarrow\infty} \Big\{ \frac{\mathbb{E}[|LIS(\sigma)-2\pi^{-1\slash 4}n\beta_n^{1\slash 4}|]}{n\beta_n^{1\slash 4}} \Big\}\leq 0.
\end{equation}
Hence
\begin{equation}
    \lim_{n\rightarrow \infty } \mathbb{E}\Big[\Big|\frac{LIS(\sigma)}{n\beta_n^{1\slash 4}}-2\pi^{-1\slash 4}\Big|\Big]=0, \text{ i.e., }  \frac{LIS(\sigma)}{n \beta_n^{1\slash 4}}\xrightarrow[]{L^1}  2\pi^{-1\slash 4}.
\end{equation}

\end{proof}

\begin{appendices}
  
\section{Proofs of Proposition \ref{Densi.l1} and Lemma \ref{Lemma2}}\label{Appendix.Sect.1}

In this appendix, we give the proofs of Proposition \ref{Densi.l1} and Lemma \ref{Lemma2}. We start with the proof of Proposition \ref{Densi.l1}.

\begin{proof}[Proof of Proposition \ref{Densi.l1}]

By adapting the proof of \cite[Theorem 1.5]{M1}, we obtain that $\nu_{n,\sigma}$ converges weakly in probability to a probability measure $\mu_{\theta}\in\mathcal{M}$, which has density
\begin{equation*}
  R_{\theta}(x,y)=e^{-\theta|x-y|+A_{\theta}(x)+B_{\theta}(y)}, \quad \forall (x,y)\in [0,1]^2,
\end{equation*}
with respect to the Lebesgue measure on $[0,1]^2$, where the functions $A_{\theta}(\cdot),B_{\theta}(\cdot)$ are in $L^1([0,1])$. Moreover, $R_{\theta}(\cdot,\cdot)$ satisfies $\int_0^1 R_{\theta}(x,y)dy=1$ for almost every $x\in [0,1]$ and $\int_0^1 R_{\theta}(x,y)dx=1$ for almost every $y\in [0,1]$. We also note that $A_{\theta}(x)\in\mathbb{R}$ for almost every $x\in [0,1]$ and $B_{\theta}(y)\in\mathbb{R}$ for almost every $y\in [0,1]$. Hence there exist $N,N'\subseteq [0,1]$ with zero Lebesgue measure, such that for any $x\in [0,1]\backslash N$, $A_{\theta}(x)\in \mathbb{R}$ and $\int_0^1 R_{\theta}(x,y) dy=1$; for any $y\in [0,1]\backslash N'$, $B_{\theta}(y)\in \mathbb{R}$ and $\int_0^1 R_{\theta}(x,y)dx=1$.

We pick any $x_0\in [0,1]\backslash N$. We have
\begin{equation*}
    \int_{0}^{1} e^{-\theta|x_0-y|+B_{\theta}(y)} dy=e^{-A_{\theta}(x_0)},
\end{equation*}
and for any $x\in [0,1]$, 
\begin{equation*}
    \int_0^1 e^{-\theta|x-y|+B_{\theta}(y)}dy \in [e^{-A_{\theta}(x_0)-\theta|x-x_0|},e^{-A_{\theta}(x_0)+\theta|x-x_0|}] \subseteq  (0,\infty).
\end{equation*}
Similarly, we can deduce that for any $y\in [0,1]$, $\int_0^1 e^{-\theta|x-y|+A_{\theta}(x)}dx \in (0,\infty)$.

Now for any $x\in [0,1]\backslash N$, we define $a_{\theta}(x):=A_{\theta}(x)$; for any $x\in N$, we define $a_{\theta}(x):=-\log\big(\int_0^1 e^{-\theta|x-y|+B_{\theta}(y)}dy\big)$. For any $y\in [0,1]\backslash N'$, we define $b_{\theta}(y):=B_{\theta}(y)$; for any $y\in N'$, we define $b_{\theta}(y):=-\log\big(\int_0^1 e^{-\theta|x-y|+A_{\theta}(x)}dx\big)$. Note that $a_{\theta}(x)\in \mathbb{R}$ for any $x\in [0,1]$ and $b_{\theta}(y)\in \mathbb{R}$ for any $y\in [0,1]$. We also define $\rho_{\theta}(x,y):=e^{-\theta|x-y|+a_{\theta}(x)+b_{\theta}(y)}$ for any $(x,y)\in [0,1]^2$. Note that $\rho_{\theta}(x,y)=R_{\theta}(x,y)$ for almost every $(x,y)\in [0,1]^2$. Hence $\rho_{\theta}(\cdot,\cdot)$ is also a density of $\mu_{\theta}$. We also note that $\rho_{\theta}(x,y)\in (0,\infty)$ for every $(x,y)\in [0,1]^2$.  

Note that $a_{\theta}(x)=A_{\theta}(x)$ for almost every $x\in [0,1]$ and $b_{\theta}(y)=B_{\theta}(y)$ for almost every $y\in [0,1]$. For any $x\in [0,1]\backslash N$, as $\int_0^1 R_{\theta}(x,y) dy=1$, we have 
\begin{equation*}
    a_{\theta}(x)=A_{\theta}(x)=-\log\Big(\int_0^1 e^{-\theta|x-y|+B_{\theta}(y)} dy \Big);
\end{equation*}
for any $x\in N$, by definition, we have
$a_{\theta}(x)=-\log\big(\int_0^1 e^{-\theta|x-y|+B_{\theta}(y)} dy \big)$. Hence for any $x\in [0,1]$, 
\begin{equation}
    a_{\theta}(x)=-\log\Big(\int_0^1 e^{-\theta|x-y|+B_{\theta}(y)} dy \Big)=-\log\Big(\int_0^1 e^{-\theta|x-y|+b_{\theta}(y)} dy \Big).
\end{equation}
Similarly, for any $y\in [0,1]$, we have 
\begin{equation}
    b_{\theta}(y)=-\log\Big(\int_0^1 e^{-\theta|x-y|+a_{\theta}(x)}dx\Big).
\end{equation}

For any $x_1,x_2\in [0,1]$, we have
\begin{eqnarray*}
  &&  a_{\theta}(x_2)-a_{\theta}(x_1)=\log\Big(\int_0^1 e^{-\theta|x_1-y|+b_{\theta}(y)} dy \Big)-\log\Big(\int_0^1 e^{-\theta|x_2-y|+b_{\theta}(y)} dy \Big) \nonumber\\
  &&\quad\quad \leq \log\Big(e^{\theta|x_1-x_2|} \int_0^1 e^{-\theta|x_2-y|+b_{\theta}(y)} dy \Big)-\log\Big(\int_0^1 e^{-\theta|x_2-y|+b_{\theta}(y)} dy \Big)\nonumber\\
  &&\quad\quad =\theta|x_1-x_2|,
\end{eqnarray*}
and similarly, $a_{\theta}(x_1)-a_{\theta}(x_2)\leq \theta|x_1-x_2|$. Hence $a_{\theta}(\cdot)$ is continuous on $[0,1]$. Similarly, we can deduce that $b_{\theta}(\cdot)$ is continuous on $[0,1]$. Therefore, $\rho_{\theta}(\cdot,\cdot)$ is continuous on $[0,1]^2$. As $\rho_{\theta}(x,y)\in (0,\infty)$ for every $(x,y)\in [0,1]^2$, there exist positive constants $m_{\theta}$ and $M_{\theta}$ that only depend on $\theta$, such that for every $(x,y)\in [0,1]^2$, $m_{\theta} \leq \rho_{\theta}(x,y)\leq M_{\theta}$.

Recall Definition \ref{Def1.5}. Let $\sigma$ be drawn from $\mathbb{P}_{n,\beta_n}$. As $\nu_{n,\sigma}$ converges weakly in probability to the probability measure with density $\rho_{\theta}(\cdot,\cdot)$, $\nu_{n,\sigma^{-1}}$ converges weakly in probability to the probability measure on $[0,1]^2$ with density 
\begin{equation*}
  \phi_{\theta}(x,y)=e^{-\theta|x-y|+b_{\theta}(x)+a_{\theta}(y)}=\rho_{\theta}(y,x), \quad \forall (x,y)\in [0,1]^2. 
\end{equation*}
As $\rho_{\theta}(\cdot,\cdot)$ is continuous on $[0,1]^2$, $\phi_{\theta}(\cdot,\cdot)$ is also continuous on $[0,1]^2$. Noting that the distribution of $\sigma^{-1}$ is also given by $\mathbb{P}_{n,\beta_n}$, we obtain that 
\begin{equation*}
   \rho_{\theta}(x,y)=\phi_{\theta}(x,y), \quad \forall (x,y)\in [0,1]^2.
\end{equation*}
Hence we can take $a_{\theta}(x)=b_{\theta}(x)$ for any $x\in [0,1]$. Therefore, we have
\begin{equation*}
  \rho_{\theta}(x,y)=e^{-\theta |x-y|+a_{\theta}(x)+a_{\theta}(y)}, \quad \forall (x,y)\in [0,1]^2. 
\end{equation*}

We show that $a_{\theta}(x)=a_{\theta}(1-x)$ for any $x\in [0,1]$ as follows. Let $\sigma$ be drawn from $\mathbb{P}_{n,\beta_n}$, and let $\bar{\sigma}\in S_n$ be such that $\bar{\sigma}(i)=n+1-\sigma(n+1-i)$ for every $i\in [n]$. As $\nu_{n,\sigma}$ converges weakly in probability to the probability measure with density $\rho_{\theta}(\cdot,\cdot)$, $\nu_{n,\bar{\sigma}}$ converges weakly in probability to the probability measure on $[0,1]^2$ with density
\begin{equation*}
    \psi_{\theta}(x,y)=e^{-\theta|x-y|+a_{\theta}(1-x)+a_{\theta}(1-y)}=\rho_{\theta}(1-x,1-y), \quad \forall (x,y)\in [0,1]^2.
\end{equation*}
As $\rho_{\theta}(\cdot,\cdot)$ is continuous on $[0,1]^2$, $\psi_{\theta}(\cdot,\cdot)$ is also continuous on $[0,1]^2$. Following the argument in (\ref{Eq1.3.7}), we can deduce that the distribution of $\bar{\sigma}$ is also given by $\mathbb{P}_{n,\beta_n}$, hence $a_{\theta}(x)=a_{\theta}(1-x)$ for any $x\in [0,1]$.


\end{proof}

Now we give the proof of Lemma \ref{Lemma2}.

\begin{proof}[Proof of Lemma \ref{Lemma2}]

Recall the definition of $\mathcal{M}$ from Definition \ref{Defm}. We also let $\mathcal{M}_0$ be the set of Borel probability measures on $[0,1]^2$ and endow it with the weak topology.

Recall that we have fixed $T,K_0\in \mathbb{N}^{*}$ such that $T\geq 4$ in Section \ref{Sect.3.1}. We also fix any $\delta>0$. For any $\Gamma\in \Pi^{T,T,K_0}$ and $l\in [2T-1]$, we let
\begin{equation}
    U_{\Gamma,l,\delta}:=\Big\{\mu\in\mathcal{M}_0: \Big|\mu(Q_l)-\int_{Q_l}\rho_{\theta}(x,y)dxdy\Big|\geq \delta\Big\}.
\end{equation}

Recall from Proposition \ref{Densi.l1} that $\rho_{\theta}(x,y)\leq M_{\theta}$ for any $(x,y)\in [0,1]^2$. Let $\mathcal{B}_{[0,1]^2}$ be the Borel $\sigma$-algebra on $[0,1]^2$, and let $d_{LP}$ be the L\'evy-Prokhorov metric on $\mathcal{M}_0$ ($d_{LP}$ metrizes the weak topology; see e.g. \cite[Chapter 1]{Bil}). Thus for any $\mu,\nu\in\mathcal{M}_0$, 
\begin{eqnarray*}
    d_{LP}(\mu,\nu)&:=&\inf\{\epsilon>0: \mu(A)\leq \nu(A^{\epsilon})+\epsilon \text{ and } \nu(A)\leq \mu(A^{\epsilon})+\epsilon\\
    &&\quad\quad \text{ for any }A\in\mathcal{B}_{[0,1]^2}\},
\end{eqnarray*}
where $A^{\epsilon}:=\{(x,y)\in [0,1]^2: \|(x,y)-(z,w)\|_2<\epsilon\text{ for some }(z,w)\in A\}$. 

Consider any $\mu\in U_{\Gamma,l,\delta}$ and any $\epsilon\leq \delta\slash (4M_{\theta}+2)$. Recall the definition of $\mu_{\theta}$ from Proposition \ref{Densi.l1}. Note that either $\mu(Q_l)\geq \mu_{\theta}(Q_l)+\delta$ or $\mu(Q_l)\leq \mu_{\theta}(Q_l)-\delta$. In the former case, we have
\begin{eqnarray*}
    \mu(Q_l)&\geq& \mu_{\theta}(Q_l)+\delta=\mu_{\theta}(Q_l^{\epsilon})-\mu_{\theta}(Q_l^{\epsilon}\backslash Q_l)+\delta \\
    &\geq& \mu_{\theta}(Q_l^{\epsilon})-4\epsilon M_{\theta}+\delta\geq \mu_{\theta}(Q_l^{\epsilon})+2\epsilon.
\end{eqnarray*}
In the latter case, we have $\mu(Q_l^c)\geq \mu_{\theta}(Q_l^c)+\delta$, hence 
\begin{eqnarray*}
   \mu(Q_l^c) &\geq& \mu_{\theta}(Q_l^c)+\delta = \mu_{\theta}((Q_l^c)^{\epsilon})-\mu_{\theta}((Q_l^c)^{\epsilon}\backslash Q_l^c)+\delta \\
   &\geq& \mu_{\theta}((Q_l^c)^{\epsilon})-4\epsilon M_{\theta}+\delta\geq  \mu_{\theta}((Q_l^c)^{\epsilon})+2\epsilon.
\end{eqnarray*}
Hence $d_{LP}(\mu,\mu_{\theta})\geq \delta\slash (4M_{\theta}+2)$ for any $\mu\in U_{\Gamma,l,\delta}$. Letting
\begin{equation*}
    V_{\delta}:= \Big\{\mu\in\mathcal{M}_0:d_{LP}(\mu,\mu_{\theta})\geq\frac{\delta}{4M_{\theta}+2}\Big\},
\end{equation*}
we have $U_{\Gamma,l,\delta}\subseteq V_{\delta}$. Note that $V_{\delta}$ is a closed subset of $\mathcal{M}_0$ and $\mu_{\theta}\notin V_{\delta}$. 


Let $\tau$ be drawn from the uniform distribution on $S_n$. Recall Definition \ref{Def1.5}. By \cite[Theorem 1.5]{M1}, the sequence of probability measures $\nu_{n,\tau}$ satisfy a large deviation principle on $\mathcal{M}_0$ with the good rate function
\begin{equation*}
    I(\mu):=\begin{cases}
        D(\mu\|u) & \text{ if }\mu \in  \mathcal{M} \\
        +\infty & \text{ otherwise}
    \end{cases},
\end{equation*}
where $u$ is the uniform measure on $[0,1]^2$ and $D(\cdot\|\cdot)$ is the Kullback-Leibler divergence. That is, for any $A\subseteq \mathcal{M}_0$, we have
\begin{eqnarray}\label{LDP}
   && -\inf_{\mu\in A^{\circ}} I(\mu)\leq \liminf_{n\rightarrow\infty} \frac{1}{n}\log{\mathbb{P}(\nu_{n,\tau}\in A)} \nonumber\\
   &&\leq \limsup_{n\rightarrow\infty} \frac{1}{n}\log{\mathbb{P}(\nu_{n,\tau}\in A)} \leq -\inf_{\mu\in\overline{A}} I(\mu),
\end{eqnarray}
where $A^{\circ}$ and $\overline{A}$ are the interior and closure of $A$ respectively.

For any $\mu\in\mathcal{M}_0$, we define $F(\mu):=\int_{[0,1]^2} |x-y| d\mu$ and
\begin{equation*}
    \tilde{F}(\mu):=\begin{cases}
        F(\mu) & \text{ if }\mu\in V_{\delta} \\
        +\infty & \text{ otherwise}
    \end{cases}.
\end{equation*}
Note that $F(\mu)$ is continuous on $\mathcal{M}_0$ and $\tilde{F}(\mu)$ is lower semicontinuous on $\mathcal{M}_0$. Let $\sigma$ be drawn from $\mathbb{P}_{n,\beta_n}$. In the following, we adapt the arguments in \cite[Lemmas 4.3.4 and 4.3.6]{DZ2} to derive an upper bound on $\mathbb{P}(\nu_{n,\sigma}\in V_{\delta})$.

As $\tau$ is drawn from the uniform distribution on $S_n$, we have
\begin{eqnarray}\label{E2.1}
\frac{1}{n}\log\Big(\frac{Z_{n,\beta_n}}{n!}\Big) &=&  \frac{1}{n}\log\Big(\frac{1}{n!}\sum_{\kappa \in S_n}e^{-\beta_n \sum_{i=1}^n |\kappa(i)-i|}\Big)  \nonumber\\
&=& \frac{1}{n}\log{\mathbb{E}[e^{-n^2\beta_n F(\nu_{n,\tau})}]}.
\end{eqnarray}

By the continuity of $F(\mu)$, for any $\delta'>0$, there exists an open set $G\subseteq \mathcal{M}_0$, such that $\mu_{\theta}\in G$ and $|F(\mu)-F(\mu_{\theta})|\leq\delta'$ for any $\mu\in G$. By (\ref{LDP}) and (\ref{E2.1}), 
\begin{eqnarray*}
&& \liminf_{n\rightarrow\infty} \frac{1}{n}\log\Big(\frac{Z_{n,\beta_n}}{n!}\Big) \geq \liminf_{n\rightarrow \infty} \frac{1}{n}\log{\mathbb{E}[e^{-n^2\beta_n F(\nu_{n,\tau})} \mathbbm{1}_{\nu_{n,\tau}\in G}]}  \nonumber \\
&  \geq   &  -\theta F(\mu_{\theta})-\theta\delta'+\liminf_{n\rightarrow\infty}\frac{1}{n}\log{\mathbb{P}(\nu_{n,\tau}\in G)} \nonumber\\
&\geq& -\theta F(\mu_{\theta})-\theta\delta'-\inf_{\mu\in G}I(\mu)\geq -\theta F(\mu_{\theta})-\theta \delta'-I(\mu_{\theta}).
\end{eqnarray*}
Letting $\delta'\rightarrow 0^{+}$, we obtain that
\begin{equation}\label{E2.2}
    \liminf_{n\rightarrow\infty} \frac{1}{n}\log\Big(\frac{Z_{n,\beta_n}}{n!}\Big)\geq -\theta F(\mu_{\theta})-I(\mu_{\theta}).
\end{equation}

Now fix an arbitrary $\alpha\in (0,\infty)$, and let $\Psi_{I}(\alpha):=\{\mu\in\mathcal{M}_0: I(\mu)\leq \alpha\}$. As $I(\mu)$ is a good rate function, $\Psi_I(\alpha)$ is a compact subset of $\mathcal{M}_0$. By the lower semicontinuity of $I(\mu)$ and $\tilde{F}(\mu)$, for any $\delta'>0$, the following holds: For any $\mu\in \Psi_{I}(\alpha)$, there exists an open set $G_{\mu}\subseteq \mathcal{M}_0$, such that $\mu\in     G_{\mu}$, and for any $\nu\in \overline{G_{\mu}}$, $I(\nu)\geq I(\mu)-\delta'$ and $\tilde{F}(\nu)\geq \tilde{F}(\mu)-\delta'$. As $\bigcup_{\mu\in\Psi_I(\alpha)}G_{\mu}$ is an open cover of the compact set $\Psi_I(\alpha)$, we can find $\mu_1,\mu_2,\cdots,\mu_L\in \Psi_I(\alpha)$, such that $\Psi_I(\alpha)\subseteq \bigcup_{j=1}^L G_{\mu_j}$. Hence
\begin{eqnarray*}
&& \frac{1}{n!}\sum_{\kappa\in S_n: \nu_{n,\kappa}\in V_{\delta}}e^{-\beta_n\sum_{i=1}^n|\kappa(i)-i|}=\mathbb{E}[e^{-n^2\beta_n\tilde{F}(\nu_{n,\tau})}]\\
&\leq & \sum_{j=1}^L \mathbb{E}[e^{-n^2\beta_n\tilde{F}(\nu_{n,\tau})} \mathbbm{1}_{\nu_{n,\tau}\in G_{\mu_j}}]+\mathbb{P}\Big(\nu_{n,\tau}\in \Big(\bigcup_{j=1}^L G_{\mu_j}\Big)^c\Big)\\
&\leq& \sum_{j=1}^L e^{-n^2\beta_n(\tilde{F}(\mu_j)-\delta')}\mathbb{P}(\nu_{n,\tau}\in G_{\mu_j})+\mathbb{P}\Big(\nu_{n,\tau}\in \Big(\bigcup_{j=1}^L G_{\mu_j}\Big)^c \Big).
\end{eqnarray*}
Hence by (\ref{LDP}), we have
\begin{eqnarray*}
&& \limsup_{n\rightarrow\infty}\frac{1}{n}\log\Big(\frac{1}{n!}\sum_{\kappa\in S_n: \nu_{n,\kappa}\in V_{\delta}}e^{-\beta_n\sum_{i=1}^n|\kappa(i)-i|}\Big)\\
&\leq& \max\{\max_{j\in [L]}\{-\theta \tilde{F}(\mu_j)+\theta \delta'-\inf_{\nu\in \overline{G_{\mu_j}}}I(\nu)\},-\inf_{\nu\in (\bigcup_{j=1}^L G_{\mu_j})^c}I(\nu)\}\\
&\leq& \max\{\max_{j\in [L]}\{-\theta \tilde{F}(\mu_j)-I(\mu_j)+(\theta+1) \delta'\},-\inf_{\nu\in \Psi_I(\alpha)^c}I(\nu)\}\\
&\leq& \max\{\max_{j\in [L]}\{-\theta \tilde{F}(\mu_j)-I(\mu_j)+(\theta+1) \delta'\},-\alpha\}.
\end{eqnarray*}
Letting $\delta'\rightarrow 0^{+}$ and $\alpha\rightarrow\infty$, we obtain that
\begin{eqnarray}\label{E2.3}
  &&  \limsup_{n\rightarrow\infty}\frac{1}{n}\log\Big(\frac{1}{n!}\sum_{\kappa\in S_n: \nu_{n,\kappa}\in V_{\delta}}e^{-\beta_n\sum_{i=1}^n|\kappa(i)-i|}\Big)\leq \sup_{\mu\in \mathcal{M}_0}\{-\theta\tilde{F}(\mu)-I(\mu)\}\nonumber\\
  &&   =\sup_{\mu\in V_{\delta}\cap\mathcal{M}} \{-\theta F(\mu)-I(\mu)\}.
\end{eqnarray}

Combining (\ref{E2.2}) and (\ref{E2.3}), we have
\begin{eqnarray}\label{E2.4}
&&\limsup_{n\rightarrow\infty}\frac{1}{n}\log{\mathbb{P}(\nu_{n,\sigma}\in V_{\delta})}\nonumber\\
&\leq&\limsup_{n\rightarrow\infty}\frac{1}{n}\log\Big(\frac{1}{n!}\sum_{\kappa\in S_n: \nu_{n,\kappa}\in V_{\delta}}e^{-\beta_n\sum_{i=1}^n|\kappa(i)-i|}\Big)-\liminf_{n\rightarrow\infty} \frac{1}{n}\log\Big(\frac{Z_{n,\beta_n}}{n!}\Big)\nonumber\\
&\leq& \sup_{\mu\in V_{\delta}\cap\mathcal{M}}\{-\theta F(\mu)-I(\mu)\}+\theta F(\mu_{\theta})+I(\mu_{\theta}).
\end{eqnarray}

By the proof of \cite[Theorem 1.5]{M1}, $\mu_{\theta}$ is the unique maximizer of the function $Q(\mu):=-\theta F(\mu)-I(\mu)$ over $\mathcal{M}$. As $V_{\delta}\cap \mathcal{M}$ is a compact set, the supremum of $Q(\mu)$ is attained on $V_{\delta}\cap\mathcal{M}$. As $\mu_{\theta}\notin V_{\delta}\cap \mathcal{M}$, we conclude that
\begin{equation}\label{E2.5}
    \sup_{\mu\in V_{\delta}\cap\mathcal{M}}\{-\theta F(\mu)-I(\mu)\}+\theta F(\mu_{\theta})+I(\mu_{\theta})<0.
\end{equation}
Combining (\ref{E2.4}) and (\ref{E2.5}), we conclude that there exist positive constants $C_{\Gamma,l,\delta}$, $c_{\Gamma,l,\delta}$ that only depend on $T,K_0,\delta,\Gamma,l$ and $\{\beta_n\}$, such that
\begin{equation}
    \mathbb{P}(\nu_{n,\sigma}\in U_{\Gamma,l,\delta})\leq \mathbb{P}(\nu_{n,\sigma}\in V_{\delta})\leq C_{\Gamma,l,\delta}\exp(-c_{\Gamma,l,\delta} n). 
\end{equation}
We take
\begin{equation*}
    C_0:=\sup_{\Gamma\in \Pi^{T,T,K_0}}\sup_{l\in [2T-1]}C_{\Gamma,l,\delta}, \quad c_0:=\inf_{\Gamma\in \Pi^{T,T,K_0}}\inf_{l\in [2T-1]}c_{\Gamma,l,\delta},
\end{equation*}
Note that $C_0$, $c_0$ are positive constants that only depend on $T,K_0,\delta$ and $\{\beta_n\}$. Moreover, for any $\Gamma\in \Pi^{T,T,K_0}$ and any $l\in [2T-1]$, we have 
\begin{equation}
    \mathbb{P}(\nu_{n,\sigma}\in U_{\Gamma,l,\delta})\leq C_0 \exp(-c_0 n).
\end{equation}
By the definition of $U_{\Gamma,l,\delta}$, we obtain the conclusion of the lemma.

\end{proof}

\end{appendices}

\bibliographystyle{acm}
\bibliography{Mallows.bib}

\begin{thebibliography}{10}

\bibitem{AD3}
{\sc Aldous, D., and Diaconis, P.}
\newblock Hammersley's interacting particle process and longest increasing
  subsequences.
\newblock {\em Probab. Theory Related Fields 103}, 2 (1995), 199--213.

\bibitem{AD2}
{\sc Aldous, D., and Diaconis, P.}
\newblock Longest increasing subsequences: from patience sorting to the
  {B}aik-{D}eift-{J}ohansson theorem.
\newblock {\em Bull. Amer. Math. Soc. (N.S.) 36}, 4 (1999), 413--432.

\bibitem{AD}
{\sc Andersen, H.~C., and Diaconis, P.}
\newblock Hit and run as a unifying device.
\newblock {\em J. Soc. Fr. Stat. \& Rev. Stat. Appl. 148}, 4 (2007), 5--28.

\bibitem{BDJ}
{\sc Baik, J., Deift, P., and Johansson, K.}
\newblock On the distribution of the length of the longest increasing
  subsequence of random permutations.
\newblock {\em J. Amer. Math. Soc. 12}, 4 (1999), 1119--1178.

\bibitem{BB}
{\sc Basu, R., and Bhatnagar, N.}
\newblock Limit theorems for longest monotone subsequences in random {M}allows
  permutations.
\newblock {\em Ann. Inst. Henri Poincar\'{e} Probab. Stat. 53}, 4 (2017),
  1934--1951.

\bibitem{BP}
{\sc Bhatnagar, N., and Peled, R.}
\newblock Lengths of monotone subsequences in a {M}allows permutation.
\newblock {\em Probab. Theory Related Fields 161}, 3-4 (2015), 719--780.

\bibitem{Bil}
{\sc Billingsley, P.}
\newblock {\em Convergence of probability measures}, second~ed.
\newblock Wiley Series in Probability and Statistics: Probability and
  Statistics. John Wiley \& Sons, Inc., New York, 1999.
\newblock A Wiley-Interscience Publication.

\bibitem{BLM}
{\sc Boucheron, S., Lugosi, G., and Massart, P.}
\newblock {\em Concentration inequalities}.
\newblock Oxford University Press, Oxford, 2013.
\newblock A nonasymptotic theory of independence, With a foreword by Michel
  Ledoux.

\bibitem{Cor}
{\sc Corwin, I.}
\newblock Commentary on ``{L}ongest increasing subsequences: from patience
  sorting to the {B}aik-{D}eift-{J}ohansson theorem'' by {D}avid {A}ldous and
  {P}ersi {D}iaconis.
\newblock {\em Bull. Amer. Math. Soc. (N.S.) 55}, 3 (2018), 363--374.

\bibitem{Crane}
{\sc Crane, H.}
\newblock The ubiquitous {E}wens sampling formula.
\newblock {\em Statist. Sci. 31}, 1 (2016), 1--19.

\bibitem{DZ2}
{\sc Dembo, A., and Zeitouni, O.}
\newblock {\em Large deviations techniques and applications}, vol.~38 of {\em
  Stochastic Modelling and Applied Probability}.
\newblock Springer-Verlag, Berlin, 2010.
\newblock Corrected reprint of the second (1998) edition.

\bibitem{D}
{\sc Diaconis, P.}
\newblock {\em Group representations in probability and statistics}, vol.~11 of
  {\em Institute of Mathematical Statistics Lecture Notes---Monograph Series}.
\newblock Institute of Mathematical Statistics, Hayward, CA, 1988.

\bibitem{FM}
{\sc Fyodorov, Y.~V., and Muirhead, S.}
\newblock The band structure of a model of spatial random permutation.
\newblock {\em Probab. Theory Related Fields 179}, 3-4 (2021), 543--587.

\bibitem{Ham}
{\sc Hammersley, J.~M.}
\newblock A few seedlings of research.
\newblock In {\em Proceedings of the {S}ixth {B}erkeley {S}ymposium on
  {M}athematical {S}tatistics and {P}robability ({U}niv. {C}alifornia,
  {B}erkeley, {C}alif., 1970/1971), {V}ol. {I}: {T}heory of statistics\/}
  (1972), Univ. California Press, Berkeley, Calif., pp.~345--394.

\bibitem{Ka3}
{\sc Kammoun, M.~S.}
\newblock Monotonous subsequences and the descent process of invariant random
  permutations.
\newblock {\em Electron. J. Probab. 23\/} (2018), Paper no. 118, 31.

\bibitem{Ka2}
{\sc Kammoun, M.~S.}
\newblock Universality for random permutations and some other groups.
\newblock {\em Stochastic Process. Appl. 147\/} (2022), 76--106.

\bibitem{LS}
{\sc Logan, B.~F., and Shepp, L.~A.}
\newblock A variational problem for random {Y}oung tableaux.
\newblock {\em Advances in Math. 26}, 2 (1977), 206--222.

\bibitem{LM}
{\sc L\"{o}we, M., and Merkl, F.}
\newblock Moderate deviations for longest increasing subsequences: the upper
  tail.
\newblock {\em Comm. Pure Appl. Math. 54}, 12 (2001), 1488--1520.

\bibitem{LMR}
{\sc L\"{o}we, M., Merkl, F., and Rolles, S.}
\newblock Moderate deviations for longest increasing subsequences: the lower
  tail.
\newblock {\em J. Theoret. Probab. 15}, 4 (2002), 1031--1047.

\bibitem{Mal}
{\sc Mallows, C.~L.}
\newblock Non-null ranking models. {I}.
\newblock {\em Biometrika 44\/} (1957), 114--130.

\bibitem{MS}
{\sc Mueller, C., and Starr, S.}
\newblock The length of the longest increasing subsequence of a random
  {M}allows permutation.
\newblock {\em J. Theoret. Probab. 26}, 2 (2013), 514--540.

\bibitem{M1}
{\sc Mukherjee, S.}
\newblock Estimation in exponential families on permutations.
\newblock {\em Ann. Statist. 44}, 2 (2016), 853--875.

\bibitem{M2}
{\sc Mukherjee, S.}
\newblock Fixed points and cycle structure of random permutations.
\newblock {\em Electron. J. Probab. 21\/} (2016), Paper No. 40, 18.

\bibitem{VK}
{\sc Ver\v{s}ik, A.~M., and Kerov, S.~V.}
\newblock Asymptotic behavior of the {P}lancherel measure of the symmetric
  group and the limit form of {Y}oung tableaux.
\newblock {\em Dokl. Akad. Nauk SSSR 233}, 6 (1977), 1024--1027.

\bibitem{Zho1}
{\sc Zhong, C.}
\newblock Mallows permutation models with {$L^1$} and {$L^2$} distances {I}:
  hit and run algorithms and mixing times.
\newblock {\em arXiv preprint arXiv:2112.13456\/} (2021).

\bibitem{Zhong2}
{\sc Zhong, C.}
\newblock {\em Mallows permutation model: Sampling algorithms and probabilistic
  properties}.
\newblock PhD thesis, Stanford University, 2022.

\end{thebibliography}
\end{document}